\crefname{thm}{Theorem}{Theorems}
\Crefname{thm}{Theorem}{Theorems}
\crefname{conj}{Conjecture}{Conjectures}
\Crefname{conj}{Conjecture}{Conjectures}
\crefname{prop}{Proposition}{Propositions}
\Crefname{prop}{Proposition}{Propositions}
\crefname{cor}{Corollary}{Corollaries}
\Crefname{cor}{Corollary}{Corollaries}
\crefname{defn}{Definition}{Definitions}
\Crefname{defn}{Definition}{Definitions}
\crefname{rmk}{Remark}{Remarks}
\Crefname{rmk}{Remark}{Remarks}
\crefname{prob}{Problem}{Problems}
\Crefname{prob}{Problem}{Problems}
\crefname{enumi}{}{}
\Crefname{enumi}{}{}
\crefname{figure}{Figure}{Figures}
\Crefname{figure}{Figure}{Figures}
\begin{document}

\NewDocumentCommand{\C}{}{{\mathbb{C}}}
\NewDocumentCommand{\R}{}{{\mathbb{R}}}
\NewDocumentCommand{\Q}{}{{\mathbb{Q}}}
\NewDocumentCommand{\Z}{}{{\mathbb{Z}}}
\NewDocumentCommand{\N}{}{{\mathbb{N}}}
\NewDocumentCommand{\M}{}{{\mathbb{M}}}
\NewDocumentCommand{\grad}{}{\nabla}
\NewDocumentCommand{\sA}{}{\mathcal{A}}
\NewDocumentCommand{\sF}{}{\mathcal{F}}
\NewDocumentCommand{\sH}{}{\mathcal{H}}
\NewDocumentCommand{\sD}{}{\mathcal{D}}
\NewDocumentCommand{\sB}{}{\mathcal{B}}
\NewDocumentCommand{\sC}{}{\mathcal{C}}
\NewDocumentCommand{\sE}{}{\mathcal{E}}
\NewDocumentCommand{\sL}{}{\mathcal{L}}
\NewDocumentCommand{\sT}{}{\mathcal{T}}
\NewDocumentCommand{\sO}{}{\mathcal{O}}
\NewDocumentCommand{\sP}{}{\mathcal{P}}
\NewDocumentCommand{\sQ}{}{\mathcal{Q}}
\NewDocumentCommand{\sR}{}{\mathcal{R}}
\NewDocumentCommand{\sM}{}{\mathcal{M}}
\NewDocumentCommand{\sI}{}{\mathcal{I}}
\NewDocumentCommand{\sK}{}{\mathcal{K}}
\NewDocumentCommand{\sZ}{}{\mathcal{Z}}
\NewDocumentCommand{\Span}{}{\mathrm{span}}
\NewDocumentCommand{\fM}{}{\mathfrak{M}}
\NewDocumentCommand{\fN}{}{\mathfrak{N}}
\NewDocumentCommand{\fX}{}{\mathfrak{X}}
\NewDocumentCommand{\fY}{}{\mathfrak{Y}}
\NewDocumentCommand{\gammat}{}{\tilde{\gamma}}
\NewDocumentCommand{\ct}{}{\tilde{c}}
\NewDocumentCommand{\bt}{}{\tilde{b}}
\NewDocumentCommand{\ch}{}{\hat{c}}
\NewDocumentCommand{\hd}{}{\hat{d}}
\NewDocumentCommand{\eh}{}{\hat{e}}
\NewDocumentCommand{\Ut}{}{\tilde{U}}
\NewDocumentCommand{\Gt}{}{\widetilde{G}}
\NewDocumentCommand{\Vt}{}{\tilde{V}}
\NewDocumentCommand{\ah}{}{\hat{a}}
\NewDocumentCommand{\at}{}{\tilde{a}}
\NewDocumentCommand{\Yh}{}{\widehat{Y}}
\NewDocumentCommand{\Yt}{}{\widetilde{Y}}
\NewDocumentCommand{\Ah}{}{\widehat{A}}
\NewDocumentCommand{\Ch}{}{\widehat{C}}
\NewDocumentCommand{\At}{}{\widetilde{A}}
\NewDocumentCommand{\Bt}{}{\widetilde{B}}
\NewDocumentCommand{\Bh}{}{\widehat{B}}
\NewDocumentCommand{\Mh}{}{\widehat{M}}
\NewDocumentCommand{\sLh}{}{\widehat{\sL}}
\NewDocumentCommand{\Sh}{}{\widehat{S}}
\NewDocumentCommand{\Xt}{}{\widetilde{X}}
\NewDocumentCommand{\Lt}{}{\widetilde{L}}
\NewDocumentCommand{\Xh}{}{\widehat{X}}
\NewDocumentCommand{\Lh}{}{\widehat{L}}
\NewDocumentCommand{\Eh}{}{\widehat{E}}
\NewDocumentCommand{\Fh}{}{\widehat{F}}
\NewDocumentCommand{\wh}{}{\hat{w}}
\NewDocumentCommand{\hh}{}{\hat{h}}
\NewDocumentCommand{\Phih}{}{\widehat{\Phi}}
\NewDocumentCommand{\Leaf}{o}{\IfNoValueTF{#1}{ \mathrm{Leaf}}{\mathrm{Leaf}_{#1}} }

\NewDocumentCommand{\Ho}{}{\mathbb{H}^1}

\NewDocumentCommand{\AMatrix}{}{\mathcal{A}}
\NewDocumentCommand{\AMatrixh}{}{\widehat{\AMatrix}}

\NewDocumentCommand{\hm}{}{g}

\NewDocumentCommand{\Zh}{}{\widehat{Z}}

\NewDocumentCommand{\Compact}{}{\mathcal{K}}


\NewDocumentCommand{\fg}{}{\mathfrak{g}}
\NewDocumentCommand{\etai}{}{\hat{\eta}}
\NewDocumentCommand{\etat}{}{\tilde{\eta}}

\NewDocumentCommand{\Deriv}{}{\mathscr{D}}
\NewDocumentCommand{\BofA}{}{\mathscr{B}}
\NewDocumentCommand{\ADeriv}{}{\mathscr{A}}

\NewDocumentCommand{\transpose}{}{\top}

\NewDocumentCommand{\ICond}{}{\sC}

\NewDocumentCommand{\LebDensity}{}{\sigma_{\mathrm{Leb}}}


\NewDocumentCommand{\Lie}{m}{\sL_{#1}}

\NewDocumentCommand{\ZygSymb}{}{\mathscr{C}}

\NewDocumentCommand{\Zyg}{m o}{\IfNoValueTF{#2}{\ZygSymb^{#1}}{\ZygSymb^{#1}(#2) }}
\NewDocumentCommand{\ZygX}{m m o}{\IfNoValueTF{#3}{\ZygSymb^{#2}_{#1}}{\ZygSymb^{#2}_{#1}(#3) }}

\NewDocumentCommand{\CSpace}{m o}{\IfNoValueTF{#2}{C(#1)}{C(#1;#2)}}

\NewDocumentCommand{\CjSpace}{m o o}{\IfNoValueTF{#2}{C^{#1}}{ \IfNoValueTF{#3}{ C^{#1}(#2)}{C^{#1}(#2;#3) } }  }

\NewDocumentCommand{\CXjSpace}{m m o}{\IfNoValueTF{#3}{C^{#2}_{#1}}{ C^{#2}_{#1}(#3) } }

\NewDocumentCommand{\ASpace}{m m o}{\mathscr{A}^{#1,#2}\IfNoValueTF{#3}{}{(#3)}}

\NewDocumentCommand{\AXSpace}{m m m o}{\mathscr{A}_{#1}^{#2,#3}\IfNoValueTF{#4}{}{(#4)}}

\NewDocumentCommand{\OSpace}{m o}{\mathscr{O}^{#1}_{b}\IfNoValueTF{#2}{}{(#2)}}
\NewDocumentCommand{\ONorm}{m m o}{\Norm{#1}[\IfNoValueTF{#3}{\OSpace{#2}}{\OSpace{#2}[#3]}]}

\NewDocumentCommand{\sBSpace}{m m m}{\mathscr{B}^{#1,#2}_{#3}}
\NewDocumentCommand{\sBNorm}{m m m m}{\Norm{#1}[\sBSpace{#2}{#3}{#4}]}

\NewDocumentCommand{\DSpace}{m m m m m}{\mathscr{D}^{#1,#2}_{#3,#4,#5}}
\NewDocumentCommand{\DNorm}{m m m m m m}{\Norm{#1}[\DSpace{#2}{#3}{#4}{#5}{#6} ]}

\NewDocumentCommand{\ComegaSpace}{m o o}{\IfNoValueTF{#2}{\CjSpace{\omega,#1}}{
\IfNoValueTF{#3}
{\CjSpace{\omega,#1}[#2]}
{\CjSpace{\omega,#1}[#2][#3]}
}
}

\NewDocumentCommand{\CXomegaSpace}{m m o o}{\IfNoValueTF{#3}{\CXjSpace{#1}{\omega,#2}}{
\IfNoValueTF{#4}
{\CXjSpace{#1}{\omega,#2}[#3]}
{\CXjSpace{#1}{\omega,#2}[#3][#4]}
}
}

\NewDocumentCommand{\ANorm}{m m m o}{\IfNoValueTF{#4}{\Norm{#1}[ \ASpace{#2}{#3} ]}{ \Norm{#1}[ \ASpace{#2}{#3}[#4] ] }}
\NewDocumentCommand{\BANorm}{m m m o}{\IfNoValueTF{#4}{\BNorm{#1}[ \ASpace{#2}{#3} ]}{ \BNorm{#1}[ \ASpace{#2}{#3}[#4] ] }}

\NewDocumentCommand{\AXNorm}{m m m m o}{\IfNoValueTF{#5}{\Norm{#1}[ \AXSpace{#2}{#3}{#4} ]}{ \Norm{#1}[ \AXSpace{#2}{#3}{#4}[#5] ] }}
\NewDocumentCommand{\BAXNorm}{m m m m o}{\IfNoValueTF{#5}{\BNorm{#1}[ \AXSpace{#2}{#3}{#4} ]}{ \BNorm{#1}[ \AXSpace{#2}{#3}{#4}[#5] ] }}

\NewDocumentCommand{\ComegaNorm}{m m o o}{\IfNoValueTF{#3}{ \Norm{#1}[\ComegaSpace{#2}] }
{
\IfNoValueTF{#4}
{\Norm{#1}[\ComegaSpace{#2}[#3]] }
{\Norm{#1}[\ComegaSpace{#2}[#3][#4]] }
}
}

\NewDocumentCommand{\CXomegaNorm}{m m m o o}{\IfNoValueTF{#4}{ \Norm{#1}[\CXomegaSpace{#2}{#3}] }
{
\IfNoValueTF{#5}
{\Norm{#1}[\CXomegaSpace{#2}{#3}[#4]] }
{\Norm{#1}[\CXomegaSpace{#2}{#3}[#4][#5]] }
}
}

\NewDocumentCommand{\HSpace}{m m o o}{\IfNoValueTF{#3}{C^{#1,#2}}{ \IfNoValueTF{#4} {C^{#1,#2}(#3)} {C^{#1,#2}(#3;#4)} }}

\NewDocumentCommand{\HXSpace}{m m m o}{\IfNoValueTF{#4}{C_{#1}^{#2,#3}}{  {C_{#1}^{#2,#3}(#4)}  }}

\NewDocumentCommand{\ZygSpace}{m o o}{\IfNoValueTF{#2}{\ZygSymb^{#1}}{ \IfNoValueTF{#3} { \ZygSymb^{#1}(#2) }{\ZygSymb^{#1}(#2;#3) } } }
\NewDocumentCommand{\ZygSpacemap}{m o o}{\IfNoValueTF{#2}{\ZygSymb^{#1}_{\mathrm{loc}}}{ \IfNoValueTF{#3} { \ZygSymb^{#1}_{\mathrm{loc}}(#2) }{\ZygSymb^{#1}_{\mathrm{loc}}(#2;#3) } } }

\NewDocumentCommand{\ZygXSpace}{m m o}{\IfNoValueTF{#3}{\ZygSymb^{#2}_{#1}}{\ZygSymb^{#2}_{#1}(#3) }}

\NewDocumentCommand{\Norm}{m o}{\IfNoValueTF{#2}{\| #1\|}{\|#1\|_{#2} }}
\NewDocumentCommand{\BNorm}{m o}{\IfNoValueTF{#2}{\mleft\| #1\mright\|}{\mleft\|#1\mright\|_{#2} }}

\NewDocumentCommand{\CjNorm}{m m o o}{ \IfNoValueTF{#3}{ \Norm{#1}[\CjSpace{#2}]} { \IfNoValueTF{#4}{\Norm{#1}[\CjSpace{#2}[#3]]} {\Norm{#1}[\CjSpace{#2}[#3][#4]]}  }  }

\NewDocumentCommand{\CNorm}{m m o}{\IfNoValueTF{#3}{\Norm{#1}[\CSpace{#2}]}{\Norm{#1}[\CSpace{#2}[#3]]}}

\NewDocumentCommand{\BCNorm}{m m}{\BNorm{#1}[\CSpace{#2}]}

\NewDocumentCommand{\CXjNorm}{m m m o}{\Norm{#1}[
\IfNoValueTF{#4}
{\CXjSpace{#2}{#3}}
{\CXjSpace{#2}{#3}[#4]}
]}

\NewDocumentCommand{\BCXjNorm}{m m m o}{\BNorm{#1}[
\IfNoValueTF{#4}
{\CXjSpace{#2}{#3}}
{\CXjSpace{#2}{#3}[#4]}
]}

\NewDocumentCommand{\LpNorm}{m m o o}{
\Norm{#2}[L^{#1}
\IfNoValueTF{#3}{}{
(#3
\IfNoValueTF{#4}{}{;#4}
)
}
]
}

\NewDocumentCommand{\BCjNorm}{m m o}{ \IfNoValueTF{#3}{ \BNorm{#1}[C^{#2}]} { \BNorm{#1}[C^{#2}(#3)]  }  }

\NewDocumentCommand{\HNorm}{m m m o o}{ \IfNoValueTF{#4}{ \Norm{#1}[\HSpace{#2}{#3}]} {
\IfNoValueTF{#5}
{\Norm{#1}[\HSpace{#2}{#3}[#4]]}
{\Norm{#1}[\HSpace{#2}{#3}[#4][#5]] }
}  }

\NewDocumentCommand{\HXNorm}{m m m m o}{ \IfNoValueTF{#5}{ \Norm{#1}[\HXSpace{#2}{#3}{#4}]} {
{\Norm{#1}[\HXSpace{#2}{#3}{#4}[#5]]}
}  }

\NewDocumentCommand{\BHXNorm}{m m m m o}{ \IfNoValueTF{#5}{ \BNorm{#1}[\HXSpace{#2}{#3}{#4}]} {
{\BNorm{#1}[\HXSpace{#2}{#3}{#4}[#5]]}
}  }

\NewDocumentCommand{\ZygNorm}{m m o o}{ \IfNoValueTF{#3}{ \Norm{#1}[\ZygSpace{#2}]} {
\IfNoValueTF{#4}
{\Norm{#1}[\ZygSpace{#2}[#3]]}
{\Norm{#1}[\ZygSpace{#2}[#3][#4]]}
}  }

\NewDocumentCommand{\BZygNorm}{m m o o}{ \IfNoValueTF{#3}{ \Norm{#1}[\ZygSpace{#2}]} {
\IfNoValueTF{#4}
{\BNorm{#1}[\ZygSpace{#2}[#3]]}
{\BNorm{#1}[\ZygSpace{#2}[#3][#4]]}
}  }

\NewDocumentCommand{\ZygXNorm}{m m m o}{\Norm{#1}[
\IfNoValueTF{#4}
{\ZygXSpace{#2}{#3}}
{\ZygXSpace{#2}{#3}[#4]}
]}

\NewDocumentCommand{\BZygXNorm}{m m m o}{\BNorm{#1}[
\IfNoValueTF{#4}
{\ZygXSpace{#2}{#3}}
{\ZygXSpace{#2}{#3}[#4]}
]}

\NewDocumentCommand{\diff}{o m}{\IfNoValueTF{#1}{\frac{\partial}{\partial #2}}{\frac{\partial^{#1}}{\partial #2^{#1}} }}

\NewDocumentCommand{\dt}{o}{\IfNoValueTF{#1}{\diff{t}}{\diff[#1]{t} }}

\NewDocumentCommand{\Zygad}{m}{\{ #1\}}

\NewDocumentCommand{\Zygsonu}{}{[s_0;\nu]}
\NewDocumentCommand{\Zygomeganu}{}{[\omega;\nu]}

\NewDocumentCommand{\Had}{m}{\langle #1\rangle}

\NewDocumentCommand{\BanachSpace}{}{\mathscr{X}}
\NewDocumentCommand{\BanachAlgebra}{}{\mathscr{Y}}

\NewDocumentCommand{\Field}{}{\mathbb{F}}

\NewDocumentCommand{\Real}{}{\mathrm{Re}}
\NewDocumentCommand{\Imag}{}{\mathrm{Im}}

\NewDocumentCommand{\VectorSpace}{m}{\mathscr{#1}}
\NewDocumentCommand{\VVS}{}{\VectorSpace{V}}
\NewDocumentCommand{\XVS}{}{\VectorSpace{X}}
\NewDocumentCommand{\LVS}{}{\VectorSpace{L}}
\NewDocumentCommand{\ZVS}{}{\VectorSpace{Z}}
\NewDocumentCommand{\WVS}{}{\VectorSpace{W}}
\NewDocumentCommand{\LVSh}{}{\widehat{\VectorSpace{L}}}
\NewDocumentCommand{\LVSb}{o}{\IfNoValueTF{#1}{\overline{\VectorSpace{L}}}{\overline{\VectorSpace{L}_{#1}}}}
\NewDocumentCommand{\WVSb}{o}{\IfNoValueTF{#1}{\overline{\VectorSpace{W}}}{\overline{\VectorSpace{W}_{#1}}}}
\NewDocumentCommand{\TVS}{}{\VectorSpace{T}}

\NewDocumentCommand{\Lb}{o}{\IfNoValueTF{#1}{\overline{L}}{\overline{L_{#1}}}}
\NewDocumentCommand{\wb}{o}{\IfNoValueTF{#1}{\overline{w}}{\overline{w}_{#1}}}
\NewDocumentCommand{\zb}{o}{\overline{z}\IfNoValueTF{#1}{}{_{#1}}}
\NewDocumentCommand{\Hb}{o}{\overline{H\IfNoValueTF{#1}{}{_{#1}}}}
\NewDocumentCommand{\Rb}{o}{\overline{R\IfNoValueTF{#1}{}{_{#1}}}}
\NewDocumentCommand{\Zb}{o}{\IfNoValueTF{#1}{\overline{Z}}{\overline{Z_{#1}}}}
\NewDocumentCommand{\Zbdelta}{m o}{\IfNoValueTF{#1}{\overline{Z^{#1}}}{\overline{Z^{#1}_{#2}}}}
\NewDocumentCommand{\Zhb}{o}{\IfNoValueTF{#1}{\overline{\Zh}}{\overline{\Zh_{#1}}}}

\NewDocumentCommand{\dbar}{}{\overline{\partial}}
\NewDocumentCommand{\etah}{}{\hat{\eta}}


\NewDocumentCommand{\SSFunctionSpacesSection}{}{Section 2}
\NewDocumentCommand{\SSStrangeZygSpace}{}{Remark 2.1}
\NewDocumentCommand{\SSBeyondManifold}{}{Section 2.2.1}
\NewDocumentCommand{\SSNormsAreInv}{}{Proposition 2.3}
\NewDocumentCommand{\SSDefineVectDeriv}{}{Remark 2.4}

\NewDocumentCommand{\SSSectionMoreOnAssumptions}{}{Section 4.1}
\NewDocumentCommand{\SSMainResult}{}{Theorem 4.7}
\NewDocumentCommand{\SSLemmaMoreOnAssump}{}{Proposition 4.14}

\NewDocumentCommand{\SSDivideWedge}{}{Section 5}
\NewDocumentCommand{\SSDerivWedge}{}{Lemma 5.1}

\NewDocumentCommand{\SSDensities}{}{Section 6}
\NewDocumentCommand{\SSDensitiesTheorem}{}{Theorem 6.5}
\NewDocumentCommand{\SSDensityCor}{}{Corollary 6.6}

\NewDocumentCommand{\SSScaling}{}{Section 7}
\NewDocumentCommand{\SSNSW}{}{Section 7.1}
\NewDocumentCommand{\SSHormandersCondition}{}{Section 7.1.1}
\NewDocumentCommand{\SSGenSubR}{}{Section 7.3}
\NewDocumentCommand{\SSGenSubResult}{}{Theorem 7.6}

\NewDocumentCommand{\SSCompareFunctionSpaces}{}{Lemma 8.1}
\NewDocumentCommand{\SSZygIsAlgebra}{}{Proposition 8.3}
\NewDocumentCommand{\SSBiggerNormMap}{}{Proposition 8.6}
\NewDocumentCommand{\SSCompareEuclidNorms}{}{Proposition 8.12}

\NewDocumentCommand{\SSDeriveODE}{}{Proposition 9.1}
\NewDocumentCommand{\SSExistODE}{}{Proposition 9.4}
\NewDocumentCommand{\SSExistXiOne}{}{Lemma 9.23}
\NewDocumentCommand{\SSDifferentOneAdmis}{}{Proposition 9.26}
\NewDocumentCommand{\SSCXjNormWedgeQuotient}{}{Lemma 9.32}
\NewDocumentCommand{\SSExistXiTwo}{}{Lemma 9.35}
\NewDocumentCommand{\SSComputefjzero}{}{Lemma 9.38}
\NewDocumentCommand{\SSSectionDensities}{}{Section 9.4}

\NewDocumentCommand{\SSProofInjectiveImmersion}{}{Appendix A}
\NewDocumentCommand{\SSFinerTopology}{}{Lemma A.1}

\newtheorem{thm}{Theorem}[section]
\newtheorem{cor}[thm]{Corollary}
\newtheorem{prop}[thm]{Proposition}
\newtheorem{lemma}[thm]{Lemma}
\newtheorem{conj}[thm]{Conjecture}
\newtheorem{prob}[thm]{Problem}

\theoremstyle{remark}
\newtheorem{rmk}[thm]{Remark}

\theoremstyle{definition}
\newtheorem{defn}[thm]{Definition}

\theoremstyle{definition}
\newtheorem{assumption}[thm]{Assumption}

\theoremstyle{remark}
\newtheorem{example}[thm]{Example}

\theoremstyle{remark}
\newtheorem{goal}[thm]{Goal}

\numberwithin{equation}{section}

\title{Sub-Hermitian Geometry and the Quantitative Newlander-Nirenberg Theorem}
\author{Brian Street\footnote{The author was partially supported by National Science Foundation Grant Nos.\ 1401671 and 1764265.}}
\date{}

\maketitle

\begin{abstract}
Given a finite collection of $C^1$ complex vector fields on a $C^2$ manifold $M$ such that they and their complex conjugates span the complexified tangent space at every point, the classical Newlander-Nirenberg theorem gives conditions on the vector fields so that there is a complex structure on $M$ with respect to which the vector fields are $T^{0,1}$.  In this paper, we give intrinsic, diffeomorphic invariant, necessary and sufficient conditions on the vector fields so that they have a desired level of regularity with respect to this complex structure (i.e., smooth, real analytic, or have Zygmund regularity of some finite order).  By addressing this in a quantitative way we obtain a holomorphic analog of the quantitative theory of sub-Riemannian geometry initiated by Nagel, Stein, and Wainger.  We call this sub-Hermitian geometry.  Moreover, we proceed more generally and obtain similar results for manifolds which have an associated formally integrable elliptic structure.  This allows us to introduce a setting which generalizes both the real and complex theories. 
\end{abstract}


\section{Introduction}
Let $M$ be a $C^2$ manifold and let $L_1,\ldots, L_m$ be $C^1$ complex vector fields on $M$.  Suppose, $\forall \zeta\in M$,
\begin{itemize}
\item $L_1(\zeta),\ldots, L_m(\zeta),\Lb[1](\zeta),\ldots, \Lb[m](\zeta)$ span $\C T_\zeta M$.
\item $[L_j,L_k](\zeta)\in \Span_{\C}\left\{L_1(\zeta),\ldots, L_m(\zeta)\right\}$, $\forall 1\leq j,k\leq m$.
\item $\Span_{\C}\left\{L_1(\zeta),\ldots, L_m(\zeta)\right\} \bigcap \Span_{\C}\left\{\Lb[1](\zeta),\ldots, \Lb[m](\zeta)\right\}=\{0\}$.
\end{itemize}
Under these conditions, the classical Newlander-Nirenberg Theorem (see \cite{HillTaylorIntegrabilityOfRoughAlmostComplexStructures}) states that $M$ can be given the structure of a complex manifold
such that $L_1(\zeta),\ldots, L_m(\zeta)$ form a spanning set of $T^{0,1}_\zeta(M)$, $\forall \zeta\in M$; and this is the unique such complex structure on $M$.
For $s>0$ we let $\ZygSpace{s}$ denote the Zygmund\footnote{For non-integer exponents, the Zygmund space agrees with the H\"older space.  More precisely, for $m\in \N$ and $a\in (0,1)$,
the Zygmund space $\ZygSpace{m+a}$ is locally the same as the H\"older space $\HSpace{m}{a}$ (see \cite[Theorem 1.118 (i)]{TriebelTheoryOfFunctionSpacesIII}).  For $a\in \{0,1\}$, these spaces differ: $\HSpace{m+1}{0}\subsetneq \HSpace{m}{1}\subsetneq \ZygSpace{m+1}$.} space of order $s$ (see \cref{Section::FuncSpaces::Euclid}), $\ZygSpace{\infty}$ denote the space of smooth functions, and $\ZygSpace{\omega}$
the space of real analytic functions.
For $s\in (0,\infty]\cup\{\omega\}$ if $M$ is known to be a $\ZygSpace{s+2}$ manifold\footnote{We use the convention $\infty+1=\infty+2=\infty$ and $\omega+1=\omega+2=\omega$.} and $L_1,\ldots, L_m$ are known to be $\ZygSpace{s+1}$ vector fields on $M$, then it is a result of
Malgrange \cite{MalgrangeSurLIntegbrabilite} that the complex structure on $M$ is compatible with the original $\ZygSpace{s+2}$ manifold structure, and therefore $L_1,\ldots, L_m$ are also $\ZygSpace{s+1}$ with respect to the complex structure on $M$--and this is the best one can say in general regarding the regularity of the vector fields $L_1,\ldots, L_m$ with respect to the complex structure.\footnote{\cite{MalgrangeSurLIntegbrabilite} used H\"older spaces with non-integer exponents instead of Zygmund spaces, though the proof extends to Zygmund spaces.  See  \cite{StreetNirenberg} for a further discussion in the setting of Zygmund spaces.}

In this paper, we proceed in a different direction and only assume $M$ is a $C^2$ manifold and $L_1,\ldots, L_m$ are $C^1$ vector fields on $M$ as above, and investigate the following two closely related questions for $s\in (1,\infty]\cup \{\omega\}$:
\begin{enumerate}[(i)]
\item\label{Item::Intro::WhenZygs} When are the vector fields, $L_1,\ldots, L_m$, $\ZygSpace{s+1}$ with respect to the above complex structure on $M$?  We present necessary and sufficient conditions for this to hold, which are intrinsic to the $C^2$
structure on $M$ (and can be checked locally in any $C^2$ coordinate system on $M$).
\item Under the conditions we give for \cref{Item::Intro::WhenZygs}, how can we pick a holomorphic coordinate system near each point so that the vector fields $L_1,\ldots, L_m$ are normalized
in this coordinate system in a way which is useful for applying techniques from analysis? See \cref{Section::Intro::Normalize} for an example of what we mean by ``normalized''.
\end{enumerate}

The real analogs of the above two questions were answered in a work of Stovall and the author \cite{StovallStreetI,StovallStreetII,StovallStreetIII}.
The coordinate charts in those papers were seen as scaling maps in sub-Riemannian geometry.  The quantitative study of scaling maps
in sub-Riemannian geometry began with the foundational work of Nagel, Stein, and Wainger \cite{NagelSteinWaingerBallsAndMetrics}
and the closely related work of C.\ Fefferman and S\'anchez-Calle \cite{FeffermanSanchezCalleFundamentalSoltuions},
and was furthered by Tao and Wright \cite[Section 4]{TaoWrightLpImproving} and the author \cite{StreetMultiparameterCCBalls}, and most recently in
the above mentioned series of papers \cite{StovallStreetI,StovallStreetII,StovallStreetIII}.  Since Nagel, Stein, and Wainger's original work,
these ideas have had many applications.  They have been particularly useful in the study of partial differential equations defined by vector fields;
see the notes at the end of Chapter 2 of \cite{StreetMultiParamSingInt} for some comments on this history.

When applying these ideas to questions in several complex variables (when working on, for example, a complex manifold) a problem immediately arises.
The scaling maps  studied by Nagel, Stein, and Wainger (and in the subsequent works described above) are not holomorphic.  Thus, if one tries to rescale questions
using these maps, one destroys any holomorphic aspects of the questions under consideration.  Nevertheless, scaling techniques are one of the main tools needed
to prove the quantitative estimates required to apply the theory of singular integrals to partial differential operators.
Thus, when working in the complex category, one needs a different approach than the one given by Nagel, Stein, and Wainger
to be able to scale with holomorphic maps.
Some authors use ad hoc methods to create these scaling maps for the particular problem they wish to study (e.g., by using non-isotropic dilations determined by the Taylor series of some ingredients in the problem)--see, e.g.,
\cite[Section 3]{NagelRosaySteinWaingerEstimatesForTheBergmanAndSzegoKernelsInCt}, \cite[Section 3.3.2]{CharpentierDupainExtermalBases}, and
\cite[Section 2.1]{CharpentierDupainEstimatesForBergmanAndSezgoLocallyDiag}.

A main goal of this paper is to adapt the results of Nagel, Stein, and Wainger \cite{NagelSteinWaingerBallsAndMetrics} (and more generally, the results of \cite{StovallStreetI,StovallStreetII,StovallStreetIII})
to the complex category.  Thus, in an appropriate setting, one obtains \textit{holomorphic} scaling maps adapted to a collection of complex vector fields.
Much as the theory of Nagel, Stein, and Wainger allows one to quantitatively study sub-Riemannian geometry on a real manifold,
the theory in this paper allows one to quantitatively study certain sub-Riemannian geometries on a complex manifold which are well adapted to the complex structure,
using only holomorphic maps.  We call such geometries sub-Hermitian.

While the complex setting is easier to understand, we proceed more generally than above.  Instead of working with the category of complex manifolds, we work more generally in the category of real manifolds endowed with an elliptic
structure; we call these manifolds E-manifolds (see \cref{Section::Emfld}).  This allows us to state a general theorem which implies both the results in the complex setting, as well as generalizes the results from the real setting in \cite{StovallStreetI,StovallStreetII,StovallStreetIII}.  The more general results apply, in some cases, to CR manifolds (see \cref{Section::CRMfld} for the relationship
between E-manifolds and CR manifolds and \cref{Section::ExtremalBasis::CR} for a discussion of our results in a setting on CR manifolds).

Our main result in the complex setting can be seen as a diffeomorphic invariant,\footnote{Here, by diffeomorphic invariant, we mean that all of the quantitative estimates are invariant under arbitrary $C^2$ diffeomorphisms.  See \cref{Section::DiffeoInv}.}
quantitative version of the classical Newlander-Nirenberg theorem \cite{NewlanderNirenbergComplexAnalyticCoordiantesInAlmostComplexManifolds},
while the more general main result in the elliptic setting can be seen as a diffeomorphic invariant, quantitative version of Nirenberg's theorem on the integrability of elliptic structures \cite{NirenbergAComplexFrobeniusTheorem}.



	\subsection{Comparison with previous results}
The results in this paper can be compared to previous work in two ways:
\begin{itemize}
\item We provide a quantitatively diffeomorphic invariant approach to the classical Newlander-Nirenberg theorem, and more generally Nirenberg's theorem on the integrability of elliptic
structures.
\item We provide a holomorphic analog of the quantitative theory of sub-Riemannian geometry due to Nagel, Stein, and Wainger \cite{NagelSteinWaingerBallsAndMetrics};
and more generally results on ``E-manifolds.''  See \cref{Section::Emfld} for the definition of E-manifolds.
\end{itemize}
We have already described the second point, so we focus on the first.

In previous results on the Newlander-Nirenberg theorem, one is given complex vector fields $L_1,\ldots, L_m$, as described at the start of the introduction, with some fixed regularity (e.g., in $\ZygSpace{s+1}$ for some $s>0$).
Given a fixed point $\zeta_0\in M$, the goal is to find a $\ZygSpace{s+2}$ coordinate chart $\Phi:B_{\C^n}(1)\rightarrow W$ (where $W$ is a neighborhood of $\zeta_0$),
such that $\Phi^{*}L_1,\ldots, \Phi^{*}L_m$ are $T^{0,1}$ (i.e., are spanned by $\diff{\zb[1]},\ldots, \diff{\zb[n]}$);
 in this case $\Phi^{*} L_1,\ldots, \Phi^{*}L_m$ are $\ZygSpace{s+1}$.
$\ZygSpace{s+2}$ is the optimal possible regularity for $\Phi$ (in general), and was established by Malgrange \cite{MalgrangeSurLIntegbrabilite}.

Our results take a different perspective.  In this paper, the vector fields are only assumed to be $C^1$, and we ask the question as to when it is possible to choose a $C^2$ coordinate chart $\Phi$
so that the vector fields are $\ZygSpace{s+1}$ and $T^{0,1}$.  Our results imply the above classical results on the Newlander-Nirenberg theorem\footnote{At least for $s>1$.} but are more general:   our results are invariant under arbitrary
$C^2$ diffeomorphisms (whereas previous results are only invariant under $\ZygSpace{s+2}$ diffeomorphisms).


\begin{rmk}
The main results of this paper are in \cref{Section::MainResult}.  There are many aspects of the main results which are important for applications.  Some of these are:
\begin{itemize}
\item They are invariant under arbitrary $C^2$ diffeomorphisms (see \cref{Section::DiffeoInv}).  For example, this allows us to understand the regularity of a given collection of $C^1$ complex vector fields, satisfying the conditions of the Newlander-Nirenberg theorem, with respect to the induced complex structure.  See, e.g., \cref{Section::CorRes::OptSmooth} and more generally \cref{Section::Res::OptE}.
\item They are quantitative.  This allows us to view the induced coordinate charts as scaling maps in ``sub-Hermitian geometry'' (see \cref{Section::ResGeom::SubHerm}) and more generally  ``sub-E geometry'' (see \cref{Section::Res::SubE}).  The quantitative nature of our results also has some applications to singular foliations; see \cref{Section::SingFoliation}.
\item Instead of dealing with complex structures, we state our results in the context of elliptic structures (see \cref{Section::Emfld}).  This allows us to state a general theorem which includes both the complex setting
and the real setting of \cite{StovallStreetI,StovallStreetII,StovallStreetIII} as special cases.  This more general setting applies, in some instances, to CR-manifolds.
\end{itemize}
Because we include all these considerations into our main results, the statements of these results are quite technical.
In \cref{Section::InformalCor} we state some simple corollaries of the main results of this paper which are less technical, to help give the reader an idea of the types of results we are interested in.
Furthermore, we describe several more significant consequences of the main results in \cref{Section::CorRes}.  We
hope that if the reader reads these results before the main results, it will make the main results easier to digest.
\end{rmk} 
	
	\subsection{Some simple corollaries}\label{Section::InformalCor}
Before we introduce all the relevant function spaces and notation, in this section we present some easy to understand corollaries of our main result to help give the reader an idea of the direction of this paper.  Here, we only consider the smooth setting;
precise statements of more general results appear later in the paper.
We also only consider the complex setting in this section; the more general setting of E-manifolds is described in \cref{Section::Emfld,Section::Res::CorE}.
There are two, related, ways in which the main result of this paper (\cref{Thm::Results::MainThm}) can be understood.  Below we give examples
of these two perspectives.  The main result addresses both of these perspectives simultaneously,
and we will see that it also applies in several other situations.
	
		\subsubsection{Smoothness in the Newlander-Nirenberg Theorem}
Let $L_1,\ldots, L_m$ be $C^1$ complex vector fields defined on an open set $W\subseteq \C^n$.
Fix a point $\zeta_0\in U$.  We wish to understand when the following goal is possible:

\begin{goal}\label{Goal::Smoothness}
Find a $C^2$ diffeomorphism $\Phi:U\rightarrow W'$, where $U\subseteq \C^n$ is open and $W'\subseteq W$ is an open set containing $\zeta_0$
such that:
\begin{itemize}
	\item The vector fields $\Phi^{*}L_1,\ldots, \Phi^{*}L_m$ are $C^\infty$ vector fields on $U$.
	\item $\forall \zeta\in U$,
		\begin{equation*}
			\Span_{\C} \{ \Phi^{*}L_1(\zeta),\ldots, \Phi^{*}L_m(\zeta)\} = \Span_{\C} \mleft\{\diff{\zb[1]} ,\ldots \diff{\zb[n]} \mright\}.
		\end{equation*}
\end{itemize}
\end{goal}

There are some obvious necessary conditions for Goal \ref{Goal::Smoothness} to be possible.  Namely, that there be an open neighborhood $W''\subseteq W$ containing $\zeta_0$, such that the following holds:
\begin{enumerate}[(i)]
	\item $\Span_{\C} \{ L_1(\zeta),\ldots, L_m(\zeta)\} \bigcap \Span_{\C} \{ \Lb[1](\zeta),\ldots, \Lb[m](\zeta)\}=\{0\}$, $\forall \zeta\in W''$.
	\item $\Span_{\C} \{  L_1(\zeta),\ldots, L_m(\zeta),  \Lb[1](\zeta),\ldots, \Lb[m](\zeta)\} = \C T_{\zeta}W''$, $\forall \zeta\in W''$.
	\item $[L_j, L_k] = \sum_{l=1}^m c_{j,k}^{1,l} L_l$ and $[L_j, \Lb[k]] = \sum_{l=1}^m c_{j,k}^{2,l} L_l + \sum_{l=1}^m c_{j,k}^{3,l} \Lb[l]$,
	where $c_{j,k}^{1,l}, c_{j,k}^{2,l}, c_{j,k}^{3,l}:W''\rightarrow \C$ and satisfy the following: for any sequence
	$V_1,\ldots, V_K \in  \{ L_1,\ldots, L_m, \Lb[1],\ldots, \Lb[m]\}$, of any length $K\in \N$, we have
	\begin{equation*}
		V_1V_2\cdots V_K c_{j,k}^{p,l}
	\end{equation*}
	defines a continuous function $W''\rightarrow \C$, $1\leq p\leq 3$, $1\leq j,k,l\leq m$.
\end{enumerate}

That these conditions are necessary to achieve Goal \ref{Goal::Smoothness} is clear:  if Goal \ref{Goal::Smoothness} holds, the
above conditions all clearly hold for the vector fields 
$\Phi^{*}L_1,\ldots, \Phi^{*}L_m$.  
Indeed, for the vector fields $\Phi^{*}L_1,\ldots, \Phi^{*}L_m$ one may take $c_{j,k}^{p,l}$ to be $C^\infty$ functions on $U$.
The above conditions are all invariant under $C^2$ diffeomorphisms, and therefore if they hold for $\Phi^{*}L_1,\ldots, \Phi^{*}L_m$, they must also hold for
the original vector fields $L_1,\ldots, L_m$.  Our first corollary of \cref{Thm::Results::MainThm} says the following:

\begin{cor}\label{Cor::IntroOptimal::Smoothness}
The above necessary conditions are also sufficient to obtain Goal \ref{Goal::Smoothness}.
\end{cor}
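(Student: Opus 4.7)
The plan is to obtain \cref{Cor::IntroOptimal::Smoothness} as a direct consequence of the main theorem \SSMainResult, with the role of the hypotheses being to certify that a single scaling/straightening chart produced by that theorem automatically achieves every finite order of Zygmund regularity. The starting observation is that the third hypothesis — that arbitrary iterated applications of vector fields from $\{L_1,\ldots,L_m,\Lb[1],\ldots,\Lb[m]\}$ to the structure functions $c^{p,l}_{j,k}$ yield continuous functions — is precisely the diffeomorphism-invariant analog of ``$c^{p,l}_{j,k}\in C^{\infty}$'' that makes sense at the $C^1$ level of the $L_j$ and is preserved under arbitrary $C^2$ coordinate changes. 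This is what is needed to feed the hypothesis list of \SSMainResult\ in the $s=\infty$ case.

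First, I would reduce to producing a $C^2$ chart $\Phi:U\to W'$ whose pullbacks $\Phi^{*}L_j$ are $\ZygSpace{s+1}$ for every finite $s>1$ and which span $\Span_{\C}\{\partial_{\bar z_1},\ldots,\partial_{\bar z_n}\}$: if such a $\Phi$ exists then the pulled-back vector fields are $\bigcap_{s}\ZygSpace{s+1}=C^{\infty}$ and satisfy the span condition of \cref{Goal::Smoothness}. Next I would apply \SSMainResult\ once to the family $L_1,\dots,L_m$ centered at $\zeta_0$. Since that theorem is diffeomorphism invariant and quantitative, it will give, under the three listed necessary conditions, a $C^2$ chart $\Phi$ on a single fixed neighborhood $U$ of $0$ which simultaneously normalizes the $L_j$ as a basis for the $T^{0,1}$-bundle of the elliptic/complex structure produced by the rough Newlander-Nirenberg theorem \cite{HillTaylorIntegrabilityOfRoughAlmostComplexStructures}, and such that the pulled-back $\Phi^{*}L_j$ lie in $\ZygSpace{s+1}(U)$ with quantitative bounds for each prescribed $s$. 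The key point is that the chart $\Phi$ itself is determined intrinsically by the vector fields, not reconstructed separately for each $s$.

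To run \SSMainResult\ at a given finite order $s$ one needs quantitative control of finitely many derivatives of the $c^{p,l}_{j,k}$ along the vector fields, and this control is exactly what the continuity-under-arbitrary-products hypothesis provides after restricting to a small enough relatively compact subneighborhood of $\zeta_0$ (where continuous functions are automatically bounded). Thus for every finite $s$ the same $\Phi$ yields $\Phi^{*}L_j\in\ZygSpace{s+1}(U)$; the $T^{0,1}$-spanning condition is preserved because $\Phi$ is holomorphic with respect to the intrinsic complex structure supplied by rough Newlander-Nirenberg.

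The main obstacle — and the reason this is a nontrivial consequence rather than a formal one — is the first bullet: we must ensure that a \emph{single} $C^2$ chart $\Phi$ works for all $s$ at once, rather than a sequence of charts $\Phi_s$ whose domains could shrink or whose diffeomorphic nature could degenerate as $s\to\infty$. This is handled by the quantitative/diffeomorphism-invariant nature of \SSMainResult: the construction of $\Phi$ in that theorem depends only on the $C^1$ data and the intrinsic directional-continuity of the $c^{p,l}_{j,k}$, so increasing $s$ only improves the estimates on the already-fixed $\Phi$. Once this is observed, the corollary follows without further work.
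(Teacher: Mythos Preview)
Your approach is correct and matches the paper's: the corollary is deduced from the main theorem (\cref{Thm::Results::MainThm}) applied once with a fixed $s_0\in(1,\infty)$, yielding a single chart $\Phi$ whose conclusions \cref{Item::Results::MainThem::IsEMap} and \cref{Item::ResultsMainThm::PullbacksSmooth} give the span condition and $\ZygSpace{s+1}$ regularity for every $s$, since the directional-smoothness hypothesis places the $c_{j,k}^{p,l}$ in $\CXjSpace{L}{\infty}=\ZygXSpace{L}{\infty}\subseteq\ZygXSpace{L}{s}$ for all $s$. One minor simplification: you need not invoke the rough Newlander--Nirenberg theorem separately, as the span condition $\Phi^{*}L_j\in\Span_{\C}\{\partial_{\bar z_1},\ldots,\partial_{\bar z_n}\}$ is delivered directly by conclusion \cref{Item::Results::MainThem::IsEMap} of the main theorem.
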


See \cref{Section::ResSmooth::Complex} for a more general version of \cref{Cor::IntroOptimal::Smoothness}.
		
		\subsubsection{Normalizing vector fields}\label{Section::Intro::Normalize}
Suppose one is given complex vector fields $L_1,\ldots, L_m$ on an open set $W\subseteq \C^n$ of the form:
\begin{equation}\label{Eqn::IntroNormal::VFs}
	L_j=\sum_{k=1}^n b_j^k \diff{\zb[k]},\quad b_{j}^k \in \CjSpace{\infty}[W],
\end{equation}
and such that $\forall \zeta\in W$,
		\begin{equation}\label{Eqn::IntroNormal::Span}
			\Span_{\C} \{ L_1(\zeta),\ldots, L_m(\zeta)\} = \Span_{\C} \mleft\{\diff{\zb[1]} ,\ldots \diff{\zb[n]} \mright\}.
		\end{equation}
Assign to each $L_j$ a formal degree $d_j\in [1,\infty)$.
For $\delta\in (0,1]$ (we think of $\delta$ as small), one tends to think of the vector fields
$\delta^{d_1}L_1,\ldots, \delta^{d_m} L_m$ as being small.  Fix a point $\zeta_0\in W$.
Our next goal is to find a holomorphic coordinate system, near $\zeta_0$, in which the vector fields are not small.
More precisely, we wish to understand when the following goal is possible:

\begin{goal}\label{Goal::Rescale}
For each $\delta\in (0,1]$ find a biholomorphism $\Phi_{\delta}:B_{\C^n}(1)\rightarrow W_\delta$ with $\Phi_{\delta}(0)=\zeta_0$, where
$B_{\C^n}(1)$ is the unit ball in $\C^n$ and $W_\delta\subseteq W$ is an open neighborhood of $\zeta_0$, such that:
\begin{itemize}
	\item $\Phi_{\delta}^{*}\delta^{d_1}L_1,\ldots, \Phi_\delta^{*}\delta^{d_m}L_m$ are $C^\infty$ vector fields, uniformly in $\delta\in (0,1]$, in the sense
	that
	\begin{equation*}
		\max_{1\leq j\leq m} \sup_{\delta\in (0,1]} \CjNorm{\Phi_{\delta}^{*} \delta^{d_j} L_j}{k}[B_{\C^n}(1)][\C^n]<\infty, \quad \forall k\in \N.
	\end{equation*}
	
	\item Because $\Phi_{\delta}$ is a biholomorphism, we have
	\begin{equation*}
			\Span_{\C} \{ \Phi_{\delta}^{*} \delta^{d_1} L_1(z),\ldots, \Phi_{\delta}^{*} \delta^{d_m} L_m(z)\} = \Span_{\C} \mleft\{\diff{\zb[1]} ,\ldots \diff{\zb[n]} \mright\}, \quad \forall z\in B_{\C^n}(1).
	\end{equation*}
		We ask that this be true uniformly in $\delta$ in the sense
		\begin{equation*}
			\inf_{\delta\in (0,1]}\max_{j_1,\ldots, j_n\in  \{1,\ldots, m\}} \inf_{z\in B_{\C^n}(1) } \mleft| \det \mleft( \Phi_{\delta}^{*} \delta^{d_{j_1}} L_{j_1}(z) | \cdots | \Phi_{\delta}^{*} \delta^{d_{j_n}} L_{j_n}(z) \mright) \mright|>0;
		\end{equation*}
		where the matrix $\mleft( \Phi_{\delta}^{*} \delta^{d_{j_1}} L_{j_1}(z) | \cdots | \Phi_{\delta}^{*} \delta^{d_{j_n}} L_{j_n}(z) \mright)$ is the $n\times n$ matrix whose columns are given
		by the coefficients of the vector fields $ \Phi_{\delta}^{*} \delta^{d_{j_k}} L_{j_k}(z)$, written as linear combinations of $\diff{\zb[1]},\ldots, \diff{\zb[n]}$.
\end{itemize}
\end{goal}

Goal \ref{Goal::Rescale} can be thought of as rescaling the vector fields so that they are ``normalized''.  Indeed, the vector fields $\Phi_{\delta}^{*} \delta^{d_1}L_m,\ldots, \Phi_{\delta}^{*}\delta^{d_m} L_m$ are $C^\infty$ uniformly in $\delta$ and span $T^{0,1} B_{\C^n}(1)$ uniformly in $\delta$.  In short, we have changed coordinates near $\zeta_0$ to turn the case
of $\delta$ small back into a situation similar to $\delta=1$.  Notice that Goal \ref{Goal::Smoothness} is trivial in the situation we are considering; nevertheless
we will see that the necessary and sufficient condition for when to Goal \ref{Goal::Rescale} is possible looks very similar to the necessary and sufficient conditions
for when Goal \ref{Goal::Smoothness} is possible.

There is an obvious necessary condition for Goal \ref{Goal::Rescale} to be possible.  Namely, that for every $\delta\in (0,1]$, there is an open neighborhood $W_{\delta}'\subseteq W$ of $\zeta_0$ such that
the follow holds.  For every $\delta\in (0,1]$,
$[\delta^{d_j} L_j, \delta^{d_k} L_k] = \sum_{l=1}^m c_{j,k}^{1,l,\delta} \delta^{d_l} L_l$ and
$[\delta^{d_j} L_j, \delta^{d_k} \Lb[k]] = \sum_{l=1}^m c_{j,k}^{2,l,\delta} \delta^{d_l} L_l + \sum_{l=1}^m c_{j,k}^{3,l,\delta} \delta^{d_l} \Lb[l]$,
where $c_{j,k}^{1,l,\delta}, c_{j,k}^{2,l,\delta}, c_{j,k}^{3,l,\delta}:W_\delta'\rightarrow \C$ and satisfy the following:
for any sequence $V_1^{\delta}, \ldots, V_K^{\delta}\in \{ \delta^{d_1} L_1,\ldots, \delta^{d_m} L_m, \delta^{d_1} \Lb[1],\ldots, \delta^{d_m} \Lb[m]\}$,
of any length $K\in \N$, we have
\begin{equation*}
	\sup_{\delta\in (0,1]} \CNorm{ V_1^{\delta}\cdots V_K^{\delta} c_{j,k}^{p,l,\delta}}{W_\delta'}<\infty,
\end{equation*}
$\forall 1\leq p\leq 3$, $1\leq j,k,l\leq m$.  That this condition is necessary is clear:  if $\Phi_{\delta}$ exists as in Goal \ref{Goal::Rescale},
then one may write
\begin{equation*}
	[\Phi_{\delta}^{*} \delta^{d_j} L_j, \Phi_{\delta}^{*} \delta^{d_k} L_k] = \sum_{l=1}^m\ch_{j,k}^{1,l,\delta} \Phi_{\delta}^{*} \delta^{d_l} L_l, \quad
	[\Phi_{\delta}^{*} \delta^{d_j} L_j, \Phi_{\delta}^{*} \delta^{d_k} \Lb[k]] = \sum_{l=1}^m \ch_{j,k}^{2,l,\delta} \Phi_{\delta}^{*}\delta^{d_l} L_l + \sum_{l=1}^m \ch_{j,k}^{3,l,\delta} \Phi_{\delta}^{*}\delta^{d_l} \Lb[l],
\end{equation*}
where $\ch_{j,k}^{p,l,\delta}\in \CjSpace{\infty}[B_{\C^n}(1)]$, \textit{uniformly in }$\delta\in (0,1]$.
Setting $c_{j,k}^{p,l,\delta}:=\ch_{j,k}^{p,l,\delta}\circ \Phi_{\delta}^{-1}$ and $W_\delta':=\Phi_{\delta}(B^n(1))$, we see that the above condition is necessary.
It is also necessary that the set $W_\delta'$ must not be too small:  it must essentially contain a sub-Riemannian ball adapted to the vector fields $\delta^{d_1}L_1,\ldots, \delta^{d_m} L_m$.
This is somewhat technical to make precise (see \cref{Rmk::IntroNormal::NecessityBall} for a precise statement), and the reader may wish to skip this on a first reading.
Our next corollary is that the above necessary condition is also sufficient.

\begin{cor}\label{Cor::Intro::Scaling}
Once the requirement on the size of $W_\delta'$ described above is made precise (see \cref{Rmk::IntroNormal::NecessityBall}), the above necessary condition is also sufficient for Goal \ref{Goal::Rescale} to be possible.
\end{cor}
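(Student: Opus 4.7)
The plan is to deduce this corollary by applying the main result of the paper, \cref{Thm::Results::MainThm}, to the one-parameter family of scaled vector fields $\{\delta^{d_j} L_j\}_{j=1}^m$ uniformly in $\delta \in (0,1]$. The main theorem is formulated in the E-manifold framework and, in the complex case, produces \emph{biholomorphic} scaling charts adapted to the given complex vector fields---precisely the output required by \cref{Goal::Rescale}. Because the original $L_j$ already lie in the standard $T^{0,1}$ bundle on $W \subseteq \C^n$ by \cref{Eqn::IntroNormal::VFs} and \cref{Eqn::IntroNormal::Span}, the $T^{0,1}$-spanning condition is automatically inherited by the pullbacks once $\Phi_\delta$ is known to be biholomorphic, so only the quantitative, uniform-in-$\delta$ content of \cref{Goal::Rescale} is genuinely at stake.

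First, I would translate the stated necessary condition, together with the ball-size requirement of \cref{Rmk::IntroNormal::NecessityBall}, into the hypotheses of \cref{Thm::Results::MainThm} uniformly in $\delta$. The uniform bounds on iterated derivatives $V_1^\delta \cdots V_K^\delta c_{j,k}^{p,l,\delta}$ are exactly the bracket-structure-constant assumption that drives the smoothness conclusion of the main theorem, applied with $\delta$ as a uniform parameter. The ball-size hypothesis from \cref{Rmk::IntroNormal::NecessityBall} provides the sub-Riemannian containment ensuring that $W_\delta'$ has enough room, in the Carnot--Carath\'eodory metric adapted to $\{\delta^{d_j} L_j\}$, to support a chart from the unit Euclidean ball $B_{\C^n}(1)$ with bounded distortion.

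Applying \cref{Thm::Results::MainThm} then produces, for each $\delta \in (0,1]$, a biholomorphism $\Phi_\delta: B_{\C^n}(1) \to W_\delta \subseteq W$ with $\Phi_\delta(0) = \zeta_0$ such that the pullbacks $\Phi_\delta^* \delta^{d_j} L_j$ are smooth uniformly in $\delta$, with $\sup_\delta \CjNorm{\Phi_\delta^* \delta^{d_j} L_j}{k}[B_{\C^n}(1)][\C^n] < \infty$ for every $k \in \N$, and such that the frame they form together with their conjugates is uniformly nondegenerate. Biholomorphy automatically gives $d\Phi_\delta^{-1}(T^{0,1}W_\delta) = \Span_{\C}\{\partial/\partial \zb[1],\ldots,\partial/\partial \zb[n]\}$, so the spanning equality in \cref{Goal::Rescale} holds, and the uniform nondegeneracy translates into the stated infimum lower bound on $|\det(\cdots)|$. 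The main obstacle is the first step: formulating the ball-size condition of \cref{Rmk::IntroNormal::NecessityBall} as a sub-Riemannian containment at precisely the scale adapted to $\{\delta^{d_j} L_j\}$, and checking that the structural constants satisfy exactly the quantitative estimates required as input to the main theorem. Once this translation is carried out, \cref{Cor::Intro::Scaling} follows as a direct specialization of \cref{Thm::Results::MainThm} to the complex, parametrized setting.
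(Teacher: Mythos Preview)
Your proposal is correct and follows essentially the same approach as the paper: apply \cref{Thm::Results::MainThm} to the lists $\delta^{d_1}L_1,\ldots,\delta^{d_m}L_m$ and track that all admissible constants are independent of $\delta\in(0,1]$. The paper's proof is a one-line citation of \cref{Thm::Results::MainThm} together with \cref{Lemma::MoreAssume::ExistEtaDelta0}; the latter is the piece you allude to but do not name---it supplies the constants $\eta$ and $\delta_0$ (governing existence and injectivity of the exponential map) uniformly over compact sets, and one checks directly that the same $\eta,\delta_0$ work for all $\delta\in(0,1]$ once they work for $\delta=1$.
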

\begin{proof}
This follows from \cref{Thm::Results::MainThm}, using \cref{Lemma::MoreAssume::ExistEtaDelta0}.
\end{proof}

At first glance, it may be hard to see \cref{Cor::Intro::Scaling} as a consequence of \cref{Thm::Results::MainThm}.  Indeed, the thrust of \cref{Cor::Intro::Scaling}
is that we have a result which is ``uniform in $\delta$''.  In \cref{Thm::Results::MainThm}, there is no parameter similar to $\delta$ for the results to be uniform in:
there is just one finite list of vector fields, which does not depend on any variable like $\delta$.  The key is that we keep careful track of what all the estimates
in \cref{Thm::Results::MainThm} depend on.  Because of this, we may apply \cref{Thm::Results::MainThm} to each of the lists $\delta^{d_1} L_1,\ldots, \delta^{d_m} L_m$,
for $\delta\in (0,1]$, and obtain results which are uniform in $\delta$--this is because we can see from the dependencies  of the estimates in \cref{Thm::Results::MainThm}
that they do not depend on $\delta\in (0,1]$, when applied to $\delta^{d_1} L_1,\ldots, \delta^{d_m} L_m$.

Thus, to proceed in this way, it is essential to keep careful track of what each constant depends on in \cref{Thm::Results::MainThm}.  This is  notationally
cumbersome, but is justified because it applies not only to results like \cref{Cor::Intro::Scaling}, but also to much more complicated situations.
For example, one might consider vector fields that depend on $\delta$ in a more complicated way than above, or consider the multi-parameter
case $\delta\in (0,1]^\nu$, or look for results which are uniform in the base point $\zeta_0$.  All of these are possible, and follow from \cref{Thm::Results::MainThm}
in the same way \cref{Cor::Intro::Scaling} does.  See, for example, \cref{Section::ExtremalBasis}.

For a setting which generalizes \cref{Cor::Intro::Scaling} and which appears in several complex variables, see \cref{Section::ExtremalBasis}.
For some more significant results similar to, but slightly different than \cref{Cor::Intro::Scaling}, see \cref{Section::ResGeom::SubHerm}--there we will see similar ideas as providing
holomorphic scaling maps adapted sub-Riemannian geometries on a complex manifold.

\begin{rmk}
In light of the above discussion, one way to think about one aspect of \cref{Thm::Results::MainThm} is the following.  Suppose you are given smooth
vector fields $L_1,\ldots, L_m$ of the form described in \cref{Eqn::IntroNormal::VFs} satisfying \cref{Eqn::IntroNormal::Span}.  But suppose the vector fields
have very large coefficients, or very small coefficients (for example, in the above setting the coefficients were very small when $\delta$ was small).
\Cref{Thm::Results::MainThm} provides necessary and sufficient conditions on when one can apply a holomorphic change of variables to normalize the coefficients in the way
described above.
\end{rmk}

\begin{rmk}\label{Rmk::IntroNormal::NecessityBall}
The size of $W_\delta'$ can be described as follows.  There exists $\xi>0$ (independent of $\delta\in (0,1]$)
such that
\begin{equation}\label{Eqn::IntroNormal::WdeltaSubRiemannian}
	B_{\delta^{d_1}L_1,\ldots, \delta^{d_m}L_m}(\zeta_0,\xi)\subseteq W_\delta'.
\end{equation}
See \cref{Eqn::FuncMan::DefnSRBall,Eqn::FuncComplex::Ball} for the definition of this ball.  In the above description of necessity of our condition for Goal \ref{Goal::Rescale},
we chose $W_\delta'=\Phi_{\delta}(B^n(1))$.  Thus, to prove the necessity of \cref{Eqn::IntroNormal::WdeltaSubRiemannian}, under the conclusions of Goal \ref{Goal::Rescale}, we wish to show
\begin{equation}\label{Eqn::IntroNorma::PhideltaSub}
	B_{\delta^{d_1}L_1,\ldots, \delta^{d_m}L_m}(\zeta_0,\xi)\subseteq \Phi_{\delta}(B^n(1)),
\end{equation}
for some $\xi>0$, independent of $\delta\in (0,1]$.  Once we prove \cref{Eqn::IntroNorma::PhideltaSub}, it will show \cref{Eqn::IntroNormal::WdeltaSubRiemannian} is necessary
for Goal \ref{Goal::Rescale} to hold.  To see \cref{Eqn::IntroNorma::PhideltaSub}, note that the Picard-Lindel\"of Theorem shows that there exists $\xi>0$,
independent of $\delta\in (0,1]$ such that
\begin{equation}\label{Eqn::IntroNormal::PullBackSub}
	B_{\Phi_\delta^{*} \delta^{d_1} L_1,\ldots, \Phi_{\delta}^{*} \delta^{d_m}L_m}(0,\xi)\subseteq B_{\C^n}(1/2).
\end{equation}
Applying $\Phi_{\delta}$ to both sides of \cref{Eqn::IntroNormal::PullBackSub} implies \cref{Eqn::IntroNorma::PhideltaSub}, which completes the proof of necessity.
\end{rmk}

\section{Function Spaces}\label{Section::FuncSpaces}
In this section, we introduce the function spaces which are used in this paper.
We make a distinction between function spaces on open subsets of $\R^n$ and function spaces on a $C^2$ manifold $M$.
$\R^n$ is endowed with its usual real analytic structure, and it makes sense to consider all the usual function
spaces on an open subset of $\R^n$.  Since $M$ is merely a $C^2$ manifold, it does not make sense to consider,
for example, $C^\infty$ functions on $M$.  However, if we are given a finite collection of $C^1$ vector fields on $M$,
it makes sense to consider functions which are $C^\infty$ with respect to these vector fields, and that is how we will proceed.
The following function spaces were defined in \cite{StovallStreetI}, and we refer the reader there for a more detailed discussion.
Throughout the paper, given a Banach space $\BanachSpace$, we denote by $B_{\BanachSpace}(r)$ the ball of radius $r>0$
centered at $0\in \BanachSpace$.

	\subsection{Function Spaces on Euclidean Space}\label{Section::FuncSpaces::Euclid}
Let $\Omega\subset \R^n$ be a bounded, connected, open set (we will almost always be considering the case when $\Omega$ is a ball in $\R^n$).
We have the following classical spaces of functions on $\Omega$:
\begin{equation*}
\CSpace{\Omega}=\CjSpace{0}[\Omega]:=\{f:\Omega\rightarrow \C \:\big|\: f\text{ is continuous and bounded}\},\quad \CNorm{f}{\Omega}=\CjNorm{f}{0}[\Omega]:=\sup_{x\in \Omega}|f(x)|.
\end{equation*}
For $m\in \N$, (we use the convention $0\in \N$)
\begin{equation*}
\CjSpace{m}[\Omega]:=\{f\in \CSpace{\Omega}\: \big|\: \partial_x^{\alpha}f \in \CSpace{\Omega}, \forall |\alpha|\leq m\}, \quad \CjNorm{f}{m}[\Omega]:=\sum_{|\alpha|\leq m} \CNorm{\partial_x^{\alpha} f}{\Omega}.
\end{equation*}
Next we define the classical H\"older spaces.  For $s\in [0,1]$,
\begin{equation}\label{Eqn::FSEuclid::DefnHolder}
\HNorm{f}{0}{s}[\Omega]:=\CNorm{f}{\Omega} + \sup_{\substack{x,y\in \Omega \\ x\ne y}} |x-y|^{-s} |f(x)-f(y)|, \quad \HSpace{0}{s}[\Omega]:=\{f\in \CSpace{\Omega} : \HNorm{f}{0}{s}[\Omega]<\infty\}.
\end{equation}
For $m\in \N$, $s\in [0,1]$,
\begin{equation*}
\HNorm{f}{m}{s}[\Omega]:=\sum_{|\alpha|\leq m} \HNorm{\partial_x^{\alpha} f}{0}{s}[\Omega], \quad \HSpace{m}{s}[\Omega]:=\{f\in \CjSpace{m}[\Omega] : \HNorm{f}{m}{s}[\Omega]<\infty\}.
\end{equation*}
Next, we turn to the classical Zygmund spaces.  Given $h\in \R^n$ define $\Omega_h:=\{x\in \R^n : x,x+h,x+2h\in \Omega\}$.
For $s\in (0,1]$ set
\begin{equation*}
\ZygNorm{f}{s}[\Omega]:=\HNorm{f}{0}{s/2}[\Omega]+ \sup_{\substack{0\ne h\in \R^n \\ x\in \Omega_h}} |h|^{-s} |f(x+2h)-2f(x+h)+f(x)|,
\end{equation*}
\begin{equation*}
\ZygSpace{s}[\Omega]:=\{f\in \CSpace{\Omega} : \ZygNorm{f}{s}[\Omega]<\infty\}.
\end{equation*}
For $m\in \N$, $s\in (0,1]$, set
\begin{equation*}
\ZygNorm{f}{m+s}[\Omega]:=\sum_{|\alpha|\leq m} \ZygNorm{\partial_x^{\alpha} f}{s}[\Omega], \quad \ZygSpace{m+s}[\Omega]:=\{ f\in \CjSpace{m}[\Omega] : \ZygNorm{f}{m+s}[\Omega]<\infty\}.
\end{equation*}
We set
\begin{equation*}
\ZygSpace{\infty}[\Omega]:=\bigcap_{s>0} \ZygSpace{s}[\Omega],\quad \CjSpace{\infty}[\Omega] := \bigcap_{m\in \N} \CjSpace{m}[\Omega].
\end{equation*}
If $\Omega$ is a ball, $\ZygSpace{\infty}[\Omega]=\CjSpace{\infty}[\Omega]$.

Finally, we turn to spaces of real analytic functions.  Given $r>0$, we define
\begin{equation*}
\ComegaNorm{f}{r}[\Omega]:=\sum_{\alpha\in \N^n} \frac{\CNorm{\partial_x^{\alpha} f}{\Omega} }{\alpha!} r^{|\alpha|}, \quad \ComegaSpace{r}[\Omega]:=\{f\in \CjSpace{\infty}[\Omega] : \ComegaNorm{f}{r}[\Omega]<\infty\}.
\end{equation*}
We set
\begin{equation*}
\CjSpace{\omega}[\Omega]:=\bigcup_{r>0} \ComegaSpace{r}[\Omega], \quad \ZygSpace{\omega}[\Omega]:=\CjSpace{\omega}[\Omega].
\end{equation*}
We also define another space of real analytic functions.  
We define
$\ASpace{n}{r}$ to be the space of those $f\in \CSpace{B_{\R^n}(r)}$ such that $f(t)=\sum_{\alpha\in \N^n} \frac{c_\alpha}{\alpha!} t^{\alpha}$, where
\begin{equation*}
\ANorm{f}{n}{r}:= \sum_{\alpha\in \N^n} \frac{|c_{\alpha}|}{\alpha!} r^{|\alpha|}<\infty.
\end{equation*}
See \cref{Lemma::FuncSpaceRev::Properties} \cref{Item::FuncSpaceRev::ComegainsA} and \cref{Item::FuncSpaceRev::sAinComega} for the relationship
between $\ASpace{n}{r}$ and $\CjSpace{\omega}$.

For $s\in (0,\infty]\cup\{\omega\}$, we say $f\in \ZygSpacemap{s}[\Omega]$ if $\forall x\in \Omega$, there exists an open ball $B\subseteq \Omega$, centered at $x$, such that $f\big|_{B} \in \ZygSpace{s}[B]$.
It is immediate to verify that $\ZygSpacemap{\infty}[\Omega]$ is the usual space of smooth functions on $\Omega$ and $\ZygSpacemap{\omega}[\Omega]$ is the usual space of real analytic functions on $\Omega$.

If $\BanachSpace$ is a Banach Space, we define the same spaces taking values in $\BanachSpace$ in the obvious way, and denote these spaces
by $\CSpace{\Omega}[\BanachSpace]$, $\CjSpace{m}[\Omega][\BanachSpace]$, $\HSpace{m}{s}[\Omega][\BanachSpace]$, $\ZygSpace{s}[\Omega][\BanachSpace]$, $\ComegaSpace{r}[\Omega][\BanachSpace]$,
 $\CjSpace{\omega}[\Omega][\BanachSpace]$, and $\ASpace{n}{r}[\BanachSpace]$.  Given a complex vector field $X$ on $\Omega$, we identify
 $X=\sum_{j=1}^n a_j(x) \frac{\partial}{\partial x_j}$ with the function $(a_1,\ldots, a_n):\Omega\rightarrow \C^n$.  It therefore makes sense to consider quantities
 like $\ZygNorm{X}{s}[\Omega][\C^n]$.
 When $\BanachSpace$ is clear from context, we sometimes suppress it and write, e.g., $\ZygNorm{f}{s}[\Omega]$ instead of $\ZygNorm{f}{s}[\Omega][\BanachSpace]$ for readability considerations. 
	
	\subsection{Function Spaces on Manifolds}
Let $W_1,\ldots, W_N$ be $C^1$ real vector fields on a connected $C^2$ manifold $M$.  Define the Carnot-Carath\'eodory ball associated to $W_1,\ldots, W_N$,
centered at $x\in M$, of radius $\delta>0$ by
\begin{equation}\label{Eqn::FuncMan::DefnSRBall}
\begin{split}
B_W(x,\delta) := \Bigg\{ y\in M\: \bigg| \: &\exists \gamma:[0,1]\rightarrow M, \gamma(0)=x,\gamma(1)=y, \gamma'(t)=\sum_{j=1}^N a_j(t) \delta W_j(\gamma(t)),
\\& a_j\in L^\infty([0,1]), \BNorm{\sum_{j=1}^N |a_j|^2}[L^\infty]<1
\Bigg\},
\end{split}
\end{equation}
and for $y\in M$, set
\begin{equation}\label{Eqn::FuncMan::Defnrho}
\rho(x,y):=\inf\{\delta>0 : y\in B_W(x,\delta)\}.
\end{equation}
$\rho$ is an extended metric:  it is possible that $\rho(x,y)=\infty$ for some $x,y\in M$.  When $\rho(x,y)=\infty$, we define $\rho(x,y)^{-s}=0$ for $s>0$ and $\rho(x,y)^{0}=1$.
See \cref{Rmk::FuncMfld::DefineDeriv} for the precise definition of $\gamma'(t)$ used in \cref{Eqn::FuncMan::DefnSRBall}.

We use ordered multi-index notation $W^\alpha$.  Here, $\alpha$ denotes a list of elements of $\{1,\ldots, N\}$ and $|\alpha|$ denotes the length of the list.
For example, $W^{(2,1,3,1)}=W_2W_1W_3W_1$ and $|(2,1,3,1)|=4$.

Associated to the vector fields $W_1,\ldots, W_N$, we have the following function spaces on $M$.
\begin{equation*}
\CSpace{M}=\CXjSpace{W}{0}[M]:=\{f:M\rightarrow \C\: \big|\: f\text{ is bounded and continuous}\},\quad \CNorm{f}{M}=\CXjNorm{f}{W}{0}[M]:=\sup_{x\in M} |f(x)|.
\end{equation*}
For $m\in \N$, we define
\begin{equation*}
\CXjSpace{W}{m}[M]:=\{f\in \CSpace{M} : W^{\alpha} f\text{ exists and }W^{\alpha} f\in \CSpace{M}, \forall|\alpha|\leq m\}, \quad \CXjNorm{f}{W}{m}[M]:=\sum_{|\alpha|\leq m} \CNorm{W^{\alpha} f}{M}.
\end{equation*}
For $s\in [0,1]$ we define the H\"older spaces associated to $W_1,\ldots, W_N$ by
\begin{equation*}
\HXNorm{f}{W}{0}{s}[M]:=\CNorm{f}{M}+\sup_{\substack{x,y\in M\\x\ne y}} \rho(x,y)^{-s} |f(x)-f(y)|, \quad \HXSpace{W}{0}{s}[M]:=\{f\in \CSpace{M} : \HXNorm{f}{W}{0}{s}[M]<\infty\}.
\end{equation*}
For $m\in \N$ and $s\in [0,1]$, set
\begin{equation*}
\HXNorm{f}{W}{m}{s}[M]:=\sum_{|\alpha|\leq m} \HXNorm{W^{\alpha}f}{W}{0}{s}[M], \quad \HXSpace{W}{m}{s}[M]:=\{f\in \CXjSpace{W}{m}[M] : \HXNorm{f}{W}{m}{s}[M]<\infty\}.
\end{equation*}

Next, we turn to the Zygmund spaces associated to $W_1,\ldots, W_N$.  For this, we use the H\"older spaces $\HSpace{0}{s}[[a,b]]$ for a closed interval $[a,b]\subset \R$;
$\HNorm{\cdot}{0}{s}[[a,b]]$ is defined via the formula \cref{Eqn::FSEuclid::DefnHolder}.  Given $h>0$, $s\in (0,1)$, define
\begin{equation*}
\sP_{W,s}^h :=\left\{ \gamma:[0,2h]\rightarrow M\: \bigg|\: \gamma'(t)=\sum_{j=1}^N d_j(t) W_j(\gamma(t)), d_j\in \HSpace{0}{s}[[0,2h]], \sum_{j=1}^q \HNorm{d_j}{0}{s}[[0,2h]]^2<1  \right\}.
\end{equation*}
For $s\in (0,1]$ set
\begin{equation*}
\ZygXNorm{f}{W}{s}[M]:=\HXNorm{f}{W}{0}{s/2}[M] + \sup_{\substack{h>0 \\ \gamma\in \sP_{W,s/2}^h}} h^{-s} |f(\gamma(2h))-2f(\gamma(h))+f(\gamma(0))|,
\end{equation*}
and for $m\in \N$,
\begin{equation*}
\ZygXNorm{f}{W}{m+s}[M]:=\sum_{|\alpha|\leq m} \ZygXNorm{W^{\alpha}f}{W}{s}[M],
\end{equation*}
and we set
\begin{equation*}
\ZygXSpace{W}{m+s}[M]:=\{f\in \CXjSpace{W}{m}[M] : \ZygXNorm{f}{W}{m+s}[M]<\infty\}.
\end{equation*}
Set
\begin{equation*}
\ZygXSpace{W}{\infty}[M]:=\bigcap_{s>0} \ZygXSpace{W}{s}[M]\text{ and } \CXjSpace{W}{\infty}[M]:= \bigcap_{m\in \N} \CXjSpace{W}{m}[M].
\end{equation*}
We have $\ZygXSpace{W}{\infty}[M]=\CXjSpace{W}{\infty}[M]$;
indeed, $\ZygXSpace{W}{\infty}[M]\subseteq \CXjSpace{W}{\infty}[M]$ is obvious, while the reverse containment follows from \cref{Lemma::FuncSpaceRev::Properties}.

Finally, we turn to functions which are real analytic with respect to $W_1,\ldots, W_N$.  Given $r>0$, we set
\begin{equation*}
\CXomegaNorm{f}{W}{r}[M]:=\sum_{m=0}^\infty \frac{r^m}{m!} \sum_{|\alpha|=m} \CNorm{W^{\alpha} f}{M}, \quad \CXomegaSpace{W}{r}[M]:=\{f\in \CXjSpace{W}{\infty}[M] : \CXomegaNorm{f}{W}{r}[M]<\infty\};
\end{equation*}
this definition was introduced in greater generality by Nelson \cite{NelsonAnalyticVectors}.
We set $\CXjSpace{W}{\omega}[M]:=\bigcup_{r>0} \CXomegaSpace{W}{r}[M]$, and $\ZygXSpace{W}{\omega}[M]:=\CXjSpace{W}{\omega}[M]$.

Given $x_0\in M$ and $r>0$ we define $\AXSpace{W}{x_0}{r}$ to be the space of those $f\in \CSpace{M}$ such that
$h(t_1,\ldots, t_N):=f(e^{t_1W_1+\cdots+t_NW_N}x_0)\in \ASpace{N}{r}$ (here, we are assuming $e^{t_1W_1+\cdots+t_NW_N}x_0$ exists for $(t_1,\ldots, t_N)\in B_{\R^N}(r)$--see \cref{Defn::MainRes::sC}).
We set $\AXNorm{f}{W}{x_0}{r}:=\ANorm{h}{N}{r}$.  Note that $\AXNorm{f}{W}{x_0}{r}$ depends only on the values of $f(y)$ where $y=e^{t_1 W_1+\cdots +t_N W_N}x_0$ and
$(t_1,\ldots, t_N)\in B^N(r)$; thus this is merely a semi-norm.

An important property of the above spaces and norms is that they are invariant under diffeomorphisms.
\begin{prop}\label{Prop::FuncMan::DiffeoInv}
	Let $L$ be another $C^2$ manifold, let $\Phi:M\rightarrow L$ be a $C^2$ diffeomorphism, and let $\Phi_{*}W$ denote the list of vector fields $\Phi_{*}W_1,\ldots, \Phi_{*} W_N$.
	Then, the map $f\mapsto f\circ \Phi$ is an isometric isomorphism between the following spaces:
	$\CXjSpace{\Phi_{*}W}{m}[L]\rightarrow \CXjSpace{W}{m}[M]$, $\HXSpace{\Phi_{*}W}{m}{s}[L]\rightarrow \HXSpace{W}{m}{s}[M]$, $\ZygXSpace{\Phi_{*}W}{s}[L]\rightarrow \ZygXSpace{W}{s}[M]$,
	$\CXomegaSpace{\Phi_{*}W}{r}[L]\rightarrow \CXomegaSpace{W}{r}[M]$, and $\AXSpace{\Phi_{*}W}{\Phi(x_0)}{r}\rightarrow \AXSpace{W}{x_0}{r}$.
\end{prop}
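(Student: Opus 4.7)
The proof rests on two elementary observations. First, $g\mapsto g\circ\Phi$ is a bijective isometry for the sup norm, since $\Phi$ is a bijection of the underlying sets. Second, if $X$ is a $C^1$ vector field on $M$ with pushforward $\Phi_{*}X$ on $L$, then for any $g:L\to\C$ the chain rule gives $X(g\circ\Phi) = ((\Phi_{*}X)g)\circ\Phi$ (whenever either side is defined). Applied inductively to an ordered multi-index $\alpha$, this yields $W^{\alpha}(g\circ\Phi) = ((\Phi_{*}W)^{\alpha} g)\circ\Phi$; combined with the first observation it gives $\CNorm{W^{\alpha}(g\circ\Phi)}{M} = \CNorm{(\Phi_{*}W)^{\alpha} g}{L}$. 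Summing these identities appropriately yields the isometries for $\CXjSpace{W}{m}$, and passing to the analytic series yields the isometry for $\CXomegaSpace{W}{r}$.

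For the path-based norms (Hölder and Zygmund) the additional input is that $\Phi$ sets up a bijection between the admissible curves defining the two geometries. A curve $\gamma:[0,1]\to M$ satisfies $\gamma'(t) = \sum_{j} a_j(t)\delta W_j(\gamma(t))$ if and only if $\Phi\circ\gamma$ satisfies the corresponding equation with $\Phi_{*}W_j$ in place of $W_j$, because $d\Phi\circ W_j = (\Phi_{*}W_j)\circ \Phi$. Hence $B_{W}(x,\delta) = \Phi^{-1}(B_{\Phi_{*}W}(\Phi(x),\delta))$, so the CC distance $\rho$ is preserved pointwise under $\Phi$, and similarly $\sP_{W,s/2}^{h}$ corresponds bijectively to the analogous class for $\Phi_{*}W$ via $\gamma\mapsto \Phi\circ\gamma$. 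Combined with the identity for $W^{\alpha}$, every term appearing in the definitions of $\HXNorm{\cdot}{W}{m}{s}$ and $\ZygXNorm{\cdot}{W}{m+s}$ matches its counterpart on $L$.

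For the space $\AXSpace{W}{x_0}{r}$ one uses that $\Phi$ intertwines flows: $\Phi(e^{t_1 W_1+\cdots+t_N W_N} x_0) = e^{t_1 \Phi_{*}W_1+\cdots+t_N \Phi_{*}W_N}\Phi(x_0)$ wherever the flows are defined, since $\Phi_{*}W_j$ is by definition the pushforward. Consequently the auxiliary function $h(t) = (g\circ\Phi)(e^{t\cdot W}x_0)$ is literally equal to $g(e^{t\cdot \Phi_{*}W}\Phi(x_0))$, and the two $\ASpace{N}{r}$-norms are equal by definition.

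The only subtlety, and therefore the main place requiring care, is that $W_1,\ldots,W_N$ are only $C^1$ and $\Phi$ only $C^2$, so iterated derivatives $W^{\alpha} f$ for $|\alpha|\geq 2$ do not a priori exist as classical objects. The identity $W^{\alpha}(g\circ\Phi) = ((\Phi_{*}W)^{\alpha} g)\circ\Phi$ must therefore be justified inductively on $|\alpha|$, verifying at each step that existence and continuity on one side force the same on the other. Since $\Phi_{*}W_j$ is itself a $C^1$ vector field (because $\Phi\in C^2$ and $W_j\in C^1$), the single-step $C^1$ chain rule applies cleanly, the induction goes through, and the remainder of the argument is bookkeeping.
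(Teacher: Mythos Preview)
Your proposal is correct and takes the same approach as the paper, which simply states that the result is immediate from the definitions. Your write-up just unpacks that immediacy in detail, verifying each norm is preserved by tracking how $\Phi$ intertwines vector fields, flows, curves, and distances.
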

\begin{proof}This is immediate from the definitions.\end{proof}

\begin{rmk}\label{Rmk::FuncMan::CoordFree}
Informally, \cref{Prop::FuncMan::DiffeoInv} says that the spaces described in this
section are ``coordinate-free''.  One can locally compute the norms in any $C^2$ coordinate system, and one gets the same result no matter what coordinate system is used.
\end{rmk}

\begin{rmk}
When we write $Vf$ for a $C^1$ vector field $V$ and $f:M\rightarrow \R$, we define this as $Vf(x):=\frac{d}{dt}\big|_{t=0} f(e^{tV} x)$.  When we say $Vf$ exists, it means that this derivative exists
in the classical sense, $\forall x$.  If we have several $C^1$ vector fields $V_1,\ldots, V_K$, we define $V_1V_2\cdots V_K f := V_1(V_2(\cdots V_K(f)))$ and to say that this
exists means that at each stage the derivative exists.
\end{rmk}

\begin{rmk}
All of the above function spaces can be defined, with the same formulas, with $M$ replaced by $B_W(x,\delta)$, whether or not $B_W(x,\delta)$ is a manifold.
Indeed, for a function $f:B_W(x,\delta)\rightarrow \C$, one may define $W_j f(x):= \frac{d}{dt}\big|_{t=0} f(e^{t W_j}x)$.  Using this one may define all the above norms, with the same formulas,
for $M$ replaced by $B_W(x,\delta)$.
See \cite[\SSBeyondManifold]{StovallStreetI} for a further discussion of this.
\end{rmk}

\begin{rmk}
Let $\Omega\subseteq \R^n$ be a bounded, open set.  Let $\grad$
denote the list of vector fields $\grad=\mleft(\diff{x_1},\ldots,\diff{x_n}\mright)$.
We have $\AXSpace{\grad}{0}{r}=\ASpace{n}{r}$ and $\CXomegaSpace{\grad}{r}[\Omega]=\ComegaSpace{r}[\Omega]$, with equality of norms.
\end{rmk}

\begin{rmk}\label{Rmk::FuncMfld::DefineDeriv}
In \cref{Eqn::FuncMan::DefnSRBall} (and in the rest of the paper), $\gamma'(t)$ is defined as follows.  In the case that $M$ is an open subset $\Omega\subseteq \R^n$
and $\gamma:[a,b]\rightarrow \Omega$, $\gamma'(t)=\sum_{j=1}^q a_j(t) X_j(\gamma(t))$ is defined to mean
$\gamma(t)= \gamma(a)+\int_a^t \sum_j a_j(s) X_j(\gamma(s))\: ds$; note that this definition is local in $t$ (equivalently, we are requiring that $\gamma$ be absolutely continuous
and have the desired derivative almost everywhere).  For an abstract $C^2$ manifold, this is interpreted locally.
I.e., if $\gamma:[a,b]\rightarrow M$, we say $\gamma'(t)= \sum_{j=1}^q a_j(t) X_j(\gamma(t))$ if $\forall t_0\in [a,b]$, there is an open neighborhood $N$ of $\gamma(t_0)$
and a $C^2$ diffeomorphism $\Psi:N\rightarrow \Omega$, where $\Omega\subseteq \R^n$ is open, such that $(\Psi\circ \gamma)'(t) = \sum_{j=1}^q a_j(t) (\Psi_{*}X_j)(\Psi\circ \gamma(t))$
for $t$ near $t_0$ ($t\in [a,b]$).
\end{rmk}

		\subsubsection{Complex Vector Fields}
Let $M$ be a $C^2$ manifold, let $L_1,\ldots, L_m$ be complex $C^1$ vector fields on $M$ (i.e., $L_1,\ldots, L_m$ take values in the complexified
tangent space), and let $X_1,\ldots, X_q$ be real $C^1$ vector fields on $M$.
We denote by $X,L$ the list $X_1,\ldots, X_q,L_1,\ldots, L_m$.
Associated to $X,L$ we define the list of real vector fields $W_1,\ldots, W_{q+2m} = X_1,\ldots, X_q,2\Real(L_1),\ldots, 2\Real(L_m),2\Imag(L_1),\ldots, 2\Imag(L_m)$.
Set
\begin{equation}\label{Eqn::FuncComplex::Ball}
B_{X,L}(x,\delta):=B_W(x,\delta).
\end{equation}

We define
$\CXjSpace{X,L}{m}[M]:=\CXjSpace{W}{m}[M]$, with equality of norms.  We similarly define
$\HXSpace{X,L}{m}{s}[M]$, $\ZygXSpace{X,L}{s}[M]$, $\CXomegaSpace{X,L}{r}[M]$, $\AXSpace{X,L}{x_0}{r}$, $\CXjSpace{X,L}{\infty}[M]$, and $\CXjSpace{X,L}{\omega}[M]$.
We will often consider the case when $q=0$, and in that case we just write $\CXjSpace{L}{m}[M]$ instead of $\CXjSpace{X,L}{m}[M]$, and similarly for 
$\HXSpace{L}{m}{s}[M]$, $\ZygXSpace{L}{s}[M]$, $\CXomegaSpace{L}{r}[M]$, $\AXSpace{L}{x_0}{r}$, $\CXjSpace{L}{\infty}[M]$, and $\CXjSpace{L}{\omega}[M]$.

\begin{rmk}
The factor $2$ in $2\Real(L_j)$ and $2\Imag(L_j)$ in the definition of $W$ is not an essential point.
It is chosen so that if $M=\R^q\times \C^m$, with coordinates $(t_1,\ldots, t_q, z_1,\ldots, z_m)$, and if
$X_k=\diff{t_k}$ and $L_j=\diff{\zb_j}$, then $W=\grad$, where $\grad$ denotes the gradient on $\R^{q+2m}\cong \R^{q}\times \C^{m}$.
\end{rmk}
		
\section{Corollaries of the Main Result}\label{Section::CorRes}
Our main result (\cref{Thm::Results::MainThm}) concerns the existence of a certain coordinate chart which satisfies good quantitative properties.
This coordinate chart is useful in two, related, ways:  
\begin{itemize}
\item It is a coordinate system in which given vector fields
have the optimal level of regularity.
\item It normalizes vector fields in a way which is useful for applying techniques from analysis.  When viewed in this light, it can be seen
as a scaling map for sub-Riemannian, or sub-Hermitian, geometries.
\end{itemize}

In this section, we present two corollaries of our main result, which separate the above two uses.  In each of these corollaries, we present the real setting (which is known)
and the complex setting (which is new).  In \cref{Section::Res::CorE}, we will revisit these corollaries and present a setting which unifies both the real and complex settings.

	\subsection{Optimal Smoothness}\label{Section::CorRes::OptSmooth}
		
		\subsubsection{The Real Case}
Let $W_1,\ldots, W_N$ be $C^1$ real vector fields on a $C^2$ manifold $M$ of dimension $n$, which span the tangent space at every point.
In this section, we describe when there is a smoother structure on $M$ with respect to which $W_1,\ldots, W_N$ have a desired level of regularity.
These results were proved in \cite{StovallStreetI,StovallStreetII,StovallStreetIII} (though in \cref{Section::CorProof::Smoothness}, we will see them as corollaries of the main result of this paper), and they set the stage for the results in the complex setting in
\cref{Section::ResSmooth::Complex}.

\begin{thm}[The Local Theorem]\label{Thm::ResSmooth::Real::Local}
For $x_0\in M$, $s\in (1,\infty]\cup\{\omega\}$, the following three conditions are equivalent:
\begin{enumerate}[(i)]
\item\label{Item::ResSmooth::Real::Local::1} There is an open neighborhood $V\subseteq M$ of $x_0$ and a $C^2$ diffeomorphism $\Phi:U\rightarrow V$ where $U\subseteq \R^n$ is open,
such that $\Phi^{*}W_1,\ldots, \Phi^{*}W_N\in \ZygSpace{s+1}[U][\R^n]$.
\item\label{Item::ResSmooth::Real::Local::2} Re-order the vector fields so that $W_1(x_0),\ldots, W_n(x_0)$ are linearly independent.  There is an open neighborhood $V\subseteq M$ of $x_0$
such that:
\begin{itemize}
	\item $[W_i, W_j]=\sum_{k=1}^n \ch_{i,j}^k W_k$, $1\leq i,j\leq n$, where $\ch_{i,j}^k\in \ZygXSpace{W}{s}[V]$.
	\item For $n+1\leq j\leq N$, $W_j=\sum_{k=1}^n b_j^k W_k$, where $b_j^k\in \ZygXSpace{W}{s+1}[V]$.
\end{itemize}
\item\label{Item::ResSmooth::Real::Local::3} There exists an open neighborhood $V\subseteq M$ of $x_0$ such that $[W_i,W_j]=\sum_{k=1}^N c_{i,j}^k W_k$, $1\leq i,j\leq N$, where
$c_{i,j}^k\in \ZygXSpace{W}{s}[V]$.
\end{enumerate}
\end{thm}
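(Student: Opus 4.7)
The plan is to establish the cyclic chain \mbox{(i)~$\implies$~(iii)~$\implies$~(ii)~$\implies$~(i)}. Only the last implication is substantive; it is where the main result of the paper enters, while the other two are structural manipulations together with invocations of the algebra/derivation properties of the Zygmund spaces.

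For (i)~$\implies$~(iii): I pull back by $\Phi$ so that $\Phi^{*}W_1,\ldots,\Phi^{*}W_N\in\ZygSpace{s+1}[U][\R^n]$. Since these pulled-back vector fields span the tangent space everywhere on $U$, about any point I can select $n$ of them forming an $n\times n$ matrix $A$ with $\ZygSpace{s+1}$ entries that is invertible on a smaller neighborhood. The Zygmund algebra properties (\SSZygIsAlgebra) yield $A^{-1}\in\ZygSpace{s+1}$, and applying $A^{-1}$ to the $\ZygSpace{s}$ commutators $[\Phi^{*}W_i,\Phi^{*}W_j]$ expresses them as $\ZygSpace{s}$-linear combinations of the $\Phi^{*}W_k$. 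Pushing these relations forward by $\Phi$ and invoking the diffeomorphism invariance of the intrinsic norms (\SSNormsAreInv), together with the comparison between Euclidean and intrinsic Zygmund norms in a chart (\SSCompareFunctionSpaces), yields the statement in (iii).

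For (iii)~$\implies$~(ii): Since the $W_j$ span the tangent space at $x_0$, I reorder so that $W_1(x_0),\ldots,W_n(x_0)$ are linearly independent; by continuity they remain so on a neighborhood $V$, and for $j>n$ there exist unique continuous $b_j^k:V\rightarrow\R$ with $W_j=\sum_{k=1}^n b_j^k W_k$. Substituting this expansion into the commutator identities from (iii) and comparing coefficients against the frame $W_1,\ldots,W_n$ expresses $W_i b_j^k$ algebraically in terms of the $b_l^m$'s and the $c_{l,m}^p$'s. Since the $c$'s lie in $\ZygXSpace{W}{s}$ and the $b$'s are initially continuous, the algebra and derivation properties of $\ZygXSpace{W}{s}$ yield $W_i b_j^k\in\ZygXSpace{W}{s}$, hence $b_j^k\in\ZygXSpace{W}{s+1}$; substituting back recovers $\ch_{i,j}^k\in\ZygXSpace{W}{s}$ for $1\leq i,j\leq n$.

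For (ii)~$\implies$~(i): The hypotheses of (ii) on the frame $W_1,\ldots,W_n$ match precisely, in the real specialization corresponding to a trivial elliptic structure on an $n$-dimensional E-manifold, those required by \SSMainResult. That theorem produces a $C^2$ diffeomorphism $\Phi:U\rightarrow V$ under which $\Phi^{*}W_1,\ldots,\Phi^{*}W_n\in\ZygSpace{s+1}[U][\R^n]$. The additional identities $W_j=\sum_k b_j^k W_k$ with $b_j^k\in\ZygXSpace{W}{s+1}$ pull back (by \SSNormsAreInv and \SSCompareFunctionSpaces) to $\Phi^{*}W_j=\sum_k(b_j^k\circ\Phi)\,\Phi^{*}W_k$ with $\ZygSpace{s+1}$ coefficients, so $\Phi^{*}W_j\in\ZygSpace{s+1}$ for all $j$. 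The hard part of the entire proof is concentrated here: the main theorem must build the chart starting from only $C^1$ vector fields satisfying intrinsic structural bounds, and the quantitative control needed to carry this out is the principal technical achievement of the paper.
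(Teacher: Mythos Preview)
Your cycle runs in the opposite direction from the paper's: the paper proves the more general \cref{Thm::QualE::LocalThm} via (i)$\Rightarrow$(ii)$\Rightarrow$(iii)$\Rightarrow$(i) and then specializes to $m=0$, whereas you go (i)$\Rightarrow$(iii)$\Rightarrow$(ii)$\Rightarrow$(i). The difference is not cosmetic---your (iii)$\Rightarrow$(ii) step has a genuine gap.

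You write that since the $c_{l,m}^p$ lie in $\ZygXSpace{W}{s}$ and the $b_j^k$ are ``initially continuous,'' the algebra properties give $W_i b_j^k\in\ZygXSpace{W}{s}$. But the algebra property says $\ZygXSpace{W}{s}\cdot\ZygXSpace{W}{s}\subseteq\ZygXSpace{W}{s}$; it does not say $C^0\cdot\ZygXSpace{W}{s}\subseteq\ZygXSpace{W}{s}$. Your identity for $W_i b_j^k$ involves products like $c_{i,j}^l\,b_l^k$ and $b_j^k\,c_{i,k}^l\,b_l^p$, and with $b$ merely continuous these need not land in $\ZygXSpace{W}{s}$. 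One can attempt a bootstrap (observe $b\in C^1$, hence locally $b\in\ZygXSpace{W}{1}$, then iterate $b\in\ZygXSpace{W}{t}\Rightarrow W_ib\in\ZygXSpace{W}{\min(s,t)}\Rightarrow b\in\ZygXSpace{W}{\min(s,t)+1}$), and with care this works for $s\in(1,\infty]$. But for $s=\omega$ the bootstrap only yields $b\in\ZygXSpace{W}{\infty}$, not real analyticity, so the argument as stated fails there.

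The paper sidesteps this entirely. In its (iii)$\Rightarrow$(i) step it applies \cref{Thm::Results::MainThm} with $q=N$, $m=0$ directly to \emph{all} of $W_1,\ldots,W_N$ (not just a frame), using that $\ZygXSpace{W}{s}\subseteq\ZygXSpace{W_{K_0}}{s}$; the conclusion \cref{Item::ResultsMainThm::PullbacksSmooth} then gives $\Phi^{*}W_k\in\ZygSpace{s+1}$ for every $k=1,\ldots,N$ in one stroke. The implication (i)$\Rightarrow$(ii) is then the easy direction (invert the frame matrix in the chart, as you did for (i)$\Rightarrow$(iii)), and (ii)$\Rightarrow$(iii) is a direct computation using that the $b_j^l$ are already known to be in $\ZygXSpace{W}{s+1}$---no bootstrap needed.
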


\begin{rmk}
Note that \cref{Thm::ResSmooth::Real::Local} \cref{Item::ResSmooth::Real::Local::2} and \cref{Item::ResSmooth::Real::Local::3} can be checked in any $C^2$ coordinate system (see \cref{Prop::FuncMan::DiffeoInv,Rmk::FuncMan::CoordFree}),
while \cref{Thm::ResSmooth::Real::Local} \cref{Item::ResSmooth::Real::Local::1} gives the existence of a ``nice'' coordinate system.
\end{rmk}

\begin{thm}[The Global Theorem]\label{Thm::ResSmooth::Real::Global}
For $s\in (1,\infty]\cup \{\omega\}$, the following two conditions are equivalent:
\begin{enumerate}[label=(\roman*),series=qualrealglobaltheoremenumeration]
	\item\label{Item::ResSmooth::Real::Atlas} There exists a $\ZygSpace{s+2}$ atlas on $M$, compatible with its $C^2$ structure, such that $W_1,\ldots, W_N$ are  $\ZygSpace{s+1}$ vector fields with respect
	to this atlas.
	\item For each $x_0\in M$, any of the three equivalent conditions from \cref{Thm::ResSmooth::Real::Local} hold for this choice of $x_0$.
\end{enumerate}
Furthermore, under these conditions, the $\ZygSpace{s+2}$ manifold structure induced by the atlas in \cref{Item::ResSmooth::Real::Atlas} is unique,
in the sense that if there is another $\ZygSpace{s+2}$ atlas on $M$, compatible with its $C^2$ structure, and such that $W_1,\ldots, W_N$
are locally $\ZygSpace{s+1}$ with respect to this second atlas, then the identity map $M\rightarrow M$  is a $\ZygSpace{s+2}$ diffeomorphism
between these two $\ZygSpace{s+2}$ manifold structures on $M$.
Finally, when $s\in (1,\infty]$,
there is a third equivalent condition
\begin{enumerate}[resume*=qualrealglobaltheoremenumeration]
\item $[W_i,W_j]=\sum_{k=1}^N c_{i,j}^k W_k$, $1\leq i,j\leq N$, where $\forall x_0\in M$, $\exists V\subseteq M$ open with $x_0\in V$
	such that $c_{i,j}^k\big|_V\in \ZygXSpace{W}{s}[V]$, $1\leq i,j,k\leq N$.
\end{enumerate}
%
\end{thm}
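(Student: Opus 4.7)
The plan is to reduce the global theorem to an application of \cref{Thm::ResSmooth::Real::Local} combined with a transition-map gluing argument. The easy direction \cref{Item::ResSmooth::Real::Atlas}$\Rightarrow$(ii) restricts the global $\ZygSpace{s+2}$ atlas to chart neighborhoods of each $x_0$, which immediately verifies clause \cref{Item::ResSmooth::Real::Local::1} of the local theorem at each point and hence, by the local equivalence, all of its equivalent local clauses; diffeomorphism invariance (\cref{Prop::FuncMan::DiffeoInv}) further translates the local bracket identities in each chart into the global commutator identity with $c_{i,j}^k\in \ZygXSpace{W}{s}$ holding locally, which is the third equivalent condition when $s\in (1,\infty]$. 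Conversely, when (ii) holds, \cref{Thm::ResSmooth::Real::Local} furnishes at each $x_0\in M$ a $C^2$ chart $\Phi_{x_0}:U_{x_0}\rightarrow V_{x_0}$, with $U_{x_0}\subseteq \R^n$ open, in which $\Phi_{x_0}^{*}W_i\in \ZygSpace{s+1}[U_{x_0}][\R^n]$ for each $i$. The family $\{(V_{x_0},\Phi_{x_0}^{-1})\}_{x_0\in M}$ is automatically a $C^2$ atlas compatible with the original $C^2$ structure on $M$, so everything reduces to the following transition lemma.

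The transition lemma asserts: whenever $\Phi_1$ and $\Phi_2$ are two $C^2$ charts on overlapping open sets in which $\Phi_j^{*}W_i\in \ZygSpace{s+1}$ for every $i$, the map $\Psi:=\Phi_2^{-1}\circ \Phi_1$ is a $\ZygSpace{s+2}$ diffeomorphism on its domain. To prove it, fix a point and, since the $W_i$ span $TM$, choose indices $i_1,\ldots,i_n$ so that the columns of $A(x):=(\Phi_1^{*}W_{i_1}(x)\,|\,\cdots\,|\,\Phi_1^{*}W_{i_n}(x))$ are linearly independent in a neighborhood; let $B(y)$ be the analogous matrix built from the $\Phi_2^{*}W_{i_k}$. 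Both $A,B$ are $\ZygSpace{s+1}$ and locally invertible, and the pushforward identity $\Psi_{*}\Phi_1^{*}W_{i_k}=\Phi_2^{*}W_{i_k}$ yields
\begin{equation*}
D\Psi(x) \;=\; B(\Psi(x))\,A(x)^{-1}.
\end{equation*}
Starting from the a priori information $\Psi\in C^2$ and iterating this identity with composition and product estimates in the Zygmund scale (and majorant arguments in the analytic spaces $\ComegaSpace{r}$, $\ASpace{n}{r}$ from \cref{Section::FuncSpaces::Euclid} when $s=\omega$), one successively upgrades $\Psi$ through $C^3,C^4,\ldots$ and finally to $\ZygSpace{s+2}$. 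Uniqueness of the $\ZygSpace{s+2}$ structure then follows by running the same lemma with $\Phi_1,\Phi_2$ drawn from the two competing atlases, which shows the identity map $M\rightarrow M$ is a $\ZygSpace{s+2}$ diffeomorphism between them. The implication (iii)$\Rightarrow$(ii) for $s\in (1,\infty]$ is then immediate from the local theorem applied pointwise.

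The main obstacle is the bootstrap inside the transition lemma: composing a $\ZygSpace{s+1}$ function with a $C^2$ (and not a priori $\ZygSpace{s+2}$) diffeomorphism is delicate for large $s$, and the Zygmund scale requires care at integer orders. The cleanest route is probably to first upgrade $\Psi$ to $C^{\lfloor s\rfloor+1}$ via plain H\"older/Lipschitz composition in $D\Psi=(B\circ\Psi)A^{-1}$, and then promote to the full Zygmund class using either finite differences along the spanning frame or the diffeomorphism-invariant vector-field norms of \cref{Section::FuncSpaces}; the $s=\omega$ case is treated analogously in the real-analytic scale via the $\ASpace{n}{r}$ majorants.
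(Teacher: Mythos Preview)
Your approach matches the paper's: the paper deduces this as the $m=0$ case of \cref{Thm::QualE::GlobalThm}, whose proof follows exactly your outline (local charts from the local theorem, then transition-map regularity), and your transition lemma is precisely \cref{Lemma::QualEPf::RecogSmooth}, which the paper cites from \cite{StovallStreetII,StovallStreetIII} rather than reproving via your bootstrap. One small gap: for (i)$\Rightarrow$(iii) you need a partition of unity to glue the locally defined $c_{i,j}^k$ into globally defined functions on $M$ (the local choices need not agree on overlaps when the $W_i$ are not a frame), and this is why (iii) is only claimed for $s\in(1,\infty]$.
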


\begin{rmk}
\Cref{Thm::ResSmooth::Real::Local,Thm::ResSmooth::Real::Global} are stated for $s>1$.  It would be desirable to have the same results for $s>0$, but our proof runs into technical
difficulties for $s\in (0,1]$.  See \cite{StovallStreetII} for details.  Similar remarks hold for many of the main results in this paper; in particular, the same remark holds for the main result
of the paper:  \cref{Thm::Results::MainThm}.
\end{rmk} 
		
		\subsubsection{The Complex Case}\label{Section::ResSmooth::Complex}

Let $M$ be a $C^2$ manifold and
let $L_1,\ldots, L_m$ be complex $C^1$ vector fields on $M$.   We assume:
\begin{itemize}
\item $\forall \zeta \in M$, $\Span_{\C} \mleft\{ L_1(\zeta),\ldots, L_m(\zeta), \Lb[1](\zeta),\ldots, \Lb[m](\zeta)\mright\} = \C T_\zeta M$.
\item $\forall \zeta\in M$, $\Span_{\C} \mleft\{ L_1(\zeta),\ldots, L_m(\zeta)\mright\} \cap \Span_{\C}\mleft\{ \Lb[1](\zeta),\ldots, \Lb[m](\zeta)\mright\} =\{0\}$.
\end{itemize}
By \cref{Lemma::AppendCR::dimFormula} and the above assumptions we have, $\forall \zeta\in M$,
$$\dim M = \dim \Span_{\C}  \mleft\{ L_1(\zeta),\ldots, L_m(\zeta), \Lb[1](\zeta),\ldots, \Lb[m](\zeta)\mright\}=2 \dim \Span_{\C}\mleft\{ L_1(\zeta),\ldots, L_m(\zeta)\mright\}.$$
In particular, let $n:=\dim \Span_{\C}\mleft\{ L_1(\zeta),\ldots, L_m(\zeta)\mright\}$, then $n$ does not depend on $\zeta$ and $\dim M=2n$.

\begin{thm}[The Local Theorem]\label{Thm::QualComplex::LocalThm}
Fix $\zeta_0\in M$ and $s\in (1,\infty]\cup\{\omega\}$.  The following three conditions are equivalent:
\begin{enumerate}[(i)]
\item There exists an open neighborhood $V\subseteq M$ of $\zeta_0$ and a $C^2$ diffeomorphism $\Phi:U\rightarrow V$, where
$U\subseteq \C^n$ is open, such that $\forall z\in U$, $1\leq j\leq m$,
\begin{equation*}
	\Phi^{*} L_j(z) \in \Span_{\C} \mleft\{ \diff{\zb[1]},\ldots, \diff{\zb[n]} \mright\},
\end{equation*}
and $\Phi^{*} L_j \in \ZygSpace{s+1}[U][\C^{n}]$.

\item Reorder $L_1,\ldots, L_m$ so that $L_1(\zeta_0),\ldots, L_n(\zeta_0)$ are linearly independent.  There exists a neighborhood $V\subseteq M$ of $\zeta_0$ such that:
\begin{itemize}
	\item $[L_j, L_k]=\sum_{l=1}^n \ch_{j,k}^{1,l} L_l$ and  $[L_j, \Lb[k]] =\sum_{l=1}^n \ch_{j,k}^{2,l} L_l + \sum_{l=1}^n \ch_{j,k}^{3,l} \Lb[l]$, where $\ch_{j,k}^{a,l}\in \ZygXSpace{L}{s}[V]$, $1\leq j,k,l\leq n$, $1\leq a\leq 3$.
	\item $L_j= \sum_{l=1}^n b_j^l L_l$, where $b_j^l\in \ZygXSpace{L}{s+1}[V]$, $n+1\leq j\leq m$, $1\leq l\leq n$.
\end{itemize}

\item There exists a neighborhood $V\subseteq M$ of $\zeta_0$ such that $[L_j, L_k] = \sum_{l=1}^m c_{j,k}^{1,l} L_l$ and  $[L_j, \Lb[k]] =\sum_{l=1}^m c_{j,k}^{2,l} L_l + \sum_{l=1}^m c_{j,k}^{3,l} \Lb[l]$,
where $c_{j,k}^{a,l}\in \ZygXSpace{L}{s}[V]$, $1\leq a\leq 3$, $1\leq j,k,l\leq m$.
\end{enumerate}
\end{thm}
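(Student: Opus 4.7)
The plan is to establish the cycle (i) $\Rightarrow$ (ii) $\Rightarrow$ (iii) $\Rightarrow$ (i). The first two implications will be bookkeeping based on the diffeomorphism invariance of the adapted Zygmund spaces (\cref{Prop::FuncMan::DiffeoInv}), the algebra property of $\ZygXSpace{L}{s+1}$, and elementary linear algebra; the third is the substantive analytic step and will reduce immediately to the main result \cref{Thm::Results::MainThm}.

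For (i) $\Rightarrow$ (ii), I would first pull back by $\Phi$ and work in $U\subseteq \C^n$, where by hypothesis $\Phi^{*}L_j=\sum_{k=1}^n a_j^k \diff{\zb[k]}$ with $a_j^k\in \ZygSpace{s+1}[U]$. Reorder so that the matrix $(a_j^k)_{j,k=1}^n$ is invertible at $\Phi^{-1}(\zeta_0)$, which is possible since $L_1,\ldots,L_m$ span $T^{0,1}$ pointwise and $\dim T^{0,1}=n$. After shrinking $U$, Cramer's rule combined with the algebra structure of $\ZygSpace{s+1}[U]$ furnishes an inverse matrix with $\ZygSpace{s+1}[U]$ entries, so for $j>n$ we get $\Phi^{*}L_j=\sum_{l=1}^n b_j^l \Phi^{*}L_l$ with $b_j^l\in \ZygSpace{s+1}[U]$. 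To expand $[\Phi^{*}L_j,\Phi^{*}L_k]$ and $[\Phi^{*}L_j,\overline{\Phi^{*}L_k}]$ in the frame $\{\Phi^{*}L_l,\overline{\Phi^{*}L_l}\}_{l=1}^n$, I use $[\diff{\zb[i]},\diff{\zb[j]}]=[\diff{z_i},\diff{\zb[j]}]=0$, so that only the $a_j^k$ need be differentiated; the resulting structure coefficients lie in $\ZygSpace{s}[U]$. Since the real list $W$ associated with $\Phi^{*}L$ is the Euclidean gradient on $\R^{2n}$, we have $\ZygXSpace{\Phi^{*}L}{s}[U]=\ZygSpace{s}[U]$; transporting back to $V$ via \cref{Prop::FuncMan::DiffeoInv} yields (ii).

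For (ii) $\Rightarrow$ (iii), the identities with $1\le j,k\le n$ are immediate by padding the sums from $l\le n$ to $l\le m$ with zero coefficients. When $j>n$ or $k>n$, I substitute $L_j=\sum_{p=1}^n b_j^p L_p$ into each bracket and expand by bilinearity and the Leibniz rule; the resulting coefficients are polynomial expressions in the $b$'s, the $L$-derivatives of $b$'s, and the $\ch$'s from (ii). Since $b\in \ZygXSpace{L}{s+1}[V]$ implies $L(b)\in \ZygXSpace{L}{s}[V]$ by definition, and since $\ZygXSpace{L}{s}[V]$ is an algebra, all such coefficients lie in $\ZygXSpace{L}{s}[V]$. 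The mixed brackets $[L_j,\Lb[k]]$ are treated identically.

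For (iii) $\Rightarrow$ (i), I invoke \cref{Thm::Results::MainThm} on $L_1,\ldots,L_m$ at $\zeta_0$ with the structure constants supplied by (iii); this produces a $C^2$ coordinate chart $\Phi:U\to V$, $U\subseteq \C^n$ open, in which $\Phi^{*}L_j$ is a $\ZygSpace{s+1}$ vector field taking values in $\Span_{\C}\mleft\{\diff{\zb[1]},\ldots,\diff{\zb[n]}\mright\}$, which is exactly (i). All of the analytic content of the local theorem is thereby concentrated into the application of \cref{Thm::Results::MainThm}, and this is where I expect the main obstacle to lie; the other two implications are routine consequences of the algebra structure of $\ZygXSpace{L}{s+1}$ and the diffeomorphism invariance of the adapted function spaces.
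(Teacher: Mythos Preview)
Your overall strategy matches the paper's: the complex local theorem is obtained as the $q=0$ special case of \cref{Thm::QualE::LocalThm}, whose proof follows exactly your cycle (i)$\Rightarrow$(ii)$\Rightarrow$(iii)$\Rightarrow$(i), with the last implication reducing to \cref{Thm::Results::MainThm}. Your treatments of (ii)$\Rightarrow$(iii) and (iii)$\Rightarrow$(i) are essentially the paper's, modulo some routine bookkeeping in the latter (verifying the passage from $\ZygXSpace{L}{s}[V]$ to the hypothesis space $\ZygXSpace{L_{J_0}}{s_0}[B_{L_{J_0}}(\zeta_0,\xi)]$, and when $s=\omega$ to $\AXSpace{L_{J_0}}{\zeta_0}{\eta}$ via \cref{Lemma::FuncSpaceRev::Properties}\cref{Item::FuncSpaceRev::Properites::ContainRA}).

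There is, however, one genuine error in your argument for (i)$\Rightarrow$(ii). You assert that ``the real list $W$ associated with $\Phi^{*}L$ is the Euclidean gradient on $\R^{2n}$'', and use this to conclude $\ZygXSpace{\Phi^{*}L}{s}[U]=\ZygSpace{s}[U]$. But $W$ consists of $2\Real(\Phi^{*}L_j)$ and $2\Imag(\Phi^{*}L_j)$, which are \emph{variable-coefficient} combinations of $\diff{x_k},\diff{y_k}$ built from the $a_j^k\in\ZygSpace{s+1}[U]$; they are not the constant coordinate vector fields. The equality $\ZygXSpace{\Phi^{*}L}{s}[U_0]=\ZygSpace{s}[U_0]$ you need is true, but it is not a tautology: it is the content of \cref{Prop::FuncSpaceRev::CompEuclid}, which says that when $\ZygSpace{s}$ vector fields span the tangent space with $\ZygSpace{s}$ inverse coefficients, the adapted and Euclidean Zygmund spaces of the relevant orders coincide. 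This is precisely the step the paper invokes at the corresponding point in the proof of \cref{Thm::QualE::LocalThm}. Replace your incorrect justification by a citation of \cref{Prop::FuncSpaceRev::CompEuclid} and the argument goes through.
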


\begin{thm}[The Global Theorem]\label{Thm::QualComplex::GlobalThm}
For $s\in (1,\infty]\cup\{\omega\}$ the following two conditions are equivalent:
\begin{enumerate}[label=(\roman*),series=qualComplexglobaltheoremenumeration]
\item\label{Item::QualComplex::Global::Manifold} There exists a complex manifold structure on $M$, compatible with its $C^2$ structure, such that $L_1,\ldots, L_m$ are  $\ZygSpace{s+1}$ vector fields on $M$ (with respect to this complex structure),
and $\forall \zeta\in M$,
$$\Span_{\C} \mleft\{L_1(\zeta),\ldots, L_m(\zeta)\mright\}= T^{0,1}_{\zeta} M.$$

\item For each $\zeta_0\in M$, any of the three equivalent conditions from \cref{Thm::QualComplex::LocalThm} hold for this choice of $\zeta_0$.
\end{enumerate}
Furthermore, under these conditions, the complex manifold structure in \cref{Item::QualComplex::Global::Manifold} is unique, in the sense that if $M$ has another complex manifold structure
satisfying the conditions of \cref{Item::QualComplex::Global::Manifold}, then the identity map $M\rightarrow M$ is a biholomorphism between these two complex structures.
Finally, when $s\in (1,\infty]$, there is a third equivalent condition:
\begin{enumerate}[resume*=qualComplexglobaltheoremenumeration]
\item $[L_j, L_k] = \sum_{l=1}^m c_{j,k}^{1,l} L_l$ and  $[L_j, \Lb[k]] =\sum_{l=1}^m c_{j,k}^{2,l} L_l + \sum_{l=1}^m c_{j,k}^{3,l} \Lb[l]$,
where $\forall \zeta\in M$, there exists an open neighborhood $V\subseteq M$ of $\zeta$ such that
$c_{j,k}^{a,l}\big|_V\in \ZygXSpace{L}{s}[V]$, $1\leq a\leq 3$, $1\leq j,k,l\leq m$.
\end{enumerate}
\end{thm}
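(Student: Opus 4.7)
The plan is to deduce the Global Theorem from the Local Theorem (\cref{Thm::QualComplex::LocalThm}) by assembling local charts, with uniqueness following from a transition-map argument. The direction (i) $\Rightarrow$ (ii) is immediate: given the complex structure, any holomorphic chart $\Phi$ around $\zeta_0$ is automatically a $C^2$ diffeomorphism (by compatibility with the underlying $C^2$ structure), preserves $T^{0,1}$ by construction, and inherits the $\ZygSpace{s+1}$ regularity of $L_j$ on the pullback side, so condition (i) of \cref{Thm::QualComplex::LocalThm} holds at $\zeta_0$.

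For (ii) $\Rightarrow$ (i), I would, at each $\zeta_0 \in M$, use condition (i) of \cref{Thm::QualComplex::LocalThm} to select a $C^2$ diffeomorphism $\Phi_{\zeta_0}: U_{\zeta_0} \to V_{\zeta_0}$ with $U_{\zeta_0} \subseteq \C^n$ open such that each $\Phi_{\zeta_0}^{*} L_j$ is $T^{0,1}$ and lies in $\ZygSpace{s+1}(U_{\zeta_0}; \C^n)$. The family $\{\Phi_{\zeta_0}^{-1}\}$ is a candidate $C^2$ atlas on $M$. On overlaps, the transition map $\Phi_{\zeta_1}^{-1} \circ \Phi_{\zeta_0}$ is a $C^2$ diffeomorphism between open subsets of $\C^n$ that intertwines the two families $\Phi_{\zeta_i}^{*} L_j$, both of which span the standard $T^{0,1}$; hence the transition map preserves $T^{0,1}$, so by the trivial $C^2$ Newlander--Nirenberg (a $C^2$ diffeomorphism of open subsets of $\C^n$ preserving $T^{0,1}$ is biholomorphic) it is holomorphic. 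This yields a complex structure on $M$, compatible with its $C^2$ structure. A regularity bootstrap -- either via Malgrange's theorem \cite{MalgrangeSurLIntegbrabilite} (adapted to Zygmund scales as noted in the introduction) or via $\dbar$-elliptic regularity applied componentwise to the transition maps, fed by the $\ZygSpace{s+1}$ regularity of the $\Phi_{\zeta_0}^{*} L_j$ -- upgrades the transitions to $\ZygSpace{s+2}$ diffeomorphisms, so $L_j$ has the required $\ZygSpace{s+1}$ regularity globally.

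The uniqueness claim follows by the same mechanism: given two complex structures satisfying (i) of the theorem, the identity map $M \to M$ is $C^2$ and, upon composing with holomorphic charts on each side, becomes a $C^2$ diffeomorphism between open subsets of $\C^n$ sending $T^{0,1}$ to $T^{0,1}$, hence biholomorphic. For the third equivalent condition when $s \in (1, \infty]$, the condition is stated as a local statement at each point of $M$, so it coincides pointwise with condition (iii) of \cref{Thm::QualComplex::LocalThm} and is therefore equivalent to (ii). The case $s = \omega$ is excluded because the analytic norm $\CXomegaNorm{\cdot}{L}{r}$ depends on an auxiliary radius $r > 0$ that cannot be chosen uniformly on $M$ in general, so the global bracket-closure formulation takes a different form.

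The main obstacle is the regularity upgrade of the transition maps in (ii) $\Rightarrow$ (i). Establishing that they preserve $T^{0,1}$ only puts them in the $C^2$ category at the outset, and improving to $\ZygSpace{s+2}$ (and, when $s = \omega$, to real-analytic) transition maps requires carefully transferring the Zygmund or analytic regularity of the pulled-back vector fields through a $\dbar$-type equation. This is where the quantitative Local Theorem and the adaptation of Malgrange's argument to Zygmund spaces carry the real weight; all other parts of the proof are soft consequences once that bootstrap is in place.
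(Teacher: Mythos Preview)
Your architecture for (i)$\Leftrightarrow$(ii) and uniqueness is correct and matches the paper's (which deduces the result from the $q=0$ case of \cref{Thm::QualE::GlobalThm}), but you have misidentified the difficulty. Once you observe that a transition map preserves $T^{0,1}$ and is therefore holomorphic, it is automatically real analytic---hence in $\ZygSpace{s+2}$ for every $s$, including $s=\omega$. No Malgrange bootstrap or $\dbar$-regularity argument is needed; the ``main obstacle'' you describe simply does not exist in the complex case. (It \emph{is} present in the general E-manifold setting of \cref{Thm::QualE::GlobalThm}, where transition maps are only E-maps rather than holomorphic, and the paper handles it there via \cref{Lemma::QualEPf::RecogSmooth}; your direct argument is actually a simplification over that route, but you do not exploit it.)

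The genuine gap is in your treatment of (iii). The global condition asserts the existence of \emph{globally defined} coefficients $c_{j,k}^{a,l}:M\to\C$ with the stated local regularity; it is not merely the statement that condition (iii) of the Local Theorem holds at every point. When $m>n$ the local coefficients are not unique and need not agree on overlaps, so passing from (ii) to (iii) requires gluing. The paper proves (i)$\Rightarrow$(iii) instead: with the $\ZygSpace{s+2}$ manifold structure in hand, a partition-of-unity argument produces global $c$'s that are locally $\ZygSpace{s}$ in the Euclidean sense, and \cref{Prop::FuncSpaceRev::CompEuclid} identifies this with the intrinsic space $\ZygXSpace{L}{s}$. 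This partition-of-unity step is the actual reason $s=\omega$ is excluded from (iii)---partitions of unity destroy real analyticity---not the radius dependence you cite.
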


\begin{rmk}
\Cref{Thm::QualComplex::GlobalThm} can be seen as a version of the Newlander-Nirenberg theorem (with sharp regularity in terms of Zygmund spaces), which is invariant under arbitrary $C^2$ diffeomorphisms.
\end{rmk}

\begin{rmk}
Because the Zygmund space
$\ZygSpace{m+\alpha}$ is (locally) the same as the H\"older space
$C^{m,\alpha}$ for $m\in \N$, $\alpha\in (0,1)$,
one can obtain analogs of \cref{Thm::QualComplex::LocalThm,Thm::QualComplex::GlobalThm} using the easier to understand H\"older spaces, as long as one avoids
integer exponents.  This is carried out in \cref{Section::Holder}.
For integer exponents, the use of Zygmund spaces is essential, as \cref{Thm::QualComplex::LocalThm} does not hold if we replace the Zygmund spaces
$\ZygSpace{s+1}$ (for $s\in \N$) with $\CjSpace{s+1}$ or $\HSpace{s}{1}$; this is described in \cref{Lemma::Holder::ZygmundRequired}.  As a consequence,
Zygmund spaces are also essential in the main theorem of this paper (\cref{Thm::Results::MainThm}).
The reason our proof requires Zygmund spaces when considering integer exponents is because
it relies on nonlinear elliptic PDEs (via the results from \cite{StovallStreetII,StreetNirenberg}).  As is well-known, the regularity theory
of elliptic PDEs works best when using Zygmund spaces instead of $C^m$ spaces or Lipschitz spaces.
\end{rmk} 
		
%
	\subsection{Geometries defined by vector fields}\label{Section::CorRes::Geometries}
We present the basic results concerning sub-Riemannian and sub-Hermitian geometry in this section.  The results on sub-Riemannian geometry
are just a reprise (in a slightly different language) of the main results of Nagel, Stein, and Wainger's work \cite{NagelSteinWaingerBallsAndMetrics}.\footnote{We present
results on sub-Riemannian geometry which are essentially those of Nagel, Stein, and Wainger, however the main results of this paper (even in this real setting) imply many results
which are beyond those that are implied by Nagel, Stein, and Wainger's methods.  In the real setting, this is described in the series \cite{StovallStreetI,StovallStreetII,StovallStreetIII}.
We present the corollaries in this section in the simplest possible setting (as opposed to a very general setting) to help the reader understand the thrust of our main theorem,
\cref{Thm::Results::MainThm}, which is stated in some generality.
For example, even if one only considers real vector fields, the main results
of this paper imply (and are stronger than) the results in the multi-parameter
setting of \cite{StreetMultiparameterCCBalls}, which could not be achieved by
the methods of \cite{NagelSteinWaingerBallsAndMetrics}.  We also present a more complicated example in the complex setting in \cref{Section::ExtremalBasis}.}
The results on sub-Hermitian geometry can be seen as holomorphic analogs of these results.  In this section, we present these ideas in these two simple settings.
In \cref{Section::Res::SubE} we generalize these results to a single unified result on ``E-manifolds''. 
	
		\subsubsection{Sub-Riemannian Geometry:  the results of Nagel, Stein, and Wainger}\label{Section::Results::NSW}
In this section, we describe the main results of the foundational paper of Nagel, Stein, and Wainger \cite{NagelSteinWaingerBallsAndMetrics}.
This describes how the existence of certain coordinate charts (like the ones developed in our main theorem) can be viewed as scaling maps
in sub-Riemannian geometry.  The results in this section set the stage for the results in the complex setting in \cref{Section::ResGeom::SubHerm}.

Let $W_1,\ldots, W_N$ be $C^\infty$ real vector fields on a connected, $C^\infty$ manifold $M$ of dimension $n$ which span the tangent space at every point.
To each $W_j$ we assign a formal degree $d_j\in [1,\infty)$.  We assume
\begin{equation*}
	[W_j, W_k]=\sum_{d_l\leq d_j+d_k} c_{j,k}^l W_l, \quad c_{j,k}^l\in \CjSpace{\infty}[M].
\end{equation*}
We write $(W,d)$ for the list $(W_1,d_1),\ldots, (W_N,d_N)$ and for $\delta>0$ write $\delta^d W$ for the list $\delta^{d_1}W_1,\ldots, \delta^{d_N} W_N$.
The sub-Riemannian ball associated to $(W,d)$ centered at $x_0\in M$ of radius $\delta>0$ is defined by
\begin{equation*}
B_{S}(x_0,\delta):=B_{\delta^{d} W}(x_0,1),
\end{equation*}
where the later ball is defined by \cref{Eqn::FuncMan::DefnSRBall}.
$B_{S}(x_0,\delta)$ is an open subset of $M$.  We define $\rho_S(x,y):=\inf\{\delta>0 : y\in B_{S}(x,\delta)\}$; $\rho$ is a metric on $M$
and is called a \textit{sub-Riemannian metric}.  For the relationship between this definition
of a sub-Riemannian metric and some of the other common definitions, see \cite{NagelSteinWaingerBallsAndMetrics}.

We define another metric on $M$, which will turn out to be equal to $\rho_S$, as follows.
We say $\rho_F(x,y)<\delta$ if and only if there exists $K\in \N$, smooth functions
$f_1,\ldots, f_K:B_{\R}(1/2)\rightarrow M$, and $\delta_1,\ldots, \delta_K>0$ with $\sum \delta_l\leq \delta$ such that:
\begin{itemize}
\item $f_j'(t)=\sum_{l=1}^N s_j^l(t) \delta_j^{d_l} W_l(f_j(t))$, with $\BNorm{\sum_{l} |s_j^l|^2}[L^\infty(B_{\R}(1/2))]<1$.
\item $f_j(B_\R(1/2))\bigcap f_{j+1}(B_\R(1/2))\ne \emptyset$, $1\leq j\leq K-1$.
\item $x\in f_1(B_\R(1/2))$, $y\in f_K(B_\R(1/2))$.
\end{itemize}
$\rho_F$ is clearly an extended metric.  Once we prove $\rho_F$ and $\rho_S$ are equal, it will then follow that $\rho_F$ is a metric.

Fix a strictly positive, $C^\infty$ density $\nu$ on $M$.\footnote{The results that follow are local and do not depend on the choice of $\nu$, so long as it is strictly positive and smooth.}
For $x\in M$, $\delta>0$, set
$$\Lambda(x,\delta):=\max_{j_1,\ldots, j_n\in \{1,\ldots, N\}} \nu(x)(\delta^{d_{j_1}} X_{j_1}(x),\ldots, \delta^{d_{j_N}}X_{j_N}(x)).$$
The next result follows from the methods of \cite{NagelSteinWaingerBallsAndMetrics} (though we prove it directly
by seeing is as a special case of the result in \cref{Section::Res::SubE}).

\begin{thm}[\cite{NagelSteinWaingerBallsAndMetrics}]\label{Thm::Results::NSW}
\begin{enumerate}[label=(\alph*),series=nswtheoremenumeration]
\item $\forall x,y\in M$, $\rho_S(x,y)= \rho_F(x,y)$.
\end{enumerate}
Fix a compact set $\Compact\subseteq M$.  There exists $\delta_0=\delta_0(\Compact)\in (0,1]$ such that the following holds.
We write $A\lesssim B$ for $A\leq CB$, where $C$ can be chosen independent of $x,y\in \Compact$ and $\delta>0$.  We write $A\approx B$ for $A\lesssim B$ and $B\lesssim A$.
\begin{enumerate}[resume*=nswtheoremenumeration]
\item $\nu(B_S(x,\delta))\approx \Lambda(x,\delta)$, $\forall x\in \Compact,\delta\in(0,\delta_0]$.
\item\label{Item::Results::NSW::HomogType} $\nu(B_S(x,2\delta))\lesssim \nu(B_S(x,\delta))$, $\forall x\in \Compact, \delta\in (0,\delta_0/2]$.
\end{enumerate}
For each $x\in \Compact$, $\delta\in (0,1]$, there exists $\Phi_{x,\delta}:B_{\R^n}(1)\rightarrow B_S(x,\delta)$ such that:
\begin{enumerate}[resume*=nswtheoremenumeration]
\item $\Phi_{x,\delta}(B_{\R^n}(1))\subseteq M$ is open and $\Phi_{x,\delta}:B_{\R^n}(1)\rightarrow \Phi_{x,\delta}(B_{\R^n}(1))$ is a $C^\infty$ diffeomorphism.
\item $\Phi_{x,\delta}^{*} \nu=h_{x,\delta}\LebDensity$, where $h_{x,\delta}\in \CjSpace{\infty}(B_{\R^n}(1))$, $h_{x,\delta}(t)\approx \Lambda(x,\delta)$ $\forall t$,
and $\CjNorm{h_{x,\delta}}{m}[B_{\R^n}(1)]\lesssim \Lambda(x,\delta)$, $\forall m$ (where the implicit constant depends on $m$, but not on $x\in \Compact$ or $\delta\in (0,1]$).
Here, and in the rest of the paper, $\LebDensity$ denotes the usual Lebesgue density on $\R^n$.
\end{enumerate}
Let $Y_j^{x,\delta}:=\Phi_{x,\delta}^{*}\delta^{d_j} W_j$, so that $Y_{j}^{x,\delta}$ is a $C^\infty$ vector field on $B_{\R^n}(1)$.
\begin{enumerate}[resume*=nswtheoremenumeration]
\item\label{Item::Results::NSW::YjSmooth} $\CjNorm{Y_j^{x,\delta}}{m}[B_{\R^n}(1)][\R^n]\lesssim 1$, $\forall x\in \Compact, \delta\in (0,1],m\in \N$, where the implicit constant depends on $m$, but not on $x$ or $\delta$.
\item\label{Item::Results::NSW::YjSpan} $Y_1^{x,\delta}(u),\ldots,Y_N^{x,\delta}(u)$ span the tangent space uniformly in $u,x,\delta$ in the sense that
\begin{equation*}
	\max_{j_1,\ldots, j_n\in \{1,\ldots, N\}} \inf_{u\in B_{\R^N}(1)} \left|\det \left(Y_{j_1}^{x,\delta}(u) | \cdots| Y_{j_n}^{x,\delta}(u)\right)\right|\approx 1, \quad x\in \Compact, \delta\in (0,1].
\end{equation*}
\item $\exists \epsilon\approx 1$ such that $B_S(x,\epsilon \delta)\subseteq \Phi_{x,\delta}(B_{\R^n}(1))\subseteq B_S(x,\delta)$, $\forall x\in \Compact, \delta\in (0,1]$.
\end{enumerate}
\end{thm}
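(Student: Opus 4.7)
The strategy is to deduce this from \cref{Thm::Results::MainThm} applied, uniformly in the parameter $\delta\in(0,1]$ and the basepoint $x\in\Compact$, to the rescaled list of vector fields $\delta^d W=(\delta^{d_1}W_1,\ldots,\delta^{d_N}W_N)$. The point, emphasized already in the discussion preceding \cref{Cor::Intro::Scaling}, is that all estimates in \cref{Thm::Results::MainThm} depend only on quantities we can control uniformly in $\delta$. Concretely, from the hypothesis
\[
[W_j,W_k]=\sum_{d_l\le d_j+d_k}c_{j,k}^lW_l,\qquad c_{j,k}^l\in\CjSpace{\infty}[M],
\]
one computes
\[
[\delta^{d_j}W_j,\delta^{d_k}W_k]=\sum_{d_l\le d_j+d_k}\bigl(\delta^{d_j+d_k-d_l}c_{j,k}^l\bigr)\,\delta^{d_l}W_l,
\]
and since $d_j+d_k-d_l\ge 0$ each coefficient $\delta^{d_j+d_k-d_l}c_{j,k}^l$, together with all its iterated $\delta^dW$-derivatives on a fixed neighborhood of $\Compact$, is bounded uniformly in $\delta\in(0,1]$ and $x\in\Compact$ (using smoothness of the $c_{j,k}^l$ and compactness). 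This verifies the structural hypotheses of \cref{Thm::Results::MainThm} with constants independent of $(x,\delta)$, via \cref{Lemma::MoreAssume::ExistEtaDelta0} as was done in \cref{Cor::Intro::Scaling}.

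Applying \cref{Thm::Results::MainThm} to $(\delta^dW,x)$ then produces the maps $\Phi_{x,\delta}:B_{\R^n}(1)\to M$ in one stroke, and the uniform regularity statements in \cref{Item::Results::NSW::YjSmooth} and the uniform spanning statement \cref{Item::Results::NSW::YjSpan} are built directly into the conclusion, as are the diffeomorphism claim (d) and the fact that $\Phi_{x,\delta}^*\nu=h_{x,\delta}\LebDensity$ with $\CjNorm{h_{x,\delta}}{m}[B_{\R^n}(1)]\lesssim\Lambda(x,\delta)$. The identity $h_{x,\delta}\approx\Lambda(x,\delta)$ follows by computing the Jacobian determinant of $\Phi_{x,\delta}$ at any point using the uniformly-nondegenerate spanning \cref{Item::Results::NSW::YjSpan} and the definition of $\Lambda$.

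For the containment claim (h), the right-hand inclusion $\Phi_{x,\delta}(B_{\R^n}(1))\subseteq B_S(x,\delta)$ follows by running sub-Riemannian curves in $B_{\R^n}(1)$ from $0$ to any $u\in B_{\R^n}(1)$ with the vector fields $Y_j^{x,\delta}$ and pushing them forward by $\Phi_{x,\delta}$. For the left-hand inclusion $B_S(x,\epsilon\delta)\subseteq\Phi_{x,\delta}(B_{\R^n}(1))$, use Picard-Lindel\"of for the uniformly-bounded vector fields $Y_j^{x,\delta}$ exactly as in \cref{Rmk::IntroNormal::NecessityBall} to obtain some $\epsilon\approx 1$ with $B_{Y^{x,\delta}}(0,\epsilon)\subseteq B_{\R^n}(1/2)$, and then push forward. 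Combining (d), (e), (h) yields
\[
\nu\bigl(B_S(x,\delta)\bigr)\ge\nu\bigl(\Phi_{x,\delta}(B_{\R^n}(1))\bigr)\approx\Lambda(x,\delta),\qquad \nu\bigl(B_S(x,\epsilon\delta)\bigr)\lesssim\Lambda(x,\delta),
\]
and an application of this with $\delta$ replaced by $\delta/\epsilon$ (taking $\delta_0:=\epsilon$) gives (b). The doubling property (c) is then immediate from (b) together with the obvious $\Lambda(x,2\delta)\le 2^{n\max d_j}\Lambda(x,\delta)$.

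The hard part will be (a), the equality $\rho_S=\rho_F$, since this is not a direct output of \cref{Thm::Results::MainThm}. The inequality $\rho_F\le\rho_S$ is essentially by definition: an $L^\infty$ sub-Riemannian curve realizing $\rho_S(x,y)<\delta$ can be approximated (after mollifying and rescaling $[0,1]$ to $B_\R(1/2)$, absorbing a small constant into $\delta$) by a single smooth curve $f_1$ of the type used in $\rho_F$, making $\rho_F(x,y)\le\rho_S(x,y)$. The reverse inequality $\rho_S\le\rho_F$ is the substantive direction and will be proved by a chaining argument: given a chain $f_1,\ldots,f_K$ realizing $\rho_F(x,y)<\delta$, each $f_j(B_\R(1/2))$ is contained in a sub-Riemannian ball $B_S(f_j(0),C\delta_j)$ by elementary ODE estimates on the scaled fields, and we then use the charts $\Phi_{f_j(0),\delta_j}$ together with (h) to concatenate the pieces into a single $L^\infty$ sub-Riemannian curve at scale $\sum\delta_j$, up to a uniform multiplicative constant that can be absorbed by rescaling the $\delta_j$. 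The key technical input here is exactly the uniformity of (f), (g), (h) in $(x,\delta)$.
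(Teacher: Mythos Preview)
Your overall strategy (apply \cref{Thm::Results::MainThm} to $\delta^dW$ uniformly in $(x,\delta)$, then deduce (b)--(h)) is exactly what the paper does, via the more general \cref{Thm::SubEGeom}. That part is fine.

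The problem is your treatment of (a), specifically the inequality $\rho_S\le\rho_F$. Your plan is overcomplicated and, as stated, would only yield $\rho_S\le C\rho_F$ for some $C>1$: using charts $\Phi_{f_j(0),\delta_j}$ and ``elementary ODE estimates'' to place $f_j(B_\R(1/2))$ inside $B_S(f_j(0),C\delta_j)$ introduces a constant that cannot be ``absorbed by rescaling the $\delta_j$'' --- the $\delta_j$ are part of the witness for $\rho_F(x,y)<\delta$, and once you multiply by $C$ you have proven only $\rho_S(x,y)<C\delta$. Moreover, invoking the charts here is circular, since they are only constructed on a compact set while (a) is asserted for all $x,y\in M$.

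The actual argument is much simpler and needs no charts at all. Given $w_1=f_j(t_1)$ and $w_2=f_j(t_2)$ with $t_1,t_2\in B_\R(1/2)$, set $\gamma(r):=f_j\bigl((1-r)t_1+rt_2\bigr)$ for $r\in[0,1]$. Then
\[
\gamma'(r)=(t_2-t_1)\,f_j'\bigl((1-r)t_1+rt_2\bigr)=\sum_{l}(t_2-t_1)s_j^l(\cdot)\,\delta_j^{d_l}W_l(\gamma(r)),
\]
and since $|t_2-t_1|<1$ and $\bigl\|\sum_l|s_j^l|^2\bigr\|_{L^\infty}<1$, this directly gives $\rho_S(w_1,w_2)<\delta_j$ with no constant. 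Chaining via the overlap points $\xi_j\in f_j(B_\R(1/2))\cap f_{j+1}(B_\R(1/2))$ and the triangle inequality then yields $\rho_S(x,y)<\sum_j\delta_j\le\delta$, hence $\rho_S\le\rho_F$. So (a) is in fact the \emph{easy} part; the uniform output of \cref{Thm::Results::MainThm} is not needed for it.
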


\begin{rmk}
The most important aspects of \cref{Thm::Results::NSW} are \cref{Item::Results::NSW::YjSmooth} and \cref{Item::Results::NSW::YjSpan}; and these allow us to see
the maps $\Phi_{x,\delta}$ as ``scaling maps''.  Indeed, for $\delta$ small, one tends to think of $\delta^{d_j} W_j$ as a ``small'' vector field.  However,
$\Phi_{x,\delta}$ gives a coordinate system in which $\delta^{d_j} W_j$ is of ``unit size'':  not only are $\Phi_{x,\delta}^{*}\delta^{d_1} W_1,\ldots, \Phi_{x,\delta}^{*} \delta^{d_N} W_N$
smooth uniformly in $x$ and $\delta$ (i.e., \cref{Item::Results::NSW::YjSmooth}), but they also span the tangent space uniformly in $x$ and $\delta$ (i.e., \cref{Item::Results::NSW::YjSpan}).
See \cite[\SSHormandersCondition]{StovallStreetI} for some more comments in this direction.
\end{rmk}

\begin{rmk}
\Cref{Item::Results::NSW::HomogType} is the main estimate needed to show that the balls $B_S(x,\delta)$ when paired with the density $\nu$ locally
give a space of homogeneous type.  Because of this, one has access to the Calder\'on-Zygmund theory of singular integrals with respect to these balls.
This has had many uses:  see the remarks at the end of Chapter 2 of \cite{StreetMultiParamSingInt} for a history of these ideas.
\end{rmk} 
		
		\subsubsection{Sub-Hermitian Geometry}\label{Section::ResGeom::SubHerm}
Let $M$ be a connected complex manifold of complex dimension $n$.  Let $L_1,\ldots, L_m$ be $C^\infty$, $T^{0,1}$ vector fields on $M$
such that $\forall \zeta\in M$, $\Span_{\C}\{L_1(\zeta),\ldots, L_m(\zeta)\}=T^{0,1}_\zeta M$.
Our goal in this section is to describe a complex analog of the results in \cref{Section::Results::NSW} with respect to the vector fields $L_1,\ldots, L_m$.
The main point is to achieve as much as possible using only \textit{holomorphic} maps, so that these results can be applied to questions in several complex variables.

To each $L_j$ we assign a formal degree $\beta_j\in [1,\infty)$.
We assume
\begin{equation*}
[L_j,L_k]=\sum_{\beta_l\leq \beta_j+\beta_k} c_{j,k}^{1,l} L_l, \quad [L_j, \Lb[k]] = \sum_{\beta_l\leq \beta_j+\beta_k} c_{j,k}^{2,l} L_l + \sum_{\beta_l\leq \beta_j+\beta_k} c_{j,k}^{3,l} \Lb[l], \quad c_{j,k}^{a,l}\in \CjSpace{\infty}[M].
\end{equation*}
Let $(W_1,d_1),\ldots, (W_{2m},d_{2m}) = (2\Real(L_1), \beta_1),\ldots, (2\Real(L_m), \beta_m),(2\Imag(L_1),\beta_1),\ldots, (2\Imag(L_m), \beta_m)$.
Fix a strictly positive, smooth density $\nu$ on $M$.
It is immediate to verify that the list $(W_1,d_1),\ldots, (W_{2m},d_{2m})$ satisfies all the hypotheses of \cref{Section::Results::NSW}.
Thus we obtain balls $B_S(\zeta,\delta)$ and an associated metric $\rho_S=\rho_F$, and \cref{Thm::Results::NSW} applies.
The main problem is that the definitions of $\rho_S$ and $\rho_F$   use the underlying smooth structure on $M$ and not the complex structure,
and the scaling maps $\Phi_{x,\delta}$ from \cref{Thm::Results::NSW} are only guaranteed to be smooth, not holomorphic.
In particular, when rescaling $\delta^{\beta_j}L_j$ by computing $\Phi_{x,\delta}^{*}\delta^{\beta_j} L_j$ we do not know that
$\Phi_{x,\delta}^{*}\delta^{\beta_j} L_j$ continues to be a $T^{0,1}$ vector field; i.e., we do not know
$\Phi_{x,\delta}^{*}\delta^{\beta_j} L_j$ is spanned by $\diff{\zb[1]},\ldots, \diff{\zb[n]}$.
The results in this section fix these problems.

First, we define a metric using the complex structure on $M$, which we will see is locally equivalent to $\rho_S=\rho_F$.
This metric is obtained by taking the definition for $\rho_F$, and rewriting it with holomorphic maps in place of smooth maps.
We say $\rho_H(\zeta_1,\zeta_2)<\delta$ if and only if there exists $K\in \N$, holomorphic functions $f_1,\ldots, f_K:B_{\C}(1/2)\rightarrow M$, and
$\delta_1,\ldots, \delta_K>0$ with $\sum_{l=1}^K \delta_l\leq \delta$ such that:
\begin{itemize}
\item $df_j(z)\diff{\zb} = \sum_{l=1}^m s_j^l(z,\zb) \delta_j^{\beta_l} L_l(f_j(z))$, with $\Norm{\sum_{l} |s_j^l|^2}[L^\infty(B_{\C}(1/2))]<1$.
\item $f_j(B_{\C}(1/2))\bigcap f_{j+1}(B_{\C}(1/2)) \ne \emptyset$, $1\leq j\leq K-1$.
\item $\zeta_1\in f_1(B_{\C}(1/2))$, $\zeta_2\in f_K(B_{\C}(1/2))$.
\end{itemize}
$\rho_H$ is clearly an extended metric; once we show it is locally equivalent to $\rho_S$, it will follow that $\rho_H$ is a metric.

\begin{thm}\label{Thm::ResSubH}
\begin{enumerate}[label=(\alph*),series=shtheoremenumeration]
\item $\forall \zeta_1,\zeta_2\in M$, $\rho_S(\zeta_1,\zeta_2)= \rho_F(\zeta_1,\zeta_2)\leq  \rho_H(\zeta_1,\zeta_2)$.
\end{enumerate}
Fix a compact set $\Compact\subseteq M$.  We write $A\lesssim B$ for $A\leq CB$ where $C$ can be chosen independent
of $\zeta,\zeta_1,\zeta_2\in \Compact$ and $\delta\in (0,1]$.  We write $A\approx B$ for $A\lesssim B$ and $B\lesssim A$.
\begin{enumerate}[resume*=shtheoremenumeration]
\item $\rho_H(\zeta_1,\zeta_2)\lesssim \rho_S(\zeta_1,\zeta_2)$, $\forall \zeta_1,\zeta_2\in \Compact$, and therefore
$\rho_H$  and $\rho_S$ are equivalent on compact sets.
\item All of the conclusions of \cref{Thm::Results::NSW} hold (when applied to $(W_1,d_1),\ldots, (W_{2m},d_{2m})$) and (by identifying $\R^{2n}\cong \C^n$)
the maps $\Phi_{\zeta,\delta}:B_{\C^n}(1)\rightarrow B_S(\zeta,\delta)\subseteq M$ can be taken to be holomorphic.
\end{enumerate}
Because $\Phi_{\zeta,\delta}$ is holomorphic, $\Phi_{\zeta,\delta}^{*}\delta^{\beta_j} L_j$ is a $T^{0,1}$ vector field; in other words, $\Phi_{\zeta,\delta}^{*}\delta^{\beta_j}L_j(z)\in \Span_{\C}\left\{\diff{\zb_1},\ldots, \diff{\zb_n}\right\}$, $\forall z\in B_{\C^n}(1)$.
We can thus think of $\Phi_{\zeta,\delta}^{*}\delta^{\beta_j} L_j$ as a map $B_{\C^n}(1)\rightarrow \C^n$.
\begin{enumerate}[resume*=shtheoremenumeration]
	\item $\CjNorm{\Phi_{\zeta,\delta}^{*}\delta^{\beta_j} L_j}{k}[B_{\C^n}(1)][\C^n]\lesssim 1$, $\forall \zeta\in \Compact, \delta\in (0,1], k\in \N$, where the implicit constant depends on $k$, but not on $\zeta\in \Compact$ or $\delta\in (0,1]$.
	\item $\Phi_{x,\delta}^{*}\delta^{\beta_1}L_1(z),\ldots, \Phi_{x,\delta}^{*} \delta^{\beta_m}{L_m}(z)$ span $T_z^{0,1}\C^n$ uniformly in $z,\zeta,\delta$ in the sense that
	$$
	\max_{j_1,\ldots, j_n\in \{1,\ldots m\}} \inf_{z\in B_{\C^n}(1)} \left|\det\left( \Phi_{\zeta,\delta}^{*} \delta^{\beta_{j_1}}L_{j_1} (z) | \ldots | \Phi_{\zeta,\delta}^{*} \delta^{\beta_{j_n}}L_{j_n}(z) \right)\right|\approx 1, \quad \zeta\in \Compact, \delta\in (0,1].
	$$
\end{enumerate}
\end{thm}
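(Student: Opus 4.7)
The plan is to deduce everything from \cref{Thm::Results::MainThm}, applied at each base point $\zeta\in\Compact$ to the rescaled complex vector fields $\delta^{\beta_1}L_1,\ldots,\delta^{\beta_m}L_m$, with uniform dependence on $(\zeta,\delta)\in\Compact\times(0,1]$, exactly as in \cref{Cor::Intro::Scaling}. The hypotheses there are satisfied by the given commutator relations and the observation that $\delta^{\beta_j+\beta_k-\beta_l}\leq 1$ whenever $\beta_l\leq\beta_j+\beta_k$ and $\delta\in(0,1]$, so when the given commutator identities are rewritten in the basis $\{\delta^{\beta_l}L_l,\delta^{\beta_l}\Lb[l]\}$, the structure coefficients are smooth and bounded uniformly in $\delta$. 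Because $M$ is already a complex manifold and the $L_j$ span $T^{0,1}$, \cref{Thm::Results::MainThm} then produces, uniformly, a biholomorphism $\Phi_{\zeta,\delta}:B_{\C^n}(1)\to M$ with $\Phi_{\zeta,\delta}(0)=\zeta$ and $B_S(\zeta,\epsilon\delta)\subseteq\Phi_{\zeta,\delta}(B_{\C^n}(1))\subseteq B_S(\zeta,\delta)$ for a uniform $\epsilon\approx 1$. These charts give parts (c), (d), and (e) directly: the uniform $C^k$ estimates and the uniform $T^{0,1}$-spanning on $B_{\C^n}(1)$ are explicit outputs of the main theorem, and the remaining conclusions of \cref{Thm::Results::NSW} for $(W,d)$ hold because the main theorem's chart refines the sub-Riemannian data.

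For part (a), $\rho_S=\rho_F$ is \cref{Thm::Results::NSW}. To obtain $\rho_F\leq\rho_H$, given a $\rho_H$-chain of holomorphic disks $f_j:B_{\C}(1/2)\to M$ with scalings $\delta_j$, we convert each disk into a family of smooth $\rho_F$-curves: for any $z_1,z_2\in B_{\C}(1/2)$, set $w=z_2-z_1$ (so $|w|\leq 1$), $z_0=(z_1+z_2)/2$, and $\gamma(t)=z_0+tw$ for $t\in B_{\R}(1/2)$, which by convexity stays in $B_{\C}(1/2)$. For $g=f_j\circ\gamma$, holomorphicity gives $df_j(\partial_z)=\overline{df_j(\partial_{\bar z})}$, and using $L_l=\tfrac12(W_l-iW_{l+m})$ together with $df_j(\partial_{\bar z})=\sum_l s^l\,\delta_j^{\beta_l}L_l\circ f_j$ yields
\[
g'(t)=\sum_l\bigl[\Real(\bar w\,s^l)\,\delta_j^{\beta_l}W_l-\Imag(\bar w\,s^l)\,\delta_j^{\beta_l}W_{l+m}\bigr]\!\bigl(g(t)\bigr).
\]
The $L^\infty$ norm of the coefficient vector is $\leq|w|\cdot\|\sum_l|s^l|^2\|_{L^\infty}^{1/2}<1$, so $g$ is $\rho_F$-admissible with scaling $\delta_j$. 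Chaining such curves through the nonempty intersections of consecutive disk images produces a $\rho_F$-chain with the same total scaling, hence $\rho_F\leq\rho_H$.

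For part (b), suppose $\rho_S(\zeta_1,\zeta_2)<\delta$. Take $\Phi:=\Phi_{\zeta_1,\delta/\epsilon}$, so that $\zeta_2\in B_S(\zeta_1,\delta)\subseteq\Phi(B_{\C^n}(1))$ and $z_0:=\Phi^{-1}(\zeta_2)\in B_{\C^n}(1)$. By (e), there exist coefficients $t_j^l$ on $B_{\C^n}(1)$, bounded uniformly by some $M$, with $\partial_{\bar w_j}=\sum_l t_j^l\,\Phi^*(\delta/\epsilon)^{\beta_l}L_l$. Choose $K\in\N$ (depending only on $M,n,m$) with $(2/K)\sqrt{mn}\,M<1$, and partition the segment from $0$ to $z_0$ into $K$ equal subsegments. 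On the $k$-th subsegment use the affine holomorphic disk $f_k(z)=(k-1)z_0/K+(2z_0/K)z$ for $z\in B_{\C}(1/2)$, and set $g_k=\Phi\circ f_k:B_{\C}(1/2)\to M$. A direct computation (using $df_k(\partial_{\bar z})=(2/K)\sum_j\overline{(z_0)_j}\,\partial_{\bar w_j}$ and the bound on the $t_j^l$) gives $dg_k(\partial_{\bar z})=\sum_l\tilde s^l\,(\delta/\epsilon)^{\beta_l}L_l\circ g_k$ with $\sum_l|\tilde s^l|^2<1$. The disks $g_1,\ldots,g_K$ meet at consecutive endpoints $\Phi(kz_0/K)$ and form a $\rho_H$-chain from $\zeta_1$ to $\zeta_2$ of total scaling $K\delta/\epsilon\lesssim\delta$, yielding $\rho_H\lesssim\rho_S$.

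The central technical obstacle is the very first step: ensuring that \cref{Thm::Results::MainThm}, applied to the $\delta$-parametrized family $\{\delta^{\beta_j}L_j\}$, produces charts with estimates uniform in $\delta\in(0,1]$ and $\zeta\in\Compact$, and that these charts are genuinely \emph{holomorphic} rather than merely $C^2$. The uniformity point is exactly the issue handled by \cref{Cor::Intro::Scaling} and requires careful bookkeeping of the dependencies of the constants in the main theorem; the holomorphicity is an automatic consequence of the main theorem in the complex setting, since the ambient manifold is complex and the pulled-back vector fields must span $T^{0,1}$. Once these two uniformity and holomorphicity assertions are extracted from the main theorem, (c)--(e) are immediate translations and (a)--(b) reduce to the elementary curve/disk computations above.
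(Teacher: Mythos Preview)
Your proposal is correct and follows the same underlying strategy as the paper, but the paper takes a shorter route: it proves \cref{Thm::ResSubH} by observing that it is the special case $q=r=0$ of the more general E-manifold result \cref{Thm::SubEGeom}, after verifying that the two a priori different definitions of $\rho_H$ (holomorphic disks from $B_{\C}(1/2)$ versus E-maps from $B_{\R\times\C}(1/2)$) coincide on a complex manifold, since any E-map into a complex manifold must be constant in the real variable.

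Your direct argument is essentially the $r=0$ specialization of the paper's proof of \cref{Thm::SubEGeom}. Your conversion of a holomorphic disk into a $\rho_F$-admissible curve is the same computation the paper performs (in the E-manifold setting) for \cref{Item::SubE::EasyMetrics}. For $\rho_H\lesssim\rho_S$ you use a chain of $K$ small holomorphic disks along a segment in the chart; the paper instead uses a \emph{single} disk $f(w)=\Phi_{\zeta,\delta}(2w z_0/|z_0|)$ and absorbs the bounded coefficients by replacing $\delta$ with $R\delta$ for $R$ large (exploiting $\beta_l\geq 1$ so $R^{-\beta_l}\leq R^{-1}$). Both give the same bound. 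One small point you elide: your chart $\Phi_{\zeta_1,\delta/\epsilon}$ is only defined for $\delta\leq\epsilon$, so your argument handles only small $\rho_S$; the paper closes this with a brief compactness argument (using that $\rho_F=\rho_S$ induces the manifold topology, so $\Compact$ is $\rho_S$-compact). What you gain by working directly is that you avoid introducing the E-manifold formalism; what the paper gains by factoring through \cref{Thm::SubEGeom} is that the real, complex, and mixed cases are proved simultaneously.
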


\begin{rmk}
In \cref{Thm::ResSubH} we described a $C^\infty$ version of sub-Hermitian geometry.  With a very similar proof one can obtain
a similar real analytic version;
see \cref{Rmk::ResSubE::RA}.  One can also obtain results for vector fields with only a finite level of smoothness; see \cref{Rmk::ResSubE::FiniteSmoothness}.
\end{rmk}

\begin{rmk}
In the above discussion, we studied the vector fields $\delta^{\beta_1}L_1,\ldots, \delta^{\beta_m} L_m$.  In many applications, the vector fields depend on $\delta$
in a more complicated way (such an example is given in \cref{Section::ExtremalBasis}).  Furthermore, in some applications, $\delta$ ranges
  over $(0,1]^\mu$ instead of $(0,1]$ (as studied in the real setting in \cite{StreetMultiparameterCCBalls}).  Our proof methods allow us to study such settings in the same way; see \cref{Rmk::ResSubE::MoreComplicated}.  We stated results
in this  setting for simplicity of presentation, so that the reader can easily see the main ideas.
\end{rmk}


\section{The Main Results}\label{Section::MainResult}
Let $X_1,\ldots, X_q$ be real $C^1$ vector fields on a $C^2$ manifold $\fM$ and let $L_1,\ldots, L_m$ be complex $C^1$ vector fields on $\fM$.
For each $x\in \fM$, set $\LVS_x:=\Span_\C\{L_1(x),\ldots, L_m(x), X_1(x),\ldots, X_q(x)\}$, $\XVS_x:=\Span_{\C}\{ X_1(x),\ldots, X_q(x)\}$.

Fix $x_0\in \fM$, $\xi>0$.  Set $r:=\dim \XVS_{x_0}$ and $n+r:=\dim \LVS_{x_0}$.
Our goal in this section is to choose a ``coordinate system'' $\Phi:B_{\R^r\times \C^n}(1)\rightarrow B_{X,L}(x_0,\xi)$
so that
$\Phi^{*}X_1,\ldots, \Phi^{*}X_q, \Phi^{*}L_1,\ldots, \Phi^{*}L_m$ have a desired level of regularity and $\forall (t,z)\in B_{\R^r\times \C^n}(1)$,
\begin{equation*}
\Span_{\C}\{\Phi^{*}X_1(t,z),\ldots, \Phi^{*}X_q(t,z), \Phi^{*}L_1(t,z),\ldots, \Phi^{*}L_m(t,z)\}=\Span_{\C}\left\{\diff{t_1},\ldots,\diff{t_r},\diff{\zb[1]},\ldots, \diff{\zb[n]}\right\},
\end{equation*}
where we have given $\R^r\times \C^n$ coordinates $(t_1,\ldots, t_r, z_1,\ldots, z_n)$.  Finally, we wish to pick this coordinate system so that
$\Phi^{*}X_1,\ldots, \Phi^{*}X_q, \Phi^{*}L_1,\ldots, \Phi^{*}L_m$ are normalized in a way which is useful for applying techniques from analysis.

Let $Z_1,\ldots, Z_{q+m}:=X_1,\ldots, X_q,L_1,\ldots, L_m$.  Our three main \textit{algebraic} assumptions are as follows:
\begin{enumerate}[(i)]
\item $\forall x\in B_{X,L}(x_0,\xi)$, $\LVS_x\bigcap \LVSb[x]=\XVS_x$.
\item $[Z_j,Z_k]=\sum_{l=1}^{m+q} c_{j,k}^{1,l} Z_l$ and $[Z_j,\Zb[k]]=\sum_{l=1}^{m+q} c_{j,k}^{2,l} Z_l+ \sum_{l=1}^{m+q}c_{j,k}^{3,l} \Zb[l]$, where $c_{j,k}^{a,l}\in \CSpace{B_{X,L}(x_0,\xi)}$, $1\leq a\leq 3$, $1\leq j,k,l\leq q+m$ (here we are giving
$B_{X,L}(x_0,\xi)$ the topology induced by the associated metric \cref{Eqn::FuncMan::Defnrho}).
\item $x\mapsto \dim \LVS_x$, $B_{X,L}(x_0,\xi)\rightarrow \N$, is constant in $x$ (it follows from the other assumptions that this is equivalent to the map $x\mapsto \dim \XVS_x$ being constant in $x$; see \cref{Section::CommentsAssump}).
\end{enumerate}

Under the above hypotheses, $B_{X,L}(x_0,\xi)$ is a $C^2$, injectively immersed submanifold of $\fM$ 
(see \cref{Prop::AppendImmerse}), and $\C T_xB_{X,L}(x_0,\xi)=\LVS_x+\LVSb[x]$, $\forall x\in B_{X,L}(x_0,\xi)$.
In particular, using \cref{Lemma::AppendCR::dimFormula},
$$\dim B_{X,L}(x_0,\xi) =\dim T_{x_0} B_{X,L}(x_0,\xi)= \dim (\LVS_{x_0}+\LVSb[x_0]) = 2\dim \LVS_{x_0} - \dim \XVS_{x_0}=2n+r.$$
Henceforth we view $X_1,\ldots, X_q, L_1,\ldots, L_m$ as $C^1$ vector fields on $B_{X,L}(x_0,\xi)$.

For $a,b\in \N$, we set
\begin{equation}\label{Eqn::MainRes::DefnsI}
\sI(a,b):=\{ (i_1,i_2,\ldots, i_a) : i_1,\ldots, i_a\in \{1,\ldots, b\}\}=\{1,\ldots,b\}^a.
\end{equation}
For $K=(k_1,\ldots, k_{r_1})\in \sI(r_1,q)$, we write $X_K$ for the list $X_{k_1},\ldots, X_{k_{r_1}}$ and for $J=(j_1,\ldots, j_{n_1})\in \sI(n_1,m)$ we write
$L_J$ for the list $L_{j_1},\ldots, L_{j_{n_1}}$.  We write $\bigwedge X_K:=X_{k_1}\wedge X_{k_2}\wedge \cdots\wedge X_{k_{r_1}}$ and
$\bigwedge L_{J}:=L_{j_1}\wedge L_{j_2}\wedge \cdots\wedge L_{j_{n_1}}$.

Fix $\zeta\in (0,1]$, $K_0\in \sI(r,q)$, $J_0\in \sI(n,m)$ such that
\begin{equation}\label{Eqn::MainRes::ChooseZeta}
	\max_{\substack{K\in \sI(r_1,q), J\in \sI(n_1,m) \\ r_1+n_1=r+n }}
	\left|
	\frac{\left(\bigwedge X_K(x_0)\right)\bigwedge \left(\bigwedge L_J(x_0)\right)}{\left(\bigwedge X_{K_0}(x_0)\right)\bigwedge\left(\bigwedge L_{J_0}(x_0)\right) }
	\right|  \leq \zeta^{-1}.
\end{equation}
See \cref{Append::LA::Wedge} for the definition of this quotient.  Such a choice of $J_0$, $K_0$, and $\zeta$ always exist; see \cref{Rmk::AppendWedge::AboutJ0K0}.  One cannot necessarily
choose $K_0$, $J_0$ so that \cref{Eqn::MainRes::ChooseZeta} holds with $\zeta=1$, however if $n=0$ or $r=0$ (the two most important special cases) one always can--see
\cref{Rmk::AppendWedge::AboutJ0K0}.
Without loss of generality, reorder $X_1,\ldots, X_q$ and $L_1,\ldots, L_m$ so that $K_0=(1,2,\ldots, r)$, $J_0=(1,2,\ldots, n)$.

Let $W_1,\ldots, W_{2m+q}$ denote the list of vector fields $X_1,\ldots, X_q, 2\Real(L_1),\ldots, 2\Real(L_m),2\Imag(L_1),\ldots, 2\Imag(L_m)$; and order $W_1,\ldots, W_{2m+q}$
so that
\begin{equation}\label{Eqn::MainRes::OrderWs}
W_1,\ldots, W_{2n+r}=X_1,\ldots, X_r,2 \Real(L_1),\ldots, 2\Real(L_n), 2\Imag(L_1),\ldots, 2\Imag(L_n).
\end{equation}
Define $\WVS_x:=\Span_{\R}\{W_1(x),\ldots, W_{2m+q}(x)\} = (\LVS_x+\LVSb[x])\cap T_xB_{X,L}(x_0,\xi)$.
Set $P_0:=(1,\ldots, 2n+r)\in \sI(2n+r,2m+q)$ and for any $P=(p_1,\ldots, p_{2n+r})\in \sI(2n+r,2m+q)$ we
write $W_P$ for the list $W_{p_1},\ldots, W_{p_{2n+r}}$ and set
$\bigwedge W_P= W_{p_1}\wedge W_{p_2}\wedge \cdots \wedge W_{p_{2n+r}}$.  In particular,
\begin{equation*}
\begin{split}
\bigwedge W_{P_0} &= X_1\wedge X_2\wedge \cdots \wedge X_r\wedge 2\Real(L_1)\wedge 2\Real(L_2)\wedge \cdots \wedge 2\Real(L_n)\wedge 2\Imag(L_1)\wedge 2\Imag(L_2)\wedge \cdots \wedge 2\Imag(L_n)
\\&=\left(\bigwedge X_{K_0}\right)\bigwedge\left(\bigwedge 2\Real(L)_{J_0}\right)\bigwedge \left(\bigwedge 2\Imag(L)_{J_0}\right),
\end{split}
\end{equation*}
where $\bigwedge 2\Real(L)_{J_0}$ and $\bigwedge 2\Imag(L)_{J_0}$ are defined in the obvious way; see \cref{Eqn::AppedWedge::DefineWedges}.
Note that $B_{W_{P_0}}(x_0,\xi)$ and $B_{X_{K_0},L_{J_0}}(x_0,\xi)$ are (by definition)  equal; see \cref{Eqn::FuncComplex::Ball}.

\begin{defn}\label{Defn::MainRes::sC}
For $x\in \fM$, $U\subseteq \fM$, and $\eta>0$, we say $W_{P_0}$ satisfies $\sC(x,\eta,U)$ if for every $a\in B^{2n+r}(\eta)$ the expression
\begin{equation*}
	e^{a_1W_1+a_2W_2+\cdots +a_{2n+r} W_{2n+r}} x
\end{equation*}
exists in $U$.  More precisely, consider the differential equation
\begin{equation*}
\diff{r} E(r) = a_1 W_1(E(r))+\cdots + a_{2n+r} W_{2n+r}(E(r)), \quad E(0)=x.
\end{equation*}
We assume that a solution $E:[0,1]\rightarrow U$ exists for this differential equation.  We have $E(r) = e^{ra_1W_1+\cdots + ra_{2n+r} W_{2n+r}} x$.
\end{defn}

We fix the following two quantities:
\begin{itemize}
\item Fix $\eta>0$ so that $W_{P_0}$ satisfies $\sC(x_0,\eta,\fM)$.
\item Fix $\delta_0>0$ such that $\forall \delta\in (0,\delta_0]$, the following holds.  If $z\in B_{X_{K_0},L_{J_0}}(x_0,\xi)$
is such that $W_{P_0}$ satisfies $\sC(z, \delta, B_{X_{K_0},L_{J_0}}(x_0,\xi))$ and if
$t\in B_{\R^{2n+r}}(\delta)$ is such that $e^{t_1 W_1+\cdots+t_{2n+r} W_{2n+r}}z=z$
and if $W_1(z),\ldots, W_{2n+r}(z)$ are linearly independent, then $t=0$.
\end{itemize}
Such a choice of $\eta,\delta_0$ always exist (see \cref{Lemma::MoreAssume::ExistEtaDelta0}).  These constants are invariant under $C^2$ diffeomorphisms, and our quantitative results will be in terms of these constants;
see \cite[\SSSectionMoreOnAssumptions]{StovallStreetI} for a detailed discussion of $\eta$ and $\delta_0$.

In our main result, we keep track of what parameters each estimate depends on\footnote{Keeping track of constants in our main theorem is essential for applications.
For example, to prove the results in \cref{Section::Results::NSW,Section::ResGeom::SubHerm,Section::Res::SubE} we will apply \cref{Thm::Results::MainThm} infinitely many times, and the constants must be uniform over all these applications.}.  To ease notation, we introduce various notions of ``admissible constants''.  These will be constants
which only depend on certain parameters.\footnote{The various notions of admissible constants may vary from section to section, but we are explicit about how they are defined whenever
they are used.  See \cref{Rmk::Nirenberg::AdmissibleOkay} for how this varying notation is exploited in the proofs.}

\begin{defn}\label{Defn::Results::0Admiss}
We say $C$ is a $0$-admissible constant if $C$ can be chosen to depend only on upper bounds for $m$, $q$, $\zeta^{-1}$, $\xi^{-1}$, and
$\CNorm{c_{j,k}^{a,l}}{B_{X_{K_0},L_{J_0}}(x_0,\xi)}$, $1\leq j,k,l\leq m+q$, $1\leq a\leq 3$.
\end{defn}

Fix $s_0\in (1,\infty)\cup \{\omega\}$; when $s_0\in (1,\infty)$ the following result concerns the setting of $\ZygSpace{s}$ for $s\in [s_0,\infty]$ (and the results
are stronger the closer $s_0$ is to $1$, but the constants depend on the choice of $s_0$).  When $s_0=\omega$ the following result concerns
the real analytic setting.   Thus, there are two cases in what follows:  when $s_0\in (1,\infty)$ and when $s_0=\omega$.

\begin{defn}\label{Defn::Results::ZygAdmiss}
If $s_0\in (1,\infty)$, for $s\in [s_0,\infty)$, if we say $C$ is a $\Zygad{s}$-admissible constant, it means that we assume $c_{j,k}^{a,l}\in \ZygXSpace{X_{K_0},L_{J_0}}{s}[B_{X_{K_0},L_{J_0}}(x_0,\xi)]$,
for $1\leq j,k,l\leq m+q$, $1\leq a\leq 3$.  $C$ can then be chosen to depend only on $s$, $s_0$, and upper bounds for $m$, $q$, $\zeta^{-1}$, $\xi^{-1}$, $\eta^{-1}$, $\delta_0^{-1}$,
and $\ZygXNorm{c_{j,k}^{a,l}}{X_{K_0},L_{J_0}}{s}[B_{X_{K_0},L_{J_0}}(x_0,\xi)]$, $1\leq j,k,l\leq m+q$, $1\leq a\leq 3$.  For $s\in (0,s_0)$, we define $\Zygad{s}$-admissible
constants to be $\Zygad{s_0}$-admissible constants.
\end{defn}

\begin{defn}\label{Defn::Results::omegaAdmiss}
If $s_0=\omega$, and if we say $C$ is an $\Zygad{\omega}$-admissible constant, it means that we assume
$c_{j,k}^{a,l}\in \AXSpace{X_{K_0},L_{J_0}}{x_0}{\eta}$, $1\leq j,k,l\leq m+q$, $1\leq a\leq 3$.  $C$ can be chosen to depend only on upper bounds for
$m$, $q$, $\zeta^{-1}$, $\xi^{-1}$, $\eta^{-1}$, $\delta_0^{-1}$,
and $\AXNorm{c_{j,k}^{a,l}}{X_{K_0},L_{J_0}}{x_0}{\eta}$, $1\leq j,k,l\leq m+q$, $1\leq a\leq 3$.
\end{defn}

Whenever we define a notion of $*$-admissible constant (where $*$ can be any symbol), we write $A\lesssim_{*}B$ for $A\leq CB$, where
$C$ is a positive $*$-admissible constant.  We write $A\approx_{*}B$ for $A\lesssim_{*}B$ and $B\lesssim_{*} A$.

In what follows, we give $\R^r\times \C^n$ coordinates $(t,z)$, where $t=(t_1,\ldots, t_r)\in \R^r$ and $z=(z_1,\ldots, z_n)\in \C^n$.
We write $\diff{t}$ for the column vector $[\diff{t_1},\ldots, \diff{t_r}]^{\transpose}$ and $\diff{\zb}$ for the column vector $[\diff{\zb[1]},\ldots, \diff{\zb[n]}]^{\transpose}$.

\begin{thm}\label{Thm::Results::MainThm}
There exists a $0$-admissible constant $\chi\in (0,\xi]$ such that:
\begin{enumerate}[label=(\roman*),series=maintheoremenumeration]
\item\label{Item::Results::MainThm::1} $\forall y\in B_{X_{K_0},L_{J_0}}(x_0,\chi)$,
\begin{equation*}
	\left(\bigwedge X_{K_0}(y)\right)\bigwedge \left(\bigwedge L_{J_0}(y)\right)\ne 0, \quad \bigwedge W_{P_0}(y)\ne 0.
\end{equation*}
In particular, $X_{K_0}(y),L_{J_0}(y)$ is a basis for $\LVS_y$ and $W_{P_0}(y)$ is a basis for $\WVS_y$.  Recall, $$\WVS_y=\Span_\R\{W_1(y),\ldots, W_{2m+q}(y)\}.$$
\item\label{Item::Results::MainThm::2} $\forall y\in B_{X_{K_0},L_{J_0}}(x_0,\chi)$,
\begin{equation*}
	\max_{\substack{J\in \sI(n_1,m), K\in \sI(r_1,q) \\ n_1+r_1=n+r}} \left| \frac{ \left(\bigwedge X_{K}(y)\right)\bigwedge \left(\bigwedge L_{J}(y)\right)}{\left(\bigwedge X_{K_0}(y)\right)\bigwedge \left(\bigwedge L_{J_0}(y)\right)} \right|\approx_0 1, \quad
	\max_{P\in \sI(2n+r,2m+q)} \left| \frac{ \bigwedge W_P(y)}{\bigwedge W_{P_0}(y) } \right|\approx_0 1.
\end{equation*}
\item\label{Item::Results::MainThm::3} $\forall \chi'\in (0,\chi]$, $B_{X_{K_0},L_{J_0}}(x_0,\chi')$ is an open subset of $B_{X,L}(x_0,\xi)$, and is therefore a submanifold.
\end{enumerate}
For the rest of the theorem, we assume:
\begin{itemize}
\item If $s_0\in (1,\infty)$, we assume $c_{j,k}^{a,l}\in \ZygXSpace{X_{K_0},L_{J_0}}{s_0}[B_{X_{K_0},L_{J_0}}(x_0,\xi)]$, $\forall 1\leq j,k,l\leq m+q$, $1\leq a\leq 3$.
\item If $s_0=\omega$, we assume $c_{j,k}^{a,l}\in \AXSpace{X_{K_0},L_{J_0}}{x_0}{\eta}$, $\forall 1\leq j,k,l\leq m+q$, $1\leq a\leq 3$.
\end{itemize}
There exists a $C^2$ map $\Phi:B_{\R^r\times \C^n}(1)\rightarrow B_{X_{K_0},L_{J_0}}(x_0,\chi)$ and $\Zygad{s_0}$-admissible constants
$\xi_1,\xi_2>0$ such that:
\begin{enumerate}[resume*=maintheoremenumeration]
\item\label{Item::ResultsMainThm::PhiOpen} $\Phi(B_{\R^r\times \C^n}(1))$ is an open subset of $B_{X_{K_0},L_{J_0}}(x_0,\chi)$ and is therefore a submanifold of $B_{X,L}(x_0,\xi)$.
\item\label{Item::ResultsMainThem::PhiDiffeo} $\Phi:B_{\R^r\times \C^n}(1)\rightarrow \Phi(B_{\R^r\times \C^n}(1))$ is a $C^2$-diffeomorphism.
\item\label{Item::Results::MainThm::xi1xi2} $B_{X,L}(x_0,\xi_2)\subseteq B_{X_{K_0},L_{J_0}}(x_0,\xi_1)\subseteq \Phi(B_{\R^r\times \C^n}(1))\subseteq B_{X_{K_0},L_{J_0}}(x_0,\chi)\subseteq B_{X,L}(x_0,\xi)$.
\item\label{Item::Results::MainThm::Phiof0} $\Phi(0)=x_0$.
\end{enumerate}
There exists an $\Zygad{s_0}$-admissible constant 
$K\geq 1$
and a matrix
$\AMatrix:B_{\R^r\times \C^n}(1)\rightarrow \M^{(n+r)\times (n+r)}(\C)$ such that:
\begin{enumerate}[resume*=maintheoremenumeration]
\item\label{Item::Results::MainThm::AMatrix0} $\AMatrix(0)=0$.
\item\label{Item::ResultsMainThm::AFormula} \begin{equation*}
\begin{bmatrix}
\diff{t} \\
\diff{\zb}
\end{bmatrix}
=K^{-1} (I+\AMatrix)
\begin{bmatrix}
\Phi^{*} X_{K_0}\\
\Phi^{*} L_{J_0}
\end{bmatrix},
\end{equation*}
where we have written $\Phi^{*} X_{K_0}$ for the column vector of vector fields $[\Phi^{*}X_1,\ldots, \Phi^{*} X_r]^{\transpose}$ and similarly for $\Phi^{*} L_{J_0}$.
\item\label{Item::ResultsMainThm::ABound}
\begin{itemize}
	\item If $s_0\in (1,\infty)$, $\ZygNorm{\AMatrix}{s+1}[B_{\R^r\times \C^n}(1)][\M^{(n+r)\times (n+r)}]\lesssim_{\Zygad{s}} 1$, $\forall s\in (0,\infty)$, and $\ZygNorm{\AMatrix}{s_0+1}[B_{\R^r\times \C^n}(1)][\M^{(n+r)\times (n+r)}]\leq \frac{1}{4}$.
	\item If $s_0=\omega$, $\ANorm{\AMatrix}{r+2n}{1}[\M^{(n+r)\times (n+r)}]\leq \frac{1}{4}$, where we have identified $\R^r\times \C^n\cong \R^{r+2n}$.
\end{itemize}
Note that in either case, this implies the matrix $(I+\AMatrix(\zeta))$ is invertible, $\forall \zeta\in B_{\R^r\times \C^n}(1)$.
\item\label{Item::Results::MainThem::IsEMap} $\forall \zeta\in B_{\R^r\times \C^n}(1)$, $1\leq k\leq q$, $1\leq j\leq m$,
\begin{equation*}
\Phi^{*} X_k(\zeta) \in \Span_{\R}\left\{\diff{t_1},\ldots, \diff{t_r}\right\}, \quad \Phi^{*} L_j(\zeta) \in \Span_{\C} \left\{ \diff{t_1},\ldots, \diff{t_r},\diff{\zb[1]},\ldots, \diff{\zb[n]}\right\}.
\end{equation*}
\item\label{Item::ResultsMainThm::PullbacksSmooth}
\begin{itemize}
	\item If $s_0\in (1,\infty)$, we have $\forall s\in (0,\infty)$, $1\leq k\leq q$, $1\leq j\leq m$,
	\begin{equation*}
		\ZygNorm{\Phi^{*}X_k}{s+1}[B_{\R^r\times \C^n}(1)][\R^r]\lesssim_{\Zygad{s}} 1, \quad \ZygNorm{\Phi^{*}L_j}{s+1}[B_{\R^r\times \C^n}(1)][\C^{r+n}]\lesssim_{\Zygad{s}} 1.
	\end{equation*}
	\item If $s_0=\omega$, we have for $1\leq k\leq q$, $1\leq j\leq m$,
	\begin{equation*}
		\ANorm{\Phi^{*} X_k}{2n+r}{1}[\R^r]\lesssim_{\Zygad{\omega}} 1,\quad \ANorm{\Phi^{*} L_j}{2n+r}{1}[\C^{r+n}]\lesssim_{\Zygad{\omega}} 1.
	\end{equation*}
\end{itemize}
\end{enumerate}
\end{thm}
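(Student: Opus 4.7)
The algebraic items \ref{Item::Results::MainThm::1}--\ref{Item::Results::MainThm::3} are essentially continuity statements: the wedge inequality \cref{Eqn::MainRes::ChooseZeta} is an open condition at $x_0$, and for $y$ within a sub-Riemannian ball of sufficiently small $0$-admissible radius $\chi$, the quotient of wedges at $y$ remains comparable to its value at $x_0$ (which is bounded by $\zeta^{-1}$). That $B_{X_{K_0},L_{J_0}}(x_0,\chi')$ is open in $B_{X,L}(x_0,\xi)$ once $K_0,J_0$ give a basis follows from the submanifold description of $B_{X,L}(x_0,\xi)$ and the Picard--Lindel\"of theorem: the time-$1$ flows under the $r+2n$ vector fields $W_{P_0}$ through a fixed point $y$ cover an open neighborhood of $y$ as long as $W_{P_0}(y)$ is a basis for $\WVS_y$.

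\textbf{Construction of $\Phi$ in two stages.} First, apply the real-variable analogue (the main theorems of \cite{StovallStreetI,StovallStreetII,StovallStreetIII}) to the list of $2n+r$ real $C^1$ vector fields $W_{P_0} = (X_1,\dots,X_r,2\Real L_1,\dots,2\Imag L_n)$ on $B_{X_{K_0},L_{J_0}}(x_0,\chi)$. Together with the algebraic part, this produces a $C^2$ diffeomorphism $\Psi:B_{\R^{r+2n}}(1)\to B_{W_{P_0}}(x_0,\chi_0)$ such that the pullbacks $\Psi^{*}W_{P_0}$ form a controlled basis for the tangent bundle and inherit Zygmund (resp.\ real analytic) regularity from the structure constants $c_{j,k}^{a,l}$, all with bounds depending only on the $0$ and $\Zygad{s_0}$ (resp.\ $\Zygad{\omega}$) admissible data. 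This handles the ``real'' part of the normalization and gives the nested ball inclusions \ref{Item::Results::MainThm::xi1xi2}.

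Second, push the complex/elliptic structure through $\Psi$: one obtains on $B_{\R^{r+2n}}(1)\subseteq \R^r\times \R^{2n}$ a family of complex vector fields $\Psi^{*}L_{J_0}$ spanning a formally integrable elliptic structure $\sE$ of rank $n$ (integrability is inherited from the bracket identities on the $L_j$ via item \ref{Item::ResultsMainThm::AFormula}). The assumption on $\LVS_x\cap\LVSb[x]=\XVS_x$ guarantees $\sE\cap\overline{\sE}$ projects to the $t$-axis, so $\sE$ defines an elliptic (Nirenberg) structure whose totally real part is spanned by $\partial/\partial t_1,\dots,\partial/\partial t_r$ up to a $C^2$ correction. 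I would then invoke the quantitative Nirenberg integrability theorem for elliptic structures from the author's earlier work (used in \cite{StreetNirenberg}) to produce a $C^2$ diffeomorphism $\Theta:B_{\R^r\times\C^n}(1)\to \Psi(B_{\R^{r+2n}}(\chi'))$ straightening $\sE$ to the standard antiholomorphic distribution $\Span_\C\{\partial/\partial t,\partial/\partial\bar z\}$, with the quantitative bound that $\Theta^{*}$ applied to a suitable basis of $\sE$ differs from $K^{-1}[\partial/\partial t;\partial/\partial\bar z]$ by a small ($\Zygad{s_0+1}$-norm $\le 1/4$) matrix $\sA$ with $\sA(0)=0$. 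Setting $\Phi:=\Psi\circ\Theta$ gives the desired chart, and the matrix identity \ref{Item::ResultsMainThm::AFormula} together with the regularity of $\Psi^{*}L_J$ yields \ref{Item::ResultsMainThm::PullbacksSmooth}.

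\textbf{Main obstacle.} The nontrivial step is obtaining the quantitative Newlander--Nirenberg/Nirenberg straightening with bounds tracked only through $0$ and $\Zygad{s_0}$ admissible data. This requires carefully setting up a nonlinear elliptic equation (a $\dbar$-type equation for the straightening diffeomorphism) whose coefficients are built from $\Psi^{*}L_{J_0}$, then applying the sharp Zygmund/$\ASpace{2n+r}{r}$ regularity theory for this equation as in \cite{StreetNirenberg,StovallStreetII}. Two technical points must be controlled simultaneously: the normalization $\sA(0)=0$ together with the smallness $\|\sA\|\le 1/4$ (which dictates the $0$-admissible choice of $\chi$ and a final rescaling of $\R^r\times\C^n$), and the fact that the constants $K$, $\xi_1$, $\xi_2$ absorb both the real Nagel--Stein--Wainger ball geometry from \cite{NagelSteinWaingerBallsAndMetrics} and the $\dbar$-regularity constants. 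The real-analytic case $s_0=\omega$ is handled similarly, replacing Zygmund estimates by bounds in the norm $\ASpace{2n+r}{1}$ and using the analytic version of the quantitative Newlander--Nirenberg theorem. Once the map $\Phi$ is produced, \ref{Item::ResultsMainThm::AFormula} together with the admissible bound on $\sA$ makes $I+\sA$ invertible pointwise, from which \ref{Item::Results::MainThem::IsEMap} is immediate.
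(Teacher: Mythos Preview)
Your proposal is correct and matches the paper's proof essentially line for line: items (i)--(iii) are obtained by applying the real-variable result of \cite{StovallStreetI,StovallStreetII,StovallStreetIII} to $W_{P_0}$ (together with the wedge-product equivalence of \cref{Prop::AppendWedge::EquivQuot} to pass between the real and complex quotients), and then $\Phi=\Phi_0\circ\Phi_1$ where $\Phi_0$ is the real chart and $\Phi_1$ is supplied by the quantitative Nirenberg theorem of \cite{StreetNirenberg} applied to $\Phi_0^{*}K_2^{-1}X_{K_0},\Phi_0^{*}K_2^{-1}L_{J_0}$. One small correction: your $\Theta$ should land in $B_{\R^{r+2n}}(1)$ (Euclidean space), not in $\Psi(B_{\R^{r+2n}}(\chi'))$, so that the composition $\Psi\circ\Theta$ is well defined; also note that the paper obtains $\xi_1$ by a direct flow argument on $\Phi^{*}W_j$ (rather than reading it off the real stage) and quotes $\xi_2$ from \cite{StovallStreetI}.
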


\begin{rmk}\label{Rmk::MainRes::EMap}
In the language of \cref{Section::Emfld}, the map $\Phi:B_{\R^r\times \C^n}(1)\rightarrow B_{X,L}(x_0,\xi)$ from \cref{Thm::Results::MainThm} is an E-map;
where $B_{X,L}(x_0,\xi)$ is given the E-manifold structure with the associated
elliptic structure $\LVS$.
In particular, when $r=0$, $\LVS$ is a complex structure and
the E-manifold structure on $B_{X,L}(x_0,\xi)$ is the complex manifold structure
associated to $\LVS$ (via the Newlander-Nirenberg theorem).
In this case, $\Phi:B_{\C^n}(1)\rightarrow B_{X,L}(x_0,\xi)$ is a holomorphic map (see \cref{Rmk::EMfld::FullSubcategory}).
This is particularly important for applications to several complex variables.  For example this is used in \cref{Section::ResSmooth::Complex,Section::ResGeom::SubHerm,Section::ExtremalBasis::Biholo}
to guarantee the desired coordinate charts are holomorphic.
\end{rmk}

	\subsection{Densities}\label{Section::MainResult::Densities}
In many applications, one wishes to change variables in an integral using the coordinate chart given in \cref{Thm::Results::MainThm} (see, e.g., the settings in \cref{Section::Results::NSW,Section::ResGeom::SubHerm,Section::Res::SubE}\footnote{For example, such changes of variables were important in the study of multi-parameter
singular integrals and singular radon transforms in \cite{StreetMultiParamSingInt,SteinStreetA,SteinStreetI,SteinStreetII,SteinStreetII}.}).
Thus, it is important to understand pullbacks of certain densities via the map $\Phi$.  We present such results in this section.
We refer the reader to \cite{GuilleminNotes} for a quick introduction to densities (see also \cite{NicolaescuLecturesOnGeometryOfManifolds} where
densities are called $1$-densities).  
In this section, we take all the assumptions as in \cref{Thm::Results::MainThm}
and let $\Phi$ be as in that theorem.

Let $\chi\in (0,\xi]$ be as in \cref{Thm::Results::MainThm} and let $\nu$ be a real $C^1$ density on $B_{X_{K_0},L_{J_0}}(x_0,\chi)$.
Suppose, for $1\leq k\leq r$, $1\leq j\leq n$,
\begin{equation*}
\Lie{X_k} \nu = f_k^1\nu, \quad \Lie{L_{j}} \nu = f_j^2\nu,\quad f_k^1,f_j^2\in \CSpace{B_{X_{K_0},L_{J_0}}(x_0,\chi)},
\end{equation*}
where $\Lie{V}$ denotes the Lie derivative with respect to $V$, and $\Lie{L_j}$ is defined as $\Lie{\Real{L_j}}+i\Lie{\Imag{L_j}}$.

\begin{defn}
If we say $C$ is a $\Zygsonu$-admissible constant, it means $C$ is a $\Zygad{s_0}$-admissible constant which is also allowed to
depend on upper bounds for
$\CNorm{f_k^1}{B_{X_{K_0},L_{J_0}}(x_0,\chi)}$, $1\leq k\leq r$,
and $\CNorm{f_j^2}{B_{X_{K_0},L_{J_0}}(x_0,\chi)}$, $1\leq j\leq n$.  This definition applies in either case: $s_0\in (1,\infty)$ or $s_0=\omega$.
\end{defn}



\begin{defn}
If $s_0\in (1,\infty)$, for $s>0$, if we say $C$ is a $\Zygad{s;\nu}$-admissible constant it means that we assume
$f_k^1,f_j^2\in \ZygXSpace{X_{K_0},L_{J_0}}{s}[B_{X_{K_0},L_{J_0}}(x_0,\chi)]$, $1\leq k\leq r$, $1\leq j\leq n$.
$C$ is allowed to depend on anything an $\Zygad{s}$-admissible constant is allowed to depend on, and is also allowed to
depend on upper bounds for $\ZygXNorm{f_k^1}{X_{K_0},L_{J_0}}{s}[B_{X_{K_0},L_{J_0}}(x_0,\chi)]$, $1\leq k\leq r$,
and $\ZygXNorm{f_j^2}{X_{K_0},L_{J_0}}{s}[B_{X_{K_0},L_{J_0}}(x_0,\chi)]$, $1\leq j\leq n$.
For $s\leq 0$, we define $\Zygad{s;\nu}$-admissible constants to be $\Zygsonu$-admissible constants.
\end{defn}

If $s_0=\omega$ we fix some $r_0>0$; the results which follow depend on the choice of $r_0$.

\begin{defn}
If $s_0=\omega$, if we say $C$ is an $\Zygad{\omega;\nu}$-admissible constant, it means that we assume
$f_k^1,f_j^2\in \AXSpace{X_{K_0},L_{J_0}}{x_0}{r_0}$.  $C$ is allowed to depend on anything an
$\Zygad{\omega}$-admissible constant may depend on, and is allowed to depend on upper bounds for $r_0^{-1}$,
$\AXNorm{f_k^1}{X_{K_0},L_{J_0}}{x_0}{r_0}$, $1\leq k\leq r$,
and $\AXNorm{f_j^2}{X_{K_0},L_{J_0}}{x_0}{r_0}$, $1\leq j\leq n$.
\end{defn}


\begin{thm}\label{Thm::Results::Desnity::MainResult}
Define $h\in \CjSpace{1}[B_{\R^r\times\C^n}(1)]$ by $\Phi^{*}\nu = h\LebDensity$, where $\LebDensity$ denotes the usual
Lebesgue density on $\R^r\times \C^n$.
\begin{enumerate}[(i)]
\item\label{Item::Results::Density::hConst} $\forall \zeta\in B_{\R^r\times \C^n}(1)$,
$$h(\zeta)\approx_{\Zygsonu} \nu(X_1,\ldots, X_r, 2\Real(L_1),\ldots, 2\Real(L_n), 2\Imag(L_1),\ldots, 2\Imag(L_n))(x_0). $$
In particular, $h(\zeta)$ always has the same sign, and is either never zero or always zero.
\item\label{Item::Results::Density::hSmooth}
\begin{itemize}
\item If $s_0\in (1,\infty)$, for $s>0$,
\begin{equation*}
	\ZygNorm{h}{s}[B_{\R^r\times \C^n}(1)]\lesssim_{\Zygad{s-1;\nu}}  \left| \nu(X_1,\ldots, X_r, 2\Real(L_1),\ldots, 2\Real(L_n), 2\Imag(L_1),\ldots, 2\Imag(L_n))(x_0)\right|.
\end{equation*}
\item If $s_0=\omega$,
\begin{equation*}
	\ANorm{h}{2n+r}{\min\{1,r_0\}}\lesssim_{\Zygad{\omega;\nu}}   \left| \nu(X_1,\ldots, X_r, 2\Real(L_1),\ldots, 2\Real(L_n), 2\Imag(L_1),\ldots, 2\Imag(L_n))(x_0)\right|.
\end{equation*}
\end{itemize}
\end{enumerate}
\end{thm}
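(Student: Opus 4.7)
The plan is to write $h$ as a product of a Jacobian factor (computable from $\AMatrix$) and a scalar function on the target pulled back by $\Phi$, then analyze each factor. First, inverting \cref{Item::ResultsMainThm::AFormula} gives
\begin{equation*}
\begin{bmatrix}\Phi^{*}X_{K_0}\\ \Phi^{*}L_{J_0}\end{bmatrix} = K(I+\AMatrix)^{-1}\begin{bmatrix}\partial/\partial t\\ \partial/\partial \bar z\end{bmatrix},
\end{equation*}
so that pushing forward and taking real/imaginary parts via $\partial/\partial x_j = \partial/\partial z_j+\partial/\partial\bar z_j$, $\partial/\partial y_j = i(\partial/\partial z_j-\partial/\partial\bar z_j)$, together with $2\Real(L_j) = L_j+\Lb[j]$ and $2\Imag(L_j) = (L_j-\Lb[j])/i$, produces a real change-of-basis matrix $\mathcal{B}(\zeta)$ satisfying $\Phi_{*}(\partial/\partial t,\partial/\partial x,\partial/\partial y) = \mathcal{B}(\zeta)\,W_{P_0}|_{\Phi(\zeta)}$. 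The entries of $\mathcal{B}$ are elementary algebraic functions of $\AMatrix$, so $|\det\mathcal{B}|\approx_0 1$ by \cref{Item::ResultsMainThm::ABound}, and the Zygmund or analytic norms of $\mathcal{B}$ and of $(\det\mathcal{B})^{-1}$ are controlled by those of $\AMatrix$. The density transformation law then yields
\begin{equation*}
h(\zeta) = |\det\mathcal{B}(\zeta)|\cdot\psi(\Phi(\zeta)), \qquad \psi := \nu(W_1,\ldots,W_{2n+r}).
\end{equation*}

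Next I would derive a scalar first-order ODE for $\psi$ along each $V\in\{X_k, L_j\}$. By the standard product rule (for top-degree forms, extended to densities up to constant sign on connected components), $V\psi = (\mathcal{L}_V\nu)(W_{P_0}) + \mathrm{tr}(M_V)\,\psi$, where $M_V$ is the matrix defined by $[V,W_i] = \sum_l (M_V)_i^l W_l$ modulo $W_p$ for $p>2n+r$; any such $W_p$ can be re-expressed as a $0$-admissibly bounded linear combination of $W_{P_0}$ by \cref{Item::Results::MainThm::2}, with coefficients inheriting the Zygmund or analytic regularity of the structure constants $c_{j,k}^{a,l}$. Substituting $\mathcal{L}_V\nu = f\cdot\nu$ gives $V\psi = g_V\psi$ with $g_V = f+\mathrm{tr}(M_V)$; the $C$-norm of $g_V$ is $\Zygsonu$-admissible, and its $\ZygXNorm{\cdot}{X_{K_0},L_{J_0}}{s}$ (respectively $\AXNorm{\cdot}{X_{K_0},L_{J_0}}{x_0}{r_0}$) norm is $\Zygad{s;\nu}$- (respectively $\Zygad{\omega;\nu}$-) admissible. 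Taking conjugates produces analogous equations for $\Lb[j]\psi$, and combining yields $W_k\psi = g_{W_k}\psi$ for each $k\in\{1,\ldots,2m+q\}$ with the same admissible bounds. Integrating $W_k(\log|\psi|) = g_{W_k}$ along Carnot-Carath\'eodory paths from $x_0$, which by definition of the ball cover all of $B_{X_{K_0},L_{J_0}}(x_0,\chi)$, establishes $|\psi|\approx_{\Zygsonu}|\psi(x_0)|$ and proves part (i).

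For part (ii), the regularity of $h = |\det\mathcal{B}|\cdot(\psi\circ\Phi)$ splits. The factor $|\det\mathcal{B}|$ is a rational function of $\AMatrix$ with denominator bounded below (since $\|\AMatrix\|\leq 1/4$), so its norms follow from \cref{Item::ResultsMainThm::ABound}. The intrinsic regularity of $\psi$ on the target in $\ZygXSpace{X_{K_0},L_{J_0}}{s}$ (respectively $\AXSpace{X_{K_0},L_{J_0}}{x_0}{r_0}$) is obtained by bootstrapping the system $W_k\psi = g_{W_k}\psi$ through the machinery underlying \SSDensitiesTheorem\ of \cite{StovallStreetI} (the E-manifold version appearing in \SSDensities). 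Finally, the change-of-variables principle adapted from \SSCompareFunctionSpaces\ of \cite{StovallStreetI}, combined with \cref{Item::ResultsMainThm::PullbacksSmooth}, identifies $\Phi^{*}\ZygXSpace{X_{K_0},L_{J_0}}{s}$ with $\ZygSpace{s}(B_{\R^r\times\C^n}(1))$ (and $\Phi^{*}\AXSpace{X_{K_0},L_{J_0}}{x_0}{r_0}$ with $\ASpace{r+2n}{\min\{1,r_0\}}$) with admissible constants, transporting the estimate back to the unit ball.

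The hard part will be the regularity bootstrap for $\psi$ from the ODE $W_k\psi = g_{W_k}\psi$ to Zygmund or analytic regularity on the manifold. Propagating regularity through integration along the vector fields requires a precise ``gain of one derivative,'' which is exactly why Zygmund spaces (as opposed to $C^k$) are indispensable for integer exponents; in the real-analytic case, it additionally forces careful tracking of radii of convergence when composing power series with $\Phi^{-1}$, which accounts for the $\min\{1,r_0\}$ in the statement.
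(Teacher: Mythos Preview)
Your approach is correct in outline but takes a more circuitous route than the paper. The paper exploits the factorization $\Phi=\Phi_0\circ\Phi_1$ that was used to construct $\Phi$ in the first place: since $\Phi_0$ is exactly the real-case chart of \cref{Prop::MainRealProp}, the real-case density result \cref{Prop::RealDensities} (proved in \cite{StovallStreetI,StovallStreetII,StovallStreetIII}) applies directly to give $\Phi_0^{*}\nu=h_0\LebDensity$ with all the required bounds on $h_0$. Then $h=(h_0\circ\Phi_1)\,\det d_{(t,x)}\Phi_1$, and each factor is controlled by \cref{Thm::Nirenberg::MainThm} \cref{Item::Nirenberg::PhiRegularity} and \cref{Item::Nirenberg::JacobianConst} together with the composition lemmas (\cref{Lemma::FuncSpaceRev::Composition}, \cref{Lemma::FuncSpaceRev::ComposeAnal}) and the algebra property. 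Your decomposition $h=(\det\mathcal{B})\cdot(\psi\circ\Phi)$ with $\psi=\nu(W_{P_0})$ is valid, but the ODE analysis you propose for $\psi$ on the manifold is precisely the content that is already packaged into \cref{Prop::RealDensities}; you are effectively re-proving that proposition inside this argument and then transporting it through $\Phi$ rather than through $\Phi_0$. Both routes arrive at the same estimates, but the paper's is a two-line reduction because the analytical core has already been isolated. One small correction: since the theorem tracks the sign of $h$, you want $\det\mathcal{B}$ rather than $|\det\mathcal{B}|$ in your formula.
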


\begin{cor}\label{Cor::Results::Desnity::MainCor}
Let $\xi_2>0$ be as in \cref{Thm::Results::MainThm}.  Then,
\begin{equation}\label{Eqn::Results::Density::MainEqual}
\begin{split}
	&\nu(B_{X_{K_0},L_{J_0}}(x_0,\xi_2)) \approx_{\Zygsonu} \nu(B_{X,L}(x_0,\xi_2))
	\\&\approx_{\Zygsonu}
	\nu(X_1,\ldots, X_r, 2\Real(L_1),\ldots, 2\Real(L_n), 2\Imag(L_1),\ldots, 2\Imag(L_n))(x_0),
\end{split}
\end{equation}
and therefore
\begin{equation}\label{Eqn::Results::Density::ToShow1}
\begin{split}
	&\left|\nu(B_{X_{K_0},L_{J_0}}(x_0,\xi_2))\right| \approx_{\Zygsonu}\left| \nu(B_{X,L}(x_0,\xi_2) \right|
	\\&\approx_{\Zygsonu}
	\left|\nu(X_1,\ldots, X_r, 2\Real(L_1),\ldots, 2\Real(L_n), 2\Imag(L_1),\ldots, 2\Imag(L_n))(x_0)\right|
	\\&\approx_0
	\max_{\substack{K\in \sI(r,q), J\in \sI(n,m)}} \left|\nu( X_K, 2\Real(L)_J, 2\Imag(L)_J)(x_0)\right|
	\\&\approx_0
	\max_{P\in \sI(2n+r,2m+q)} \left|\nu(W_P)(x_0)\right|.
\end{split}
\end{equation}
\end{cor}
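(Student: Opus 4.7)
The plan is to derive this corollary from \cref{Thm::Results::Desnity::MainResult} by a change of variables under $\Phi$, combined with the ball containments and wedge estimates from \cref{Thm::Results::MainThm}.

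First I would set $c_0 := \nu(X_1,\ldots, X_r, 2\Real(L_1),\ldots, 2\Real(L_n), 2\Imag(L_1),\ldots, 2\Imag(L_n))(x_0)$ and observe from $W_{P_0} = (X_{K_0}, 2\Real(L)_{J_0}, 2\Imag(L)_{J_0})$ (see \cref{Eqn::MainRes::OrderWs}) that $c_0 = \nu(W_{P_0})(x_0)$. By \cref{Thm::Results::Desnity::MainResult}\cref{Item::Results::Density::hConst}, we have $h \approx_{\Zygsonu} c_0$ uniformly on $B_{\R^r\times \C^n}(1)$. Using item \cref{Item::Results::MainThm::xi1xi2} of \cref{Thm::Results::MainThm} and the obvious inclusion $B_{X_{K_0},L_{J_0}}(x_0,\xi_2)\subseteq B_{X,L}(x_0,\xi_2)$ (more vector fields yield larger balls), both balls lie inside $\Phi(B_{\R^r\times \C^n}(1))$ and we may change variables: $\nu(E)=\int_{\Phi^{-1}(E)} h\,d\LebDensity \approx_{\Zygsonu} c_0 \cdot |\Phi^{-1}(E)|_{\mathrm{Leb}}$.

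The next step is to pin down $|\Phi^{-1}(E)|_{\mathrm{Leb}}\approx 1$ for both $E=B_{X_{K_0},L_{J_0}}(x_0,\xi_2)$ and $E=B_{X,L}(x_0,\xi_2)$. The upper bound is immediate since $\Phi^{-1}(E)\subseteq B_{\R^r\times \C^n}(1)$. For the lower bound I would apply Picard-Lindel\"of exactly as in \cref{Rmk::IntroNormal::NecessityBall}: using the uniform bounds $\|\Phi^{*} X_k\|_{C^1},\|\Phi^{*}L_j\|_{C^1}\lesssim_{\Zygad{s_0}} 1$ from \cref{Item::ResultsMainThm::PullbacksSmooth}, there is an admissible $\epsilon>0$ with $B_{\R^r\times\C^n}(\epsilon)\subseteq B_{\Phi^{*}X_{K_0},\Phi^{*}L_{J_0}}(0,\xi_2)$; applying $\Phi$ and using diffeomorphism invariance of sub-Riemannian balls (\SSNormsAreInv) yields $\Phi(B_{\R^r\times\C^n}(\epsilon))\subseteq B_{X_{K_0},L_{J_0}}(x_0,\xi_2)\subseteq B_{X,L}(x_0,\xi_2)$, giving the desired lower bound on both preimages. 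Combining, the three quantities in \cref{Eqn::Results::Density::MainEqual} are all $\approx_{\Zygsonu} c_0$, which proves \cref{Eqn::Results::Density::MainEqual}.

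Taking absolute values gives the first two equivalences of \cref{Eqn::Results::Density::ToShow1}. For the last two ($\approx_0$) equivalences I would use the algebraic formula $\nu(V_1,\ldots,V_{2n+r})(x_0)=\frac{V_1\wedge\cdots\wedge V_{2n+r}(x_0)}{\bigwedge W_{P_0}(x_0)}\,\nu(W_{P_0})(x_0)$ valid whenever the $V_i$'s lie in $\WVS_{x_0}$. Since the max over $P\in\sI(2n+r,2m+q)$ trivially dominates the max over $K\in\sI(r,q),J\in\sI(n,m)$ (every tuple $(X_K,2\Real(L)_J,2\Imag(L)_J)$ is some $W_P$), and the latter includes $K_0,J_0$ giving $|c_0|$, we obtain
\begin{equation*}
|c_0|\leq \max_{K,J}|\nu(X_K,2\Real(L)_J,2\Imag(L)_J)(x_0)|\leq \max_P|\nu(W_P)(x_0)|.
\end{equation*}
For the reverse inequality apply \cref{Item::Results::MainThm::2} of \cref{Thm::Results::MainThm} at $y=x_0$ to get $|\bigwedge W_P(x_0)/\bigwedge W_{P_0}(x_0)|\lesssim_0 1$ for every $P$, whence $|\nu(W_P)(x_0)|\lesssim_0|\nu(W_{P_0})(x_0)|=|c_0|$. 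This closes the chain.

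The main obstacle is the Picard-Lindel\"of step producing the Euclidean sub-ball inside $\Phi^{-1}(B_{X_{K_0},L_{J_0}}(x_0,\xi_2))$ with an $\Zygad{s_0}$-admissible radius; everything else is an application of \cref{Thm::Results::Desnity::MainResult} and bookkeeping of admissible constants.
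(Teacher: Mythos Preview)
Your argument is correct, but it diverges from the paper's in how you obtain the lower bound on $\nu(B_{X_{K_0},L_{J_0}}(x_0,\xi_2))$. The paper does not work with $|\Phi^{-1}(E)|_{\mathrm{Leb}}$ at all; instead it first records an auxiliary result (\cref{Cor::Containments}) obtained by applying \cref{Thm::Results::MainThm} a \emph{second} time with $\xi$ replaced by $\xi_2$, yielding another map $\Phih$ with the same properties and $\Phih(B_{\R^r\times\C^n}(1))\subseteq B_{X_{K_0},L_{J_0}}(x_0,\xi_2)$. The same integral computation gives $\nu(\Phih(B_{\R^r\times\C^n}(1)))\approx_{\Zygsonu} c_0$, and since $h$ has constant sign the sandwich
\[
\Phih(B_{\R^r\times\C^n}(1))\subseteq B_{X_{K_0},L_{J_0}}(x_0,\xi_2)\subseteq B_{X,L}(x_0,\xi_2)\subseteq \Phi(B_{\R^r\times\C^n}(1))
\]
finishes \cref{Eqn::Results::Density::MainEqual}. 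Your route---exhibiting an admissible Euclidean ball inside $\Phi^{-1}(B_{X_{K_0},L_{J_0}}(x_0,\xi_2))$---is more elementary in that it avoids the second invocation of the main theorem, at the cost of a small direct computation. One caveat: your citation of Picard--Lindel\"of and \cref{Rmk::IntroNormal::NecessityBall} points to the wrong direction (that remark shows a CC-ball sits inside a Euclidean ball, via control on how far flows travel). What you actually need is that $\diff{t_k},\diff{\zb[j]}$ are $\Zygad{s_0}$-bounded combinations of $\Phi^{*}X_{K_0},\Phi^{*}L_{J_0}$; this comes from \cref{Item::ResultsMainThm::AFormula} together with the bound on $\AMatrix$ in \cref{Item::ResultsMainThm::ABound}, after which a straight-line path does the job.

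For the $\approx_0$ chain your use of \cref{Thm::Results::MainThm}~\cref{Item::Results::MainThm::2} at $y=x_0$ is equivalent to the paper's argument, which quotes \cref{Eqn::MainRes::RealZeta} directly and then invokes \cref{Lemma::AppendWedge::FormulasForQuot} to translate the wedge quotient into a density quotient.
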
 
	
	\subsection{Some Comments on the Assumptions}\label{Section::CommentsAssump}
Because $W_1,\ldots, W_{2m+q}$ span the tangent space of $B_{X,L}(x_0,\xi)$ at every point (see \cref{Prop::AppendImmerse}) and $B_{X,L}(x_0,\xi)$ is a $2n+r$ dimensional
manifold, it follows that $x\mapsto \dim \WVS_x$, taking $B_{X,L}(x_0,\xi)\rightarrow \N$, is constant.
However, the hypothesis that $x\mapsto \dim \LVS_x$ is constant does not follow from the other assumptions.  The next example elucidates this:

\begin{example}
On $\C$, consider the vector fields $L_1=\diff{z}$, $L_2=\zb\diff{\zb}$, $X_1=z\diff{z}+\zb\diff{\zb}$, and $X_2=\frac{1}{i}\left( z\diff{z}-\zb\diff{\zb}\right)$.
We then have
\begin{equation*}
	[L_1,L_2]=[X_1,X_2]=[L_2,X_1]=[L_2,X_2]=0,\quad [L_1,X_1]=L_1,\quad [L_1,X_2]=\frac{1}{i}L_1,
\end{equation*}
and the vector fields $L_1,L_2,X_1,X_2, \overline{L_1},\overline{L_2}$ span the complexified tangent space at every point (in fact $L_1$ and $\overline{L_1}$ do).
However,
$$\dim\Span_{\C}\{L_1(z),L_2(z),X_1(z),X_2(z)\} =
\begin{cases} 2, &z\ne 0,\\
1, &z=0.
\end{cases}$$
\end{example}

The assumption that $x\mapsto \dim \LVS_x$ is constant is equivalent to the assumption that $x\mapsto \dim \XVS_x$ is constant.
Indeed, by \cref{Lemma::AppendCR::dimFormula},  and the fact that $\dim \WVS_x= 2n+r=\dim B_{X,L}(x_0,\xi)$, we have
\begin{equation*}
2n+r=\dim \WVS_x = \dim (\LVS_x+\LVSb[x]) = 2\dim(\LVS_x) - \dim(\LVS_x\bigcap \LVSb[x]) = 2\dim(\LVS_x)-\dim(\XVS_x).
\end{equation*}
In particular, in the two most important special cases $\LVS_x=\XVS_x$ $\forall x$, or $\XVS_x=\{0\}$ $\forall x$, the hypothesis that $x\mapsto \dim \LVS_x$ is constant does follow
from the other assumptions. 

A choice of $\eta,\delta_0>0$, as in the hypotheses of \cref{Thm::Results::MainThm}, always exist.  In fact, they can be chosen uniformly on compact sets, as the next lemma shows.
\begin{lemma}\label{Lemma::MoreAssume::ExistEtaDelta0}
Let $W=W_1,\ldots, W_N$ be a list of $C^1$ vector fields on a $C^2$ manifold $M$ and let $\Compact\Subset M$ be a compact set.
\begin{enumerate}[(i)]
\item\label{Item::MoreAssump::ExistEta} $\exists \eta>0$ such that $\forall x_0\in \Compact$, $W$ satisfies $\sC(x_0,\eta,M)$.
\item\label{Item::MoreAssump::ExistDelta0} $\exists \delta_0>0$ such that $\forall \theta\in S^{N-1}$ if $x\in \Compact$ is such that $\theta_1 W_1(x)+\cdots+\theta_N W_N(x)\ne 0$,
then $\forall r\in (0,\delta_0]$,
\begin{equation*}
e^{r \theta_1 W_1+\cdots + r\theta_N W_N} x\ne x.
\end{equation*}
\end{enumerate}
\end{lemma}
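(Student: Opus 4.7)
The plan is to reduce both statements to uniform Picard-Lindel\"of estimates in finitely many Euclidean coordinate charts. First, cover $\Compact$ by finitely many open sets $V_1,\ldots,V_K$ such that each $\overline{V_i}$ lies in the domain of a $C^2$ chart $\psi_i\colon U_i\to\Omega_i\subseteq\R^n$. Pushing forward via $\psi_i$, the $W_j$ become $C^1$ vector fields on $\Omega_i$. By shrinking $U_i$ if necessary, find compact sets $\widetilde{\Compact}_i\subseteq\Omega_i$ containing a neighborhood of $\psi_i(\overline{V_i})$ on which every $(\psi_i)_*W_j$ is bounded by some $M$ and Lipschitz with constant $L$. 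Then any $V_\theta:=\sum_j\theta_jW_j$ with $\theta\in S^{N-1}$ enjoys these same bounds, up to a factor of $\sqrt N$ absorbed into $M$ and $L$.

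For \cref{Item::MoreAssump::ExistEta}, take $x_0\in\Compact\cap V_i$ and $a\in B_{\R^N}(\eta)$; the ODE $\dot E=\sum_ja_j(\psi_i)_*W_j(E)$ with $E(0)=\psi_i(x_0)$ has right-hand side of norm at most $\sqrt N\,M\eta$ as long as $E\in\widetilde{\Compact}_i$. Choose $\eta$ so small that $\sqrt N\,M\eta$ is less than the distance from $\psi_i(\overline{V_i})$ to the complement of $\widetilde{\Compact}_i$ (uniformly over the finite cover). Then the Euclidean Picard-Lindel\"of theorem produces a unique solution on $[0,1]$ that never leaves $\widetilde{\Compact}_i$; pulling back via $\psi_i^{-1}$ verifies $\sC(x_0,\eta,M)$.

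For \cref{Item::MoreAssump::ExistDelta0}, given $\theta\in S^{N-1}$ and $x\in\Compact\cap V_i$ with $V_\theta(x)\ne 0$, first choose $\delta_0$ (via the previous paragraph) so that the flow $y(r):=\psi_i(e^{rV_\theta}x)$ is defined and remains in $\widetilde{\Compact}_i$ for $r\in[0,\delta_0]$. Write $v:=(\psi_i)_*V_\theta(\psi_i(x))$ and $y_0:=\psi_i(x)$. The identity $y(r)-y_0=\int_0^r V_\theta(y(s))\,ds$ combined with $|V_\theta(y)-v|\leq L|y-y_0|$ yields via a standard Gronwall argument
\[
|y(s)-y_0|\leq\frac{s|v|}{1-sL}\quad\text{for }s<1/L,
\]
and hence $|y(r)-y_0-rv|\leq L|v|r^2$ for $r\leq 1/(2L)$. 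Consequently $|y(r)-y_0|\geq r|v|(1-Lr)\geq\tfrac12 r|v|>0$ for $r\leq 1/(2L)$, so after shrinking $\delta_0$ to be at most $1/(2L)$, we conclude $e^{rV_\theta}x\ne x$ in $M$.

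The subtle point — and the only place where the argument requires care — is the uniformity in \cref{Item::MoreAssump::ExistDelta0}: a priori $|v|=|V_\theta(x)|$ can be arbitrarily small as $(\theta,x)$ ranges over $S^{N-1}\times\Compact$, so one cannot obtain $\delta_0$ from a quantitative lower bound on $|v|$. The saving feature exploited above is that the Taylor error $|y(r)-y_0-rv|$ itself scales \emph{linearly} with $|v|$ (the trajectory's displacement from $y_0$ is already controlled by $|v|$ via Gronwall), so the ratio of error to main term is $O(Lr)$ independently of $|v|$. This is what lets a single $\delta_0$ work simultaneously for all admissible pairs $(\theta,x)$.
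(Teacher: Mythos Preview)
Your proof is correct and follows the same strategy as the paper, which simply attributes \cref{Item::MoreAssump::ExistEta} to Picard--Lindel\"of and cites \cite[\SSLemmaMoreOnAssump]{StovallStreetI} for \cref{Item::MoreAssump::ExistDelta0}; you have essentially written out the latter reference's argument, and your discussion of why the $|v|$-dependence cancels is exactly the point that makes the uniformity go through. One minor notational slip: in the displayed integral identity for \cref{Item::MoreAssump::ExistDelta0} you write $V_\theta(y(s))$ where you mean $(\psi_i)_*V_\theta(y(s))$, but the meaning is clear from context.
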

\begin{proof}\Cref{Item::MoreAssump::ExistEta} is a simple consequence of the Phragm\'en--Lindel\"of Principle.  \Cref{Item::MoreAssump::ExistDelta0}
is proved in \cite[\SSLemmaMoreOnAssump]{StovallStreetI}.\end{proof}

Despite the fact that a choice of $\eta,\delta_0>0$ always exist (as described in \cref{Lemma::MoreAssume::ExistEtaDelta0}),
$\eta$ and $\delta_0$ are diffeomorphic invariant quantities\footnote{I.e., $\eta$ and $\delta_0$ remain unchanged when the entire setting is pushed forward
under a $C^2$ diffeomorphism.},
and the proof of existence of these constants in \cref{Lemma::MoreAssume::ExistEtaDelta0} depends on the $C^1$ norms of the vector fields $W_1,\ldots, W_{N}$
 in some fixed coordinate system (which is not a diffeomorphic invariant quantity).  Thus, we state all of our results in terms of $\eta$ and $\delta_0$
 to preserve the quantitative diffeomorphism invariance.  See \cref{Section::DiffeoInv}.

	
	\subsection{Diffeomorphism invariance}\label{Section::DiffeoInv}
The main results of this paper are invariant under arbitrary $C^2$ diffeomorphisms.  This is true quantitatively.  
For example, consider \cref{Thm::Results::MainThm}.  Let $X_1,\ldots, X_q,L_1,\ldots, L_m$ be the vector fields on $\fM$ from \cref{Thm::Results::MainThm} and
let $\Psi:\fM\rightarrow \fN$ be a $C^2$ diffeomorphism. 
Then $X_1,\ldots, X_q,L_1,\ldots, L_m$ satisfy the conditions of \cref{Thm::Results::MainThm} at the point $x_0\in \fM$ if and only if
$\Psi_{*} X_1,\ldots, \Psi_*X_q, \Psi_*L_1,\ldots, \Psi_* L_m$ satisfy the conditions at $\Psi(x_0)$.  Moreover, any constant which is $*$-admissible (where $*$ is any symbol)
with respect to $X_1,\ldots, X_q,L_1,\ldots, L_m$ is $*$-admissible with respect to $\Psi_{*} X_1,\ldots, \Psi_*X_q, \Psi_*L_1,\ldots, \Psi_* L_m$.
Finally, if $\Phi$ is the map guaranteed by \cref{Thm::Results::MainThm} when applied to $X_1,\ldots, X_q,L_1,\ldots, L_m$, then
$\Psi\circ \Phi$ is the map given by \cref{Thm::Results::MainThm} when applied to  $\Psi_{*} X_1,\ldots, \Psi_*X_q, \Psi_*L_1,\ldots, \Psi_* L_m$ (as can be seen by tracing through the proof).
Thus the main results (and, indeed, the entire proofs) are invariant under arbitrary $C^2$ diffeomorphisms.
	
	\subsection{The Frobenius Theorem and Singular Foliations}\label{Section::SingFoliation}
We now describe a consequence of \cref{Thm::Results::MainThm} which is not used in the rest of the paper:  it provides coordinate charts on leaves of singular foliations, which
behave well in a quantitative way near singular points.

Let $\fM$ be a smooth manifold and let $X_1,\ldots, X_q$ be real $C^\infty$ vector fields on $\fM$.
Suppose
\begin{equation*}
[X_j, X_k] = \sum_{l=1}^{q} c_{j,k}^{l} X_l, \quad c_{j,k}^{l}\in \CjSpace{\infty}[\fM].
\end{equation*}
For $x\in \fM$, let $\XVS_x:=\Span_{\R} \{ X_1(x),\ldots, X_q(x)\}$.  Under these hypotheses, the real Frobenius theorem applies to the distribution $\XVS$
to foliate $\fM$ into leaves.  The tangent bundle to each leaf is given by $\XVS$ restricted to the leaf.
Note that this may be a singular foliation:  different leaves may have different dimensions, since $x\mapsto \dim \XVS_x$ might not be constant in $x$.
For $x\in \fM$, let $\Leaf[x]$ denote the leaf passing through $x$; thus $\Leaf[x]$ is an injectively immersed $C^\infty$ submanifold of $\fM$.

\begin{defn}
We say $x\in \fM$ is a singular point of the foliation if $x\mapsto \dim \LVS_x$ is not constant on any neighborhood of $x$ (equivalently, if $x\mapsto \dim \Leaf[x]$ is not constant
on any neighborhood of $x$).
\end{defn}

$\Leaf[x]$ is a manifold, and is therefore defined by an atlas.  For applications in analysis, it is sometimes important to have quantitative control
of the charts which define the atlas.  An interesting aspect of \cref{Thm::Results::MainThm} is that it yields coordinate charts
which behave well whether or not one is near a singular point.

Indeed, let $\Compact \Subset \fM$ be a compact set.  \Cref{Lemma::MoreAssume::ExistEtaDelta0} and some straightforward estimates show that
\cref{Thm::Results::MainThm} (in the case $m=0$) applies
to the vector fields $X_1,\ldots, X_q$ (with, e.g., $s_0=3/2$), \textit{uniformly} for $x_0\in \Compact$.  Thus, any constant
which is $\Zygad{s}$-admissible (for any $s\in (0,\infty)$) in the sense of \cref{Thm::Results::MainThm} can be taken independent of $x_0\in \Compact$.
The map $\Phi$ provided by \cref{Thm::Results::MainThm} can be seen as a coordinate chart on $\Leaf[x_0]$, centered at $x_0$, which has good estimates
which are uniform in $x_0$.  In particular, as $x_0\in \Compact$ approaches a singular point in $\Compact$, the estimates remain uniform.

The above holds in the complex setting as well.  Again let $\fM$ be a smooth manifold, and let $L_1,\ldots, L_m$ be $C^\infty$ complex vector fields on $\fM$.
Suppose
\begin{equation*}
[L_j, L_k] = \sum_{l=1}^m c_{j,k}^{1,l} L_l, \quad [L_j, \Lb[k]] = \sum_{l=1}^m c_{j,k}^{2,l} L_l + \sum_{l=1}^{m} c_{j,k}^{3,l} \Lb[l], \quad c_{j,k}^{a,l}\in \CjSpace{\infty}[\fM].
\end{equation*}
For $x\in \fM$, set $\LVS_x:= \Span_{\C} \{ L_1(x),\ldots, L_m(x)\}$; we assume
$\LVS_x\bigcap \LVSb[x]=\{0\}$, $\forall x\in \fM$.
Under the above assumptions, the real Frobenius theorem applies to the distribution $\LVS+\LVSb$ to foliate $\fM$ into leaves;
as before this may be a singular foliation.
Let $\Leaf[x]$ denote the leaf passing through $x$.
 For each $x\in \fM$,  $\LVS$ (restricted to $\Leaf[x]$) defines a complex structure on $\Leaf[x]$, and the classical Newlander-Nirenberg theorem therefore
gives $\Leaf[x]$ the structure of a complex manifold.
As in the real case, \cref{Thm::Results::MainThm} applies uniformly as the base point $x_0$ ranges over compact sets (in this case, we take $q=0$),
in the sense that $\Zygad{s}$-admissible constants (for any $s\in (0,\infty)$) may be taken independent of $x_0$ as $x_0$ ranges over a compact set.
The map $\Phi$ provided by \cref{Thm::Results::MainThm} can be seen as a holomorphic coordinate chart near $x_0$, which has estimates which are uniform
on compact sets; whether or not that compact set contains a singular point.

\begin{rmk}
A previous (and weaker) version of the above ideas (originally described in \cite{StreetMultiparameterCCBalls}) was an essential point in the work of the author and Stein
on singular Radon transforms \cite{SteinStreetA, SteinStreetI, SteinStreetII, SteinStreetIII}.
For example, a corollary of one of the main results of \cite{SteinStreetIII} is the following.  Suppose $\gamma_t(x)$ is real analytic function defined on a neighborhood of
the origin of $(t,x)\in \R^N\times \R^n$, mapping to $\R^n$, and satisfying $\gamma_0(x)\equiv x$.  Define an operator acting on functions $f(x)$ defined near the origin in $x\in \R^n$ by
\begin{equation*}
Tf(x) = \psi(x) \int f(\gamma_t(x)) K(t) \: dt,
\end{equation*}
where $K(t)$ is a Calder\'on-Zygmund kernel supported near $t=0$, and $\psi\in C_0^\infty(\R^n)$ is supported near $x=0$.  Then, $T:L^p\rightarrow L^p$, for $1<p<\infty$;
see \cite{SteinStreetIII} for a more precise statement and further details.
This result does not follow from the foundational work of Christ, Nagel, Stein, and Wainger on singular Radon transforms \cite{ChristNagelSteinWaingerSingularAndMaximal};
however, the only additional ingredient necessary to conclude this result (beyond the theory in \cite{ChristNagelSteinWaingerSingularAndMaximal})
is the above described uniformity of coordinate charts near singular points (though the theory in \cite{SteinStreetIII} proceeds by proving a more general result, and
concluding the above result as a corollary).
\end{rmk}

\begin{rmk}
One way to view the above discussion is that \cref{Thm::Results::MainThm} is quantitatively invariant under $C^2$ diffeomorphisms (see \cref{Section::DiffeoInv}),
and being ``nearly'' a singular point is not a diffeomorphically invariant concept.  Indeed, consider the real case described above.
Fix $x_0\in \fM$ and let $k:=\dim \XVS_{x_0}=\dim \Leaf[x_0]$ and $N:=\dim \fM$.
Pick a coordinate system on $\fM$ near $x_0$. 
In this coordinate system, we may think of $X_1(x_0),\ldots, X_q(x_0)$ as
vectors in $\R^N$ which span a $k$ dimensional subspace of $\R^N$.  Let $\sigma:=\min |\det B|$, where $B$ ranges over all $k\times k$
submatrices of the $N\times q$ matrix $(X_1(x_0)|\cdots |X_q(x_0))$.  Then, $\sigma>0$.
One might say $x_0$ is ``nearly'' a singular point if $\sigma$ is small.  However, $\sigma$ is not invariant under diffeomorphisms:  the above procedure depended on the
choice of coordinate system.
This is one way of intuitively understanding why the estimates in  \cref{Thm::Results::MainThm} do not depend on a lower bound for $\sigma>0$.
\end{rmk}

\begin{rmk}
While we described the above for smooth vector fields, similar remarks hold for vector fields with a finite level of smoothness
using the same ideas.
\end{rmk}



	\subsection{Proof Outline}
\Cref{Thm::Results::MainThm} is the central result of this paper.  If all we wanted was a coordinate system like $\Phi$ in which the vector fields were normalized and had the desired regularity,
but did not have the key property given in  \cref{Thm::Results::MainThm} \cref{Item::Results::MainThem::IsEMap},\footnote{And replacing $\diff{\zb}$ with $\diff{x}$ in \cref{Thm::Results::MainThm} \cref{Item::ResultsMainThm::AFormula}, where $x\in \R^{2n}$.} then \cref{Thm::Results::MainThm} would be an easy consequence
of the main results in \cite{StovallStreetI,StovallStreetII,StovallStreetII} applied to $W_1,\ldots, W_{2m+q}$ (see \cref{Section::RealCase} for a detailed statement of this).
In particular, in the case when $q=0$ (and $M$ is given the complex structure induced by $\LVS$ via the Newlander-Nirenberg theorem--see \cref{Rmk::MainRes::EMap}),
then if we did not require that $\Phi$ be holomorphic, \cref{Thm::Results::MainThm} would be a simple consequence of the results in \cite{StovallStreetI,StovallStreetII,StovallStreetII}.

The proof proceeds as follows.  We apply the results from \cite{StovallStreetI,StovallStreetII,StovallStreetII} (see \cref{Section::RealCase}) to yield a candidate chart $\Phi_0$ satisfying
all the conclusions of \cref{Thm::Results::MainThm} without the key property discussed above.  
Then, we apply the main technical result of \cite{StreetNirenberg} to obtain another map $\Phi_1$ such that
 if we set $\Phi=\Phi_0\circ \Phi_1$, $\Phi$ satisfies all the conclusions of \cref{Thm::Results::MainThm}.
 
 As described above, in this paper we construct the map $\Phi$ as a composition of two maps $\Phi=\Phi_0\circ \Phi_1$.  
 When $s_0\in (1,\infty)$, $\Phi_0$ is constructed in \cite{StovallStreetII} as a composition of three maps (one of which was a simple dilation map).
 When $s_0\in (1,\infty)$, $\Phi_1$ was constructed in \cite{StreetNirenberg} as a composition of four maps (two of which were simple dilation maps).
 Thus, if $s_0\in (1,\infty)$, when all the proofs are unraveled, $\Phi$ is a composition of seven maps, three of which are simple dilation maps.
 When $s_0=\omega$, $\Phi$ is considerably simpler.
	
\section{Notation}\label{Section::Notation}
If $f:M\rightarrow N$ is a $C^1$ map between $C^1$ manifolds, we write $df(x):T_xM\rightarrow T_xN$ for the usual differential.  We extend this to be a complex linear map
$df(x):\C T_xM\rightarrow \C T_x N$, where $\C T_x M = T_x M\otimes_{\R} \C$ denotes the complexified tangent space.
Even if the manifold $M$ has additional structure (e.g., in the case of a complex manifold), $df(x)$ is defined in terms of the underlying real manifold structure.

When working on $\R^r\times \C^n$ we will often use coordinates $(t,z)$ where $t=(t_1,\ldots, t_r)\in \R^r$ and $z=(z_1,\ldots, z_n)\in \C^n$.
We write
\begin{equation*}
\diff{t}=
\begin{bmatrix}
\diff{t_1}\\
\diff{t_2}\\
\vdots\\
\diff{t_r}
\end{bmatrix},
\quad
\diff{\zb}
=\begin{bmatrix}
\diff{\zb[1]}\\
\diff{\zb[2]}\\
\vdots\\
\diff{\zb[n]}
\end{bmatrix}.
\end{equation*}
At times we will instead use coordinates $(u,w)$ where $u\in \R^r$ and $w\in \C^n$ and define $\diff{u}$ and $\diff{\wb}$ similarly.

We identify $\R^{r}\times \R^{2n}\cong \R^{r}\times \C^n$ via the map $(t_1,\ldots, t_r, x_1,\ldots,x_{2n})\mapsto (t_1,\ldots, t_r, x_1+ix_{n+1}, \ldots, x_n+ix_{2n})$.
Thus, given a function $G(t,z):\R^r\times \C^n\rightarrow \R^s\times \C^m$, we may also think of $G$ as function $G(t,x)=(G_1(t,x),\ldots, G_{s+2m}(t,x)):\R^{r}\times \R^{2n}\rightarrow \R^{s}\times \R^{2m}$.
For such a function, we write
\begin{equation*}
d_{(t,x)} G =
\begin{bmatrix}
\frac{\partial G_1}{\partial t_1} & \cdots & \frac{\partial G_1}{\partial t_r} & \frac{\partial G_1}{\partial x_1} & \cdots & \frac{\partial G_1}{\partial x_{2n}}\\
\vdots &\ddots &\vdots&\vdots&\ddots& \vdots\\
\vdots &\ddots &\vdots&\vdots&\ddots& \vdots\\
\frac{\partial G_{s+2m}}{\partial t_1} & \cdots & \frac{\partial G_{s+2m}}{\partial t_r} & \frac{\partial G_{s+2m}}{\partial x_1} & \cdots & \frac{\partial G_{s+2m}}{\partial x_{2n}}
\end{bmatrix}.
\end{equation*} 
We write $I_{N\times N}\in \M^{N\times N}$ to denote the $N\times N$ identity matrix, and $0_{a\times b}\in \M^{a\times b}$ to denote the $a\times b$ zero matrix.

	
\section{E-manifolds}\label{Section::Emfld}
The results in this paper simultaneously deal with the setting of real vector fields (on a real manifold) and the setting of complex vector fields (on a complex manifold).
It is more convenient to work in a category of manifolds which contains both real manifolds and complex manifolds as full subcategories.
We define these manifolds here, and call them E-manifolds.\footnote{The manifold structure we discuss here is well-known to experts, but we could not find a name for the category of such manifolds,
and decided to call them E-manifolds for lack of a better name.}
This category of manifolds was also used in \cite{StreetNirenberg}, and we refer the reader to that reference for a more detailed description.

\begin{rmk}
``E'' in the name E-manifolds stands for ``elliptic''.  Indeed, using the terminology of \cite[Definition I.2.3]{TrevesHypoanalyticStructures},
a complex manifold is a manifold endowed with a complex structure, a CR-manifold is a manifold endowed with a CR structure, and an
E-manifold is a manifold endowed with an elliptic structure; see \cref{Thm::EMfld::StructureEquiv}  and \cite{StreetNirenberg} for a more detailed discussion.  Unfortunately, the name ``elliptic manifold'' is already taken by an unrelated concept.
\end{rmk}

\begin{defn}
Let $U_1\subseteq \R^{r_1}\times \C^{n_1}$ and $U_2\subseteq \R^{r_2}\times \C^{n_2}$ be open sets.  We give $\R^{r_1}\times \C^{n_1}$ coordinates $(t,z)$
and $\R^{r_2}\times \C^{n_2}$ coordinates $(u,w)$.  We say a $C^1$ map $f:U_1\rightarrow U_2$ is an E-map if
\begin{equation*}
	df(t,z) \diff{t_k}, df(t,z)\diff{\zb[j]}\in \Span_{\C}\left\{\diff{u_1},\ldots, \diff{u_{r_2}},\diff{\wb[1]},\ldots, \diff{\wb[n_2]}\right\},\quad \forall (t,z)\in U_1, 1\leq k\leq r_1, 1\leq j\leq n_1.
\end{equation*}
For $s\in (1,\infty]\cup \{\omega\}$, we say $f$ is a $\ZygSpacemap{s}$ E-map 
if $f$ is an E-map and $f\in \ZygSpacemap{s}[U_1][\R^{r_2}\times \C^{n_2}]$.
\end{defn}

\begin{rmk}
Suppose $U_1,U_2\subseteq \R^{r}\times \C^{n}$ and $f:U_1\rightarrow U_2$ is an E-map which is also a $C^1$-diffeomorphism.  Then, $f^{-1}:U_2\rightarrow U_1$
is an E-map.
\end{rmk}

\begin{rmk}\label{Rmk::Emfld::Holomorphic}
Note that when $r_1=r_2=0$, if $U_1\subseteq \R^{0}\times \C^{n_1}\cong \C^{n_1}$, $U_2\subseteq \R^{0}\times \C^{n_2}\cong \C^{n_2}$, then
$f:U_1\rightarrow U_2$ is an E-map if and only if it is holomorphic.
\end{rmk}

\begin{defn}
Let $M$ be a Hausdorff, paracompact topological space and fix $n, r\in \N$, $s\in (1,\infty]\cup\{\omega\}$.  We say $\{(\phi_\alpha, V_\alpha) : \alpha\in \sI\}$ (where $\sI$ is some index set)
is a $\ZygSpace{s}$ E-atlas of dimension $(r,n)$ if $\{V_\alpha: \alpha\in \sI\}$ is an open cover for $M$,
$\phi_{\alpha}:V_\alpha\rightarrow U_\alpha$ is a homeomorphism where $U_\alpha\subseteq \R^r\times \C^n$ is open, and
$\phi_\beta \circ \phi_{\alpha}^{-1}: \phi_{\alpha}(V_\beta\cap V_\alpha)\rightarrow U_\beta$ is a $\ZygSpacemap{s}$ E-map, $\forall \alpha,\beta$.
\end{defn}

\begin{defn}\label{Defn::EMfld::EMfld}
A $\ZygSpace{s}$ E-manifold $M$ of dimension $(r,n)$ is a Hausdorff, paracompact topological space $M$ endowed with a $\ZygSpace{s}$ E-atlas
of dimension $(r,n)$.
\end{defn}

\begin{rmk}
On may analogously define $C^m$ E-manifolds in the obvious way.  $C^\infty$ E-manifolds and $\ZygSpace{\infty}$ E-manifolds are the same (because $\ZygSpacemap{\infty}$ is the usual space of smooth functions).
\end{rmk}

\begin{defn}
For $s\in (0,\infty]\cup\{\omega\}$,
let $M$ and $N$ be $\ZygSpace{s+1}$ E-manifolds with $\ZygSpace{s+1}$ E-atlases $\{(\phi_{\alpha},V_{\alpha})\}$ and $\{(\psi_\beta, W_\beta)\}$, respectively.
We say $f:M\rightarrow N$ is a $\ZygSpacemap{s+1}$ E-map if $\psi_\beta\circ f\circ \phi_{\alpha}^{-1}$ is a $\ZygSpacemap{s+1}$ E-map, $\forall \alpha,\beta$.
\end{defn}

\begin{lemma}
For $s\in (0,\infty]\cup\{\omega\}$, let $M_1$, $M_2$, and $M_3$ be $\ZygSpace{s+1}$ E-manifolds and $f_1:M_1\rightarrow M_2$ and $f_2:M_2\rightarrow M_3$
be $\ZygSpacemap{s+1}$ E-maps.  Then, $f_2\circ f_1:M_1\rightarrow M_3$ is a $\ZygSpacemap{s+1}$ E-map.
\end{lemma}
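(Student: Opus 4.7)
The plan is to argue locally: the $\ZygSpacemap{s+1}$ E-map condition is defined chart-by-chart and is manifestly a local property, so I would fix a point $p\in M_1$ and choose charts $(\phi_\alpha, V_\alpha)$ on $M_1$ containing $p$, $(\psi_\beta, W_\beta)$ on $M_2$ containing $f_1(p)$, and $(\chi_\gamma, Y_\gamma)$ on $M_3$ containing $f_2(f_1(p))$. Using continuity of $f_1$ and $f_2$, I would shrink $V_\alpha$ so that $f_1(V_\alpha)\subseteq W_\beta$ and $f_2(f_1(V_\alpha))\subseteq Y_\gamma$, and then exploit on this neighborhood the identity
\begin{equation*}
\chi_\gamma\circ (f_2\circ f_1)\circ \phi_\alpha^{-1} \;=\; (\chi_\gamma\circ f_2\circ \psi_\beta^{-1})\circ (\psi_\beta\circ f_1\circ \phi_\alpha^{-1}),
\end{equation*}
whose two factors on the right are $\ZygSpacemap{s+1}$ E-maps between open subsets of spaces of the form $\R^{r_j}\times \C^{n_j}$ by hypothesis. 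The entire problem thus reduces to the Euclidean statement: the composition of two $\ZygSpacemap{s+1}$ E-maps between open subsets of some $\R^r\times \C^n$ is again a $\ZygSpacemap{s+1}$ E-map.

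In this Euclidean case I would verify the two defining conditions separately. The E-map property is infinitesimal and should drop out of the chain rule: giving the three spaces coordinates $(t,z)$, $(u,w)$, and $(v,\zeta)$, for any $\partial\in\{\diff{t_k},\diff{\zb[j]}\}$ one has $d(F_2\circ F_1)\partial = dF_2 \cdot dF_1\partial$. The E-map property of $F_1$ places $dF_1\partial$ in $\Span_{\C}\{\diff{u_i},\diff{\wb[l]}\}$, and the E-map property of $F_2$ then places $dF_2$ applied to each of those generators in $\Span_{\C}\{\diff{v_m},\diff{\overline{\zeta}_p}\}$. So the E-map property of $F_2\circ F_1$ is a one-line chain-rule computation.

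The remaining step, and the only real obstacle I foresee, is the regularity claim that $F_2\circ F_1$ is locally in $\ZygSpace{s+1}$. For $s=\omega$ this is the classical fact that the composition of real analytic maps is real analytic. For $s\in (0,\infty]$, since $s+1>1$ the maps are at least $C^1$, and I would invoke the standard composition lemma for Zygmund classes: the chain rule combined with Moser-type product estimates (of the sort underlying the function-space machinery already set up earlier in the paper) yields $\ZygSpace{s+1}$-regularity of the composition. I do not expect any essentially new difficulty here because this composition fact is standard for Zygmund spaces of order greater than one; combining it with the chain-rule verification above gives both defining conditions of a $\ZygSpacemap{s+1}$ E-map, finishing the proof.
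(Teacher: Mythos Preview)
Your proposal is correct and outlines precisely the standard argument. The paper itself does not give a proof here; it simply cites \cite[Lemma 4.10]{StreetNirenberg} and calls the result standard, so there is no substantive comparison to make---your chart-reduction plus chain-rule verification of the E-map condition, together with the Zygmund composition lemma (available in the paper as \cref{Lemma::FuncSpaceRev::Composition}) for the regularity claim, is exactly what one would expect that reference to contain.
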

\begin{proof}See \cite[Lemma 4.10]{StreetNirenberg} for a proof of this standard result.\end{proof}

\begin{lemma}
For $s\in (0,\infty]\cup\{\omega\}$, let $M_1$ and $M_2$ be $\ZygSpace{s+1}$ E-manifolds and let $f:M_1\rightarrow M_2$ be a $\ZygSpacemap{s+1}$ E-map which is also a $C^1$ diffeomorphism.
Then, $f^{-1}:M_2\rightarrow M_1$ is a $\ZygSpacemap{s+1}$ E-map.
\end{lemma}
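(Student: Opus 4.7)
The plan is to argue in charts, reducing the lemma to two ingredients: the Euclidean statement immediately preceding it (the inverse of a $C^1$-diffeomorphism which is an E-map between open subsets of the same $\R^r\times\C^n$ is again an E-map), and the standard fact that the inverse of a $C^1$-diffeomorphism of class $\ZygSpacemap{s+1}$ is of class $\ZygSpacemap{s+1}$. Given $y\in M_2$, I would pick a chart $(\phi_\alpha,V_\alpha)$ in the atlas of $M_1$ around $f^{-1}(y)$ and a chart $(\psi_\beta,W_\beta)$ in the atlas of $M_2$ around $y$, and shrink them by continuity so that $g:=\psi_\beta\circ f\circ\phi_\alpha^{-1}$ is a $\ZygSpacemap{s+1}$ E-map and a $C^1$-diffeomorphism between open subsets of $\R^{r_1}\times\C^{n_1}$ and $\R^{r_2}\times\C^{n_2}$. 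Since neighborhoods of this form cover $M_2$, it suffices to show that $g^{-1}$ is a $\ZygSpacemap{s+1}$ E-map.

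The key geometric step is to verify that $(r_1,n_1)=(r_2,n_2)$. Writing $g(t,z)=(u(t,z),w(t,z))$ and expanding $dg(\partial/\partial t_k)$ and $dg(\partial/\partial\bar z_j)$ in the basis $\{\partial/\partial u_i,\partial/\partial w_j,\partial/\partial\bar w_j\}$, the E-map condition (no $\partial/\partial w_j$ coefficient may appear) forces $\partial w/\partial t\equiv 0$ and $\partial w/\partial\bar z\equiv 0$, i.e.\ $w$ depends only on $z$ and is holomorphic in $z$. Splitting $z=x+iy$ and $w=\xi+i\eta$, the real Jacobian of $g$ therefore has the block form
\begin{equation*}
d_{\R}g=\begin{pmatrix}A&B\\0&C\end{pmatrix},
\end{equation*}
where $A=\partial u/\partial t$ is $r_2\times r_1$, the $0$-block is $2n_2\times r_1$, and $C$ is the $2n_2\times 2n_1$ real Jacobian of the holomorphic map $w\colon\C^{n_1}\to\C^{n_2}$. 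Invertibility of $d_{\R}g$ forces $C$ to be both surjective and injective, hence $n_1=n_2$, and $A$ to be invertible, hence $r_1=r_2$.

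Once the dimensions are matched, the Euclidean remark preceding the lemma applies and yields that $g^{-1}$ is an E-map. For the regularity of $g^{-1}$: when $s\in(0,\infty]$, the identity $dg^{-1}(y)=(dg(g^{-1}(y)))^{-1}$ together with the analyticity of matrix inversion on the open set of invertible matrices and the closure of $\ZygSpace{s+1}$ under composition with $C^1$-diffeomorphisms, applied inductively on the regularity order, gives $g^{-1}\in\ZygSpacemap{s+1}$; when $s=\omega$, the real-analytic inverse function theorem applies directly. The main obstacle is the dimension-matching step; once this is in hand, the conclusion reduces to the Euclidean remark together with a standard regularity-of-inverses argument.
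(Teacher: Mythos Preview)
Your reduction to charts is the right move, and once the E-dimensions of $M_1$ and $M_2$ agree, the Euclidean remark together with a standard regularity-of-inverses argument finishes the job. The genuine gap is in the dimension-matching step. From the block form
\[
d_{\R}g=\begin{pmatrix}A&B\\0&C\end{pmatrix},
\]
with $A$ of size $r_2\times r_1$ and $C$ of size $2n_2\times 2n_1$, invertibility gives only that $C$ is surjective (project onto the last $2n_2$ coordinates) and that $A$ has full column rank (look at the first $r_1$ columns). This yields $n_1\geq n_2$ and $r_2\geq r_1$, not the reverse inequalities; nothing forces $C$ to be injective.

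In fact the dimension-matching claim is false in general, and the lemma as literally stated fails without it. Take $M_1=\C^2$ with E-dimension $(0,2)$, $M_2=\R^2\times\C$ with E-dimension $(2,1)$, and $f(z_1,z_2)=(\operatorname{Re} z_2,\operatorname{Im} z_2,z_1)$. This is a real-analytic diffeomorphism, and one checks directly that $df(\partial/\partial\bar z_1)=\partial/\partial\bar w$ and $df(\partial/\partial\bar z_2)=\tfrac12(\partial/\partial u_1+i\,\partial/\partial u_2)$, so $f$ is an E-map. But $f^{-1}(u_1,u_2,w)=(w,u_1+iu_2)$ sends $\partial/\partial u_1$ to $\partial/\partial x_2=\partial/\partial z_2+\partial/\partial\bar z_2$, which lies outside $\operatorname{span}_{\C}\{\partial/\partial\bar z_1,\partial/\partial\bar z_2\}$; hence $f^{-1}$ is not an E-map.

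So the lemma requires the additional hypothesis that $M_1$ and $M_2$ have the same E-dimension $(r,n)$; this is the only case used in the paper and is exactly the hypothesis of the Euclidean remark you invoke. With that assumption in place, your argument works. (The paper itself does not prove the lemma but cites the companion paper \cite{StreetNirenberg}, where the equal-dimension hypothesis is presumably explicit.)
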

\begin{proof}See \cite[Lemma 4.11]{StreetNirenberg} for a proof of this standard result.\end{proof}

\begin{defn}
Suppose $s\in (0,\infty]\cup\{\omega\}$, and $M_1$ and $M_2$ are $\ZygSpace{s+1}$ E-manifolds.  We say $f:M_1\rightarrow M_2$ is a $\ZygSpace{s+1}$ E-diffeomorphism
if $f:M_1\rightarrow M_2$ is invertible and $f:M_1\rightarrow M_2$ and $f^{-1}:M_2\rightarrow M_1$ are $\ZygSpacemap{s+1}$ E-maps.
\end{defn}

\begin{rmk}\label{Rmk::EMfld::FullSubcategory}
For $s\in (1,\infty]\cup\{\omega\}$ the category of $\ZygSpace{s}$ E-manifolds, whose objects are $\ZygSpace{s}$ E-manifolds and morphisms are $\ZygSpacemap{s}$ E-maps,
contains both $\ZygSpace{s}$ real manifolds and complex manifolds as full subcategories.  The real manifolds of dimension $r$ are those
with E-dimension $(r,0)$, while the complex manifolds of complex dimension $n$ are those with E-dimension $(0,n)$.  That complex manifolds (with morphisms given by holomorphic maps)
embed as a \textit{full} subcategory follows from \cref{Rmk::Emfld::Holomorphic}.
The isomorphisms in the category of $\ZygSpace{s}$ E-manifolds are the $\ZygSpace{s}$ E-diffeomorphisms.
\end{rmk}

\begin{rmk}
Note that open subsets of $\R^r\times \C^n$ are $\ZygSpace{\omega}$
E-manifolds of dimension $(r,n)$, by using the atlas consisting
of one coordinate chart (the identity map).  Henceforth,
we give such sets this E-manifold structure.
\end{rmk}



As mentioned above, $\ZygSpace{s}$ E-manifolds of dimension $(r,0)$ are exactly the $\ZygSpace{s}$ manifolds of dimension $r$, in the usual sense
(in particular, one may take \cref{Defn::EMfld::EMfld} in the case $n=0$ as the definition of a $\ZygSpace{s}$ manifolds of dimension $r$).
There is a natural forgetful functor taking $\ZygSpace{s}$ E-manifolds of dimension $(r,n)$ to $\ZygSpace{s}$ manifolds of dimension $2n+r$.
Thus, one may define any of the usual objects from manifolds on E-manifolds.  For example, we have the following standard definitions on $\ZygSpace{s}$ manifolds,
and therefore on $\ZygSpace{s}$ E-manifolds.\footnote{The following standard definitions can all be found in \cite{TrevesHypoanalyticStructures} in the case $s=\infty$,
and in \cite{StreetNirenberg} for finite levels of smoothness.}


\begin{defn}
For $s\in (0,\infty]\cup\{\omega\}$ let $M$ be a $\ZygSpace{s+1}$ manifold of dimension $r$, with $\ZygSpace{s+1}$ atlas $\{(\phi_\alpha, V_\alpha)\}$;
here $\phi_{\alpha}:V_{\alpha}\rightarrow U_\alpha$ is a $\ZygSpace{s+1}$ diffeomorphism and $U_\alpha\subseteq \R^{r}$ is open.
We say a complex vector field $X$ on $M$ is a $\ZygSpace{s}$ vector field if $(\phi_\alpha)_{*} X \in \ZygSpacemap{s}[U_\alpha][\C^{r}]$, $\forall \alpha$.
\end{defn}

\begin{defn}
For $s\in (0,\infty]\cup \{\omega\}$, a $\ZygSpace{s}$ sub-bundle $\LVS$ of $\C TM$ of rank $m\in \N$ is a disjoint union
\begin{equation*}
	\LVS = \bigcup_{\zeta\in M} \LVS_{\zeta}\subseteq \C TM
\end{equation*}
such that:
\begin{itemize}
	\item $\forall \zeta\in M$, $\LVS_{\zeta}$ is an $m$-dimensional vector subspace of $\C T_{\zeta} M$.
	\item $\forall \zeta_0\in M$, there exists an open neighborhood $U\subseteq M$ of $\zeta_0$ and a finite collection of complex
	$\ZygSpace{s}$ vector fields $L_1,\ldots, L_K$ on $U$, such that $\forall \zeta\in U$,
	\begin{equation*}
		\Span_{\C} \{ L_1(\zeta),\ldots, L_K(\zeta)\} = \LVS_{\zeta}.
	\end{equation*}
\end{itemize}
\end{defn}

\begin{defn}
For a $\ZygSpace{s}$ sub-bundle $\LVS$ of $\C TM$, we define $\LVSb$ by $\LVSb[\zeta]=\{ \zb : z\in \LVS_{\zeta}\}$.
It is easy to see that $\LVSb$ is a $\ZygSpace{s}$ sub-bundle of $\C TM$.
\end{defn}

\begin{defn}
Let $W\subseteq M$ be open, $L$ a complex vector field on $W$, and $\LVS$ a $\ZygSpace{s}$ sub-bundle of $\C TM$.  We say $L$ is a section of $\LVS$
over $W$ if $\forall \zeta\in W$, $L(\zeta)\in \LVS_{\zeta}$.  We say $L$ is a $\ZygSpace{s}$ section of $\LVS$ over $W$ if $L$ is a section of $\LVS$
over $W$ and $L$ is a $\ZygSpace{s}$ complex vector field on $W$.
\end{defn}

\begin{defn}
Let $\LVS$ be a $\ZygSpace{s+1}$ sub-bundle of $\C TM$.  We say $\LVS$ is a $\ZygSpace{s+1}$ formally integrable structure if the following holds.
For all $W\subseteq M$ open, and all $\ZygSpace{s+1}$ sections $L_1$ and $L_2$ of $\LVS$ over $W$, we have $[L_1,L_2]$ is a section of $\LVS$ over $W$.
\end{defn}

\begin{defn}
Let $\LVS$ be a $\ZygSpace{s+1}$ formally integrable structure on $M$.  We say $\LVS$ is a $\ZygSpace{s+1}$ elliptic structure if
$\LVS_{\zeta}+\LVSb[\zeta]=\C T_\zeta M$, $\forall \zeta\in M$.
\end{defn}


For $s\in (0,\infty]\cup\{\omega\}$, on a $\ZygSpace{s+2}$ E-manifold of dimension $(r,n)$, there is a naturally associated $\ZygSpace{s+1}$ elliptic structure on $M$
defined as follows.
Let $(\phi_\alpha,V_\alpha)$ be an E-atlas for $M$.  For $\zeta\in M$ let $\zeta\in V_\alpha$ for some $\alpha$.
We set:
\begin{equation*}
\LVS_{\zeta}:= \Span_{\C}\left\{ d\Phi_{\alpha}^{-1}(\Phi_\alpha(\zeta)) \diff{t_1},\ldots, d\Phi_{\alpha}^{-1}(\Phi_\alpha(\zeta)) \diff{t_r},d\Phi_{\alpha}^{-1}(\Phi_\alpha(\zeta)) \diff{\zb[1]},\ldots, d\Phi_{\alpha}^{-1}(\Phi_\alpha(\zeta)) \diff{\zb[n]} \right\}.
\end{equation*}
It is straightforward to check that $\LVS_{\zeta}\subseteq \C T_{x_0}M$ is well-defined\footnote{I.e., $\LVS_{\zeta}$ does not depend on which $\alpha$ we pick with $\zeta\in V_\alpha$.}
and $\LVS= \bigcup_{\zeta\in M} \LVS_{\zeta}$ is a $\ZygSpace{s+1}$ elliptic structure on $M$.
As remarked above, an E-manifold of dimension $(0,n)$ is a complex manifold; in this case $\LVS$ equals $T^{0,1}M$.


\begin{defn}\label{Defn::EMfld::sL}
We call $\LVS$ the elliptic structure associated to the E-manifold $M$.
\end{defn}

\begin{lemma}\label{Lemma::EMfld::RecongnizeEMap}
Suppose $M$ and $\Mh$ are $\ZygSpace{s}$ E-manifolds with associated elliptic structures $\LVS$ and $\LVSh$.
Then a $\ZygSpacemap{s}$ map $f:M\rightarrow \Mh$ is a $\ZygSpacemap{s}$ E-map if and only if
$df(x) \LVS_x \subseteq \LVSh_{f(x)}$, $\forall x\in M$.
\end{lemma}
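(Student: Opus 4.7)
The plan is to show that both conditions are pointwise, and at each point they translate into literally the same linear-algebraic condition once one passes to local E-charts. The work is almost entirely bookkeeping.

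First, I would fix $x \in M$ and choose E-charts $(\phi_\alpha, V_\alpha)$ with $x \in V_\alpha$ and $(\psi_\beta, W_\beta)$ with $f(x) \in W_\beta$, shrinking $V_\alpha$ so that $f(V_\alpha) \subseteq W_\beta$. Write $g := \psi_\beta \circ f \circ \phi_\alpha^{-1}$, which is a $\ZygSpacemap{s}$ map between open subsets of $\R^{r_1}\times\C^{n_1}$ and $\R^{r_2}\times\C^{n_2}$. By definition, $f$ is a $\ZygSpacemap{s}$ E-map if and only if every such local representative $g$ is a $\ZygSpacemap{s}$ E-map in the Euclidean sense; since $g$ already has the right regularity (because $f$ does and the transition maps are E-diffeomorphisms), this reduces to the pointwise condition that $dg(y)$ sends each $\diff{t_k}$ and $\diff{\zb[j]}$ into $\Span_\C\{\diff{u_l},\diff{\wb[p]}\}$ for every $y\in\phi_\alpha(V_\alpha)$.

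Next, I would invoke the definition of the associated elliptic structure (Definition \ref{Defn::EMfld::sL}): for $\zeta\in V_\alpha$,
\begin{equation*}
\LVS_\zeta = d\phi_\alpha^{-1}(\phi_\alpha(\zeta))\,\Span_\C\!\left\{\diff{t_1},\ldots,\diff{t_{r_1}},\diff{\zb[1]},\ldots,\diff{\zb[n_1]}\right\},
\end{equation*}
and analogously $\LVSh_{f(\zeta)} = d\psi_\beta^{-1}(\psi_\beta(f(\zeta)))\,\Span_\C\{\diff{u_l},\diff{\wb[p]}\}$. Using the chain rule,
\begin{equation*}
df(\zeta) = d\psi_\beta^{-1}(g(\phi_\alpha(\zeta)))\,\circ\, dg(\phi_\alpha(\zeta))\,\circ\, d\phi_\alpha(\zeta),
\end{equation*}
and noting that $d\phi_\alpha(\zeta)$ and $d\psi_\beta^{-1}$ are linear isomorphisms identifying $\LVS_\zeta$ and $\LVSh_{f(\zeta)}$ with the Euclidean spans above, the inclusion $df(\zeta)\LVS_\zeta \subseteq \LVSh_{f(\zeta)}$ holds if and only if $dg(\phi_\alpha(\zeta))$ sends $\Span_\C\{\diff{t_k},\diff{\zb[j]}\}$ into $\Span_\C\{\diff{u_l},\diff{\wb[p]}\}$.

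Combining the last two paragraphs: the E-map condition for $g$ at $\phi_\alpha(\zeta)$ is the same statement as $df(\zeta)\LVS_\zeta\subseteq\LVSh_{f(\zeta)}$. Since this equivalence holds at every $\zeta\in V_\alpha$ and both sides are local conditions that only need to hold pointwise (and the charts $\alpha,\beta$ were arbitrary covering E-charts), the global equivalence follows. I do not anticipate any real obstacle: the only point requiring slight care is verifying that the definition of $\LVS_\zeta$ given via $\phi_\alpha$ is independent of the choice of chart, but this is precisely because the transition maps are E-diffeomorphisms (so their differentials preserve the relevant Euclidean subspace), and this independence is implicit in Definition \ref{Defn::EMfld::sL}.
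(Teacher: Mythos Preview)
Your argument is correct and is precisely the unpacking of what the paper means by ``This follows immediately from the definitions.'' The paper gives no further detail, so your chart-by-chart reduction via the chain rule is exactly the intended reasoning, just written out in full.
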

\begin{proof}This follows immediately from the definitions.\end{proof}

It turns out that the elliptic structure $\LVS$ associated to the E-manifold $M$ uniquely determines the E-manifold structure as the following theorem shows.

\begin{thm}\label{Thm::EMfld::StructureEquiv}
Let $s\in (0,\infty]\cup\{\omega\}$ and let $M$ be a $\ZygSpace{s+2}$ manifold.  For each $\zeta\in M$, let $\LVS_{\zeta}$ be a vector subspace
of $\C T_{\zeta} M$, and let $\LVS=\bigcup_{\zeta\in M} \LVS_{\zeta}$.  The following are equivalent:
\begin{enumerate}[(i)]
	\item\label{Item::EMfld::ExistsStructure} There is a $\ZygSpace{s+2}$ E-manifold structure $M$, compatible with its $\ZygSpace{s+2}$ structure, such that $\LVS$ is the $\ZygSpace{s+1}$ elliptic structure associated to $M$.
	\item $\LVS$ is a $\ZygSpace{s+1}$ elliptic structure.
\end{enumerate}
Moreover, under these conditions, the E-manifold structure given in \cref{Item::EMfld::ExistsStructure} is unique in the sense that of $M$ is given another $\ZygSpace{s+2}$
E-manifold structure, compatible with its $\ZygSpace{s+2}$ structure, with respect to which $\LVS$ is the associated elliptic sub-bundle, then the identity map $M\rightarrow M$
is a $\ZygSpace{s+2}$ E-diffeomorphism between these two $\ZygSpace{s+2}$ E-manifold structures on $M$.
\end{thm}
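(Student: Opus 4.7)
The plan splits into the easier $(i)\Rightarrow(ii)$, the main content $(ii)\Rightarrow(i)$, and uniqueness. For $(i)\Rightarrow(ii)$, I would verify directly from the definitions that the sub-bundle
$\LVS_\zeta:=\Span_\C\{d\phi_{\alpha}^{-1}(\phi_\alpha(\zeta))\diff{t_k},\,d\phi_{\alpha}^{-1}(\phi_\alpha(\zeta))\diff{\zb[j]}\}$
built out of an E-atlas $\{(\phi_\alpha,V_\alpha)\}$ is well-defined (independent of $\alpha$), is a $\ZygSpace{s+1}$ sub-bundle, and is formally integrable and elliptic. Well-definedness follows from \cref{Lemma::EMfld::RecongnizeEMap} applied to the transition maps, which must preserve the standard span $\Span_\C\{\diff{t_k},\diff{\zb[j]}\}$ on $\R^r\times\C^n$; formal integrability and ellipticity are inherited from the trivial properties of this standard span on Euclidean space, transported by E-diffeomorphisms.

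For $(ii)\Rightarrow(i)$, the plan is to build a compatible E-atlas by applying \cref{Thm::Results::MainThm} near each point. Fix $\zeta_0\in M$. Since $\LVS\cap\LVSb$ is closed under complex conjugation, it is the complexification of a real $\ZygSpace{s+1}$ sub-bundle, from which I pick local real generators $X_1,\ldots,X_r$; I complete these to a local frame $X_1,\ldots,X_r,L_1,\ldots,L_n$ of $\LVS$ by adjoining complex $\ZygSpace{s+1}$ vector fields $L_1,\ldots,L_n$. Formal integrability and ellipticity translate directly into the algebraic hypotheses of \cref{Thm::Results::MainThm}, with structure coefficients $c_{j,k}^{a,l}$ lying in $\ZygSpace{s}$. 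The main theorem then yields a $C^2$ diffeomorphism $\Phi:B_{\R^r\times\C^n}(1)\to V\subseteq M$ whose pullbacks satisfy $\Phi^{*}X_k\in\Span_\R\{\diff{t_l}\}$ and $\Phi^{*}L_j\in\Span_\C\{\diff{t_l},\diff{\zb[l]}\}$, all with $\ZygSpace{s+1}$ coefficients. I take $\phi:=\Phi^{-1}$ as a local chart and assemble such charts into a candidate atlas.

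The central verification is that every transition map $\psi=\phi_\beta\circ\phi_\alpha^{-1}$ is a $\ZygSpacemap{s+2}$ E-map. By construction $d\phi_\alpha$ carries $\LVS_\zeta$ onto the standard span on $\R^r\times\C^n$, and likewise for $\phi_\beta$, so $\psi$ takes the standard elliptic structure to itself and is an E-map by \cref{Lemma::EMfld::RecongnizeEMap}. For the $\ZygSpacemap{s+2}$ regularity, the pushforwards $\phi_{\alpha*}X_k$, $\phi_{\alpha*}L_j$ form a $\ZygSpace{s+1}$ frame on $\phi_\alpha(V_\alpha)$ (and similarly on the $\beta$-side); the intertwining $\psi_{*}\phi_{\alpha*}Z=\phi_{\beta*}Z$ for $Z\in\{X_k,L_j\}$ lets me express $d\psi$ at each point as a product of the target frame matrix with the inverse of the source frame matrix composed with $\psi^{-1}$. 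Since $\psi$ is $C^2$ to begin with, this formula bootstraps $d\psi$ to $\ZygSpace{s+1}$, hence $\psi\in\ZygSpacemap{s+2}$. Uniqueness is handled symmetrically: given two E-structures $\mathcal{A}_1,\mathcal{A}_2$ inducing $\LVS$, the identity map $M\to M$ read in a chart of $\mathcal{A}_1$ on the source and a chart of $\mathcal{A}_2$ on the target is again a coordinate change between frames for the standard elliptic structure, and the same bootstrap shows it is a $\ZygSpacemap{s+2}$ E-map, making the identity a $\ZygSpace{s+2}$ E-diffeomorphism.

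The main obstacle I anticipate is the regularity bootstrap for the transition maps: the formal E-map property is essentially immediate, but propagating from the $C^2$ conclusion of \cref{Thm::Results::MainThm} to the desired $\ZygSpace{s+2}$ regularity requires carefully exploiting the $\ZygSpace{s+1}$ estimates on the transported frames, and in the edge case $s\in(0,1]$ one must invoke \cref{Thm::Results::MainThm} at some auxiliary $s_0>1$ and absorb the discrepancy via an elliptic-regularity argument, in the spirit of the remark following \cref{Thm::ResSmooth::Real::Local}.
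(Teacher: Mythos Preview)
The paper does not actually prove this theorem: the ``proof'' consists entirely of historical comments and a citation to \cite[Theorem 4.18]{StreetNirenberg}. Your proposal is therefore not a comparison against a paper argument but a self-contained attempt, and for $s\in(1,\infty]\cup\{\omega\}$ it is essentially sound and parallels very closely the paper's proof of \cref{Thm::QualE::GlobalThm} (local charts from \cref{Thm::Results::MainThm}, E-map property of transitions via \cref{Lemma::EMfld::RecongnizeEMap}, regularity bootstrap via \cref{Lemma::QualEPf::RecogSmooth}).

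There is, however, a genuine gap in the range $s\in(0,1]$. \cref{Thm::Results::MainThm} requires $s_0\in(1,\infty)\cup\{\omega\}$, and your proposed workaround (invoke it at an auxiliary $s_0>1$ and ``absorb the discrepancy via an elliptic-regularity argument'') is exactly what the paper itself says cannot be done: the remark following \cref{Thm::ResSmooth::Real::Local} states that the proof ``runs into technical difficulties for $s\in(0,1]$'', and the same restriction propagates to \cref{Thm::Results::MainThm}. So the route through the main theorem simply does not reach the low-regularity part of the statement. A viable repair is to bypass \cref{Thm::Results::MainThm} entirely and appeal directly to \cref{Thm::Nirenberg::MainThm}, which \emph{does} allow $s_0\in(0,\infty)$: since here the vector fields are already $\ZygSpace{s+1}$ in an ambient $\ZygSpace{s+2}$ chart, an affine change of coordinates normalizes them to the hypotheses of \cref{Thm::Nirenberg::MainThm} without needing the real-case machinery (\cref{Prop::MainRealProp}) that forces $s_0>1$. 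This is essentially what the cited reference \cite{StreetNirenberg} does.
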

\begin{proof}[Comments on the proof]
The case $s=\omega$ of this result is classical.  The case $s=\infty$ is due to Nirenberg \cite{NirenbergAComplexFrobeniusTheorem}.
In the special case of complex manifolds (i.e., E-manifolds of dimension $(0,n)$) this is the Newlander-Nirenberg Theorem \cite{NewlanderNirenbergComplexAnalyticCoordiantesInAlmostComplexManifolds}
with sharp regularity, as proved by Malgrange \cite{MalgrangeSurLIntegbrabilite}.
The full result can be found in \cite[Theorem 4.18]{StreetNirenberg}.
\end{proof}

	\subsection{CR manifolds}\label{Section::CRMfld}
There is another, related, category of manifolds of substantial interest, where the original scaling maps
of Nagel, Stein, and Wainger \cite{NagelSteinWaingerBallsAndMetrics} have been widely used:  CR manifolds.  \Cref{Thm::EMfld::StructureEquiv} characterizes E-manifolds as manifolds
endowed with an elliptic structure.  CR manifolds are defined in a similar way.

\begin{defn}
Let $\WVS$ be a $\ZygSpace{s+1}$ formally integrable structure on $M$.  We say $\WVS$ is a $\ZygSpace{s+1}$ CR structure if $\WVS_{\zeta}\cap \WVSb[\zeta]=\{0\}$, $\forall \zeta\in M$.
\end{defn}

\begin{defn}
A $\ZygSpace{s+2}$ CR manifold $M$ is a $\ZygSpace{s+2}$ manifold $M$ endowed with at $\ZygSpace{s+1}$ CR structure on $M$.
\end{defn}

Notice that a CR structure, $\WVS$, is an elliptic structure if and only if $\WVS_{\zeta}\oplus \WVSb[\zeta]=\C T_\zeta M$, $\forall \zeta\in M$.
This is precisely the definition of a complex structure.
There does not seem to be a natural way, given an arbitrary E-manifold, to see it as a CR manifold.  Nor is there a natural way, given an arbitrary CR manifold, to see it as an E-manifold.
Nevertheless, many of the classical examples of CR manifolds can be naturally given the structure of an E-manifold.
Indeed, given a CR structure $\WVS$, it is often the case that there is another sub-bundle, $\TVS$, of $\C TM$ such that
$\WVS\oplus \TVS$ is an elliptic structure.

The simplest example of this is the three dimensional Heisenberg group $\Ho$.  As a manifold, $\Ho$ is diffeomorphic to $\C\times \R$ and we give it coordinates
$(z,t)\in \C\times \R$.  We give $\Ho$ a CR structure by setting $\WVS_{(z,t)}:=\Span_{\C} \mleft\{ \diff{\zb}-iz \diff{t}\mright\}$.
By setting $\TVS_{(z,t)}:= \Span_{\C}\mleft\{ \diff{t}\mright\}$, we have $\WVS\oplus \TVS$ is an elliptic structure on $\Ho$.
In many examples of CR manifolds one has a similar setting: there are local coordinates $(z_1,\ldots, z_n, t_1,\ldots, t_r)\in \C^n\times \R^r$
such that the CR structure, $\WVS$, is contained in the span of $\diff{\zb[1]},\ldots, \diff{\zb[n]}, \diff{t_1},\ldots, \diff{t_r}$
in such at way that if one takes $\TVS_{(z,t)}:= \Span_{\C}\mleft\{ \diff{t_1},\ldots, \diff{t_r}\mright\}$, then
$\WVS\oplus \TVS$ is an elliptic structure.
See  \cref{Section::ExtremalBasis::CR} for a discussion of one way the results of this paper can be applied to CR manifolds.




\begin{rmk}
A major distinction between CR structures and elliptic structures is that elliptic structures of dimension $(r,n)$ have a single canonical example.
Indeed, $\R^r\times \C^n$ is naturally an E-manifold with associated elliptic structure $\LVSh$ given by
$$\LVSh_{\zeta}=\Span_{\C}\mleft\{ \diff{\zb[1]},\ldots, \diff{\zb[n]},\diff{t_1},\ldots, \diff{t_r}\mright\},\quad \forall \zeta\in \R^r\times \C^n.$$
\Cref{Thm::EMfld::StructureEquiv} shows that given any elliptic structure $\LVS$, there is a local coordinate system in which $\LVS$
is given by $\LVSh$, where $n+r=\dim \LVS_{\zeta}$ and $r=\dim \mleft(\LVS_\zeta\cap \LVSb[\zeta]\mright)$ (here, $n$ and $r$ are constant in $\zeta$--see \cite[Section 3]{StreetNirenberg}).
  \Cref{Thm::Results::MainThm} can be thought of as a quantitative, diffeomorphic invariant version of a coordinate system
which sees an elliptic structure as this canonical example.  Since there is no similar canonical example of a CR structure, it is not immediately clear what an analog of \cref{Thm::Results::MainThm}
would be for general CR structures.
\end{rmk}

\section{Corollaries Revisited}\label{Section::Res::CorE}
In this section, we generalize the results from \cref{Section::CorRes} using the language of E-manifolds.  This unifies the complex
and real settings.

	\subsection{Optimal Smoothness}\label{Section::Res::OptE}
Let $X_1,\ldots, X_q$ be real $C^1$ vector fields on a connected $C^2$ manifold $M$ and let $L_1,\ldots, L_m$ be complex $C^1$ vector fields on $M$.
For $x\in M$ set
\begin{equation}\label{Eqn::QualE::DefineLVS}
\LVS_x:=\Span_{\C}\{X_1(x),\ldots, X_q(x), L_1(x),\ldots, L_m(x)\},\quad \XVS_x:=\Span_{\C}\{X_1(x),\ldots, X_q(x)\}.
\end{equation}
We assume:
\begin{itemize}
\item $\LVS_x+\LVSb[x]=\C T_xM$, $\forall x\in M$.
\item $\XVS_x=\LVS_x\cap \LVSb[x]$, $\forall x\in M$.
\end{itemize}

\begin{thm}[The Local Theorem]\label{Thm::QualE::LocalThm}
Fix $x_0\in M$, $s\in (1,\infty]\cup \{\omega\}$, and set $r:=\dim \XVS_{x_0}$ and $n+r:=\dim \LVS_{x_0}$.  The following three conditions are equivalent:
\begin{enumerate}[(i)]
\item\label{Item::QualE::Local::Diffeo} There exists an open neighborhood $V\subseteq M$ of $x_0$ and a $C^2$ diffeomorphism $\Phi:U\rightarrow V$, where $U\subseteq \R^r\times \C^n$ is open,
such that $\forall(t,z)\in U$, $1\leq k\leq q$,
$1\leq j\leq m$,
$$\Phi^{*}X_k(t,z)\in \Span_{\R}\left\{\diff{t_1},\ldots, \diff{t_r}\right\},\quad \Phi^{*}L_j(t,z)\in \Span_{\C}\left\{\diff{t_1},\ldots, \diff{t_r},\diff{\zb[1]},\ldots, \diff{\zb[n]}\right\},$$
and $\Phi^{*}X_k\in \ZygSpace{s+1}[U][\R^r]$, $\Phi^{*}L_j\in \ZygSpace{s+1}[U][\C^{r+n}]$.

\item\label{Item::QualE::Local::Basis} Reorder $X_1,\ldots, X_q$ so that $X_1(x_0),\ldots, X_r(x_0)$ are linearly independent, and reorder $L_1,\ldots, L_m$ so that $L_1(x_0),\ldots, L_n(x_0),X_1(x_0),\ldots, X_r(x_0)$
are linearly independent.  Let $\Zh_1,\ldots, \Zh_{n+r}$ denote the list $X_1,\ldots, X_r,L_1,\ldots, L_n$, and let $Y_1,\ldots, Y_{m+q-(r+n)}$ denote the list $X_{r+1},\ldots, X_q,L_{n+1},\ldots, L_m$.
There exists an open neighborhood $V\subseteq M$ of $x_0$ such that:
\begin{itemize}
\item $[\Zh_j,\Zh_k]=\sum_{l=1}^{n+r} \ch_{j,k}^{1,l} \Zh_l$, and $[\Zh_j,\Zhb[k]]=\sum_{l=1}^{n+r} \ch_{j,k}^{2,l} \Zh_l + \sum_{l=1}^{n+r} \ch_{j,k}^{3,l} \Zhb[l]$, where
$\ch_{j,k}^{a,l}\in \ZygXSpace{X,L}{s}[V]$, $1\leq j,k,l\leq n+r$, $1\leq a\leq 3$.
\item $Y_j=\sum_{l=1}^{n+r} b_j^l \Zh_l$, where $b_j^l\in \ZygXSpace{X,L}{s+1}[V]$, $1\leq j\leq m+q-(r+n)$, $1\leq l\leq n+r$.
\end{itemize}
Furthermore, the map $x\mapsto \dim \LVS_x$, $V\rightarrow \N$ is constant in $x$.

\item\label{Item::QualE::Local::Commute} Let $Z_1,\ldots, Z_{m+q}$ denote the list $X_1,\ldots, X_q, L_1,\ldots, L_m$.  There exists a neighborhood $V\subseteq M$ of $x_0$ such that
$[Z_j,Z_k]=\sum_{l=1}^{m+q} c_{j,k}^{1,l} Z_l$ and $[Z_j, \Zb[k]]=\sum_{l=1}^{m+q} c_{j,k}^{2,l} Z_l  + \sum_{l=1}^{m+q} c_{j,k}^{3,l} \Zb[l]$, where
$c_{j,k}^{a,l}\in \ZygXSpace{X,L}{s}[V]$, $1\leq a\leq 3$, $1\leq j,k,l\leq m+q$.
Furthermore, the map $x\mapsto \dim \LVS_x$, $V\rightarrow \N$ is constant in $x$.

\end{enumerate}
\end{thm}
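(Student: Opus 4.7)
The plan is to prove the cyclic chain $\cref{Item::QualE::Local::Diffeo}\Rightarrow \cref{Item::QualE::Local::Basis}\Rightarrow \cref{Item::QualE::Local::Commute}\Rightarrow \cref{Item::QualE::Local::Diffeo}$, where the first two implications are elementary (linear algebra plus diffeomorphism invariance), and the substantive implication $\cref{Item::QualE::Local::Commute}\Rightarrow \cref{Item::QualE::Local::Diffeo}$ is obtained by applying the main theorem \cref{Thm::Results::MainThm}.

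For $\cref{Item::QualE::Local::Diffeo}\Rightarrow\cref{Item::QualE::Local::Basis}$, by \cref{Prop::FuncMan::DiffeoInv} it suffices to verify everything for the pulled back vector fields on $U$. At the origin, $\Phi^{*}X_1(0),\ldots,\Phi^{*}X_r(0),\Phi^{*}L_1(0),\ldots,\Phi^{*}L_n(0)$ span $\Span_\C\{\partial/\partial t_1,\ldots,\partial/\partial t_r,\partial/\partial\zb[1],\ldots,\partial/\partial\zb[n]\}$, so after reordering they form a basis on a neighborhood of $0$. Cramer's rule expresses each $\Phi^{*}Y_j$ as $\sum \tilde b_j^l \Phi^{*}\hat Z_l$, with coefficients that are polynomials in the $\ZygSpace{s+1}$ coefficient functions of $\Phi^{*}Y_j$ and $\Phi^{*}\hat Z_l$ divided by a non-vanishing $\ZygSpace{s+1}$ determinant; these $\tilde b_j^l$ remain in $\ZygSpace{s+1}$. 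Similarly, $[\Phi^{*}\hat Z_j,\Phi^{*}\hat Z_k]$ is a $\ZygSpace{s}$ vector field, which one writes as a $\ZygSpace{s}$ combination of the basis $\Phi^{*}\hat Z_l$, again by Cramer's rule. Constant dimension of $\LVS_x$ is automatic from the E-map form of the pulled back vector fields. For $\cref{Item::QualE::Local::Basis}\Rightarrow\cref{Item::QualE::Local::Commute}$, a direct computation using the Leibniz identity $[fU,gV]=fg[U,V]+f(Ug)V-g(Vf)U$ (and its conjugate variants) together with the assumed relations expresses every commutator among the full list $Z,\Zb$ as a $\ZygXSpace{X,L}{s}[V]$ combination of the $Z_l$ and $\Zb_l$, since differentiating a $\ZygXSpace{X,L}{s+1}$ coefficient by a $\ZygXSpace{X,L}{s+1}$ vector field lands in $\ZygXSpace{X,L}{s}$ and this space is an algebra. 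Constant dimension transfers from \cref{Item::QualE::Local::Basis} since the basis $\hat Z_l$ remains linearly independent on a neighborhood.

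For $\cref{Item::QualE::Local::Commute}\Rightarrow\cref{Item::QualE::Local::Diffeo}$, shrink $V$ so that the structure relations hold with $c_{j,k}^{a,l}\in\ZygXSpace{X,L}{s}[V]$ and $\dim \LVS_x\equiv n+r$ on $V$, and choose $\xi>0$ small enough that $B_{X,L}(x_0,\xi)\subseteq V$. The global hypotheses $\LVS_x+\LVSb[x]=\C T_xM$ and $\XVS_x=\LVS_x\cap\LVSb[x]$ together with the constant dimension and the commutator relations supply all three algebraic assumptions of \cref{Thm::Results::MainThm} on $B_{X,L}(x_0,\xi)$; \cref{Lemma::MoreAssume::ExistEtaDelta0} furnishes $\eta,\delta_0>0$. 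Choosing $s_0\in (1,s]$ (or $s_0=\omega$ when $s=\omega$) makes $c_{j,k}^{a,l}$ satisfy the regularity hypothesis of \cref{Thm::Results::MainThm}. The theorem then produces a $C^2$ map $\Phi:B_{\R^r\times\C^n}(1)\rightarrow B_{X_{K_0},L_{J_0}}(x_0,\chi)$ which, by \cref{Item::ResultsMainThm::PhiOpen,Item::ResultsMainThem::PhiDiffeo}, is a $C^2$-diffeomorphism onto an open set, and by \cref{Item::Results::MainThem::IsEMap,Item::ResultsMainThm::PullbacksSmooth} has pulled back vector fields of the required algebraic form and in $\ZygSpace{s+1}$. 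Taking $V:=\Phi(B_{\R^r\times\C^n}(1))$ and $U:=B_{\R^r\times\C^n}(1)$ yields $\cref{Item::QualE::Local::Diffeo}$.

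The main obstacle is the regularity bookkeeping in $\cref{Item::QualE::Local::Diffeo}\Rightarrow\cref{Item::QualE::Local::Basis}$: extracting $\ZygSpace{s+1}$ (rather than merely $\ZygSpace{s}$) regularity for the Cramer rule coefficients $b_j^l$ requires that $\ZygSpace{s+1}$ be an algebra closed under reciprocation of non-vanishing elements and, crucially, that this algebra property survive the transition from the Euclidean spaces $\ZygSpace{s+1}[U]$ to the vector-field Zygmund spaces $\ZygXSpace{X,L}{s+1}[V]$ via \cref{Prop::FuncMan::DiffeoInv}. For non-integer $s+1$ this follows from H\"older space theory, while for integer $s+1$ it is precisely the point at which Zygmund spaces (rather than $\CjSpace{s+1}$ or $\HSpace{s}{1}$) become essential, consistent with the framework of the paper.
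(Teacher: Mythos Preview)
Your proof follows the same cyclic strategy as the paper's, and the implications \cref{Item::QualE::Local::Basis}$\Rightarrow$\cref{Item::QualE::Local::Commute} and \cref{Item::QualE::Local::Commute}$\Rightarrow$\cref{Item::QualE::Local::Diffeo} match the paper closely (the paper also flags, for $s=\omega$, a small discrepancy between the $\ZygXSpace{X,L}{\omega}$ hypothesis here and the $\AXSpace{X_{K_0},L_{J_0}}{x_0}{\eta}$ hypothesis of \cref{Thm::Results::MainThm}, resolved by shrinking $\eta$ and invoking \cref{Lemma::FuncSpaceRev::Properties}; you gloss over this but it is routine).

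There is, however, a genuine missing ingredient in your \cref{Item::QualE::Local::Diffeo}$\Rightarrow$\cref{Item::QualE::Local::Basis}. You attribute the passage from the Euclidean Zygmund spaces $\ZygSpace{s+1}[U]$ to the intrinsic spaces $\ZygXSpace{X,L}{s+1}[V]$ entirely to \cref{Prop::FuncMan::DiffeoInv}. But that proposition only gives the isometry $\ZygXSpace{\Phi^{*}X,\Phi^{*}L}{s+1}[U]\cong \ZygXSpace{X,L}{s+1}[V]$; it says nothing about identifying $\ZygSpace{s+1}[U]$ with $\ZygXSpace{\Phi^{*}X,\Phi^{*}L}{s+1}[U]$. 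That identification is the content of \cref{Prop::FuncSpaceRev::CompEuclid}, which requires that the pulled-back vector fields span the tangent space with $\ZygSpace{s}$ coefficients in both directions---precisely what the paper extracts from the invertibility of the matrix $B$ in \cref{Eqn::QualE::1imp2::matrix} on a small ball $U_0$. Without \cref{Prop::FuncSpaceRev::CompEuclid}, your Cramer-rule computation yields $\tilde b_j^l\in \ZygSpace{s+1}[U_0]$ but not $b_j^l\in \ZygXSpace{X,L}{s+1}[V_0]$, and the implication does not close. Once you insert this proposition, your argument coincides with the paper's.
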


\begin{thm}[The Global Theorem]\label{Thm::QualE::GlobalThm}
For $s\in (1,\infty]\cup \{\omega\}$ the following two conditions are equivalent:
\begin{enumerate}[label=(\roman*),series=qualEglobaltheoremenumeration]
\item\label{Item::QualE::EMfld} There exists a $\ZygSpace{s+2}$ E-manifold structure on $M$, compatible with its $C^2$ structure, such that $X_1,\ldots, X_q,L_1,\ldots, L_m$ are  $\ZygSpace{s+1}$
vector fields on $M$ and $\LVS$ (as defined in \cref{Eqn::QualE::DefineLVS})  is the
associated elliptic structure (see \cref{Defn::EMfld::sL}).

\item\label{Item::QualE::ThreeEquiv} For each $x_0\in M$, any of the three equivalent conditions from \cref{Thm::QualE::LocalThm} hold for this choice of $x_0$.
\end{enumerate}
Furthermore, under these conditions, the $\ZygSpace{s+2}$ E-manifold structure in \cref{Item::QualE::EMfld} is unique, in the sense that if $M$ has another
$\ZygSpace{s+2}$ E-manifold structure satisfying the conclusions of \cref{Item::QualE::EMfld}, then the identity map $M\rightarrow M$ is
a $\ZygSpace{s+2}$ E-diffeomorphism
between these two E-manifold structures.  Finally, when $s\in (1,\infty]$,
 there is a third equivalent condition:
\begin{enumerate}[resume*=qualEglobaltheoremenumeration]
\item\label{Item::QualE::Commute} Let $Z_1,\ldots, Z_{m+q}$ denote the list $X_1,\ldots, X_q,L_1,\ldots, L_m$.  Then,
$[Z_j,Z_k]=\sum_{l=1}^{m+q} c_{j,k}^{1,l} Z_l$ and $[Z_j,\Zb[k]]=\sum_{l=1}^{m+q} c_{j,k}^{2,l} Z_l + \sum_{l=1}^{m+q} c_{j,k}^{3,l} \Zb[l]$,
where $\forall x\in M$, there exists an open neighborhood $V\subseteq M$ of $x$ such that
$c_{j,k}^{a,l}\big|_V\in \ZygXSpace{X,L}{s}[V]$, $1\leq a \leq 3$, $1\leq j,k,l\leq m+q$.
Furthermore, the map $x\mapsto \dim \LVS_x$, $M\rightarrow \N$ is constant.
\end{enumerate}
\end{thm}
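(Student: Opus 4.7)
The plan is to deduce \cref{Thm::QualE::GlobalThm} from \cref{Thm::QualE::LocalThm} together with the structure theorem \cref{Thm::EMfld::StructureEquiv}, by a local-to-global gluing argument in the spirit of the classical proof of Newlander--Nirenberg. The direction $(i) \Rightarrow (ii)$ is immediate: any E-chart $(\phi, V)$ at $x_0 \in M$ intertwines $\LVS\big|_V$ with the canonical elliptic structure $\LVSh$ on $\phi(V) \subseteq \R^r \times \C^n$ by \cref{Defn::EMfld::sL} and \cref{Lemma::EMfld::RecongnizeEMap}, so the pushforwards $\phi_* X_k$ and $\phi_* L_j$ are $\ZygSpace{s+1}$ sections of $\LVSh$ on $\phi(V)$, which is condition (i) of \cref{Thm::QualE::LocalThm} applied to $\phi^{-1}$.

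For the main direction $(ii) \Rightarrow (i)$, I would apply \cref{Thm::QualE::LocalThm} at each $x_0 \in M$ to produce a $C^2$ diffeomorphism $\Phi_{x_0} : U_{x_0} \to V_{x_0}$, with $U_{x_0} \subseteq \R^r \times \C^n$ open, so that $\Phi_{x_0}^{*}X_k, \Phi_{x_0}^{*}L_j$ are $\ZygSpace{s+1}$ sections of $\LVSh$. The collection $\{(\Phi_{x_0}^{-1}, V_{x_0})\}_{x_0 \in M}$ is the candidate $\ZygSpace{s+2}$ E-atlas. Every transition map $T := \Phi_{x_1}^{-1} \circ \Phi_{x_0}$ is a $C^2$ diffeomorphism between open subsets of $\R^r \times \C^n$ that pushes $\LVSh$ to $\LVSh$ (because each $\Phi_{x_i}$ pushes $\LVSh$ to $\LVS$ on the overlap), hence, by \cref{Lemma::EMfld::RecongnizeEMap}, is a $C^2$ E-map. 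Upgrading $T$ to $\ZygSpacemap{s+2}$ is the core technical step: the E-map condition on $T$ is the generalized Cauchy--Riemann system determined by $\LVSh$, an elliptic PDE system, and sharp-regularity elliptic theory (in the form of \cref{Thm::EMfld::StructureEquiv}, which specializes to Malgrange's theorem in the pure complex case) yields $T \in \ZygSpacemap{s+2}$. Once the atlas is in hand, the associated elliptic structure is $\LVS$ by construction and the $X_k, L_j$ are $\ZygSpace{s+1}$ with respect to it.

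Uniqueness then follows directly from the uniqueness half of \cref{Thm::EMfld::StructureEquiv}: two E-manifold structures on $M$ satisfying (i) share the same intrinsically defined elliptic structure $\LVS$, and so the identity $M \to M$ is a $\ZygSpace{s+2}$ E-diffeomorphism between them. For the third equivalence when $s \in (1, \infty]$, the implication $(ii) \Rightarrow (iii)$ follows by selecting condition (iii) of \cref{Thm::QualE::LocalThm} at each point and using connectedness of $M$ to promote local constancy of $\dim \LVS_x$ to global constancy; conversely $(iii) \Rightarrow (ii)$ holds since the global commutator relations with locally Zygmund coefficients restrict on any small neighborhood to the local theorem's condition (iii). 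The principal obstacle is the regularity bootstrap in step $(ii) \Rightarrow (i)$: promoting the $C^2$ E-transition maps to $\ZygSpacemap{s+2}$. A clean workaround is first to apply \cref{Thm::ResSmooth::Real::Global} to the real vector fields $W_1, \ldots, W_{2m+q}$ to install a $\ZygSpace{s+2}$ manifold structure on $M$ compatible with its $C^2$ structure; this upgrades $\LVS$ to a $\ZygSpace{s+1}$ elliptic structure on a $\ZygSpace{s+2}$ manifold, allowing a direct invocation of \cref{Thm::EMfld::StructureEquiv} to supply the desired E-manifold structure.
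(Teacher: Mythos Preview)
Your first proposed mechanism for the regularity bootstrap in $(ii)\Rightarrow(i)$ does not work when $r>0$. The E-map condition on a transition map $T$ between open subsets of $\R^r\times\C^n$ is \emph{not} an elliptic system: by \cref{Lemma::QualE::RealVFsInSpan}, the requirement $dT(\partial/\partial t_k)\in\Span_\C\{\partial/\partial u,\partial/\partial\wb\}$ forces the $\C^n$-component of $T$ to be independent of $t$, but places no differential constraint on the $\R^r$-component in the $t$-variables. So one cannot gain regularity from the E-map condition alone, and \cref{Thm::EMfld::StructureEquiv} is not a statement about regularity of maps. The paper instead uses \cref{Lemma::QualEPf::RecogSmooth}: since $\Psi_{x,y}=\Phi_y^{-1}\circ\Phi_x$ pushes the spanning $\ZygSpace{s+1}$ vector fields $\Phi_x^*X_k,\Phi_x^*L_j$ to the $\ZygSpace{s+1}$ vector fields $\Phi_y^*X_k,\Phi_y^*L_j$, that lemma yields $\Psi_{x,y}\in\ZygSpacemap{s+2}$. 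The same lemma handles uniqueness directly (the identity pushes $X_k,L_j$ to themselves), whereas your appeal to \cref{Thm::EMfld::StructureEquiv} does not apply as stated, since that theorem presupposes a fixed ambient $\ZygSpace{s+2}$ structure and the two E-structures might a priori induce different $\ZygSpace{s+2}$ refinements of the given $C^2$ structure. Your ``workaround'' via \cref{Thm::ResSmooth::Real::Global} followed by \cref{Thm::EMfld::StructureEquiv} is a legitimate alternative route and would also rescue uniqueness (both E-structures make the $W_j$ $\ZygSpace{s+1}$, so by uniqueness in \cref{Thm::ResSmooth::Real::Global} they share the same underlying $\ZygSpace{s+2}$ structure), but it is less direct than the paper's argument.

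Your argument for the equivalence with (iii) also has a gap. Condition (iii) demands \emph{globally defined} functions $c_{j,k}^{a,l}$ on $M$ that are locally in $\ZygXSpace{X,L}{s}$; the local theorem's condition (iii) only furnishes such functions on a neighborhood of each point, and different neighborhoods may yield incompatible choices. The paper therefore proves $(i)\Rightarrow(iii)$ rather than $(ii)\Rightarrow(iii)$: once the $\ZygSpace{s+2}$ E-manifold structure is in place, a partition of unity glues local choices into global $c_{j,k}^{a,l}$ that are locally $\ZygSpace{s}$ in the Euclidean sense, and \cref{Prop::FuncSpaceRev::CompEuclid} then converts this to $\ZygXSpace{X,L}{s}$ regularity.
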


\begin{rmk}
For a discussion of results like \cref{Thm::QualE::LocalThm,Thm::QualE::GlobalThm} using the easier to understand H\"older spaces, see
\cref{Section::Holder}.
\end{rmk} 
	
	\subsection{Sub-E geometry}\label{Section::Res::SubE}
Let $M$ be a connected $C^\infty$ E-manifold of dimension $(r,n)$ and let $\LVS$ be the associated elliptic structure.  For $x\in M$,
set $\XVS_x:=\LVS_x\cap \LVSb[x]$, so that $r=\dim \XVS_x$ and $n+r=\dim \LVS_x$, $\forall x\in M$.
Fix a strictly positive $C^\infty$ density $\nu$ on $M$.\footnote{The results that follow are local and do not depend on the choice of density.}
Suppose $X_1,\ldots, X_q$ are $C^\infty$ real vector fields on $M$ and $L_1,\ldots, L_m$ are $C^\infty$ complex vector fields on $M$
such that
$\XVS_x = \Span_{\C}\{X_1(x),\ldots, X_q(x)\}$ and $\LVS_x=\Span_{\C} \{X_1(x),\ldots, X_q(x), L_1(x),\ldots, L_m(x)\}$, $\forall x\in M$.

To each $X_k$, we assign a formal degree $\beta_k\in [1,\infty)$, and to each $L_j$ we assign a formal degree $\beta_{j+q}\in [1,\infty)$.
We let $Z_1,\ldots, Z_{m+q}$ denote the list $X_1,\ldots, X_q,L_1,\ldots, L_m$, so that $Z_j$ has assigned formal degree $\beta_j$.

We assume:
\begin{equation}\label{Eqn::SubE::AssumeNSW}
[Z_j,Z_k]=\sum_{\beta_l\leq \beta_j+\beta_k} c_{j,k}^{1,l}Z_l, \quad [Z_j, \Zb[k]]=\sum_{\beta_l\leq \beta_j+\beta_k} c_{j,k}^{2,l} Z_l + \sum_{\beta_l\leq \beta_j+\beta_k} c_{j,k}^{3,l} \Zb[l],\quad c_{j,k}^{a,l}\in \CjSpace{\infty}[M].
\end{equation}
For $\delta\in (0,1]$ write $\delta^{\beta} X$ for the list $\delta^{\beta_1} X_1,\ldots, \delta^{\beta_q} X_q$ and write $\delta^{\beta} L= \delta^{\beta_{q+1}} L_1,\ldots, \delta^{\beta_{q+m}} L_m$.
Using the notation from \cref{Section::MainResult} it makes sense to write, for $K\in \sI(r_1,q)$, $J\in \sI(n_1,m)$, $\left(\bigwedge (\delta^{\beta} X)_K\right)\bigwedge\left(\bigwedge (\delta^\beta L)_J\right)$.
We assume:  $\forall \Compact\Subset M$ compact, $\exists \zeta\in(0,1]$ such that $\forall x\in \Compact$, $\delta\in (0,1]$,
$\exists K_0(x,\delta)\in \sI(r,q), J_0(x,\delta)\in \sI(n,m)$ such that
\begin{equation}\label{Eqn::ResSubE::ExistsMaximal}
\sup_{\substack{x\in \Compact \\ \delta\in (0,1]}}\max_{\substack{K\in \sI(r_1,q), J\in \sI(n_1,m)\\ r_1+n_1=r+n}} \left|
\frac{ \left(\bigwedge (\delta^{\beta} X(x))_K \right)\bigwedge \left( (\delta^{\beta}L(x))_J \right)}
{\left(\bigwedge (\delta^{\beta} X(x))_{K_0(x,\delta)} \right)\bigwedge \left( (\delta^{\beta}L(x))_{J_0(x,\delta)} \right)}
\right|\leq \zeta^{-1}.
\end{equation}

\begin{rmk}\label{Rmk::SubE::GeneralsizesSubHR}
The existence of $K_0(x,\delta)$, $J_0(x,\delta)$, and $\zeta$ as in \cref{Eqn::ResSubE::ExistsMaximal} does not follow from the other hypotheses.  However, it is immediate to see that if $r=0$ or $n=0$,
one may always find $J_0(x,\delta)$ and $K_0(x,\delta)$ so that \cref{Eqn::ResSubE::ExistsMaximal} holds with $\zeta=1$.
This accounts for the two most important special cases:  the ones in \cref{Section::Results::NSW,Section::ResGeom::SubHerm}.
\end{rmk}

Under these hypotheses, we will study two metrics on $M$ (and
show these two metrics are equivalent on compact sets).
The first metric is a standard sub-Riemannian metric and we will define it in two different ways, denoted by $\rho_S$ and $\rho_F$.  We will
show that $\rho_S=\rho_F$.  Both of the definitions $\rho_S$
and $\rho_F$ are defined extrinsically:  they are defined
by using the underlying manifold structure on $M$ using
maps which are not necessarily E-maps.  The second metric, $\rho_H$,
has a definition which is similar to that of $\rho_F$, but it is defined
intrinsically on $M$:  it is defined entirely within the category of
E-manifolds.

For $x\in M$, $\delta>0$ set $B_S(x,\delta):=B_{\delta^{\beta}X, \delta^{\beta}L}(x,1)$ (where the later ball is defined in \cref{Eqn::FuncComplex::Ball}) and set $\rho_S(x,y):=\inf\{\delta>0 : y\in B_S(x,\delta)\}$.

Let $(W_1,d_1),\ldots, (W_{2m+q}, d_{2m+q})$ denote the list of vector fields with formal degrees
$$(X_1, \beta_1),\ldots, (X_q,\beta_q), (2\Real(L_1),\beta_{q+1}),\ldots, (2\Real(L_m),\beta_{q+m}), (2\Imag(L_1),\beta_{q+1}),\ldots, (2\Imag(L_m),\beta_{q+m}).$$
We say $\rho_F(x,y)<\delta$ if and only if $\exists K\in \N$, $C^\infty$ functions $f_1,\ldots, f_K:B_{\R}(1/2)\rightarrow M$, and
$\delta_1,\ldots, \delta_K>0$ with $\sum_{j=1}^K \delta_j \leq \delta$, such that:
\begin{itemize}
\item $f_j'(t) = \sum_{l=1}^{2m+q} s_j^l(t) \delta_j^{d_l} W_l(f_j(t))$, with $\Norm{\sum_l |s_j^l|^2}[L^\infty(B_{\R}(1/2))]<1$.
\item $f_j(B_{\R}(1/2))\cap f_{j+1}(B_{\R}(1/2))\ne \emptyset$, $1\leq j\leq K-1$.
\item $x\in f_1(B_{\R}(1/2))$, $y\in f_{K}(B_{\R}(1/2))$.
\end{itemize}
Set $B_F(x,\delta):=\{y\in M:\rho_F(x,y)<\delta\}$.

Finally, we define $\rho_H$.  We say $\rho_H(x,y)<\delta$ if and only if
$\exists K\in \N$, $C^\infty$ E-maps $f_1,\ldots, f_K:B_{\R\times \C}(1/2)\rightarrow M$,
and $\delta_1,\ldots, \delta_K$ with $\sum_{j=1}^K \delta_j\leq \delta$, such that:
\begin{enumerate}[(1)]
\item\label{Item::SubE::DefnSj} Because $f_j$ is an E-map, we may write
\begin{equation*}
df_j(t,z) \diff{t} = \sum_{k=1}^q s_{j,1}^k(t,z) \delta_j^{\beta_k} X_k(f_j(t,z)) + \sum_{l=1}^m s_{j,1}^{l+q}(t,z) \delta_j^{\beta_{l+q}} \frac{2}{\sqrt{2}} L_l(f_j(t,z)),
\end{equation*}
\begin{equation*}
df_j(t,z)\frac{2}{\sqrt{2}} \diff{\zb} = \sum_{k=1}^q s_{j,2}^k(t,z) \delta_j^{\beta_k} X_k(f_j(t,z)) + \sum_{l=1}^m s_{j,2}^{l+q}(t,z) \delta_j^{\beta_{l+q}} \frac{2}{\sqrt{2}} L_l(f_j(t,z)).
\end{equation*}
The choice of $s_j$'s is not necessarily unique.  Let $S_j(t,z)$ denote the $(q+2m)\times 3$ matrix such that the $(l,a)$ component of $S_j(t,z)$ is given by
\begin{equation*}
\begin{cases}
s_{j,a}^l(t,z), & 0\leq l\leq m+q, a=1,2\\
0, & m+q+1\leq l\leq 2m+q, a=1,2\\
\overline{s_{j,a}^l(t,z)}, & 0\leq l\leq q\text{ or }m+q+1\leq l\leq 2m+q, a=3\\
0, & q+1\leq l\leq m+q, a=3.
\end{cases}
\end{equation*}
In particular, $S_j(t,z)$ is a matrix representation of $df_j(t,z)$ thought of as taking the basis $\diff{t}, \frac{2}{\sqrt{2}} \diff{\zb}, \frac{2}{\sqrt{2}}\diff{z}$ to the spanning
set
$$\delta_j^{\beta_1} X_1,\ldots, \delta_j^{\beta_q} X_q, \delta_j^{\beta_{q+1}}\frac{2}{\sqrt{2}} L_1,\ldots, \delta_j^{\beta_{q+m}} \frac{2}{\sqrt{2}} L_m,\delta_j^{\beta_{q+1}}\frac{2}{\sqrt{2}} \Lb[1],\ldots, \delta_j^{\beta_{q+m}} \frac{2}{\sqrt{2}} \Lb[m].$$
We assume
\begin{equation*}
\Norm{S_j}[L^\infty (B_{\R\times \C}(1/2); \M^{(q+2m)\times 3})]<1.
\end{equation*}
The choice of $S_j$ may not be unique\footnote{The choice of $S_j$ is not unique if $m+q>n+r$.}, and we only ask for the existence of such an $S_j$.
\item $f_j(B_{\R\times \C}(1/2))\cap f_{j+1}(B_{\R\times \C}(1/2))\ne \emptyset$, $1\leq j\leq K-1$.
\item $x\in f_1(B_{\R\times \C}(1/2))$, $y\in f_K(B_{\R\times \C}(1/2))$.
\end{enumerate}
Set $B_H(x,\delta):=\{y\in M:\rho_H(x,y)<\delta\}$.

\begin{rmk}\label{Rmk::SubH::Shj}
A consequence of \cref{Item::SubE::DefnSj} is the following.  We identify $\R\times \C$ with $\R^3$ in the usual way.  Let $\Sh_j(t,x_1,x_2)$ be a $(2m+q)\times 3$ matrix representation of $df_j(t,x_1,x_2)$ thought
as taking the basis $\diff{t},\diff{x_1},\diff{x_2}$ to the spanning set $\delta_j^{d_1}W_1(f_j(t,x_1,x_2),\ldots, \delta_j^{d_{2m+q}} W_{2m+q}(f_j(t,x_1,x_2))$.
Then if \cref{Item::SubE::DefnSj} holds we may choose $\Sh_j$ so that
\begin{equation}\label{Eqn::SubH::NormShj}
\Norm{\Sh_j}[L^\infty (B_{\R^3}(1/2); \M^{(q+2m)\times 3})]<1.
\end{equation}
\end{rmk}

Define, for $x\in M$, $\delta>0$,
\begin{equation*}
\Lambda(x,\delta):=\max_{j_1,\ldots, j_{2n+r}\in \{1,\ldots, 2m+q\}} \nu(x)(\delta^{d_{j_1}}W_{j_1}(x),\ldots, \delta^{d_{j_{2n+r}}} W_{j_{2n+r}}(x)).
\end{equation*}

\begin{thm}\label{Thm::SubEGeom}
\begin{enumerate}[label=(\alph*),series=subEtheoremenumeration]
\item\label{Item::SubE::EasyMetrics} $\forall x,y\in M$, $\rho_S(x,y)= \rho_F(x,y)\leq \rho_H(x,y)$.
\end{enumerate}
Fix a compact set $\Compact\Subset M$.  We write $A\lesssim B$ for $A\leq C B$, where $C$ is a positive constant which can be chosen independent of $x,y\in \Compact$, $\delta>0$.
We write $A\approx B$ for $A\lesssim B$ and $B\lesssim A$.  There exists $\delta_1\approx 1$ such that:
\begin{enumerate}[resume*=subEtheoremenumeration]
\item\label{Item::SubEThm::HIsSmallerThanS} $\rho_H(x,y)\lesssim \rho_S(x,y)$, and therefore $\rho_S$ and $\rho_H$ are equivalent on compact sets.
\item\label{Item::SubEThm::EstimateVols} $\nu(B_S(x,\delta))
\approx \nu(B_H(x,\delta))\approx \Lambda(x,\delta)$, $x\in \Compact$, $\delta\in (0,\delta_1]$.
\item\label{Item::SubEThm::Doubling} $\nu(B_S(x,2\delta))\lesssim \nu(B_S(x,\delta))$, $\forall x\in \Compact$, $\delta\in (0,\delta_1/2]$; the same holds with $B_S$ replaced by $B_H$.\footnote{This is the key estimate that shows that the
balls $B_S(x,\delta)$, when paired with the density $\nu$, locally give a space of homogeneous type.}
\end{enumerate}
For each $x\in \Compact$, $\delta\in (0,1]$, there exists a $C^\infty$ E-map $\Phi_{x,\delta}:B_{\R^r\times \C^n}(1)\rightarrow B_S(x,\delta)$ such that
\begin{enumerate}[resume*=subEtheoremenumeration]
\item\label{Item::SubEThm::PhiDiffeo} $\Phi_{x,\delta}(B_{\R^r\times \C^n}(1))\subseteq M$ is open and $\Phi_{x,\delta}:B_{\R^r\times \C^n}(1)\rightarrow \Phi(B_{\R^r\times \C^n}(1))$ is a $C^\infty$ diffeomorphism.
\item\label{Item::SubEThm::Esth} $\Phi_{x,\delta}^{*} \nu = h_{x,\delta} \LebDensity$, where $\LebDensity$ denotes the usual Lebesgue density on $\R^r\times \C^n$, $h_{x,\delta}\in \CjSpace{\infty}[B_{\R^r\times \C^n}(1)]$,
and $\CjNorm{h_{x,\delta}}{m}[B_{\R^r\times \C^n}(1)]\lesssim \Lambda(x,\delta)$, $\forall m$ (where the implicit constant may depend on $m$).  Also, $h_{x,\delta}(t,z)\approx \Lambda(x,\delta)$, $\forall (t,z)\in B_{\R^r\times \C^n}(1)$,
where the implicit constant does not depend on $x\in \Compact$, $\delta\in (0,1]$, or $(t,z)\in B_{\R^r\times \C^n}(1)$.
\end{enumerate}
Let $\Zh_j^{x,\delta}:=\Phi_{x,\delta}^{*} \delta^{\beta_j} Z_j$, so that $\Zh_j^{x,\delta}$ is a $C^\infty$ vector field on $B_{\R^r\times \C^n}(1)$.
\begin{enumerate}[resume*=subEtheoremenumeration]
\item\label{Item::SubEThm::ZhIsE} $\Zh_j^{x,\delta}(t,z)\in \Span_{\C} \left\{\diff{t_1},\ldots, \diff{t_r},\diff{\zb[1]},\ldots, \diff{\zb[n]}\right\}$, $\forall (t,z)\in B_{\R^r\times \C^n}(1)$.
\end{enumerate}
In light of \cref{Item::SubEThm::ZhIsE}, we may think of $\Zh_j^{x,\delta}$ as a map $B_{\R^r\times \C^n}(1)\rightarrow \C^{r+n}$, and we henceforth do this.
\begin{enumerate}[resume*=subEtheoremenumeration]
\item\label{Item::SubEThm::ZsSpan} $\Zh_1^{x,\delta}(t,z),\ldots, \Zh_{m+q}^{x,\delta}(t,z)$ span  $\Span_{\C} \left\{\diff{t_1},\ldots, \diff{t_r},\diff{\zb[1]},\ldots, \diff{\zb[n]}\right\}$ uniformly in $t,z,x,\delta$ in the sense that
\begin{equation*}
\max_{j_1,\ldots, j_{n+r}\in \{1,\ldots, m+q\}} \inf_{(t,z)\in B_{\R^r\times \C^n}(1)} \left|  \det\left( \Zh_{j_1}^{x,\delta}(t,z) | \cdots| \Zh_{j_{n+r}}^{x,\delta}(t,z) \right) \right|\approx 1, \quad x\in \Compact, \delta\in (0,1].
\end{equation*}
In fact, for $x\in \Compact$, $\delta\in (0,1]$,
\begin{equation*}
\begin{split}
&\max_{\substack{k_1,\ldots, k_r\in \{1,\ldots, q\} \\ j_1,\ldots, j_n\in \{1,\ldots, m\}}} \inf_{(t,z)\in B_{\R^r\times \C^n}(1)}
\\&\left|
\det\left( \Phi_{x,\delta}^{*} \delta^{\beta_{k_1}} X_{k_1}(t,z) | \cdots | \Phi_{x,\delta}^{*} \delta^{\beta_{k_r}} X_{k_r}(t,z) |  \Phi_{x,\delta}^{*} \delta^{\beta_{j_1+q}} L_{j_1}(t,z) | \cdots | \Phi_{x,\delta}^{*} \delta^{\beta_{j_n+q}} L_{j_n}(t,z)   \right)
\right|
\approx 1.
\end{split}
\end{equation*}
\item\label{Item::SubEThm::ZsSmooth} $\CjNorm{\Zh_j^{x,\delta}}{k}[B_{\R^r\times \C^n}(1)][\C^{r+n}]\lesssim 1$, $\forall x\in \Compact$, $\delta\in (0,1]$ (where the implicit constant may depend on $k\in \N$).
\item\label{Item::SubEThm::ImageInH} $\exists R\approx 1$ such that $\Phi_{x,\delta}(B_{\R^r\times \C^n}(1))\subseteq B_H(x,R\delta)$, $x\in \Compact$, $\delta\in (0,1]$.
\item\label{Item::SubEGeom::ExistEpsilon} $\exists \epsilon \approx 1$ such that $B_S(x,\epsilon\delta)\subseteq \Phi_{x,\delta}(B_{\R^r\times \C^n}(1)) \subseteq B_S(x,\delta)$, $x\in \Compact$, $\delta\in (0,1]$.
\end{enumerate}
\end{thm}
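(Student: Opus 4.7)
The plan is to reduce essentially everything to a uniform application of \cref{Thm::Results::MainThm} to the rescaled tuples $\delta^{\beta_1}Z_1,\ldots,\delta^{\beta_{m+q}}Z_{m+q}$, with base point $x\in\Compact$ and scale $\delta\in(0,1]$. The first step is to verify that the hypotheses of that theorem hold \emph{uniformly} in $(x,\delta)$. The algebraic assumption \cref{Eqn::SubE::AssumeNSW} ensures that after rescaling the structure coefficients become $\delta^{\beta_j+\beta_k-\beta_l}c_{j,k}^{a,l}$ with nonnegative exponents, hence uniformly bounded (in every $\ZygSpace{s}$ norm on a compact set) by $\max_{\Compact}|c_{j,k}^{a,l}|$-type quantities. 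Condition \cref{Eqn::ResSubE::ExistsMaximal} furnishes $K_0(x,\delta)$, $J_0(x,\delta)$, $\zeta$ uniformly. The uniformity of $\eta$ comes from \cref{Lemma::MoreAssume::ExistEtaDelta0}\cref{Item::MoreAssump::ExistEta} applied to an enlarged compact set containing $\Compact$, while the uniformity of $\delta_0$ follows from part \cref{Item::MoreAssump::ExistDelta0} of the same lemma after observing that the rescaled vector fields $\delta^{d_l}W_l$ have $C^1$ norm uniformly dominated by that of the $W_l$ in any fixed coordinate chart. Since the admissible constants in \cref{Thm::Results::MainThm} depend only on these bounds, its conclusions hold uniformly in $(x,\delta)$.

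Once \cref{Thm::Results::MainThm} is applied, I take $\Phi_{x,\delta}$ to be the map it produces (pre-composed with the fixed reordering of indices); \cref{Rmk::MainRes::EMap} guarantees it is a $C^\infty$ E-map. Conclusions \cref{Item::SubEThm::PhiDiffeo}, \cref{Item::SubEThm::ZhIsE}, \cref{Item::SubEThm::ZsSpan}, and \cref{Item::SubEThm::ZsSmooth} are then immediate from \cref{Item::ResultsMainThem::PhiDiffeo}, \cref{Item::Results::MainThem::IsEMap}, \cref{Item::ResultsMainThm::AFormula}+\cref{Item::ResultsMainThm::ABound} (invertibility of $I+\AMatrix$ and the uniform bound on $K$ give the uniform span estimate), and \cref{Item::ResultsMainThm::PullbacksSmooth} respectively. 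Part \cref{Item::SubEGeom::ExistEpsilon} comes from \cref{Item::Results::MainThm::xi1xi2} after rescaling identifies $B_{X,L}(x,\xi_2)$ and $B_{X_{K_0},L_{J_0}}(x,\xi_1)$ with $B_S(x,\xi_2\delta)$-type balls. The density statements \cref{Item::SubEThm::Esth}, and consequently the volume estimate $\nu(B_S(x,\delta))\approx\Lambda(x,\delta)$ in \cref{Item::SubEThm::EstimateVols} and the doubling bound \cref{Item::SubEThm::Doubling}, follow from \cref{Thm::Results::Desnity::MainResult} and \cref{Cor::Results::Desnity::MainCor}, using $\Lambda(x,2\delta)\leq 2^{\sum_l d_l}\Lambda(x,\delta)$.

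The metric identities in \cref{Item::SubE::EasyMetrics} are essentially formal: $\rho_S=\rho_F$ is the Nagel--Stein--Wainger-type equivalence between integral-curve balls and piecewise $C^\infty$ curve balls (standard density/approximation argument), while $\rho_F\leq\rho_H$ follows by restricting any E-map $f_j\colon B_{\R\times\C}(1/2)\to M$ used for $\rho_H$ to a suitable $C^\infty$ curve $\gamma_j\colon B_\R(1/2)\to M$: the matrix $\Sh_j$ of \cref{Rmk::SubH::Shj} and the bound \cref{Eqn::SubH::NormShj} give exactly the admissibility condition in the definition of $\rho_F$ along such a slice.

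The main obstacle, and the real content of the theorem, is \cref{Item::SubEThm::HIsSmallerThanS}: that $\rho_H\lesssim\rho_S$ on $\Compact$. For this I use the E-map produced above. Given $y\in B_S(x,\delta)$ with $x\in\Compact$ and $\delta$ admissibly small, choose by \cref{Item::SubEGeom::ExistEpsilon} a point $u\in B_{\R^r\times\C^n}(1)$ with $\Phi_{x,\delta/\epsilon}(u)=y$. Connecting $0$ to $u$ by a short segmented broken path inside $B_{\R^r\times\C^n}(1)$ and transporting each segment through $\Phi_{x,\delta/\epsilon}$ produces a finite family of E-maps $B_{\R\times\C}(1/2)\to M$ whose associated $S_j$ matrices satisfy the bound of \cref{Item::SubE::DefnSj}; this is because $\Phi_{x,\delta/\epsilon}^{*}\delta^{\beta_j}Z_j$ are uniformly $C^\infty$ and span $\Span_\C\{\partial_{t_k},\partial_{\bar z_l}\}$ uniformly. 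Summing the scales yields $\rho_H(x,y)\lesssim\delta$, giving \cref{Item::SubEThm::HIsSmallerThanS} and also \cref{Item::SubEThm::ImageInH}. With $\rho_H$ controlled by $\rho_S$, the second equivalence in \cref{Item::SubEThm::EstimateVols}, $\nu(B_H(x,\delta))\approx\Lambda(x,\delta)$, and the doubling for $B_H$ in \cref{Item::SubEThm::Doubling} are immediate from part \cref{Item::SubE::EasyMetrics} and the corresponding statements for $B_S$.
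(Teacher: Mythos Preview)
Your proposal is correct and follows essentially the same approach as the paper: verify that the hypotheses of \cref{Thm::Results::MainThm} hold uniformly for the rescaled tuples $\delta^{\beta}X,\delta^{\beta}L$ over $(x,\delta)\in\Compact\times(0,1]$, read off \cref{Item::SubEThm::PhiDiffeo}--\cref{Item::SubEGeom::ExistEpsilon} from the conclusions of that theorem and \cref{Thm::Results::Desnity::MainResult}, and then combine \cref{Item::SubEGeom::ExistEpsilon} with \cref{Item::SubEThm::ImageInH} to obtain \cref{Item::SubEThm::HIsSmallerThanS}. The only notable differences are cosmetic: the paper handles \cref{Item::SubEThm::ImageInH} with a \emph{single} E-map $f(s,w)=\Phi_{x,\delta}(2s\,t_0/|t_0|,\,2w\,z_0/|z_0|)$ rather than a broken family, and for \cref{Item::SubEGeom::ExistEpsilon} one must note that $B_{\delta^{\beta}X,\delta^{\beta}L}(x,\xi_2)$ is not literally $B_S(x,\xi_2\delta)$---the containment $B_S(x,\xi_2\delta)\subseteq B_{\delta^{\beta}X,\delta^{\beta}L}(x,\xi_2)$ requires the small observation that $a_j\xi_2^{d_j}=(a_j\xi_2^{d_j-1})\xi_2$ with $\xi_2^{d_j-1}\leq 1$.
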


\begin{rmk}\label{Rmk::ResSubE::RA}
In \cref{Thm::SubEGeom} we stated a result for $C^\infty$ vector fields.  A similar result, with a similar proof, can be stated for real analytic vector fields, where one can ensure the map $\Phi_{x,\delta}$ is
real analytic and the vector fields $\Zh_j^{x,\delta}$ are real analytic in a quantitative way.  This proceeds by using the case $s_0=\omega$ in \cref{Thm::Results::MainThm} (instead of $s_0\in (1,\infty)$).
In the setting of real vector fields, this was done in \cite{StovallStreetIII}.  We leave the details to the interested reader.
\end{rmk}

\begin{rmk}\label{Rmk::ResSubE::MoreComplicated}
In this section, we described geometries where the vector fields at scale $\delta$ where given by
$\delta^{\beta_1} X_1,\ldots, \delta^{\beta_q} X_q, \delta^{\beta_{q+1}} L_1,\ldots, \delta^{\beta_{q+m}} L_m$,
for some fixed vector fields $X_1,\ldots, X_q,L_1,\ldots, L_m$.  It is straightforward to generalize  \cref{Thm::SubEGeom}
to work in a setting where the vector fields have a more complicated dependance on $\delta$.  In this setting, one would
take, for each $\delta\in(0,1]$, a collection of vector fields $X_1^{\delta},\ldots, X_q^{\delta},L_1^{\delta},\ldots, L_m^{\delta}$
and place appropriate axioms on these vector fields so that the  proof of \cref{Thm::SubEGeom} works uniformly for $\delta\in(0,1]$.
This approach was described in the real setting in \cite{StovallStreetI,StovallStreetIII}.  An example in the complex setting is described in \cref{Section::ExtremalBasis}.
Using the same ideas, the results in this paper generalize the result in the multi-parameter
stetting of \cite{StreetMultiparameterCCBalls}.  Here, we fix some $\mu\in \N$, $\mu\geq 1$ and for each $\delta\in (0,1]^\mu$ we are given vector fields $X_1^{\delta},\ldots, X_q^{\delta},L_1^{\delta},\ldots, L_m^{\delta}$ and proceed in the same way.
We leave further details to the interested reader.
\end{rmk}

\begin{rmk}\label{Rmk::ResSubE::FiniteSmoothness}
The assumption that the vector fields are $\CjSpace{\infty}$ is not essential.  In fact, because \cref{Thm::Results::MainThm} is stated for $C^1$ vector fields, one need only assume
the given vector fields are $C^1$.  Then, as in \cref{Rmk::ResSubE::MoreComplicated}, one assumes that the hypotheses of \cref{Thm::Results::MainThm} hold uniformly in the relevant
parameters.  See \cite[\SSGenSubR]{StovallStreetI} for a description of this in the real setting.
\end{rmk}

\section{An example from several complex variables}\label{Section::ExtremalBasis}
In \cref{Section::Intro::Normalize,Section::CorRes::Geometries,Section::Res::SubE} we described how to use the coordinate system $\Phi$ from \cref{Thm::Results::MainThm} as
a generalized scaling map.  In these settings, we applied \cref{Thm::Results::MainThm} to a family of vector fields which depended on $\delta\in (0,1]$.
For example, in \cref{Section::ResGeom::SubHerm}, the vector fields were $\delta^{\beta_1}L_1,\ldots, \delta^{\beta_m} L_m$, where $L_1,\ldots, L_m$ were
sections of $T^{0,1} M$ satisfying certain properties (and $M$ was a complex manifold).  In these settings, the vector fields depend on $\delta$ in a very simple way; and we presented results
in these settings for simplicity.  However, \cref{Thm::Results::MainThm} allows one to consider vector fields which depend on $\delta$ (and on the base point) in much more
complicated ways.  This can be important in applications, and to describe these ideas we present an important setting which arises in several complex variables:  extremal bases.

Extremal bases were first used by McNeal \cite{McNealEstimatesOnTheBergmanKernelsOfConvexDomains} to study Bergman kernels and invariant metrics associated to convex domains of finite type; see also \cite{HeferExtremalBasesAndHolderEstimates}.    More generally, extremal bases can be used to study lineally convex domains \cite{ConradAnisotrope}
(see also \cite[Section 7.1]{CharpentierDupainExtermalBases}).  They can also be used
to study Bergman and Szeg\"o kernels and invariant metrics on pseudoconvex domains of finite type with comparable eigenvalues \cite{KoenigOnMaximalSobolevAndHolder,ChoEstimatesOfInvariantMetricsOnPseudoConvexDomains,ChoEstimatesOfTheBergmanKernel,ChoBoundaryBehaviorOfTheBergmanKernel}.
Finally, they have been used to study pseudoconvex domains of finite type with locally diagonalizable Levi forms \cite{CharpentierDupainGeometryOfPseudoConvexDomainsOfFiniteType,CharpentierDupainEstimatesForBergmanAndSezgoLocallyDiag,CharpentierDupainExtermalBases}.
All of these settings have been generalized to one abstract setting by Charpentier and Dupain \cite{CharpentierDupainExtermalBases}.
The presentation below is closely related to the ideas of \cite{CharpentierDupainExtermalBases}, though expressed in a different way.

As can be seen from the above mentioned works, extremal bases are closely related to a notion of distance in many complex domains; and scaling techniques are central
in using extremal bases to study objects like Bergman and Szeg\"o kernels (many of the above papers use some kind of scaling).
See \cite{McNealSubellipticEstimatesAndScaling} for a particularly straightforward explanation of the form scaling takes in some of these examples.
In this section, we show how to use \cref{Thm::Results::MainThm} to understand this scaling
in a more abstract way.
The idea is to rephrase the notion of an extremal basis in a way which is \textit{quantitatively} invariant under arbitrary biholomorphisms.
We hope that this will give the reader some idea of how to apply the results of this paper
to questions in several complex variables, perhaps even beyond the setting of extremal bases.

Following the philosophy of this paper, we describe the scaling associated to extremal bases in three steps:
\begin{itemize}
\item Extremal bases at the unit scale:  because we wish to scale a small scale into the unit scale, first we must introduce what we mean by the unit scale.  This is the setting
where classical techniques from several complex variables can be used to prove estimates.
\item Extremal bases in a biholomorphic invariant setting:  using \cref{Thm::Results::MainThm}, we rephrase the unit scale from the previous point in a way which is quantitatively
invariant under arbitrary biholomorphisms.  Because of this, we completely remove the notion of ``scale,'' because that notion depends on a choice of coordinate system.
\item Extremal bases at small scales:  here we introduce the notion of extremal bases at small scales, by seeing it as a special case of the biholomorphically invariant version of the previous point.
Because of this, it will immediately follow that the setting of small scales is biholomorphically equivalent to the unit scale.
\end{itemize}
After introducing these three steps, we describe the similar setting of CR manifolds.

Though it will not play a role in our discussion, the setting to keep in mind is the following.  $\fM$ is a complex manifold, and $\Omega=\{ \zeta\in \fM: \rho(\zeta)<0\}$ is
relatively compact domain, where $\rho\in \CjSpace{\infty}[\fM][\R]$ is a defining function of $\Omega$ such that $d\rho(\zeta)\ne 0$, $\forall \zeta\in \partial \Omega=\{\rho=0\}$.
All of the above mentioned papers concern pseudoconvex domains near points of finite type.

While the scaling maps of Nagel, Stein, and Wainger \cite{NagelSteinWaingerBallsAndMetrics} have long been used in such problems (see, e.g., \cite{KoenigOnMaximalSobolevAndHolder}),
we will see that the results of this paper allow us to have similar scaling maps which are \textit{holomorphic}, as opposed to the smooth maps given in \cite{NagelSteinWaingerBallsAndMetrics};
thus they do not destroy the complex nature of the problem.

\begin{rmk}
Other than a new way of viewing extremal bases, the perspective here may not bring much new to this well-studied concept.  However, we hope the general outline may be useful for other problems in several complex variables.
Indeed, as we will explain, the idea is to take a known result at the unit scale, rewrite it in a way which is quantitatively invariant under biholomorphisms (using \cref{Thm::Results::MainThm}).
This then automatically gives a quantitative result at small scales, since the notion of scale is not invariant under biholomorphisms.
\end{rmk} 

	\subsection{Extremal bases at the unit scale}\label{Section::ExtremalBasis::Unit}
Fix $n\in \N$ and let $\rho\in \CjSpace{\infty}[B_{\C^n}(1)][\R]$ satisfy $\rho(0)=0$ and $d\rho(\zeta)\ne 0$, $\forall \zeta\in B_{\C^n}(1)$.
Let $L_n$ be a smooth section of $T^{0,1}B_{\C^n}(1)$ (i.e., $L_n$ is a complex vector field spanned by $\diff{\zb[1]},\ldots, \diff{\zb[n]}$)
be such that $L_n\rho(\zeta)\ne 0$, $\forall \zeta\in B_{\C^n}(1)$.
For example, one often takes
\begin{equation*}
	L_n = \sum_{j=1}^{n} \frac{\partial \rho}{\partial z_j} \diff{\zb[j]},
\end{equation*}
so that $L_n \rho =\sum \mleft| \frac{\partial \rho}{\partial z_j} \mright|^2$.

Let $L_1, \ldots, L_{n-1}$ be smooth sections of $T^{0,1}B_{\C^n}(1)$ such that $L_j\rho =0$ on $B_{\C^n}(1)$,
and such that $L_1(\zeta),\ldots, L_{n}(\zeta)$ span $T^{0,1}_\zeta B_{\C^n}$, $\forall \zeta\in B_{\C^n}(1)$.
Given $\theta=(\theta_1,\ldots, \theta_{n-1})\in \C^{n-1}$ with $|\theta|=1$, set
$L_{\theta}=\sum_{j=1}^{n-1} \theta_{j} L_j$.
Set $\sZ_1^{\theta}=\{ [L_{\theta}, \Lb[\theta] ]\}$, and recursively set $\sZ_j^{\theta} = \{ [ L_\theta, Z], [\Lb[\theta],Z] : Z\in \sZ_{j-1}^{\theta}\}$ for $j\geq 2$.

\begin{defn}
We say $L_1,\ldots, L_n,\rho$ is an extremal system if there exists $K\in \N$ such that
\begin{equation*}
	\{ L_1,\ldots, L_{n-1}, \Lb[1],\ldots, \Lb[n-1], L_n\} \bigcup \mleft(\bigcup_{j=1}^K \sZ_j^{\theta}\mright)
\end{equation*}
spans $\C T_\zeta B_{\C^{n}}(1)$, $\forall \zeta\in B_{\C^n}(1)$, $|\theta|=1$.
\end{defn}

Along with an extremal system, strictly plurisubharmonic functions are often used.  Thus, we assume we are given a function $H\in \CjSpace{3}[B_{\C^n}(1)][\R]$ such
that $\partial \overline{\partial} H$ is strictly positive definite on $B_{\C^n}(1)$.

\begin{rmk}\label{Rmk::ExtremalUnit::QuantAssump}
Given an extremal system and plurisubharmonic function, as above, there are many estimates one can prove using now standard techniques (usually, this occurs under the additional
qualitative assumption that the domain is weakly pseudoconvex, see the above mentioned works for details).  These estimates often depend on the following quantities (or something similar):
\begin{enumerate}[(i)]
	\item\label{Item::ExtremalUnit::nk} Upper bounds for $n$ and $K$.
	\item\label{Item::ExtremalUnit::BoundCN} Upper bounds for $\CjNorm{\rho}{N}[B_{\C^n}(1)]$ and $\max_{1\leq j\leq n} \CjNorm{L_j}{N}[B_{\C^n}(1)][\C^n]$, where $N$ can be chosen to depend only on upper bounds
	for $K$, $n$, and the particular estimate being shown.
	\item A lower bound, $>0$, for $\inf_{\zeta\in B_{\C^n}(1)} \mleft| L_n \rho (\zeta)\mright|$.
	\item\label{Item::ExtrmealUnit::SpanT01} A lower bound, $>0$, for $\inf_{\zeta\in B_{\C^{n}}(1)} \mleft| \det (L_1(\zeta) | \cdots | L_n(\zeta))\mright|$, where this matrix has columns $L_1,\ldots, L_n$, written
	in terms of $\diff{\zb[1]},\ldots, \diff{\zb[n]}$.
	\item\label{Item::ExtremalUnit::SpanAllTangent} A lower bound, $>0$, for
	\begin{equation*}
		\inf_{\substack{\zeta\in B_{\C^n}(1) \\ |\theta|=1}} \max_{Z\in \bigcup_{j=1}^K \sZ_{j}^{\theta}} \mleft| \det \bigg( L_1(\zeta) | \cdots | L_n(\zeta)  | \Lb[1](\zeta)| \cdots | \Lb[n-1](\zeta)| Z(\zeta)  \bigg) \mright|,
	\end{equation*}
	where in the above matrix, the vector fields are written in terms of $\diff{z_1},\ldots, \diff{z_n}, \diff{\zb[1]},\ldots, \diff{\zb[n]}$.
	\item An upper bound for $\CjNorm{H}{3}[B_{\C^n}(1)]$.
	\item\label{Item::ExtremalUnit::HessianLower}  A lower bound, $>0$, for the quadratic form $\partial \overline{\partial} H$ on $B_{\C^n}(1)$.  Equivalently, using
	\cref{Item::ExtremalUnit::BoundCN,Item::ExtrmealUnit::SpanT01}, for $\omega=(\omega_1,\ldots, \omega_n)\in \C^n$ with $|\omega|=1$,
	set $L_\omega=\sum \omega_j L_j$.  The estimates may depend on a lower bound, $>0$,  for:
	\begin{equation}\label{Eqn::ExtremalUnit::Hessian}
		\inf_{ \substack{\zeta\in B_{\C^{n}}(1) \\ |\omega|=1}} \mleft< \partial \overline{\partial} H(\zeta); L_\omega(\zeta), \Lb[\omega](\zeta) \mright>.
	\end{equation}
\end{enumerate}
Thus, if one has an infinite collection of extremal systems and plurisubharmonic functions,  such that the above quantities can be chosen uniformly over this infinite collection, then one can prove the above mentioned
estimates, uniformly over the infinite collection.  See \cite{McNealSubellipticEstimatesAndScaling} for some easy to understand examples of such estimates.
Note that all of the above quantities except for \cref{Item::ExtremalUnit::nk} depend on the choice of coordinate system:  if one applies a biholomorphism to this setting,
it destroys all of the above constants.  The next section fixes this problem.
\end{rmk}

\begin{rmk}
In \cref{Item::ExtremalUnit::HessianLower}, we assumed that the quadratic form $\partial \overline{\partial} H$ was bounded away from $0$.  In the famous work of Catlin \cite{CatlinSubellipticEstimates},
subelliptic estimates are shown using plurisubharmonic functions where this bound can be chosen very large.
While the form being positive definite does not depend on the choice of holomorphic coordinate system, the lower bound for the form does depend on the choice of coordinate system.
In \cref{Section::ExtremalBasis::Small}, we will assume the existence of a plurisubharmonic function adapted to each scale;
those adapted to a small scale will have a large lower bound when viewed in a fixed coordinate system independent of the scale (see \cref{Rmk::ExtremalSmall::LargeHessian}).
\end{rmk}

	\subsection{Extremal bases invariant under biholomorphisms}\label{Section::ExtremalBasis::Biholo}
In this section, we present extremal bases again.
Qualitatively, this is exactly the same as what is written in \cref{Section::ExtremalBasis::Unit}; the difference here is that our quantitative assumptions will be written in
a way which is invariant under biholomorphisms (as opposed to the quantitative assumptions in \cref{Rmk::ExtremalUnit::QuantAssump} which depended on
the choice of coordinate system).

Let $\fM$ be a complex manifold of complex dimension $n$.  Fix a point $\zeta_0\in \fM$ and $\rho\in \CjSpace{\infty}[\fM][\R]$ with $\rho(\zeta_0)=0$.
Let $L_1,\ldots, L_n$ be smooth sections of $T^{0,1}\fM$ and fix $\xi>0$.  We take the following assumptions and definitions:

\begin{enumerate}[(i)]
	\item\label{Item::ExtremalBiholo::Span} $\forall \zeta\in B_{L}(\zeta_0,\xi)$, $\Span_{\C}\{ L_1(\zeta),\ldots, L_n(\zeta)\} = T^{0,1}\fM$.
	\item $c_1:=\inf_{\zeta\in B_L(\zeta_0,\xi)} |L_n \rho(\zeta)|>0$.
	\item\label{Item::ExtremalBiholo::VansihDeriv} For $1\leq j\leq n-1$, $L_j\rho(\zeta)=0$ for $\zeta\in B_L(\zeta_0,\xi)$.
	\item\label{Item::ExtremalBiholo::Invol} Due to \cref{Item::ExtremalBiholo::Span}, we may write $[L_j, L_k]=\sum_{l=1}^n c_{j,k}^{1,l} L_l$ and $[L_j, \Lb[k]] =\sum_{l=1}^n c_{j,k}^{2, l} L_l + \sum_{j=1}^n c_{j,k}^{3,l} \Lb[l]$.
	For each $N\in \N$, take $C_N$ (which we assume to be finite\footnote{$C_N$ can always be chosen to be finite, so long as $B_L(\zeta_0,\xi)$ is relatively compact in $\fM$, which can
	be guaranteed by taking $\xi$ small enough; though it is the particular value of $C_N$ which is important, not just that it is finite.}) so that
	\begin{equation*}
		\CXjNorm{c_{j,k}^{a,l}}{L}{N}[B_{L}(\zeta_0,\xi)], \sum_{m=1}^N \CNorm{L_n^m\rho}{B_{L}(\zeta_0,\xi)}\leq C_N, \quad 1\leq j,k,l\leq n, 1\leq a\leq 3.
	\end{equation*}
	\item\label{Item::ExtremalBiholo::Span2} For each $\theta\in \C^{n-1}$ with $|\theta|=1$, define $\sZ_j^{\theta}$ in terms of $L_1,\ldots, L_{n-1}$ as in \cref{Section::ExtremalBasis::Unit}.  We assume
	that there exists $K\in \N$ such that $\forall \zeta\in B_L(\zeta_0,\xi)$,
	\begin{equation*}
		\Lb[n](\zeta)=\sum_{j=1}^{n} a_j^{1,\theta}(\zeta) L_j(\zeta) + \sum_{j=1}^{n-1} a_j^{2,\theta}(\zeta) \Lb[j](\zeta) + \sum_{Z\in \bigcup_{j=1}^K \sZ_j^{\theta}} b_Z^{\theta}(\zeta) Z(\zeta),
	\end{equation*}
	with
	\begin{equation*}
		D:=\sup_{\substack{\zeta\in B_L(\zeta_0,\xi) \\ |\theta|=0, Z\in \bigcup_{j=1}^K \sZ_j^{\theta}}} |a_j^{1,\theta}(\zeta)|+|a_j^{2,\theta}(\zeta)| + |b_Z^{\theta}(\zeta)|<\infty.
	\end{equation*}
	\item\label{Item::ExtremalBiholo::etadelta} Let $\eta, \delta_0>0$ be as in \cref{Thm::Results::MainThm}.  In applications, usually $L_1,\ldots, L_n$ are often given in a coordinate system in which their $C^1$ norms are very small,
	and then $\eta$ and $\delta_0$ can be bounded below in terms of the $C^1$ norms in this coordinate system (see, e.g., \cref{Lemma::MoreAssume::ExistEtaDelta0} and the discussion following it).
	\item We suppose we are given a function $H\in \CXjSpace{L}{3}[B_L(\zeta_0,\xi);\R]$.  
	For $\omega\in \C^n$ with $|\omega|=1$, define $L_\omega=\sum_{j=1}^n \omega_j L_j$.
	We assume,
	\begin{equation*}
		c_2:=\inf_{ \substack{\zeta\in B_L(\zeta_0,\xi) \\ |\omega|=1}} \mleft< \partial \overline{\partial} H(\zeta); L_\omega(\zeta), \Lb[\omega](\zeta) \mright>>0.
	\end{equation*}
\end{enumerate}

\begin{prop}\label{Prop::ExtremalBiholo::Exists}
In the above setting, there is a biholomorphism $\Phi: B_{\C^n}(1)\rightarrow \Phi(B_{\C^n}(1))\subseteq B_L(\zeta_0,\xi)$, with $\Phi(0)=\zeta_0$, such that
$\Phi^{*}L_1,\ldots, \Phi^{*}L_n, \Phi^{*}\rho$ is an extremal system with plurisubharmonic function $\Phi^{*}H$.  Moreover, all of the estimates described in
\cref{Rmk::ExtremalUnit::QuantAssump} can be bounded in terms of upper bounds for $n$, $K$, $c_1^{-1}$, $C_N$ (where $N$ can be chosen to depend only on $n$, $K$,
and the particular estimate being shown), $D$, $\eta^{-1}$, $\delta_0^{-1}$, $\xi^{-1}$, $c_2^{-1}$, and $\CXjNorm{H}{L}{3}[B_L(\zeta_0,\xi)]$.
\end{prop}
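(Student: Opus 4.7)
The plan is to obtain $\Phi$ by applying \cref{Thm::Results::MainThm} to the list of complex vector fields $L_1,\ldots,L_n$ with $q=0$, $m=n$, so $r=0$ and $n=n$. Hypothesis \cref{Item::ExtremalBiholo::Span} provides the span requirement and also allows us to choose $J_0=(1,\ldots,n)$ with $\zeta=1$ in \cref{Eqn::MainRes::ChooseZeta} (using \cref{Rmk::SubE::GeneralsizesSubHR}); hypothesis \cref{Item::ExtremalBiholo::Invol} provides the structure coefficients and the bounds needed to make any $\Zygad{s}$-admissible constant (say with $s_0=3/2$) depend only on $n$ and $C_N$; and hypothesis \cref{Item::ExtremalBiholo::etadelta} provides $\eta,\delta_0$. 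Because $r=0$, the E-manifold structure on $B_L(\zeta_0,\xi)$ induced by $\LVS$ is the complex structure on $\fM$ (\cref{Rmk::MainRes::EMap}), and the map $\Phi$ produced by the theorem is therefore holomorphic (\cref{Rmk::EMfld::FullSubcategory}). Conclusions \cref{Item::ResultsMainThm::PhiOpen,Item::ResultsMainThem::PhiDiffeo,Item::Results::MainThm::xi1xi2,Item::Results::MainThm::Phiof0} then make $\Phi:B_{\C^n}(1)\to\Phi(B_{\C^n}(1))\subseteq B_L(\zeta_0,\xi)$ a biholomorphism with $\Phi(0)=\zeta_0$.

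Next I would verify each estimate of \cref{Rmk::ExtremalUnit::QuantAssump} for $\Phi^{*}L_1,\ldots,\Phi^{*}L_n,\Phi^{*}\rho,\Phi^{*}H$. Items \cref{Item::ExtremalUnit::nk,Item::ExtremalUnit::BoundCN} follow directly: item \cref{Item::ResultsMainThm::PullbacksSmooth} of \cref{Thm::Results::MainThm} gives $\CjNorm{\Phi^{*}L_j}{N}[B_{\C^n}(1)][\C^n]\lesssim 1$ uniformly, while $\Phi^{*}\rho$ is controlled by observing that $\rho\in\CXjSpace{L}{\infty}[B_L(\zeta_0,\xi)]$ with bounded norm (all iterated derivatives $V_1\cdots V_k\rho$ with $V_j\in\{L_j,\Lb[j]\}$ reduce, via $\Lb[j]\rho=\overline{L_j\rho}$, the commutator relations in \cref{Item::ExtremalBiholo::Invol}, and $L_j\rho=0$ for $j<n$, to bounded polynomial expressions in the $c_{j,k}^{a,l}$, their iterated $L_n,\Lb[n]$-derivatives, and in $L_n^m\rho$), so diffeomorphism invariance (\cref{Prop::FuncMan::DiffeoInv}) together with the $C^N$ bounds on $\Phi^{*}L_j$ gives $\CjNorm{\Phi^{*}\rho}{N}[B_{\C^n}(1)]\lesssim 1$. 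Items \cref{Item::ExtrmealUnit::SpanT01} and the non-vanishing of $\Phi^{*}L_n\Phi^{*}\rho$ follow from conclusion \cref{Item::Results::MainThm::1,Item::Results::MainThm::2} of \cref{Thm::Results::MainThm} and from $\inf|L_n\rho|\geq c_1$ being preserved under pullback, and $\Phi^{*}L_j\Phi^{*}\rho\equiv 0$ for $j<n$ since $L_j\rho\equiv 0$.

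For \cref{Item::ExtremalUnit::SpanAllTangent}, the decomposition of $\Lb[n]$ given by hypothesis \cref{Item::ExtremalBiholo::Span2} pulls back under $\Phi^{*}$ (since pullback is compatible with Lie brackets and hence maps $\sZ_j^\theta$ to the analogous iterated commutators of $\Phi^{*}L_\theta,\Phi^{*}\Lb[\theta]$) and the coefficients $\Phi^{*}a_j^{a,\theta},\Phi^{*}b_Z^\theta$ are uniformly bounded by $D$. Combined with conclusion \cref{Item::Results::MainThm::2} of \cref{Thm::Results::MainThm}, which makes $\Phi^{*}L_1,\ldots,\Phi^{*}L_n$ a well-conditioned basis of $T^{0,1}$ pointwise, a linear-algebra argument (solving for one of the $Z\in\bigcup_{j=1}^K\sZ_j^\theta$ as the determinantal complement when $\Phi^{*}\Lb[n]$ is included in the basis) gives the required lower bound on the maximal determinant in \cref{Item::ExtremalUnit::SpanAllTangent}. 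For \cref{Item::ExtremalUnit::HessianLower}, since $\Phi$ is holomorphic one has $\partial\overline{\partial}(\Phi^{*}H)=\Phi^{*}(\partial\overline{\partial}H)$; testing on $\Phi^{*}L_\omega$ reduces the lower bound on the quadratic form to the hypothesis $c_2>0$, and the $C^3$-bound on $\Phi^{*}H$ follows from $H\in\CXjSpace{L}{3}$ together with the $C^N$-bounds on $\Phi^{*}L_j$.

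I expect the main obstacle to be the bookkeeping for \cref{Item::ExtremalUnit::SpanAllTangent}: one must show carefully that the nested commutators $\sZ_j^\theta$ transform covariantly under $\Phi^{*}$ with coefficient norms controlled uniformly in $\theta\in S^{2n-3}$, and then extract the correct lower bound on the mixed determinant involving one element of $\sZ_j^\theta$ using only a lower bound on $|b_Z^\theta|^{-1}$ for the max-achieving $Z$ (and not on the norms of the other coefficients). Once this is done, gathering dependencies shows that every estimate depends only on the quantities listed in the proposition, completing the proof.
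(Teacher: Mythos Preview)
Your proposal is correct and follows essentially the same approach as the paper: apply \cref{Thm::Results::MainThm} with $q=0$, $m=n$ to obtain the holomorphic $\Phi$ via \cref{Rmk::MainRes::EMap}, then check the estimates of \cref{Rmk::ExtremalUnit::QuantAssump}. The paper's proof is terser---it singles out only the two points that do not follow directly from the theorem, namely the $C^N$ bound on $\Phi^{*}\rho$ (handled exactly as you do, by reducing $\CXjNorm{\rho}{L}{N}$ to $\sum_m\CNorm{L_n^m\rho}{}$ via \cref{Item::ExtremalBiholo::VansihDeriv,Item::ExtremalBiholo::Invol}) and the Hessian lower bound (handled by biholomorphism invariance of $\langle\partial\overline{\partial}H;L_\omega,\Lb[\omega]\rangle$, again as you do).

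Your caution about \cref{Item::ExtremalUnit::SpanAllTangent} is reasonable but the obstacle dissolves: since $\Phi^{*}$ commutes with Lie brackets, the pulled-back $\sZ_j^{\theta}$ are exactly the iterated brackets of $\Phi^{*}L_\theta,\Phi^{*}\Lb[\theta]$, and hypothesis \cref{Item::ExtremalBiholo::Span2} pulls back verbatim with the same coefficient bound $D$. Expanding $\det(\Phi^{*}L_1|\cdots|\Phi^{*}L_n|\Phi^{*}\Lb[1]|\cdots|\Phi^{*}\Lb[n])$ along the last column using that decomposition (the $L_j,\Lb[j]$ terms drop by antisymmetry) and using the uniform lower bound on this full determinant from \cref{Item::ResultsMainThm::AFormula,Item::ResultsMainThm::ABound} gives the required lower bound on $\max_Z|\det(\cdots|Z)|$ immediately, with dependence only on $n$, $K$, $D$, and the admissible constants. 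The paper does not spell this out, evidently regarding it as routine.
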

\begin{proof}
This follows immediately from \cref{Thm::Results::MainThm} (and that theorem includes more properties of $\Phi$), by applying the theorem
to $L_1,\ldots, L_n$ (we are talking $m=n$ and $q=0$)--that $\Phi$ is holomorphic is the essence of \cref{Rmk::MainRes::EMap}.  

There are two parts which do not follow directly from the statement of \cref{Thm::Results::MainThm}.  To estimate $\CjNorm{\Phi^{*}\rho}{N}[B_{\C^N}(1)]$ we would like to have
estimates on $\CXjNorm{\rho}{L}{N}[B_{L}(\zeta_0,\xi)]$.  To obtain this, note that it follows easily from \cref{Item::ExtremalBiholo::VansihDeriv,Item::ExtremalBiholo::Invol}
that $\CXjNorm{\rho}{L}{N}[B_{L}(\zeta_0,\xi)]\approx \sum_{m=1}^N \CNorm{L_n^m\rho}{B_{L}(\zeta_0,\xi)}\leq C_N$.

The other part
that does not follow directly from the statement of \cref{Thm::Results::MainThm} is
a lower bound for \eqref{Eqn::ExtremalUnit::Hessian}.  The key here is that this quantity is invariant under biholomorphisms.  Indeed, since $\Phi$ is a biholomorphism,
\begin{equation*}
	 \mleft< \partial \overline{\partial} (\Phi^{*}H)(z); (\Phi^{*}L_\omega)(z), (\Phi^{*}\Lb[\omega])(z) \mright>
	 =\mleft< \partial \overline{\partial} H(\Phi(z)); L_\omega(\Phi(z)), \Lb[\omega](\Phi(z)) \mright>.
\end{equation*}
Thus, \eqref{Eqn::ExtremalUnit::Hessian} is bounded below by $c_2$.
\end{proof}

\begin{defn}\label{Defn::ExtremalBiholo::Uniform}
Let $\sI$ be an index set, and suppose for each $\iota\in \sI$, we are given $\fM^{\iota}$, $\zeta_0^{\iota}$, $L_1^{\iota},\ldots, L_n^{\iota}$, $\rho^{\iota}$, and $H^{\iota}$
as above, satisfying the above estimates uniformly (i.e., there are upper bounds for the quantities discussed in \cref{Prop::ExtremalBiholo::Exists} which can be chosen independent of $\iota$).
We say that the collection $L_1^{\iota},\ldots, L_n^{\iota}$, $\zeta_0^{\iota}$, $\rho^{\iota}$, $H^{\iota}$ is an extremal system with adapted plurisubharmonic function, uniformly in $\iota$.
\end{defn}

\begin{rmk}\label{Rmk::ExtremalBiholo::Uniform}
Combining \cref{Defn::ExtremalBiholo::Uniform} and \cref{Prop::ExtremalBiholo::Exists}, we see that if $L_1^{\iota},\ldots, L_n^{\iota}$, $\zeta_0^{\iota}$, $\rho^{\iota}$, $H^{\iota}$ is an extremal system with adapted plurisubharmonic function, uniformly in $\iota$, then for each $\iota\in \sI$, there exists a biholomorphism $\Phi_\iota:B_{\C^n}(1)\rightarrow \Phi_{\iota}(B_{\C^n}(1))\subseteq \fM^{\iota}$,
with $\Phi_\iota(0)=\zeta_0^{\iota}$ such that
$\Phi_\iota^{*} L_1^{\iota},\ldots, \Phi_{\iota}^{*}L_n^{\iota}$, $\Phi_\iota^{*} \rho^{\iota}$, $\Phi_\iota^{*} H^{\iota}$ is an extremal system with plurisubharmonic function on $B_{\C^n}(1)$,
satisfying all of the estimates outlined in \cref{Rmk::ExtremalUnit::QuantAssump}, uniformly in $\iota$.
\end{rmk}

	\subsection{Extremal bases at small scales}\label{Section::ExtremalBasis::Small}
Let $\fM$ be a complex manifold of complex dimension $n$ and fix $\zeta_0\in \fM$, and let $\rho\in \CjSpace{\infty}[\fM][\R]$.
Let $\Compact\subseteq \{ \zeta\in \fM:\rho(\zeta)=0\}$, and suppose $L_n$ is a smooth section of $T^{0,1}\fM$ such that
$\inf_{\zeta\in \Compact} |L_n\rho(\zeta)|>0$.  To work at scale $\delta\in(0,1]$, we wish to replace $\rho$ with $\delta^{-2} \rho$ and $L_n$ with $\delta^2 L_n$.

To do this, we assume that for each $\delta\in (0,1]$, $\zeta_0\in \Compact$, we are given smooth sections of $T^{0,1}\fM$ defined near $\zeta_0$, $L_1^{\delta,\zeta},\ldots, L_{n-1}^{\delta,\zeta}$,
and a real valued smooth function $H^{\zeta_0,\delta}$ defined near $\zeta_0$, such that
\begin{equation*}
	L_1^{\delta,\zeta_0},\ldots, L_{n-1}^{\delta,\zeta_0}, \delta^{2} L_n, \delta^{-2} \rho,H^{\zeta_0,\delta}, \zeta_0
\end{equation*}
is an extremal system with adapted plurisubharmonic
function, uniformly in $\delta\in (0,1]$, $\zeta_0\in \Compact$.
Thus, using \cref{Rmk::ExtremalBiholo::Uniform}, for each $\delta\in (0,1]$, $\zeta_0\in \Compact$, there is a biholomorphism $\Phi_{\zeta_0,\delta}:B_{\C^n}(1)\rightarrow \Phi_{\zeta_0,\delta}(B_{\C^n}(1))$,
with $\Phi_{\zeta_0,\delta}(0)=\zeta_0$ and
such that $\Phi_{\zeta_0,\delta}^{*} L_1^{\delta,\zeta_0}, \ldots, \Phi_{\zeta_0,\delta}^{*}L_{n-1}^{\delta,\zeta_0}, \Phi_{\zeta_0,\delta}^{*} \delta^{2}L_n$, $\Phi_{\zeta_0,\delta}^{*}\delta^{-2}\rho$, $\Phi_{\zeta_0,\delta}^{*} H^{\zeta_0,\delta}$ is an extremal system at the unit scale, uniformly in $\zeta_0\in \Compact$ and $\delta\in (0,1]$ (in the sense that the constants described in \cref{Rmk::ExtremalUnit::QuantAssump} can be chosen
uniformly in $\zeta_0$ and $\delta$).

If one imagines $\fM$ has having some fixed coordinate system, independent of $\delta$, then $\Phi_{\zeta,\delta}$ takes points which look to be of distance $\approx \delta^2$ from $\{\rho=0\}$ and
``rescales'' them to have distance $\approx 1$ from $\{\Phi_{\zeta,\delta}^{*}\rho =0\}$.

\begin{rmk}\label{Rmk::ExtremalSmall::LargeHessian}
In the coordinate system $\Phi_{\zeta,\delta}$, the quadratic form $\partial\overline{\partial} \Phi_{\zeta,\delta}^{*} H^{\zeta,\delta}$ is bounded above and below, uniformly in $\zeta\in \Compact$, $\delta\in (0,1]$.
However, if one starts with a fixed coordinate system on $\fM$ (independent of $\delta\in (0,1]$), then in terms of this coordinate system, $\partial \overline{\partial} H^{\zeta,\delta}$ has a large
lower bound
 (as $\delta\rightarrow 0$), since the vector fields $L_1^{\delta,\zeta},\ldots, L_{n-1}^{\delta,\zeta}, \delta^{2} L_n$ are small in this fixed coordinate system.
\end{rmk}

In applications, a major difficulty is showing such an extremal basis and adapted plurisubharmonic function exists at each scale.  As mentioned before, this has been done in many settings, and was abstracted
in \cite{CharpentierDupainExtermalBases}.  Indeed, if $L_1,\ldots, L_n$ is a $(M,K,\zeta,\delta)$ extremal basis as in \cite[Definition 3.1]{CharpentierDupainExtermalBases}, then one can obtain
an extremal basis at scale $\delta$ (in the sense discussed here) by considering
$F(L_1, \zeta,\delta)^{-1/2}L_1,\ldots, F(L_{n-1}, \zeta,\delta)^{-1/2}L_{n-1}, \delta^{2} L_n$ and $\delta^{-2}\rho$, where these terms are all defined in \cite{CharpentierDupainExtermalBases} (adapted plurisubharmonic functions are described in \cite[Section 5]{CharpentierDupainExtermalBases}).
While the results in this paper do not help find such an extremal basis, perhaps they will help make clear what to look for in other similar situations.  Indeed, once one has a result on the unit scale,
to translate the result to small scales it often suffices to write the unit scale result in a way which is invariant under biholomorphisms using \cref{Thm::Results::MainThm}, as we did in \cref{Section::ExtremalBasis::Biholo}.  Then one can immediately translate the setting to small scales as we did in \cref{Section::ExtremalBasis::Small}. 
	
	\subsection{CR Manifolds}\label{Section::ExtremalBasis::CR}
Instead of studying extremal bases on a neighborhood of a point on the boundary of a complex domain, one could try to work directly on the boundary by working with abstract CR manifolds;
this is the approach taken, for example, in \cite{KoenigOnMaximalSobolevAndHolder} (which addressed the ``comparable eigenvalue'' setting).

Let $\fM$ be a smooth manifold of dimension $2n-1$ endowed with a smooth CR structure $\WVS$ of rank $n-1$ (so that $\WVS\oplus \WVSb$ has codimension $1$ in $\C T\fM$).
Fix a point $\zeta_0\in \fM$ and pick a smooth real vector field $T$, defined near $\zeta_0$, such that
\begin{equation*}
	\WVS_{\zeta_0}+\WVSb[\zeta_0]+\Span_{\C} \{ T(\zeta_0)\} = \C T\fM.
\end{equation*}
If we pick smooth sections $L_1,\ldots, L_{n-1}$ of $\WVS$ near $\zeta_0$, these can play the role that the vector fields of the same name did in
\cref{Section::ExtremalBasis::Biholo}.  We then take the same assumptions as \cref{Item::ExtremalBiholo::Invol}, \cref{Item::ExtremalBiholo::Span}, and \cref{Item::ExtremalBiholo::etadelta} from
\cref{Section::ExtremalBasis::Biholo}, where we replace $\Lb[n]$ with $T$, $L_n$ with $0$, and remove $\rho$ from \cref{Item::ExtremalBiholo::Invol};
thus we obtain constants $\xi$, $C_N$, $D$, $K$, $\eta$, and $\delta_0$ as in those assumptions.

In this setting, \cref{Thm::Results::MainThm}  applies with $X_1,\ldots, X_q = \Real(L_1),\Imag(L_1),\ldots, \Real(L_{n-1}), \Imag(L_{n-1}), T$,
to obtain a $C^\infty$ diffeomorphism\footnote{Throughout we identify $\R^{2n-1}$ with $\R\times \C^{n-1}$.} $\Phi:B_{\R\times \C^{n-1}}(1)\rightarrow \Phi(B_{\R \times \C^{n-1}}(1))$ with $\Phi(0)=\zeta_0$ and such that
$\Phi^{*}L_1,\ldots, \Phi^{*}L_{n-1}, \Phi^{*} T$ satisfy good estimates at the unit scale.  Namely, we obtain
\cref{Rmk::ExtremalUnit::QuantAssump} \cref{Item::ExtremalUnit::BoundCN} and \cref{Item::ExtremalUnit::SpanAllTangent},
where in \cref{Item::ExtremalUnit::BoundCN} we replace $L_n$ with $T$ and $\rho$ with $0$, and in \cref{Item::ExtremalUnit::SpanAllTangent} we replace $L_n$ with $0$
and the vector fields are written in terms of the standard basis for vector fields on $\R \times \C^{n-1}\cong \R^{2n-1}$--and these quantities can be estimated in terms of
upper bounds for $\xi^{-1}$, $n$, $K$, $C_N$ (where $N$ can be chosen to depend only on $n$, $K$, and the estimate at hand),
$D$, $\eta^{-1}$, and $\delta_0^{-1}$.

Thus, \cref{Thm::Results::MainThm} allows us to rewrite a setting at the ``unit scale'' in a way which is invariant under arbitrary diffeomorphisms; which, as in \cref{Section::ExtremalBasis::Small},
allows us to view these maps as scaling maps.
In \cref{Section::ExtremalBasis::Biholo} we asked that the map be bioholomorphic.  It does not make a priori sense to insist that the map $\Phi$ given here is a CR map
because $\R\times \C^{n-1}$ does not have a  canonical CR structure.  We could give $B_{\R \times \C^{n-1}}(1)$ the CR structure $\Phi^{*} \WVS$,
and then $\Phi$ is automatically a CR map, but this does not add much useful information.

However, in the special case that one can choose $T$ so that the bundle $\WVS_{\zeta}+\C T(\zeta)$ is formally integrable (and therefore an elliptic structure), then we can do more.  In this case, we can choose
$\Phi$ so that $\Phi^{*}L_1,\ldots, \Phi^{*} L_{n-1}, \Phi^{*} T$ are all spanned by $\diff{\zb[1]},\ldots, \diff{\zb[n]}, \diff{t}$ (where $\R\times \C^{n-1}$ is given coordinates
$(t, z_1,\ldots, z_n)$).  One can see this by applying \cref{Thm::Results::MainThm} to the vector fields $L_1,\ldots, L_{n-1}$ and $X_1=T$ (with $m=n-1$ and $q=1$).
In many of the standard examples of CR manifolds, one can find such a $T$--see \cref{Section::CRMfld}.

	
\section{Function Spaces Revisited}
In this section we present the basic properties of the function spaces introduced in \cref{Section::FuncSpaces}; most of these properties were proved
in \cite{StovallStreetI,StovallStreetII,StovallStreetIII}, and we refer the reader to those references for proofs and a further discussion of the results not proved here.
We take $W_1,\ldots, W_N$ to be real $C^1$ vector fields on a $C^2$ manifold $M$ as in \cref{Section::FuncSpaces}.

\begin{lemma}\label{Lemma::FuncSpaceRev::Properties}
\begin{enumerate}[(i)]
\item\label{Item::FuncSpaceRev::IncludHolder} For $0\leq s_1\leq s_2\leq 1$, $m\in \N$, $\HXNorm{f}{W}{m}{s_1}[M]\leq 3\HXNorm{f}{W}{m}{s_2}[M]$.
\item\label{Item::FuncSpaceRev::CmBoundsHms} $\HXNorm{f}{W}{m}{1}[M]\leq \CXjNorm{f}{W}{m+1}[M]$.
\item\label{Item::FuncSpaceRev::HmsBoundsZygs} For $s\in (0,1]$, $m\in \N$, $\ZygXNorm{f}{W}{s+m}[M]\leq 5\HXNorm{f}{W}{m}{s}[M]$.
\item\label{Item::FuncSpaceRev::IncludeZygmund} For $0<s_1\leq s_2<\infty$, $\ZygXNorm{f}{W}{s_1}[M]\leq 15 \ZygXNorm{f}{W}{s_2}[M]$.
\item\label{Item::FuncSpaceRev::IncludeSets} If $U\subseteq M$ is an open set, then $\HXNorm{f}{W}{m}{s}[U]\leq \HXNorm{f}{W}{m}{s}[M]$ and $\ZygXNorm{f}{W}{s}[U]\leq \ZygXNorm{f}{W}{s}[M]$.
\item\label{Item::FuncSpaceRev::ComegainsA} $\ComegaSpace{r}[B_{\R^n}(r)]\subseteq \ASpace{n}{r}$ and $\ANorm{f}{n}{r}\leq \ComegaNorm{f}{r}[B_{\R^n}(r)]$.
\item\label{Item::FuncSpaceRev::sAinComega} $\ASpace{n}{r}\subseteq \ComegaSpace{r/2}[B_{\R^n}(r/2)]$ and $\ComegaNorm{f}{r/2}[B^n(r/2)]\leq \ANorm{f}{n}{r}$.
\item\label{Item::FuncSpaceRev::Properites::ContainRA} Suppose $W=W_1,\ldots, W_N$ satisfies $\sC(x_0,r,M)$.  Then, $\CXomegaSpace{W}{r}[M]\subseteq \AXSpace{W}{x_0}{r}$
and $\AXNorm{f}{W}{x_0}{r}\leq \CXomegaNorm{f}{W}{r}[M]$.
\item\label{Item::FuncSpaceRev::DerivCXomega} For any $s\in (0,r)$, $W_j: \CXomegaSpace{W}{r}[M]\rightarrow \CXomegaSpace{W}{s}[M]$.  In particular, $W_j:\CXjSpace{W}{\omega}[M]\rightarrow \CXjSpace{W}{\omega}[M]$.
\item\label{Item::FuncSpaceRev::Properties::Deriv} For any $s\in (1,\infty]\cup\{\omega\}$, $W_j : \ZygXSpace{W}{s}[M]\rightarrow \ZygXSpace{W}{s-1}[M]$.
\end{enumerate}
\end{lemma}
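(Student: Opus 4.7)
The plan is to treat each part in sequence, observing that parts \cref{Item::FuncSpaceRev::IncludHolder}--\cref{Item::FuncSpaceRev::IncludeSets} depend only on the definitions and a direct manipulation of $\rho$, while parts \cref{Item::FuncSpaceRev::ComegainsA}--\cref{Item::FuncSpaceRev::sAinComega} are power series manipulations, and parts \cref{Item::FuncSpaceRev::Properites::ContainRA}--\cref{Item::FuncSpaceRev::Properties::Deriv} require using the explicit Taylor expansion of $f(e^{t\cdot W}x_0)$.

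For \cref{Item::FuncSpaceRev::IncludHolder}, I would split the supremum in $\HXNorm{f}{W}{m}{s_1}[M]$ into the regions $\rho(x,y)\leq 1$ and $\rho(x,y)>1$; on the first, $\rho^{-s_1}\leq \rho^{-s_2}$, and on the second the difference quotient is bounded by $2\CNorm{W^\alpha f}{M}$, absorbing the factor of $3$. For \cref{Item::FuncSpaceRev::CmBoundsHms}, given any $\delta>\rho(x,y)$ there is a curve $\gamma$ realizing the definition \cref{Eqn::FuncMan::DefnSRBall}; the fundamental theorem of calculus gives $|f(x)-f(y)|\leq \delta\CjNorm{f}{1}$ and we let $\delta\to \rho(x,y)$. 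For \cref{Item::FuncSpaceRev::HmsBoundsZygs}, I estimate $|f(\gamma(2h))-2f(\gamma(h))+f(\gamma(0))|$ by a triangle inequality and use that $\rho(\gamma(a),\gamma(b))\leq |b-a|$ for curves $\gamma\in \sP_{W,s/2}^h$, which follows from rescaling the defining curve. \Cref{Item::FuncSpaceRev::IncludeZygmund} is obtained by iterating \cref{Item::FuncSpaceRev::IncludHolder}, \cref{Item::FuncSpaceRev::CmBoundsHms}, and \cref{Item::FuncSpaceRev::HmsBoundsZygs} after writing $s_i=m_i+\sigma_i$ with $\sigma_i\in (0,1]$; and \cref{Item::FuncSpaceRev::IncludeSets} is immediate since all suprema are taken over smaller sets and the class $\sP_{W,s/2}^h$ shrinks.

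For \cref{Item::FuncSpaceRev::ComegainsA}, the assumption $\ComegaNorm{f}{r}[B_{\R^n}(r)]<\infty$ implies by Taylor expansion at $0$ (with remainder estimated by the tail of the defining series) that $f$ equals its Taylor series, hence $f=\sum \frac{c_\alpha}{\alpha!}t^\alpha$ with $c_\alpha = \partial^\alpha f(0)$, and then $|c_\alpha|\leq \CNorm{\partial^\alpha f}{B_{\R^n}(r)}$ gives $\ANorm{f}{n}{r}\leq \ComegaNorm{f}{r}[B_{\R^n}(r)]$. For \cref{Item::FuncSpaceRev::sAinComega}, I differentiate the series term by term: $\partial^\beta f(t)=\sum_{\alpha\geq \beta}\frac{c_\alpha}{(\alpha-\beta)!}t^{\alpha-\beta}$, then bound $|\partial^\beta f|$ on $B_{\R^n}(r/2)$ and swap the order of summation, using the identity $\sum_{\beta\leq \alpha}\frac{1}{\beta!(\alpha-\beta)!}=\frac{2^{|\alpha|}}{\alpha!}$ to convert the $(r/2)^{|\alpha|}2^{|\alpha|}$ back to $r^{|\alpha|}$.

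The main obstacle is \cref{Item::FuncSpaceRev::Properites::ContainRA}, because one must produce an actual convergent power series for $h(t):=f(e^{t\cdot W}x_0)$ and identify its Taylor coefficients in terms of the non-commuting $W^\alpha f(x_0)$. The trick is to expand along rays: for fixed $t\in B_{\R^N}(r)$ set $F(s)=f(e^{st\cdot W}x_0)$, so that $F^{(k)}(s)=(t\cdot W)^k f(e^{st\cdot W}x_0)$, and the bound $\sum_k \frac{|F^{(k)}(0)|}{k!}\leq \sum_k \frac{1}{k!}\sum_{|\alpha|=k}\CNorm{W^\alpha f}{M}\cdot |t|_\infty^k<\infty$ (where $|\alpha|$ counts the length of an ordered list) gives analyticity of $F$ on $[0,1]$ and Taylor's theorem yields $h(t)=\sum_{k\geq 0}\frac{1}{k!}(t\cdot W)^k f(x_0)$. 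Expanding $(t\cdot W)^k=\sum_{(\alpha_1,\ldots,\alpha_k)}t_{\alpha_1}\cdots t_{\alpha_k}W_{\alpha_1}\cdots W_{\alpha_k}$ and collecting the coefficient of $t^\beta$ gives $\frac{c_\beta}{\beta!}=\frac{1}{|\beta|!}\sum_{\alpha\text{ rearranges to }\beta}W^\alpha f(x_0)$; summing over $\beta$ converts the double sum back to a single sum over ordered lists, yielding $\ANorm{h}{N}{r}\leq \CXomegaNorm{f}{W}{r}[M]$. Once \cref{Item::FuncSpaceRev::Properites::ContainRA} is in hand, \cref{Item::FuncSpaceRev::DerivCXomega} follows by noting that for each ordered list $\alpha$ of length $m$, $W^\alpha W_j f = W^{\alpha'}f$ with $|\alpha'|=m+1$, so termwise estimation gives $\CXomegaNorm{W_j f}{W}{s}\leq \sum_{m\geq 1}\frac{s^{m-1}}{(m-1)!}\sum_{|\alpha|=m}\CNorm{W^\alpha f}{M}$, and the factor $m(s/r)^m/s$ is bounded by a constant $C(r,s)$ when $s<r$. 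Finally, \cref{Item::FuncSpaceRev::Properties::Deriv} for finite $s$ follows from the definition of $\ZygXSpace{W}{s}$: writing $s=m+\sigma$ with $\sigma\in (0,1]$, $W_j f$ requires $W^\alpha W_j f\in \ZygXSpace{W}{\sigma}$ for $|\alpha|\leq m-1$, which follows from $f\in \ZygXSpace{W}{s}$; the cases $s=\infty$ and $s=\omega$ then follow from this together with \cref{Item::FuncSpaceRev::DerivCXomega}.
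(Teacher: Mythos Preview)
Your proposal goes well beyond what the paper actually does: the paper's proof of \cref{Item::FuncSpaceRev::IncludHolder}--\cref{Item::FuncSpaceRev::Properites::ContainRA} and \cref{Item::FuncSpaceRev::DerivCXomega} consists entirely of citations to the companion papers \cite{StovallStreetI,StovallStreetIII}, and only \cref{Item::FuncSpaceRev::Properties::Deriv} is argued directly (exactly as you do). Your self-contained arguments for \cref{Item::FuncSpaceRev::IncludHolder}--\cref{Item::FuncSpaceRev::HmsBoundsZygs}, \cref{Item::FuncSpaceRev::IncludeSets}--\cref{Item::FuncSpaceRev::DerivCXomega}, and \cref{Item::FuncSpaceRev::Properties::Deriv} are correct sketches and track the stated constants; the power-series manipulations for \cref{Item::FuncSpaceRev::ComegainsA}--\cref{Item::FuncSpaceRev::Properites::ContainRA} and the ray expansion $F(s)=f(e^{st\cdot W}x_0)$ are the right devices.

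There is, however, a genuine gap in your sketch of \cref{Item::FuncSpaceRev::IncludeZygmund}. Writing $s_i=m_i+\sigma_i$ with $\sigma_i\in(0,1]$, the troublesome case is $m_1=m_2$ and $\sigma_2/2<\sigma_1\leq\sigma_2$. Your proposed iteration of \cref{Item::FuncSpaceRev::IncludHolder}, \cref{Item::FuncSpaceRev::CmBoundsHms}, \cref{Item::FuncSpaceRev::HmsBoundsZygs} gives $\ZygXNorm{g}{W}{\sigma_1}\leq 15\HXNorm{g}{W}{0}{1}$, but you then need $\HXNorm{g}{W}{0}{1}\lesssim\ZygXNorm{g}{W}{\sigma_2}$, which is precisely the reverse of \cref{Item::FuncSpaceRev::HmsBoundsZygs} and is \emph{false} without extra hypotheses on the vector fields (the paper says so explicitly in the remark immediately following the lemma). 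A direct attack fails for the same reason: for $\gamma\in\sP^h_{W,\sigma_1/2}$ the triangle-inequality bound on the second difference yields only $|\Delta^2_\gamma g|\leq 2h^{\sigma_2/2}\HXNorm{g}{W}{0}{\sigma_2/2}$, and $h^{\sigma_2/2-\sigma_1}$ blows up as $h\to 0$ when $\sigma_1>\sigma_2/2$. The obstruction is that the path class $\sP^h_{W,\sigma/2}$ \emph{grows} as $\sigma$ decreases, so the second-difference supremum in $\ZygXNorm{\cdot}{W}{\sigma_1}$ is taken over strictly more paths than the one in $\ZygXNorm{\cdot}{W}{\sigma_2}$, and there is no reparametrization that shrinks the class. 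The argument in \cite{StovallStreetI} handles this by a different route; your sketch does not.
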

\begin{proof}
\Cref{Item::FuncSpaceRev::IncludHolder}, \cref{Item::FuncSpaceRev::CmBoundsHms}, \cref{Item::FuncSpaceRev::HmsBoundsZygs}, \cref{Item::FuncSpaceRev::IncludeZygmund}, and \cref{Item::FuncSpaceRev::IncludeSets}
are contained in \cite[\SSCompareFunctionSpaces]{StovallStreetI}.
\Cref{Item::FuncSpaceRev::ComegainsA}, \cref{Item::FuncSpaceRev::sAinComega}, \cref{Item::FuncSpaceRev::Properites::ContainRA}, \cref{Item::FuncSpaceRev::DerivCXomega} are contained in \cite{StovallStreetIII}.
For $s\in (1,\infty]$, \cref{Item::FuncSpaceRev::Properties::Deriv} follows immediately from the definitions.  For $s=\omega$, \cref{Item::FuncSpaceRev::Properties::Deriv} follows from \cref{Item::FuncSpaceRev::DerivCXomega}; where
we are using the convention $\omega=\omega-1$.
\end{proof}

\begin{rmk}\label{Rmk::FuncSpaceRev::ZygAndHolderEquiv}
Let $\Omega\subseteq \R^n$ be an open set.  In analogy with \cref{Lemma::FuncSpaceRev::Properties} \cref{Item::FuncSpaceRev::HmsBoundsZygs}, for $m\in \N$, $s\in [0,1]$ with $m+s>0$,
we have $\HSpace{m}{s}[\Omega]\subseteq \ZygSpace{m+s}[\Omega]$.  If $\Omega$ is a bounded Lipschitz domain and $s\in (0,1)$, then we have the reverse containment as well
$\ZygSpace{m+s}[\Omega]\subseteq \HSpace{m}{s}[\Omega]$ (see \cite[Theorem 1.118 (i)]{TriebelTheoryOfFunctionSpacesIII}).  Because of this, one might hope for the reverse
inequality to the one in \cref{Lemma::FuncSpaceRev::Properties} \cref{Item::FuncSpaceRev::HmsBoundsZygs} for $s\in (0,1)$.  One can obtain such an estimate, but it requires
additional hypotheses on the vector fields.  This is discussed in \cite{StovallStreetII}.
\end{rmk}

\begin{prop}\label{Prop::FuncSpaceRev::Algebra}
The spaces $\HXSpace{W}{m}{s}[M]$, $\ZygXSpace{W}{s}[M]$, $\HSpace{m}{s}[\Omega]$, $\ZygSpace{s}[\Omega]$, $\CXomegaSpace{W}{r}[M]$, $\AXSpace{W}{x_0}{r}$, $\ComegaSpace{r}[M]$, and $\ASpace{n}{r}$
are algebras.  In fact, if $\BanachAlgebra$ denotes any one of these spaces, then
\begin{equation*}
\Norm{fg}[\BanachAlgebra]\leq C_{\BanachAlgebra} \Norm{f}[\BanachAlgebra]\Norm{g}[\BanachAlgebra].
\end{equation*}
When $\BanachAlgebra\in \{\CXomegaSpace{W}{r}[M], \AXSpace{W}{x_0}{r}, \ComegaSpace{r}[M], \ASpace{n}{r}\}$, we may take $C_{\BanachAlgebra}=1$; i.e., these spaces are Banach algebras\footnote{This remains true for the analogous spaces taking values in a Banach algebra.}.
When $\BanachAlgebra\in \{\HXSpace{W}{m}{s}[M], \ZygXSpace{W}{s}[M], \HSpace{m}{s}[\Omega], \ZygSpace{s}[\Omega]\}$, these spaces have multiplicative inverses for functions
which are bounded away from zero: if $f\in \BanachAlgebra$ with $\inf_x |f(x)|\geq c_0>0$, then $f(x)^{-1}=\frac{1}{f(x)}\in \BanachAlgebra$.
Furthermore, $\Norm{f(x)^{-1}}[\BanachAlgebra]\leq C$ where $C$ can be chosen to depend only on $\BanachAlgebra$, $c_0$, and an upper bound for $\Norm{f}[\BanachAlgebra]$.
\end{prop}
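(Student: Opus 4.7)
The plan is to reduce every algebra estimate to two algebraic identities and then promote them through the various norms. First I would record Leibniz's rule for derivatives together with its analog for finite differences: setting $\Delta_h\phi(x):=\phi(x+h)-\phi(x)$,
\begin{equation*}
\Delta_h(fg)(x) = \Delta_h f(x)\,g(x+h) + f(x)\,\Delta_h g(x),
\end{equation*}
\begin{equation*}
\Delta_h^{2}(fg)(x) = \Delta_h^{2}f(x)\,g(x+2h) + 2\,\Delta_h f(x)\,\Delta_h g(x+h) + f(x)\,\Delta_h^{2}g(x).
\end{equation*}
The first identity, applied with $h=y-x$, gives the $\HSpace{0}{s}[\Omega]$ algebra property with an explicit constant. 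The second, combined with the cross-term bound $|\Delta_h f\cdot\Delta_h g|\lesssim|h|^{s}\HNorm{f}{0}{s/2}[\Omega]\HNorm{g}{0}{s/2}[\Omega]$, gives the $\ZygSpace{s}[\Omega]$ algebra property for $s\in(0,1]$. The vector-field analogs $\HXSpace{W}{0}{s}[M]$ and $\ZygXSpace{W}{s}[M]$ are then handled by substituting $f\circ\gamma$, $g\circ\gamma$ for $f,g$ along curves $\gamma\in\sP_{W,s/2}^{h}$, using the path rescaling in \cref{Eqn::FuncMan::DefnSRBall} to obtain the bound $\rho(\gamma(0),\gamma(h))\lesssim h$ needed for the first-difference piece. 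The cases $m\geq 1$ are recovered by expanding $W^{\alpha}(fg)$ via iterated Leibniz and applying the base case together with \cref{Lemma::FuncSpaceRev::Properties}\cref{Item::FuncSpaceRev::Properties::Deriv}.

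For the analytic algebras, the Leibniz expansion of $\partial^{\alpha}(fg)$, once weighted by $r^{|\alpha|}/\alpha!$, reorganizes into the Cauchy product of the two series defining $\ComegaNorm{f}{r}[\Omega]$ and $\ComegaNorm{g}{r}[\Omega]$, giving submultiplicativity with constant $1$. The same scheme, applied to the iterated Leibniz identity $W^{\alpha}(fg)=\sum_{\beta\subseteq\alpha}W^{\alpha|_{\beta}}f\cdot W^{\alpha|_{\bar\beta}}g$ (sum over ordered subsequences of $\alpha$) and weighted by $r^{|\alpha|}/|\alpha|!$, handles $\CXomegaSpace{W}{r}[M]$. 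For $\ASpace{n}{r}$ and $\AXSpace{W}{x_0}{r}$, which are by construction weighted $\ell^{1}$-spaces on Taylor coefficients with the Cauchy product as multiplication, submultiplicativity is automatic; $\AXSpace{W}{x_0}{r}$ reduces to $\ASpace{N}{r}$ via pullback by $t\mapsto e^{t_{1}W_{1}+\cdots+t_{N}W_{N}}x_{0}$.

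For multiplicative inverses with $|f|\geq c_{0}>0$, set $u=1/f$. The H\"older case is immediate from $u(x)-u(y)=(f(y)-f(x))/(f(x)f(y))$. For $\ZygSpace{s}[\Omega]$ with $s\in(0,1]$ the approach is to clear denominators in $u(x+2h)-2u(x+h)+u(x)$, which after substituting $f(x+h)=\tfrac{1}{2}(f(x+2h)+f(x)-\Delta_h^{2}f(x))$ collapses to a numerator that is a bounded linear combination of $\Delta_h^{2}f(x)$ and $(f(x+2h)-f(x))^{2}$. The first contributes $\lesssim|h|^{s}\ZygNorm{f}{s}[\Omega]$ and the second $\lesssim|h|^{s}\HNorm{f}{0}{s/2}[\Omega]^{2}$, while the denominator is bounded below by a power of $c_{0}$; the manifold version is identical using differences along $\gamma\in\sP_{W,s/2}^{h}$. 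Higher-order regularity of $1/f$ then follows from Fa\`{a} di Bruno, since each $W^{\alpha}(1/f)$ is a polynomial in $\{W^{\beta}f:|\beta|\leq|\alpha|\}$ divided by a power of $f$; the algebra property just proved together with $|f|\geq c_{0}$ closes the argument.

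The main obstacle will be the integer Zygmund exponents, where $\ZygSpace{m}$ is strictly larger than both $\CjSpace{m}$ and $\HSpace{m-1}{1}$, so the inverse estimate cannot be obtained by interpolating from the H\"older case. I expect to have to handle these exponents by working directly with the second-difference identity above, and likewise with its path analog for the vector-field classes; everything else is essentially bookkeeping.
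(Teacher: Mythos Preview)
Your proposal is correct and supplies the standard direct arguments; the paper itself does not give a proof but instead defers the H\"older case to the reader and cites \cite{StovallStreetI,StovallStreetIII} for the Zygmund and analytic cases. Your second-difference identity and the clearing-denominators computation for $1/f$ are exactly the right tools, and your reduction of the path-based norms $\ZygXSpace{W}{s}[M]$ to the one-variable identities via composition with $\gamma\in\sP_{W,s/2}^{h}$ is the intended mechanism (the bound $\rho(\gamma(0),\gamma(h))\leq h$ you flag follows immediately from $\|d_j\|_{L^\infty}\leq\|d_j\|_{\HSpace{0}{s/2}}<1$ after reparametrizing to $[0,1]$). One small remark: the ``main obstacle'' you anticipate at integer Zygmund exponents does not actually materialize in your scheme---the second-difference identity and the $(f(x+2h)-f(x))^{2}$ decomposition for $1/f$ work uniformly for $s\in(0,1]$, including $s=1$, and the higher-order cases reduce to this base case via Leibniz exactly as you describe; so the proof is complete as sketched without any additional endpoint analysis.
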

\begin{proof}
The proofs for $\HXSpace{W}{m}{s}[M]$ and $\HSpace{m}{s}[\Omega]$ are straightforward and standard, and we leave the proofs to the reader.
The results for $\ZygXSpace{W}{s}[M]$ and $\ZygSpace{s}[\Omega]$ are in \cite[\SSZygIsAlgebra]{StovallStreetI}.
The results for $\CXomegaSpace{W}{r}[M]$, $\AXSpace{W}{x_0}{r}$, $\ComegaSpace{r}[M]$, and $\ASpace{n}{r}$ are in \cite{StovallStreetIII}.
\end{proof}

\begin{rmk}\label{Rmk::FuncSpaceRev::InverseMatrix}
For $s\in (0,\infty]\cup\{\omega\}$,
suppose $A\in \ZygSpace{s}[\Omega][\M^{k\times k}]$
is such that $\inf_{t\in \Omega} |\det A(t)|>0$.  Then it follows that $A(\cdot)^{-1}\in \ZygSpace{s}[\Omega][\M^{k\times k}]$; where we write $A(\cdot)^{-1}$ for the function
$t\mapsto A(t)^{-1}$.
Indeed, for $s\in (0,\infty]$, this follows from \cref{Prop::FuncSpaceRev::Algebra} using the cofactor representation of $A(\cdot)^{-1}$.  For $s=\omega$, this is standard.
When $s\in (0,\infty)$, $\ZygNorm{A(\cdot)^{-1}}{s}[\Omega]$ can be bounded in terms of $s$, $k$, $n$, a lower bound for $\inf_{t\in \Omega} |\det A(t)|>0$, and an upper bound for
$\ZygNorm{A}{s}[\Omega]$.
\end{rmk}

\begin{lemma}\label{Lemma::FuncSpaceRev::Composition}
Let $D_1,D_2>0$, $s_1>0$, $s_2\geq s_1$, $s_2>1$, $f\in \ZygSpace{s_1}[B_{\R^n}(D_1)]$, $g\in \ZygSpace{s_2}[B_{\R^m}(D_2)][\R^n]$
with $g(B_{\R^m}(D_2))\subseteq B_{\R^n}(D_1)$.  Then, $f\circ g\in \ZygSpace{s_1}[B_{\R^m}(D_2)]$ and
$\ZygNorm{f\circ g}{s_1}[B_{\R^m}(D_2)]\leq C \ZygNorm{f}{s_1}[B_{\R^n}(D_1)]$, where $C$ can be chosen to depend only on $s_1$, $s_2$, $D_1$, $D_2$,
$m$, $n$, and an upper bound for $\ZygNorm{g}{s_2}[B_{\R^m}(D_2)]$.
\end{lemma}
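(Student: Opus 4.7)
The plan is to proceed by induction on $\lceil s_1 \rceil$, reducing the case $s_1 > 1$ to a base case $s_1 \in (0,1]$ via a chain-rule/Faà di Bruno argument, exploiting the hypothesis $s_2 \geq s_1$ to guarantee that every derivative of $g$ that appears has enough regularity.

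For the base case $s_1 \in (0,1]$, I would estimate the two pieces of $\ZygNorm{f\circ g}{s_1}$ directly. The H\"older part is easy: since $s_2 > 1$, the function $g$ is Lipschitz with Lipschitz constant controlled by $\ZygNorm{g}{s_2}$ (using \cref{Lemma::FuncSpaceRev::Properties} \cref{Item::FuncSpaceRev::CmBoundsHms} applied to $\partial g$), so $|f(g(x+h))-f(g(x))| \leq \|f\|_{C^{0,s_1/2}} |g(x+h)-g(x)|^{s_1/2} \lesssim \ZygNorm{f}{s_1} |h|^{s_1/2}$. For the second-difference seminorm I would use the decomposition, with $y_i := g(x+ih)$, $a:=(y_2-y_0)/2$, $u:=(y_0+y_2)/2$,
\begin{equation*}
f(y_2)-2f(y_1)+f(y_0) = \bigl[f(u+a) - 2f(u) + f(u-a)\bigr] + 2\bigl[f(u) - f(y_1)\bigr].
\end{equation*}
The first bracket is a pure Zygmund second difference of $f$ with step $|a| \lesssim |h|$, hence bounded by $\ZygNorm{f}{s_1}\,|h|^{s_1}$ up to constants. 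The correction $u - y_1 = \tfrac12(g(x) - 2g(x+h) + g(x+2h))$ is a second difference of $g$, so $|u-y_1| \lesssim \ZygNorm{g}{s_2} |h|^{s_2}$; combined with the H\"older continuity of $f$ at exponent $s_1$ (for $s_1 < 1$) or the logarithmic modulus of continuity $|f(p)-f(q)| \lesssim \ZygNorm{f}{1}|p-q|\log(1/|p-q|)$ (for $s_1 = 1$), the second term is bounded by a constant times $\ZygNorm{f}{s_1}|h|^{s_1}$, using crucially that $s_2 > 1$ strictly so that $|h|^{s_2}\log(1/|h|) \lesssim |h|$.

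For the inductive step $s_1 > 1$, write $s_1 = m + r$ with $m \geq 1$ integer and $r \in (0,1]$. Then
\begin{equation*}
\ZygNorm{f\circ g}{s_1}[B_{\R^m}(D_2)] = \sum_{|\alpha|\leq m} \ZygNorm{\partial^\alpha(f\circ g)}{r}[B_{\R^m}(D_2)].
\end{equation*}
For $|\alpha| \geq 1$, Fa\`a di Bruno expresses $\partial^\alpha(f\circ g)$ as a finite sum of products $((\partial^\beta f)\circ g)\cdot \prod_{j} \partial^{\gamma_j} g_{k_j}$ with $1 \leq |\beta| \leq |\alpha|$ and $\sum_j |\gamma_j| = |\alpha|$. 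Apply the induction hypothesis with outer exponent $s_1 - |\beta| \leq s_1 - 1$ to each $\partial^\beta f \in \ZygSpace{s_1-|\beta|}$ and the same inner function $g$ (permissible since $s_2 \geq s_1 \geq s_1 - |\beta|$ and $s_2 > 1$), obtaining $(\partial^\beta f)\circ g \in \ZygSpace{s_1 - |\beta|} \subseteq \ZygSpace{r}$ with norm $\lesssim \ZygNorm{f}{s_1}$. Each factor $\partial^{\gamma_j} g_{k_j}$ lies in $\ZygSpace{s_2 - |\gamma_j|} \subseteq \ZygSpace{r}$ since $|\gamma_j| \leq m$ and $s_2 - m \geq s_1 - m = r$. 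Combine via the algebra property of Zygmund spaces (\cref{Prop::FuncSpaceRev::Algebra}) to get the product bound. The case $\alpha = 0$ is handled by applying the induction hypothesis with outer exponent $r \leq 1 < s_1$, using the monotonicity $\ZygNorm{f}{r} \lesssim \ZygNorm{f}{s_1}$ from \cref{Lemma::FuncSpaceRev::Properties} \cref{Item::FuncSpaceRev::IncludeZygmund}.

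The main obstacle is the endpoint $s_1 = 1$ of the base case: here $\ZygSpace{1}$ properly contains the Lipschitz space, so one cannot simply use $|f(y_1)-f(y_2)| \lesssim |y_1-y_2|$. Appealing to the classical logarithmic modulus of continuity for Zygmund-$1$ functions circumvents this, and the strict inequality $s_2 > 1$ provides exactly the small power of $|h|$ needed to absorb the logarithm. The remainder of the argument is bookkeeping, most notably verifying that at each inductive step the constants depend only on $s_1$, $s_2$, $D_1$, $D_2$, the dimensions, and an upper bound for $\ZygNorm{g}{s_2}$, never on $\ZygNorm{f}{s_1}$ itself (so that the final bound is linear in $\ZygNorm{f}{s_1}$).
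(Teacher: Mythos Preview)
The paper does not supply its own proof here; the one-line proof reads ``This is proved in \cite{StovallStreetII}.'' So there is no in-paper argument to compare against. Your inductive scheme via Fa\`a di Bruno, with a direct second-difference estimate in the base case $s_1\in(0,1]$, is a standard and essentially correct route to this composition lemma.

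Two points are worth tightening. First, the claim $|u-y_1|\lesssim\ZygNorm{g}{s_2}|h|^{s_2}$ is not quite what the paper's norms give directly: from $\nabla g\in\ZygSpace{s_2-1}$ and the built-in H\"older part of the definition one only gets $|u-y_1|\lesssim|h|^{1+\epsilon}$ for some $\epsilon>0$ depending on $s_2$; with the classical identification $\ZygSpace{t}[B]=\HSpace{0}{t}[B]$ for $t\in(0,1)$ on a ball one recovers the full exponent $\min(s_2,2)$. Either way the exponent strictly exceeds $1$, which is all your argument actually uses. Second, your base case appeals to two facts not proved in this paper: the equivalence $\ZygSpace{s}[B]=\HSpace{0}{s}[B]$ for $s\in(0,1)$ on a Euclidean ball (the paper cites \cite{TriebelTheoryOfFunctionSpacesIII} for this in a footnote and in \cref{Rmk::FuncSpaceRev::ZygAndHolderEquiv}), and the logarithmic modulus of continuity $|f(p)-f(q)|\lesssim\ZygNorm{f}{1}\,|p-q|\log(C/|p-q|)$ for $\ZygSpace{1}$ functions on a ball. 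Both are classical, but should be cited explicitly. With those caveats the argument is complete, and the bookkeeping in the inductive step (in particular that $s_2-|\gamma_j|\geq r$ and $s_1-|\beta|\geq r>0$, so that all factors land in $\ZygSpace{r}$ and the algebra property \cref{Prop::FuncSpaceRev::Algebra} applies) is correct.
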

\begin{proof}This is proved in \cite{StovallStreetII}.\end{proof}

\begin{lemma}\label{Lemma::FuncSpaceRev::ComposeAnal}
Let $\eta_1,\eta_2>0$, $n_1,n_2\in \N$, and let $\BanachSpace$ be a Banach space.
Suppose $f\in \ASpace{n_1}{\eta_1}[\BanachSpace]$, $g\in \ASpace{n_2}{\eta_2}[\R^{n_1}]$ with
$\ANorm{g}{n_2}{\eta_2}[\R^{n_1}]\leq \eta_1$.  Then, $f\circ g\in \ASpace{n_2}{\eta_2}[\BanachSpace]$
with $\ANorm{f\circ g}{n_2}{\eta_2}\leq \ANorm{f}{n_1}{\eta_1}$.
\end{lemma}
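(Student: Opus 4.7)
The plan is to substitute the power series of $g$ into the power series of $f$ and check that the resulting series converges absolutely in $\ASpace{n_2}{\eta_2}[\BanachSpace]$, using the Banach algebra property of $\ASpace{n_2}{\eta_2}$ established in \cref{Prop::FuncSpaceRev::Algebra}. Write $f(s)=\sum_{\alpha\in \N^{n_1}} \frac{c_\alpha}{\alpha!} s^{\alpha}$ with $c_\alpha\in \BanachSpace$ and $\ANorm{f}{n_1}{\eta_1}=\sum_\alpha \frac{\Norm{c_\alpha}[\BanachSpace]}{\alpha!}\eta_1^{|\alpha|}$, and let $g=(g_1,\ldots,g_{n_1})$ with scalar components $g_j\in \ASpace{n_2}{\eta_2}$. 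The standing hypothesis $\ANorm{g}{n_2}{\eta_2}[\R^{n_1}]\leq \eta_1$ unpacks, by extracting components from the vector-valued Taylor coefficients, to the scalar bounds $\ANorm{g_j}{n_2}{\eta_2}\leq \eta_1$ for every $j$.

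Next, I would use that $\ASpace{n_2}{\eta_2}$ is a Banach algebra with constant $1$ to deduce that for each $\alpha\in \N^{n_1}$,
\begin{equation*}
\ANorm{g^\alpha}{n_2}{\eta_2} \;=\; \ANorm{\prod_{j=1}^{n_1} g_j^{\alpha_j}}{n_2}{\eta_2} \;\leq\; \prod_{j=1}^{n_1} \ANorm{g_j}{n_2}{\eta_2}^{\alpha_j} \;\leq\; \eta_1^{|\alpha|}.
\end{equation*}
Combining this with the module bound $\ANorm{c_\alpha h}{n_2}{\eta_2}[\BanachSpace]\leq \Norm{c_\alpha}[\BanachSpace]\ANorm{h}{n_2}{\eta_2}$ for $h\in \ASpace{n_2}{\eta_2}$ (immediate from the definitions), I obtain
\begin{equation*}
\sum_{\alpha} \ANorm{\tfrac{c_\alpha}{\alpha!} g^\alpha}{n_2}{\eta_2}[\BanachSpace] \;\leq\; \sum_{\alpha} \frac{\Norm{c_\alpha}[\BanachSpace]}{\alpha!}\eta_1^{|\alpha|} \;=\; \ANorm{f}{n_1}{\eta_1} \;<\;\infty.
\end{equation*}
Hence the series $H:=\sum_{\alpha} \frac{c_\alpha}{\alpha!}g^\alpha$ converges absolutely in the Banach space $\ASpace{n_2}{\eta_2}[\BanachSpace]$, and $\ANorm{H}{n_2}{\eta_2}[\BanachSpace]\leq \ANorm{f}{n_1}{\eta_1}$.

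Finally, I would identify $H$ with $f\circ g$ pointwise. For each $t\in B_{\R^{n_2}}(\eta_2)$ the vector $g(t)$ satisfies $|g(t)|\leq \ANorm{g}{n_2}{\eta_2}[\R^{n_1}]\leq \eta_1$, so $f(g(t))=\sum_\alpha \frac{c_\alpha}{\alpha!}g(t)^\alpha$ converges absolutely; on the other hand, point evaluation at $t$ is a bounded linear functional on $\ASpace{n_2}{\eta_2}[\BanachSpace]$, so the sum $H$ evaluated at $t$ agrees term-by-term with this series. Therefore $H(t)=f(g(t))$ for all such $t$, which gives $f\circ g\in \ASpace{n_2}{\eta_2}[\BanachSpace]$ together with the desired estimate.

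The only step that needs a little attention is the very first one---unpacking the single $\R^{n_1}$-valued norm bound into the component bounds $\ANorm{g_j}{n_2}{\eta_2}\leq \eta_1$---but this amounts to remembering that the vector-valued norm $\ANorm{\cdot}{n_2}{\eta_2}[\R^{n_1}]$ majorises each scalar coordinate norm (as is clear from the definition for any reasonable norm on $\R^{n_1}$). Every other step is a direct application of results already stated in the excerpt: the algebra property of $\ASpace{n_2}{\eta_2}$ and the completeness of $\ASpace{n_2}{\eta_2}[\BanachSpace]$.
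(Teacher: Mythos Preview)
Your argument is correct and is exactly what the paper has in mind: the paper's entire proof reads ``This is immediate from the definitions,'' and you have simply unpacked that one line into the obvious substitution-of-power-series computation using the Banach algebra property of $\ASpace{n_2}{\eta_2}$ from \cref{Prop::FuncSpaceRev::Algebra}. There is nothing more to compare.
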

\begin{proof}
This is immediate from the definitions.
\end{proof}

\begin{lemma}\label{Lemma::FuncSpaceRev::DerivOfAnal}
Fix $0<\eta_2<\eta_1$, and suppose $f\in \ASpace{n}{\eta_1}[\BanachSpace]$, where $\BanachSpace$ is a Banach space.
Then, for each $j=1,\ldots, n$, $\diff{t_j} f(t)\in \ASpace{n}{\eta_2}[\BanachSpace]$ and
$\ANorm{\diff{t_j} f}{n}{\eta_2} \leq C \ANorm{f}{n}{\eta_1}$, where $C$ can be chosen to depend only on $\eta_1$ and $\eta_2$.
\end{lemma}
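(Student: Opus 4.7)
The plan is to reduce the claim to a term-by-term comparison of the power series defining the norms. Write $f(t)=\sum_{\alpha\in\N^n}\frac{c_\alpha}{\alpha!}t^\alpha$, so that by definition $\ANorm{f}{n}{\eta_1}=\sum_\alpha \frac{\|c_\alpha\|}{\alpha!}\eta_1^{|\alpha|}<\infty$. Formally differentiating term-by-term gives $\diff{t_j}f(t)=\sum_{\alpha_j\geq 1}\frac{c_\alpha}{\alpha!}\alpha_j t^{\alpha-e_j}=\sum_\beta \frac{c_{\beta+e_j}}{\beta!}t^\beta$, so the coefficients of $\diff{t_j}f$ in the form used by $\ANorm{\cdot}{n}{\eta_2}$ are $c'_\beta=c_{\beta+e_j}$.

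Then, after reindexing via $\alpha=\beta+e_j$ (so $|\beta|=|\alpha|-1$ and $\beta!=\alpha!/\alpha_j$), we get
\begin{equation*}
\ANorm{\diff{t_j}f}{n}{\eta_2}=\sum_\beta\frac{\|c_{\beta+e_j}\|}{\beta!}\eta_2^{|\beta|}=\sum_{\alpha_j\geq 1}\frac{\|c_\alpha\|\,\alpha_j}{\alpha!}\eta_2^{|\alpha|-1}.
\end{equation*}
Comparing with $\ANorm{f}{n}{\eta_1}=\sum_\alpha\frac{\|c_\alpha\|}{\alpha!}\eta_1^{|\alpha|}$, it suffices to show that the multiplier $\alpha_j\eta_2^{|\alpha|-1}/\eta_1^{|\alpha|}$ is bounded uniformly in $\alpha$ by a constant $C=C(\eta_1,\eta_2)$. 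Since $\alpha_j\leq|\alpha|$, this reduces to the elementary fact that with $r:=\eta_2/\eta_1\in(0,1)$, the quantity $k\,r^{k-1}/\eta_1$ is bounded over $k\in\N$: indeed $k r^k$ attains its maximum over $k\geq 0$ at $k\approx -1/\ln r$, giving $\sup_k kr^k\leq (e|\ln r|)^{-1}<\infty$. Set $C:=\eta_2^{-1}\sup_{k\geq 0} k(\eta_2/\eta_1)^k$, a constant depending only on $\eta_1$ and $\eta_2$.

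The only substantive issue to justify is that the formal differentiation is actually valid, i.e., that $\diff{t_j}f$ computed pointwise on $B_{\R^n}(\eta_2)$ coincides with the series $\sum_\beta\frac{c_{\beta+e_j}}{\beta!}t^\beta$. But the above termwise estimate already shows that this candidate series converges absolutely and uniformly on $B_{\R^n}(\eta_2)$, so by the classical theorem on differentiating convergent power series termwise (applied coordinate-by-coordinate; valued in the Banach space $\BanachSpace$, the argument is identical), it equals the true partial derivative. This shows $\diff{t_j}f\in\ASpace{n}{\eta_2}[\BanachSpace]$ with the desired norm bound. There is no serious obstacle; the only mild point is the uniform boundedness of $k(\eta_2/\eta_1)^k$, which is the reason the hypothesis $\eta_2<\eta_1$ is needed (at $\eta_2=\eta_1$ the multiplier would be unbounded).
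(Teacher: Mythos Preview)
Your proof is correct and follows essentially the same approach as the paper: write $f$ as a power series, differentiate termwise, and bound the resulting norm by comparing coefficients via the elementary fact that $k(\eta_2/\eta_1)^k$ is bounded when $\eta_2<\eta_1$. You add a justification for termwise differentiation that the paper omits, but otherwise the arguments are identical.
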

\begin{proof}
Without loss of generality, we prove the result for $j=1$.  We let $e_1$ denote the first standard basis element:  $e_1=(1,0,\ldots, 0)\in \N^n$.
Suppose $f(t) = \sum c_{\alpha} \frac{t^{\alpha}}{\alpha!}$.  Then, $\diff{t_1} f(t) = \sum_{\alpha_1>0} c_\alpha \frac{t^{\alpha-e_1}}{(\alpha-e_1)!}$.
Hence,
\begin{equation*}
\BANorm{\diff{t_1} f}{n}{\eta_2} = \sum_{\alpha_1>0} \frac{|c_\alpha|}{(\alpha-e_1)!} \eta_2^{|\alpha-e_1|}
=\sum_{\alpha} \frac{|c_\alpha|}{\alpha!} \eta_1^{|\alpha|} \mleft(\frac{\eta_2}{\eta_1}\mright)^{|\alpha|} \frac{\alpha_1}{\eta_1}
\leq \mleft( \sup_{\alpha}\mleft(\frac{\eta_2}{\eta_1}\mright)^{|\alpha|} \frac{\alpha_1}{\eta_1} \mright) \ANorm{f}{n}{\eta_1},
\end{equation*}
completing the proof.
\end{proof}

\begin{prop}\label{Prop::FuncSpaceRev::CompEuclid}
Let $Y_1,\ldots, Y_N$ be $C^1$ vector fields on an open ball $B\subseteq \R^n$.  Suppose $Y_1,\ldots, Y_N$ span the tangent space at every point
in the sense that for $1\leq j\leq n$,
\begin{equation*}
\diff{t_j} =\sum_{k=1}^N b_j^k Y_k,\quad b_j^k\in \CSpace{B}.
\end{equation*}
Fix $s\in (0,\infty]\cup\{\omega\}$ and suppose $Y_k\in \ZygSpace{s-1}[B][\R^n]$, $b_j^k\in \ZygSpace{s-1}[B]$, $\forall j,k$.
Then, $\ZygSpace{s}[B]=\ZygXSpace{Y}{s}[B]$.
Here we use the convention that for $s\in (-1,0]$, $\ZygSpace{s}[B]:=\HSpace{0}{(s+1)/2}[B]$.
\end{prop}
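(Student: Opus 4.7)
I will prove the two inclusions $\ZygSpace{s}[B]\subseteq \ZygXSpace{Y}{s}[B]$ and $\ZygXSpace{Y}{s}[B]\subseteq \ZygSpace{s}[B]$ separately, inducting on $s$ in the finite case and treating $s=\omega$ in parallel. For the forward inclusion, write $Y_k = \sum_j (Y_k)_j \diff{t_j}$ with $(Y_k)_j \in \ZygSpace{s-1}[B]$. For $f \in \ZygSpace{s}[B]$ with $s>1$, the algebra property (\cref{Prop::FuncSpaceRev::Algebra}) gives $Y_k f = \sum_j (Y_k)_j \diff{t_j}{f} \in \ZygSpace{s-1}[B]$, reducing by induction to the base case $s \in (0,1]$. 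In that base case, the H\"older portion $\HXNorm{f}{Y}{0}{s/2}[B]$ follows from the Euclidean H\"older estimate once one observes that $|x-y| \lesssim \rho_Y(x,y)$ on bounded sets (using that the straight line between $x$ and $y$ is itself a $Y$-admissible path with bounded velocity, since the $Y_k$ span the tangent space with bounded $b_j^k$), while the second-difference estimate along $\gamma \in \sP_{Y,s/2}^h$ is a direct computation using that $\gamma'(t) = \sum d_j(t) Y_j(\gamma(t))$ has $\HSpace{0}{s/2}([0,2h])$ coefficients and that the $Y_k$ are continuous.

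The crux is the reverse inclusion at $s \in (0, 1]$. Given $f \in \ZygXSpace{Y}{s}[B]$ and $x, x+h, x+2h \in B$, pick a constant $C_0>0$ (to be fixed large) and set $a = C_0 |h|$; define the straight-line path $\gamma(t) = x + (t/a)\, h$ on $[0, 2a]$, so that $\gamma(0) = x$, $\gamma(a) = x+h$, $\gamma(2a) = x+2h$. Using $\diff{t_j} = \sum_k b_j^k Y_k$ pointwise,
\begin{equation*}
\gamma'(t) = h/a = \sum_k d_k(t)\, Y_k(\gamma(t)), \qquad d_k(t) := \sum_j (h/a)_j\, b_j^k(\gamma(t)).
\end{equation*}
Under the proposition's convention, $b_j^k \in \ZygSpace{s-1}[B] = \HSpace{0}{s/2}[B]$ when $s \in (0, 1]$; combined with the fact that $\gamma$ is Lipschitz with constant $|h|/a$, one obtains
\begin{equation*}
\HNorm{d_k}{0}{s/2}[[0, 2a]] \lesssim \frac{|h|}{a}\Bigl(\max_j \CNorm{b_j^k}{B} + (|h|/a)^{s/2}\,\max_j \HNorm{b_j^k}{0}{s/2}[B]\Bigr),
\end{equation*}
where the constants depend only on $n$. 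Taking $C_0$ sufficiently large in terms of $n$, $N$, and the $\HSpace{0}{s/2}$-norms of the $b_j^k$'s forces $\sum_k \HNorm{d_k}{0}{s/2}^2 < 1$, so $\gamma \in \sP_{Y, s/2}^a$, and the $Y$-Zygmund hypothesis gives $|f(x+2h) - 2f(x+h) + f(x)| \leq a^s \ZygXNorm{f}{Y}{s}[B] \lesssim |h|^s \ZygXNorm{f}{Y}{s}[B]$. The analogous single-segment argument bounds $|f(x) - f(y)|$ and yields the H\"older part of $\ZygNorm{f}{s}[B]$.

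For the induction step of the reverse inclusion, suppose $s > 1$ and the inclusion is known at $s - 1$. By \cref{Lemma::FuncSpaceRev::Properties}\cref{Item::FuncSpaceRev::Properties::Deriv}, $Y_k f \in \ZygXSpace{Y}{s-1}[B]$, and by induction $Y_k f \in \ZygSpace{s-1}[B]$; then $\diff{t_j}{f} = \sum_k b_j^k\, Y_k f \in \ZygSpace{s-1}[B]$ by \cref{Prop::FuncSpaceRev::Algebra}, so $f \in \ZygSpace{s}[B]$. The case $s = \infty$ is obtained by intersecting over finite $s$. For $s = \omega$, both $\CjSpace{\omega}[B] = \ZygSpace{\omega}[B]$ and $\CXjSpace{Y}{\omega}[B] = \ZygXSpace{Y}{\omega}[B]$ are Banach algebras (\cref{Prop::FuncSpaceRev::Algebra}); using the symmetric relations $Y_k f = \sum_j (Y_k)_j \diff{t_j}{f}$ and $\diff{t_j}{f} = \sum_k b_j^k\, Y_k f$ together with the analyticity of $(Y_k)_j$ and $b_j^k$, one inducts on $|\alpha|$ to propagate factorial bounds between $\CNorm{\partial^\alpha f}{B}$ and $\CNorm{Y^\beta f}{B}$ via Fa\`a-di-Bruno-type combinatorics, possibly shrinking the analyticity radius by an absolute factor at each step.

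The main technical obstacle is the base case $s \in (0, 1]$ of the reverse inclusion. The proposition's convention $\ZygSpace{s-1}[B] := \HSpace{0}{s/2}[B]$ in this range is precisely what is needed to ensure that the coefficients $d_k$ built from $b_j^k \circ \gamma$ inherit $\HSpace{0}{s/2}$ regularity on $[0, 2a]$, and the bookkeeping that produces the correct power of $|h|/a$ in the H\"older seminorm of $d_k$ (so that a choice of $a \sim |h|$ both exhibits $\gamma$ as an admissible element of $\sP_{Y, s/2}^a$ and supplies the right factor of $|h|^s$ in the second difference) is the calculation most needing care.
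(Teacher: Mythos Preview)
Your reverse inclusion $\ZygXSpace{Y}{s}[B]\subseteq\ZygSpace{s}[B]$ is essentially fine: straight lines are $Y$-admissible once one uses $b_j^k\in\HSpace{0}{s/2}$, and the bookkeeping with $a=C_0|h|$ works. But you have the difficulty backwards. The real gap is the \emph{forward} inclusion at the base case $s\in(0,1]$, which you dismiss as ``a direct computation.''

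Concretely, for $f\in\ZygSpace{s}[B]$ and $\gamma\in\sP_{Y,s/2}^h$, you need
\[
|f(\gamma(2h))-2f(\gamma(h))+f(\gamma(0))|\lesssim h^{s}.
\]
Writing $a=\gamma(0)$, $b=\gamma(h)$, $c=\gamma(2h)$ and $m=(a+c)/2$, one gets $|m-b|\lesssim h^{1+s/2}$ from $\gamma'\in\HSpace{0}{s/2}$, and then
\[
f(c)-2f(b)+f(a)=\bigl[f(c)-2f(m)+f(a)\bigr]+2\bigl[f(m)-f(b)\bigr].
\]
The first bracket is $O(h^{s})$, but the only control on the second from the \emph{definition} of $\ZygSpace{s}$ is the $\HSpace{0}{s/2}$ part, giving $|m-b|^{s/2}\lesssim h^{(1+s/2)s/2}$, and $(1+s/2)(s/2)<s$ for every $s\in(0,1]$. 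So the naive estimate fails. For $s\in(0,1)$ one can rescue it by first proving $\ZygSpace{s}[B]=\HSpace{0}{s}[B]$ on a ball, but that is itself nontrivial; at $s=1$ even this is false, and one genuinely needs extra input (e.g.\ a Littlewood--Paley decomposition $f=\sum_j f_j$, using $\|\nabla f_j\|_\infty\lesssim 2^{j(1-s)}$ and $\|\nabla^2 f_j\|_\infty\lesssim 2^{j(2-s)}$ together with $\gamma'\in\HSpace{0}{s/2}$, then summing $\min(2^{-js},\,2^{j(2-s)}h^2+2^{j(1-s)}h^{1+s/2})$, or equivalently the log-Lipschitz bound for the Zygmund class). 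None of this is indicated. A minor related point: your justification for $|x-y|\lesssim\rho_Y(x,y)$ via ``the straight line is $Y$-admissible'' gives the \emph{opposite} inequality $\rho_Y(x,y)\lesssim|x-y|$; the correct direction comes from boundedness of the $Y_k$.

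The paper itself does not prove this statement but cites \cite[Proposition 8.12]{StovallStreetI} for $s\in(0,\infty]$ and \cite{StovallStreetIII} (ultimately Nelson \cite{NelsonAnalyticVectors}) for $s=\omega$, so there is no self-contained argument here to compare against; but your sketch of the forward base case would need substantial filling in, and the $s=\omega$ case (``Fa\`a-di-Bruno-type combinatorics, possibly shrinking the analyticity radius'') is likewise too vague to stand on its own.
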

\begin{proof}
The case $s\in (0,\infty]$ is contained in \cite[Proposition 8.12]{StovallStreetI}, while the case $s=\omega$ is discussed in \cite{StovallStreetIII}.  The case $s=\omega$
is part of a more general result due to Nelson \cite[Theorem 2]{NelsonAnalyticVectors}.
\cite{StovallStreetI,StovallStreetIII} also contain quantitative versions of this result.
\end{proof} 
%
%
%
\section{Proofs of Corollaries}

	\subsection{Optimal Smoothness}\label{Section::CorProof::Smoothness}
In this section, we prove \cref{Thm::QualE::LocalThm,Thm::QualE::GlobalThm}, and describe how \cref{Thm::ResSmooth::Real::Local,Thm::ResSmooth::Real::Global,Thm::QualComplex::LocalThm,Thm::QualComplex::GlobalThm} are consequences of \cref{Thm::QualE::LocalThm,Thm::QualE::GlobalThm}.

\begin{proof}[Proof of \cref{Thm::QualE::LocalThm}]
\Cref{Item::QualE::Local::Diffeo}$\Rightarrow$\cref{Item::QualE::Local::Basis}:  Suppose the conditions of \cref{Item::QualE::Local::Diffeo} hold and without loss of generality
we may assume $0\in U$ and $\Phi(0)=x_0$; reorder the vector fields as in \cref{Item::QualE::Local::Basis}.
Because $\dim \XVS_{x_0}=r$ and $\dim \LVS_{x_0}=n+r$, we have
\begin{equation*}
\Span_{\R}\{\Phi^{*}X_1(0,0),\ldots, \Phi^{*}X_r(0,0)\}=\Span_{\R}\left\{\diff{t_1},\ldots, \diff{t_r}\right\},
\end{equation*}
\begin{equation*}
\Span_{\C}\{\Phi^{*}X_1(0,0),\ldots, \Phi^{*}X_r(0,0), \Phi^{*}L_1(0,0),\ldots, \Phi^{*}L_n(0,0)\}=\Span_{\C}\left\{
\diff{t_1},\ldots, \diff{t_r},\diff{\zb[1]},\ldots, \diff{\zb[n]}
\right\}.
\end{equation*}
Writing $X_{K_0}$ for the column vector of vector fields $[X_{1},\ldots, X_r]^{\transpose}$ and $L_{J_0}$ for the column vector $[L_{1},\ldots, L_n]^{\transpose}$
and using the hypotheses of \cref{Item::QualE::Local::Diffeo}, we may write
\begin{equation*}
\begin{bmatrix}
\Phi^{*} X_{K_0}\\
\Phi^{*} L_{J_0}
\end{bmatrix}
= B
\begin{bmatrix}
\diff{t}\\
\diff{\zb}
\end{bmatrix},
\end{equation*}
where $B\in \ZygSpace{s+1}[U][\M^{(n+r)\times (n+r)}]$ is such that $B(0,0)$ is invertible.
Letting $U_0\subseteq U$ be a sufficiently small open ball centered at $(0,0)$, we have that $|\det B(t,z)|$ is bounded away from $0$
on $U_0$.
Thus, on $U_0$, $B$ is invertible and $B(\cdot)^{-1}\in \ZygSpace{s+1}[U_0][\M^{(n+r)\times (n+r)}]$ (see \cref{Rmk::FuncSpaceRev::InverseMatrix});
and we have
\begin{equation}\label{Eqn::QualE::1imp2::matrix}
B^{-1}\begin{bmatrix}
\Phi^{*} X_{K_0}\\
\Phi^{*} L_{J_0}
\end{bmatrix}
=
\begin{bmatrix}
\diff{t}\\
\diff{\zb}
\end{bmatrix}.
\end{equation}
Thus, for $x\in \Phi(U_0)$,
$$\dim \LVS_x\geq \dim \Span_{\C} \{ X_1(x),\ldots, X_r(x), L_1(x),\ldots, L_n(x)\} =\dim \Span_{\C}\mleft\{ \diff{t_1},\ldots, \diff{t_r}, \diff{\zb[1]},\ldots, \diff{\zb[n]}\mright\}=n+r.$$
The hypothesis \cref{Item::QualE::Local::Diffeo} implies for $x\in \Phi(U)=V$,
$$\dim \LVS_x  = \dim \Span_{\C} \{ X_1(x),\ldots, X_q(x), L_1(x),\ldots, L_m(x)\}\leq \dim \Span_{\C}\mleft\{ \diff{t_1},\ldots, \diff{t_r}, \diff{\zb[1]},\ldots, \diff{\zb[n]}\mright\}=n+r.$$
This shows that the map $x\mapsto \dim \LVS_x$, $\Phi(U_0)\rightarrow \N$ is the constant function $n+r$.

Since
$\Phi^{*}\Zh_j\in \Span_{\C}\{\diff{t_1},\ldots, \diff{t_r}, \diff{\zb[1]},\ldots, \diff{\zb[n]}\}$ we can think of $\Phi^{*}\Zh_j$ as a function taking values in $\C^{n+r}$.  We have $\Phi^{*} \Zh_j\in \ZygSpace{s+1}[U][\C^{n+r}]$, and therefore
$[\Phi^{*} \Zh_j, \Phi^{*} \Zh_k]\in \ZygSpace{s}[U][\C^{n+r}]$ and it follows from \cref{Eqn::QualE::1imp2::matrix} and \cref{Prop::FuncSpaceRev::Algebra}
that
\begin{equation}\label{Eqn::EqualE::1imp2::1}
[\Phi^{*}\Zh_j, \Phi^{*} \Zh_k] = \sum_{l=1}^{n+r} \ct_{j,k}^{1, l} \Phi^{*}\Zh_l, \quad \ct_{j,k}^{1,l}\in \ZygSpace{s}[U_0].
\end{equation}
Similarly, since $[\Phi^{*} \Zh_j, \Phi^{*} \Zhb[k]]\in \ZygSpace{s}[U][\C^{2n+r}]$, we have
\begin{equation}\label{Eqn::EqualE::1imp2::2}
[\Phi^{*} \Zh_j, \Phi^{*} \Zhb[k]]= \sum_{l=1}^{n+r} \ct_{j,k}^{2,l} \Phi^{*} \Zh_l + \sum_{l=1}^{n+r} \ct_{j,k}^{3,l} \Phi^{*} \Zhb[l], \quad \ct_{j,k}^{2,l},\ct_{j,k}^{3,l}\in \ZygSpace{s}[U_0].
\end{equation}
Furthermore, since $\Phi^{*}Y_j(t,z)\in \Span_{\C}\left\{\diff{t_1},\ldots, \diff{t_r}, \diff{\zb[1]},\ldots, \diff{\zb[n]}\right\}$ and $\Phi^{*}Y_j\in \ZygSpace{s+1}[U][\C^{r+n}]$,
\cref{Eqn::QualE::1imp2::matrix} and \cref{Prop::FuncSpaceRev::Algebra} imply
\begin{equation}\label{Eqn::EqualE::1imp2::3}
\Phi^{*}Y_j = \sum_{l=1}^{n+r} \bt_j^l \Phi^{*}\Zh_l, \quad \bt_j^l\in \ZygSpace{s+1}[U_0].
\end{equation}
\Cref{Prop::FuncSpaceRev::CompEuclid}, combined with \cref{Eqn::QualE::1imp2::matrix}, shows
$\ct_{j,k}^{a,l}\in \ZygSpace{s}[U_0]=\ZygXSpace{\Phi^{*}X, \Phi^{*}L}{s}[U_0]$ and $\bt_j^l \in \ZygSpace{s+1}[U_0]=\ZygXSpace{\Phi^{*}X, \Phi^{*} L}{s+1}[U_0]$, $\forall j,k,l,a$.
Let $\ch_{j,k}^{a,l}:=\ct_{j,k}^{a,l}\circ \Phi^{-1}$, $b_j^l:=\bt_j^l\circ \Phi^{-1}$, and $V_0:=\Phi(U_0)$.  \Cref{Prop::FuncMan::DiffeoInv} shows
$\ch_{j,k}^{a,l}\in \ZygXSpace{X,L}{s}[V_0]$ and $b_j^l\in \ZygXSpace{X,L}{s+1}[V_0]$.
Pushing forward \cref{Eqn::EqualE::1imp2::1}, \cref{Eqn::EqualE::1imp2::2}, and \cref{Eqn::EqualE::1imp2::3} via $\Phi$
gives
\begin{equation*}
[\Zh_j, \Zh_k] =\sum_{l=1}^{n+r} \ch_{j,k}^{1,l}\Zh_l, \quad [Z_j, \Zhb[k]] = \sum_{l=1}^{n+r} \ch_{j,k}^{2,l} \Zh_l + \sum_{l=1}^{n+r} \ch_{j,k}^{3,l} \Zhb[l],
\quad Y_j = \sum_{l=1}^{n+r} b_j^l \Zh_l.
\end{equation*}
Along with the above remarks on $\ch_{j,k}^{a,l}$ and $b_j^l$, this completes the proof of \cref{Item::QualE::Local::Basis} with $V$ replaced by $V_0$.

\Cref{Item::QualE::Local::Basis}$\Rightarrow$\cref{Item::QualE::Local::Commute}:
Suppose \cref{Item::QualE::Local::Basis} holds.  First, we wish to show that
\begin{equation}\label{Eqn::EqualE::2imp3::1}
[Z_j,Z_k]=\sum_{l=1}^{m+q} c_{j,k}^{1,l} Z_l, \quad c_{j,k}^{1,l}\in \ZygXSpace{X,L}{s}[V].
\end{equation}
$Z_j$ and $Z_k$ are each either of the form $\Zh_l$ or $Y_l$ for some $l$ (where $\Zh_l$ and $Y_l$ are as in \cref{Item::QualE::Local::Basis}).
When $Z_j$ and $Z_k$ are both of the form $\Zh_l$ for some $l$, \cref{Eqn::EqualE::2imp3::1} is contained in \cref{Item::QualE::Local::Basis}.
We address the case when $Z_j=Y_{l_1}$, $Z_k=Y_{l_2}$ for some $l_1,l_2$.  The remaining case (when $Z_j=\Zh_{l_1}$ and $Z_k=Y_{l_2}$)
is similar, and we leave it to the reader.
We have,
\begin{equation*}
[Z_j,Z_k]=[Y_{l_1},Y_{l_2}] =\left[ \sum_{l_3} b_{l_1}^{l_3} \Zh_{l_3}, \sum_{l_4} b_{l_2}^{l_4} \Zh_{l_4} \right]
=\sum_{l_3,l_4} b_{l_1}^{l_3} b_{l_2}^{l_4} [\Zh_{l_3}, \Zh_{l_4}] + \sum_{l_3,l_4} b_{l_1}^{l_3} (\Zh_{l_3} b_{l_2}^{l_4}) \Zh_{l_4} - \sum_{l_3,l_4} b_{l_2}^{l_4} (\Zh_{l_4} b_{l_1}^{l_3}) \Zh_{l_3}.
\end{equation*}
Using \cref{Lemma::FuncSpaceRev::Properties} \cref{Item::FuncSpaceRev::Properties::Deriv} and \cref{Prop::FuncSpaceRev::Algebra}, we have
$b_{l_1}^{l_3} (\Zh_{l_3} b_{l_2}^{l_4}), b_{l_2}^{l_4} (\Zh_{l_4} b_{l_1}^{l_3})\in \ZygXSpace{X,L}{s}[V]$.
Also, we have
\begin{equation*}
\sum_{l_3,l_4} b_{l_1}^{l_3} b_{l_2}^{l_4} [\Zh_{l_3}, \Zh_{l_4}]  = \sum_{l_3,l_4}\sum_{l_5} b_{l_1}^{l_3} b_{l_2}^{l_4} \ch_{l_3,l_4}^{1,l_5}\Zh_{l_5},
\end{equation*}
and by \cref{Prop::FuncSpaceRev::Algebra}, $b_{l_1}^{l_3} b_{l_2}^{l_4} \ch_{l_3,l_4}^{1,l_5}\in \ZygXSpace{X,L}{s}[V]$.  Combining the above remarks, we have
\begin{equation*}
[Z_j,Z_k]=\sum_{l=1}^{n+r} c_{j,k}^{1,l} \Zh_l, \quad c_{j,k}^{1,l}\in \ZygXSpace{X,L}{s}[V].
\end{equation*}
Since each $\Zh_l$ is of the form $Z_{l'}$ for some $l'$, \cref{Eqn::EqualE::2imp3::1} follows.  A similar proof shows
\begin{equation*}
[Z_j, \Zb[k]]=\sum_{l} c_{j,k}^{2,l} Z_l + \sum_{l} c_{j,k}^{3,l} \Zb[l], \quad c_{j,k}^{2,l}, c_{j,k}^{3,l}\in \ZygXSpace{X,L}{s}[V],
\end{equation*}
and we leave the details to the reader.  This completes the proof of \cref{Item::QualE::Local::Commute}.

\Cref{Item::QualE::Local::Commute}$\Rightarrow$\cref{Item::QualE::Local::Diffeo}:  This is a consequence of \cref{Thm::Results::MainThm}; and we include a few remarks
on this.  First, a choice of $\eta,\delta_0>0$ as in the hypotheses of \cref{Thm::Results::MainThm} always exist; see \cref{Lemma::MoreAssume::ExistEtaDelta0}.
A choice of $J_0$, $K_0$, and $\zeta>0$ as in the hypotheses also always exist; see \cref{Rmk::AppendWedge::AboutJ0K0}.  We take $\xi>0$ so small
$B_{X,L}(x_0,\xi)\subseteq V$.

First we address the case $s\in (1,\infty]$.  In this case,
pick $s_0\in (1,s]\setminus \{\infty\}$ (the choice of $s_0$ does not matter).  We have, directly from the definitions
$$c_{j,l}^{a,l}\in \ZygXSpace{X,L}{s}[V]\subseteq \ZygXSpace{X,L}{s}[B_{X,L}(x_0,\xi)]\subseteq \ZygXSpace{X_{K_0},L_{J_0}}{s}[B_{X_{K_0},L_{J_0}}(x_0,\xi)]\subseteq \ZygXSpace{X_{K_0},L_{J_0}}{s_0}[B_{X_{K_0},L_{J_0}}(x_0,\xi)].
$$
Thus, all of the hypotheses of \cref{Thm::Results::MainThm} hold for this choice of $s_0$.  The map guaranteed by \cref{Thm::Results::MainThm} satisfies the conclusions
of \cref{Item::QualE::Local::Diffeo} and this completes the proof in the case $s\in (1,\infty]$.

When $s=\omega$, we wish to apply \cref{Thm::Results::MainThm} in the case $s_0=\omega$.  There is a slight discrepancy between the hypotheses of
\cref{Thm::Results::MainThm} and \cref{Item::QualE::Local::Commute}.  Namely, we are currently assuming $c_{j,k}^{a,l}\in \CXomegaSpace{X,L}{r_0}[V]$
for some $r_0>0$,
while \cref{Thm::Results::MainThm} assumes $c_{j,k}^{a,l}\in \AXSpace{X_{K_0},L_{J_0}}{x_0}{\eta}$ and $c_{j,k}^{a,l}$ is continuous near $x_0$.
However, $c_{j,k}^{a,l}\in \CXomegaSpace{X,L}{r_0}[V]$ clearly implies $c_{j,k}^{a,l}$ is continuous near $x_0$, and
using \cref{Lemma::FuncSpaceRev::Properties} \cref{Item::FuncSpaceRev::Properites::ContainRA} we have
$\CXomegaSpace{X,L}{\eta}\subseteq \CXomegaSpace{X_{K_0},L_{J_0}}{\eta}\subseteq \AXSpace{X_{K_0},L_{J_0}}{x_0}{\eta}$, so by shrinking $\eta$
so that $\eta\leq r_0$, these hypotheses follow.  With these remarks, \cref{Thm::Results::MainThm} applies to yield the coordinate chart $\Phi$
as in that theorem, which satisfies all the conclusions of \cref{Item::QualE::Local::Diffeo}.  This completes the proof.
\end{proof}

Before we prove \cref{Thm::QualE::GlobalThm}, we require two lemmas.
\begin{lemma}\label{Lemma::QualEPf::RecogSmooth}
Fix $s\in (0,\infty]\cup\{\omega\}$ and suppose $M_1$ and $M_2$ are $\ZygSpace{s+2}$ manifolds.  Let $Z_1,\ldots, Z_N$ be complex $\ZygSpace{s+1}$ vector fields on $M_1$
such that $Z_1,\ldots Z_N, \Zb[1],\ldots, \Zb[N]$ span the complexified tangent space to $M_1$ at every point.  Let $\Psi:M_1\rightarrow M_2$ be a $C^2$ diffeomorphism
such that $\Psi_{*} Z_j$ is a $\ZygSpace{s+1}$ vector field, $\forall 1\leq j\leq N$.  Then, $\Psi$ is a $\ZygSpace{s+2}$ diffeomorphism.
\end{lemma}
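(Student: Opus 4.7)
The plan is to work in local coordinates and reduce to a bootstrap on the pushforward identity, then treat $s=\omega$ separately by an analytic bootstrap.

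Fix $x_0\in M_1$, choose $\ZygSpace{s+2}$ charts $\phi_1:U_1\to V_1\subseteq\R^n$ at $x_0$ and $\phi_2:U_2\to V_2\subseteq\R^n$ at $\Psi(x_0)$, shrinking $U_1$ so that $\Psi(U_1)\subseteq U_2$. Let $F=\phi_2\circ\Psi\circ\phi_1^{-1}:V_1\to V_2$, which is $C^2$ since $\Psi$ is. Since $Z_1,\ldots,Z_N,\Zb[1],\ldots,\Zb[N]$ span $\C T_x M_1$ at every $x$, the real vector fields $2\Real(Z_j),2\Imag(Z_j)$ span $T_xM_1$ and are $\ZygSpace{s+1}$ in the chart $\phi_1$; similarly their pushforwards are $\ZygSpace{s+1}$ in the chart $\phi_2$ (because the transition maps are $\ZygSpace{s+2}$ and each $\Psi_{*}Z_j$ is assumed $\ZygSpace{s+1}$ with respect to the atlas of $M_2$). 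Reorder and shrink $V_1$ so that $n$ of these real vector fields, say with coefficient matrices $A(x)\in \M^{n\times n}$ on $V_1$ and $B(y)\in \M^{n\times n}$ on $\phi_2(\Psi(\phi_1^{-1}(V_1)))$, satisfy $|\det A|,|\det B|\geq c>0$. Both $A$ and $B$ are $\ZygSpace{s+1}$, and the definition of the pushforward in coordinates gives the key identity
\[
dF(x)\,A(x)=B(F(x)),\qquad\text{i.e.}\qquad dF(x)=B(F(x))\,A(x)^{-1}.
\]
By \cref{Rmk::FuncSpaceRev::InverseMatrix}, $A(\cdot)^{-1}\in\ZygSpace{s+1}[V_1][\M^{n\times n}]$.

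For $s\in(0,\infty]$ I would run a bootstrap. Because $F\in C^2$, locally $F\in\ZygSpace{2}$; suppose inductively that $F\in\ZygSpace{k+1}$ for some integer $k\geq 1$. Then \cref{Lemma::FuncSpaceRev::Composition} gives $B\circ F\in\ZygSpace{\min(s+1,k+1)}$ (provided $k+1>1$, which holds), and \cref{Prop::FuncSpaceRev::Algebra} then yields
\[
dF=B(F)\,A^{-1}\in\ZygSpace{\min(s+1,k+1)},
\]
so $F\in\ZygSpace{\min(s+2,k+2)}$. Iterating raises the exponent one step at a time until it stabilizes at $\min(s+2,k+2)=s+2$ once $k+1\geq s+1$; in the case $s=\infty$ the iteration never stops but produces $F\in\ZygSpace{k}$ for every $k$, i.e.\ $F\in\ZygSpace{\infty}$. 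Applying the same argument to $F^{-1}$ (which is $C^2$ and whose pushforwards are $\ZygSpace{s+1}$ by symmetry), or alternatively invoking the classical inverse function theorem together with \cref{Prop::FuncSpaceRev::Algebra,Rmk::FuncSpaceRev::InverseMatrix} applied to $dF$, gives $F^{-1}\in\ZygSpace{s+2}$. Hence $\Psi$ is a $\ZygSpace{s+2}$ diffeomorphism in the sense of $\ZygSpace{s+2}$ manifold atlases.

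The main obstacle is the real analytic case $s=\omega$, because \cref{Lemma::FuncSpaceRev::Composition} is phrased for Zygmund spaces. The plan is to first apply the case $s=\infty$ above (the analytic hypothesis is stronger than $\CjSpace{\infty}$) to conclude $F\in\CjSpace{\infty}$, and then to upgrade from smooth to real analytic using the identity $dF=B(F)\,A^{-1}$ with $A,A^{-1},B$ in the analytic class. Concretely, I would pick $r>0$ such that $A^{-1}$ lies in $\ASpace{n}{r}$ near $x_0$ and $B$ lies in $\ASpace{n}{r}$ near $F(x_0)$, and then reformulate the first-order system as a fixed point for the operator $G\mapsto F(x_0)+\int_0^1 B(G(sx))A(sx)^{-1}x\,ds$ (or solve it componentwise by integrating along lines) in a ball of the Banach algebra $\ASpace{n}{r'}[\R^n]$ for sufficiently small $r'>0$; \cref{Lemma::FuncSpaceRev::ComposeAnal} controls the composition $B\circ G$ once $\ANorm{G}{n}{r'}$ is small enough, and \cref{Lemma::FuncSpaceRev::DerivOfAnal} controls the derivatives needed to close the contraction. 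Uniqueness of the $C^2$ solution then forces $F$ itself to coincide with this analytic fixed point near $x_0$, and symmetry handles $F^{-1}$. This finishes the proof in all cases.
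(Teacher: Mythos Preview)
The paper's own proof is merely a reduction and a citation: after observing that real and imaginary parts reduce the statement to the case of real $\ZygSpace{s+1}$ vector fields spanning the tangent space, it defers to \cite{StovallStreetII} for $s\in(0,\infty]$ and to \cite{StovallStreetIII} for $s=\omega$. Your proposal, by contrast, actually attempts the argument.

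For $s\in(0,\infty]$, your bootstrap on the identity $dF=B(F)\,A^{-1}$ is correct and is essentially what the referenced paper does: the composition estimate (\cref{Lemma::FuncSpaceRev::Composition}) applied to $B\circ F$, together with the algebra property (\cref{Prop::FuncSpaceRev::Algebra}) and the invertibility of $A$ (\cref{Rmk::FuncSpaceRev::InverseMatrix}), raises the Zygmund regularity of $dF$ by one at each step until it reaches $s+1$. One small point worth stating explicitly is why $dF\in\ZygSpace{r}$ forces $F\in\ZygSpace{r+1}$ on a ball; this is immediate from the paper's inductive definition of $\ZygSpace{m+s}$ once one notes $F\in C^1\subseteq\ZygSpace{1}\subseteq\ZygSpace{\sigma}$ for the bottom term.

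For $s=\omega$, your plan is sound in spirit but the fixed-point step as written is not quite closed. The map $G\mapsto F(x_0)+\int_0^1 B(G(tx))A(tx)^{-1}x\,dt$ is not obviously a contraction in $\ASpace{n}{r'}$: \cref{Lemma::FuncSpaceRev::ComposeAnal} controls $\ANorm{B\circ G}{n}{r'}$ only when $\ANorm{G}{n}{r'}$ is bounded by the radius on which $B$ is controlled, and you have not shown the Lipschitz constant in $\ASpace{n}{r'}$ is $<1$. A cleaner route, and the one taken in the cited reference, is to first use the $s=\infty$ case to get $F\in C^\infty$, then argue via majorants: differentiating $dF=B(F)A^{-1}$ repeatedly expresses $\partial^\alpha F(x_0)$ as a polynomial in lower derivatives of $F$ and derivatives of the analytic data, and a Cauchy--Kovalevskaya style majorant bound shows the Taylor series of $F$ at $x_0$ converges in a ball. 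Alternatively, complexify the data and solve the holomorphic ODE along complex lines through $x_0$; uniqueness of $C^1$ solutions identifies the holomorphic extension with $F$. Either of these would make the $\omega$ case rigorous.
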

\begin{proof}
By taking real an imaginary parts, it suffices to prove the result in the case $Z_1,\ldots, Z_N$ are real and span the tangent space at every point.
In the case $s\in (0,\infty]$, this is proved in \cite{StovallStreetII}.  In the case $s=\omega$, this is proved in \cite{StovallStreetIII}.
\end{proof}

\begin{lemma}\label{Lemma::QualE::RealVFsInSpan}
$\mleft(T_{(t_0,z_0)}(\R^r\times \C^n)\mright) \cap \Span_{\C}\mleft\{ \diff{t_1},\ldots, \diff{t_r}, \diff{\zb[1]},\ldots, \diff{\zb[n]}\mright\}= \Span_{\R}\mleft\{ \diff{t_1},\ldots, \diff{t_r}\mright\}$.
\end{lemma}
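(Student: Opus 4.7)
The plan is to use the standard fact that the real tangent space $T_{(t_0,z_0)}(\R^r\times \C^n)$ is exactly the subspace of $\C T_{(t_0,z_0)}(\R^r\times \C^n)$ fixed by complex conjugation, and then compare coefficients in the complexified basis.

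More precisely, I would first set up the basis. The complexified tangent space $\C T_{(t_0,z_0)}(\R^r\times \C^n)$ admits the basis $\{\diff{t_j}\}_{j=1}^{r}\cup \{\diff{z_k}\}_{k=1}^n\cup \{\diff{\zb[k]}\}_{k=1}^n$ over $\C$; this is linearly independent since it is obtained from the real basis $\{\diff{t_j}\}\cup\{\diff{x_k}\}\cup\{\diff{y_k}\}$ by the invertible $\C$-linear change of variables $\diff{z_k}=\tfrac{1}{2}(\diff{x_k}-i\diff{y_k})$, $\diff{\zb[k]}=\tfrac{1}{2}(\diff{x_k}+i\diff{y_k})$. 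Conjugation on $\C T_{(t_0,z_0)}$ fixes each $\diff{t_j}$ and exchanges $\diff{z_k}\leftrightarrow \diff{\zb[k]}$, and the real tangent space consists of its fixed points.

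Next, suppose $v\in T_{(t_0,z_0)}(\R^r\times\C^n)\cap \Span_{\C}\{\diff{t_1},\ldots,\diff{t_r},\diff{\zb[1]},\ldots,\diff{\zb[n]}\}$ and write
\begin{equation*}
v=\sum_{j=1}^r a_j\diff{t_j}+\sum_{k=1}^n b_k\diff{\zb[k]},\qquad a_j,b_k\in\C.
\end{equation*}
Since $v$ is real, $v=\overline{v}=\sum_j \overline{a_j}\diff{t_j}+\sum_k \overline{b_k}\diff{z_k}$. Comparing coefficients in the basis above yields $a_j=\overline{a_j}$ (so $a_j\in\R$) from the $\diff{t_j}$ terms, $b_k=0$ from the $\diff{\zb[k]}$ terms, and $\overline{b_k}=0$ from the $\diff{z_k}$ terms. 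Hence $v\in\Span_\R\{\diff{t_1},\ldots,\diff{t_r}\}$, giving the nontrivial inclusion; the reverse inclusion is immediate.

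There is no real obstacle: the lemma is essentially a bookkeeping statement about real versus complex linear combinations in the standard Wirtinger basis, and the only nontrivial point used is linear independence of $\{\diff{t_j},\diff{z_k},\diff{\zb[k]}\}$ over $\C$, which follows from the invertibility of the change-of-basis matrix between this set and the real basis.
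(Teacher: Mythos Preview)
Your argument is correct; the paper simply records the proof as ``This is immediate,'' and what you have written is exactly the routine verification one would supply if asked to unpack that word.
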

\begin{proof}
This is immediate.
\end{proof}

\begin{proof}[Proof of \cref{Thm::QualE::GlobalThm}]
\Cref{Item::QualE::EMfld}$\Rightarrow$\cref{Item::QualE::ThreeEquiv}:  The inverses of the coordinate charts from the atlas given in \cref{Item::QualE::EMfld} satisfy
the conditions in \cref{Thm::QualE::LocalThm} \cref{Item::QualE::Local::Diffeo} (this uses \cref{Lemma::QualE::RealVFsInSpan}); and so \cref{Item::QualE::ThreeEquiv} follows.

\Cref{Item::QualE::ThreeEquiv}$\Rightarrow$\cref{Item::QualE::EMfld}:  Assume that \cref{Item::QualE::ThreeEquiv} holds.
Using the characterization in \cref{Thm::QualE::LocalThm} \cref{Item::QualE::Local::Commute}, we have that $x\mapsto\dim \LVS_x$, $M\rightarrow \N$ is locally constant,
and since $M$ is connected, $x\mapsto \dim\LVS_x$, $M\rightarrow \N$ is constant.  By the discussion in \cref{Section::CommentsAssump} we also have $x\mapsto \dim \XVS_x$, $M\rightarrow \N$
is constant.  Set $r:=\dim \XVS_x$ and $n+r:=\dim \LVS_x$ (so that $n$ and $r$ do not depend on $x$, by the above discussion).
Now, we use the characterization given in \cref{Thm::QualE::LocalThm} \cref{Item::QualE::Local::Diffeo}.
Thus, for each $x\in M$, there is a neighborhood $V_x\subseteq M$ of $x$ and a $C^2$ diffeomorphism $\Phi_x:U_x\rightarrow V_x$, where $U_x\subseteq \R^r\times \C^n$ is open,
such that $\forall (t,z)\in U_x$,  $1\leq k\leq q$,
$1\leq j\leq m$,
$$\Phi_x^{*}X_k(t,z)\in \Span_{\R}\left\{\diff{t_1},\ldots, \diff{t_r}\right\},\quad \Phi_x^{*}L_j(t,z)\in \Span_{\C}\left\{\diff{t_1},\ldots, \diff{t_r},\diff{\zb[1]},\ldots, \diff{\zb[n]}\right\},$$
and $\Phi_x^{*}X_k\in \ZygSpace{s+1}[U_x][\R^r]$, $\Phi_x^{*}L_j\in \ZygSpace{s+1}[U_x][\C^{r+n}]$.  Our desired atlas is $\{ (\Phi_x^{-1}, V_x) : x\in M\}$--once we show this is a $\ZygSpace{s+2}$ E-atlas,
\cref{Item::QualE::EMfld} will follow.
For $x,y\in M$, set $\Psi_{x,y}:=\Phi_y^{-1}\circ\Phi_x:\Phi_x^{-1}(V_y\cap V_x)\rightarrow U_y$; we wish to show that $\Psi_{x,y}$ is a $\ZygSpacemap{s+2}$ E-map.
Note that
\begin{equation}\label{Eqn::QualEPf::dPsi}
d\Psi_{x,y}(t,z) (\Phi_x^{*} X_k)(t,z) = (\Phi_y^{*} X_k)(\Psi_{x,y}(t,z)), \quad d\Psi_{x,y}(t,z) (\Phi_x^{*} L_j)(t,z) = (\Phi_y^{*} L_j)(\Psi_{x,y}(t,z)), \quad \forall j,k.
\end{equation}
In other words,
\begin{equation}\label{Eqn::QualEPf::PsiStar}
(\Psi_{x,y})_{*} \Phi_x^{*} X_k = \Phi_y^{*} X_k, \quad (\Psi_{x,y})_{*} \Phi_x^{*} L_j = \Phi_y^{*} L_j, \quad \forall j,k.
\end{equation}

Since $\dim \LVS_y=n+r$, $\forall y\in M$, we have $\forall (t,z)\in U_x$,
\begin{equation}\label{Eqn::QualEPf::SpanPullback}
	\Span_{\C} \{\Phi_x^{*}X_1(t,z),\ldots, \Phi_x^{*}X_q(t,z),\Phi_x^{*}L_1(t,z),\ldots ,\Phi_x^{*}L_m(t,z)  \}= \Span_{\C}\left\{\diff{t_1},\ldots, \diff{t_r},\diff{\zb[1]},\ldots, \diff{\zb[n]}\right\}.
\end{equation}

Combining \cref{Eqn::QualEPf::SpanPullback} and \cref{Eqn::QualEPf::dPsi} shows that $\Psi_{x,y}$ is an E-map.
\Cref{Eqn::QualEPf::SpanPullback} implies
$$\Phi_x^{*}X_1(t,z),\ldots, \Phi_x^{*}X_q(t,z),\Phi_x^{*}L_1(t,z),\ldots,\Phi_x^{*}L_m(t,z),\overline{\Phi_x^{*}L_1}(t,z),\ldots,\overline{\Phi_x^{*}L_m}(t,z)$$
span the complexified tangent space
at every point of $U_x$.  Since these vector fields are also $\ZygSpace{s+1}$ by hypothesis,
 \cref{Eqn::QualEPf::PsiStar} and \cref{Lemma::QualEPf::RecogSmooth} show that $\Psi_{x,y}$ is $\ZygSpacemap{s+2}$.
This completes the proof of \cref{Item::QualE::EMfld}.

\Cref{Item::QualE::Commute}$\Rightarrow$\cref{Item::QualE::ThreeEquiv}:  This is obvious, and holds for $s\in (0,\infty]\cup\{\omega\}$.

\Cref{Item::QualE::EMfld}$\Rightarrow$\cref{Item::QualE::Commute}, for $s\in (0,\infty]$: Assuming that
\cref{Item::QualE::EMfld} holds (where $M$ is an E-manifold of dimension $(r,n)$), a simple partition of unity argument shows that we may write
$[Z_j,Z_k]=\sum_{l=1}^{m+q} c_{j,k}^{1,l} Z_l$ and $[Z_j,\Zb[k]]=\sum_{l=1}^{m+q} c_{j,k}^{2,l} Z_l + \sum_{l=1}^{m+q} c_{j,k}^{3,l} \Zb[l]$,
where $c_{j,k}^{a,l}:M\rightarrow \C$ and $c_{j,k}^{a,l}$ are locally in $\ZygSpace{s}$.
We wish to show $\forall x_0\in M$, $\exists V\subseteq M$ open with $x_0\in V$ and $c_{j,k}^{a,l}\big|_{V} \in \ZygXSpace{X,L}{s}[V]$.
Fix $x_0\in M$ and let $W\subseteq M$ be a neighborhood of $x_0$ such that there is a $\ZygSpace{s+2}$ diffeomorphism $\Phi:B_{\R^r\times \C^n}(1)\rightarrow W$ with
$\Phi(0)=x_0$. 
Let $Y_1,\ldots, Y_{q+2m}$ denote the list $\Phi^{*}X_1,\ldots, \Phi^{*}X_r, 2\Phi^{*}\Real(L_1),\ldots, 2\Phi^{*}\Real(L_m), 2\Phi^{*}\Imag(L_1),\ldots, 2\Phi^{*}\Imag(L_m)$.
$Y_1,\ldots, Y_{q+2m}$ are $\ZygSpace{s+1}$ vector fields on $B_{\R^r\times \C^n}(1)$ and span the tangent space at every point.  We conclude
$Y_1,\ldots, Y_{q+2m}$ satisfy all the hypotheses of \cref{Prop::FuncSpaceRev::CompEuclid} with $B:=B_{\R^r\times \C^n}(1/2)$.
Thus, by \cref{Prop::FuncSpaceRev::CompEuclid}, $c_{j,k}^{a,l}\circ\Phi \in \ZygSpace{s}[B]=\ZygXSpace{Y}{s}[B]$.  \Cref{Prop::FuncMan::DiffeoInv} shows
$c_{j,k}^{a,l}\in \ZygXSpace{X,L}{s}[\Phi(B)]$, completing the proof with $V=\Phi(B)$.



Finally, we turn to the uniqueness claimed in the theorem; that under the equivalent hypotheses \cref{Item::QualE::EMfld} and \cref{Item::QualE::ThreeEquiv},
the E-manifold structure given in \cref{Item::QualE::EMfld} is unique.  Indeed, suppose there are two such structures on $M$.
Under these conditions, the identity map $M\rightarrow M$ is $\ZygSpacemap{s+2}$ by \cref{Lemma::QualEPf::RecogSmooth} (here we have applied
\cref{Lemma::QualEPf::RecogSmooth} with the vector fields $X_1,\ldots, X_q, L_1,\ldots, L_m$).
That the identity map is a $\ZygSpacemap{s+2}$ E-map follows from \cref{Lemma::EMfld::RecongnizeEMap}.
It follows that the identity map is a $\ZygSpace{s+2}$ E-diffeomorphism, as claimed.
\end{proof}

\begin{proof}[Proof of \cref{Thm::ResSmooth::Real::Local,Thm::ResSmooth::Real::Global}]
In the setting of \cref{Thm::ResSmooth::Real::Local,Thm::ResSmooth::Real::Global}, because $W_1,\ldots, W_N$ span the tangent space at every point, we have
$\dim \Span_{\R}\{W_1(x),\ldots, W_N(x)\} = \dim M=n$, $\forall x$; in particular, the map $x\mapsto \dim \Span_{\R}\{W_1(x),\ldots, W_N(x)\}$ is constant.
With this in mind, \cref{Thm::ResSmooth::Real::Local,Thm::ResSmooth::Real::Global} are immediate consequences of the case $m=0$ of \cref{Thm::QualE::LocalThm,Thm::QualE::GlobalThm}.
\end{proof}

\begin{proof}[Proof of \cref{Thm::QualComplex::LocalThm,Thm::QualComplex::GlobalThm}]
In the setting of \cref{Thm::QualComplex::LocalThm,Thm::QualComplex::GlobalThm}, we have $\dim \LVS_\zeta=n$, $\forall \zeta\in M$.  Thus, the map $\zeta\mapsto \dim \LVS_\zeta$ is constant.
Also, in the context of \cref{Thm::QualComplex::GlobalThm}, E-maps are holomorphic (and E-diffeomorphisms are biholomorphisms);  this is because complex manifolds embed into E-manifolds
as a \textit{full} sub-category (see \cref{Rmk::EMfld::FullSubcategory}).  With these remarks in hand, \cref{Thm::QualComplex::LocalThm,Thm::QualComplex::GlobalThm} are immediate consequences of the case $q=0$
of \cref{Thm::QualE::LocalThm,Thm::QualE::GlobalThm}.
\end{proof}

	\subsection{Sub-E geometry}
In this section, we prove \cref{Thm::SubEGeom}.  In light of \cref{Rmk::SubE::GeneralsizesSubHR}, \cref{Thm::Results::NSW} is a special case
of \cref{Thm::SubEGeom}.  \Cref{Thm::ResSubH} is also a special case of \cref{Thm::SubEGeom}:
\begin{proof}[Proof of \cref{Thm::ResSubH}]
In light of \cref{Rmk::SubE::GeneralsizesSubHR}, the hypotheses of \cref{Thm::ResSubH} imply the hypotheses of \cref{Thm::SubEGeom}.
The main issue in seeing \cref{Thm::ResSubH} as a special case of \cref{Thm::SubEGeom} is that the definitions of $\rho_H$ in the two theorems are not obviously the same.
However, if $M$ is a complex manifold and $f(t,z):B_{\R\times \C}(1/2)\rightarrow M$ is an E-map, then $f$ must be constant in $t$
and is therefore a holomorphic map $B_{\C}(1/2)\rightarrow M$.
Indeed, $df(t,z)\diff{t}$ is both a $T^{0,1}_{f(t,z)}$ tangent vector and a real tangent vector, and we conclude $df(t,z)\diff{t}\equiv 0$.
Using this, it is easy to see that the definition of $\rho_H$ in  \cref{Thm::SubEGeom} is the same as the definition of $\rho_H$
in \cref{Thm::ResSubH} when $M$ is a complex manifold.  
\end{proof}
The rest of this section is devoted to the proof of \cref{Thm::SubEGeom}.

\begin{lemma}\label{Lemma::PfSubE::rhoFCont}
$\lim_{y\rightarrow x} \rho_F(x,y)=0$, where the limit is taken
in the usual topology on $M$--recall, $M$ is a manifold and therefore comes equipped with a topology which we are referring to as the ``usual topology.''
\end{lemma}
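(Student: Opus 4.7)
The plan is to show that every point $y$ sufficiently close to $x$ can be connected to $x$ by a single curve $f_1:B_\R(1/2)\to M$ of the type allowed in the definition of $\rho_F$, with an admissible scale $\delta$ that tends to $0$ as $y\to x$. The key structural fact is that, because $\LVS + \LVSb = \C TM$ on the E-manifold $M$, the real vector fields $W_1,\ldots,W_{2m+q}$ span $T_\zeta M$ at every $\zeta\in M$; in particular, we may pick indices $j_1,\ldots,j_{2n+r}$ so that $W_{j_1}(x),\ldots,W_{j_{2n+r}}(x)$ form a basis of $T_xM$.

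First I would set up the ``exponential chart'' near $x$. Consider the $C^\infty$ map
\begin{equation*}
\Psi(s_1,\ldots,s_{2n+r}) := e^{s_1 W_{j_1}+\cdots+s_{2n+r}W_{j_{2n+r}}}\,x,
\end{equation*}
defined and $C^1$ on a small neighborhood of $0\in\R^{2n+r}$ with values in $M$. Its differential at $0$ sends $\partial_{s_k}$ to $W_{j_k}(x)$, which is a basis of $T_xM$, so by the inverse function theorem $\Psi$ is a $C^1$ diffeomorphism from some $B_{\R^{2n+r}}(\epsilon_0)$ onto an open neighborhood $V$ of $x$. Hence for every $y\in V$ there exist unique $(s_1,\ldots,s_{2n+r})\in B_{\R^{2n+r}}(\epsilon_0)$ with $y=\Psi(s_1,\ldots,s_{2n+r})$, and continuity of $\Psi^{-1}$ gives $\max_k|s_k|\to 0$ as $y\to x$.

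Next I would build the curve. With $v:=\sum_{k=1}^{2n+r} s_k W_{j_k}$, define $f_1:B_\R(1/2)\to M$ by
\begin{equation*}
f_1(t):=e^{(t+1/2)v}\,x,
\end{equation*}
which is well defined and smooth provided the $s_k$'s are small enough to ensure the flow exists up to time $1$ (this is immediate from the standard ODE theory applied to $\Psi$). Then $f_1(-1/2)=x$, $f_1(1/2)=y$, and $f_1'(t)=\sum_{k} s_k\,W_{j_k}(f_1(t))$. Setting $j_k$-th coefficient $c_{j_k}(t):=s_k/\delta^{d_{j_k}}$ and the remaining coefficients to zero, we rewrite
\begin{equation*}
f_1'(t)=\sum_{l=1}^{2m+q} c_l(t)\,\delta^{d_l}\,W_l(f_1(t)),
\end{equation*}
and
\begin{equation*}
\sum_l|c_l(t)|^2=\sum_k |s_k|^2/\delta^{2d_{j_k}}.
\end{equation*}
Choose
\begin{equation*}
\delta:=\max_{1\le k\le 2n+r}\bigl(2\sqrt{2n+r}\,|s_k|\bigr)^{1/d_{j_k}},
\end{equation*}
(taking $\delta>0$ arbitrarily small if all $s_k$ vanish). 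Since every $d_{j_k}\ge 1$, we have $\delta^{d_{j_k}}\ge 2\sqrt{2n+r}\,|s_k|$, hence $\sum_l|c_l(t)|^2\le 1/4<1$, so $f_1$ is admissible and $\rho_F(x,y)<\delta$.

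Finally, as $y\to x$, each $|s_k|\to 0$; because $t\mapsto t^{1/d_{j_k}}$ is continuous at $0$ for $d_{j_k}\ge 1$, this forces $\delta\to 0$, whence $\rho_F(x,y)\to 0$. The main obstacle, which is really only a bookkeeping matter, is choosing $\delta$ so that the quadratic constraint $\sum_l|c_l|^2<1$ holds uniformly and simultaneously $\delta\to 0$; the inequality $d_{j_k}\ge 1$ is what makes this work, and it is essential in controlling the nonisotropic scaling.
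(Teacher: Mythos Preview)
Your approach is essentially the same as the paper's: use the exponential map $\Psi(s)=e^{\sum s_kW_{j_k}}x$, apply the inverse function theorem to get a local chart, and then feed a rescaled straight-line flow into the definition of $\rho_F$. The paper phrases it as ``given $\epsilon>0$, find a neighborhood $N$'' and uses the anisotropic box $\{|t_j|<c\,\epsilon^{d_j}\}$ as the preimage, while you go the other direction and choose $\delta$ depending on $y$; these are equivalent.

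There is one small but genuine slip in your parametrization. You set $f_1(t)=e^{(t+1/2)v}x$ on $B_\R(1/2)$ and claim $f_1(-1/2)=x$, $f_1(1/2)=y$, but $\pm 1/2$ are \emph{not} in the open ball $B_\R(1/2)$, and the definition of $\rho_F$ requires $x\in f_1(B_\R(1/2))$ and $y\in f_1(B_\R(1/2))$. As written, neither $x$ nor $y$ need lie in the image. The paper avoids this by using $f(s)=e^{4s\,v}x$, so that $f(0)=x$ and $f(1/4)=y$ are safely interior; the extra factor $4$ is then absorbed into the constant in front of $|s_k|$ when checking $\sum|c_l|^2<1$. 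Your argument is repaired the same way: replace $(t+1/2)$ by, say, $4t$ (or $2t+1/2$), and adjust $2\sqrt{2n+r}$ accordingly in the definition of $\delta$.
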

\begin{proof}
Fix $\epsilon>0$; we wish to find a neighborhood $N\subseteq M$ of $x$
such that $\forall y\in N$, $\rho_F(x,y)<\epsilon$.
Reorder $W_1,\ldots, W_{2m+q}$ so that $W_1(x),\ldots, W_{2n+r}(x)$
form a basis for $T_x M$ and set
$$\Psi(t_1,\ldots, t_{2n+r}):=e^{t_1 W_1+\cdots+t_{2n+r}W_{2n+r}}x.$$
Since $\diff{t_j}\big|_{t=0} \Psi(t) = W_j(x)$ it follows from
the inverse function theorem that there exists an open
neighborhood $U$ of $0\in \R^{2n+r}$ such that
$\Psi(U)$ is open and $\Psi:U\rightarrow \Psi(U)$ is a $C^\infty$
diffeomorphism.
Set $0<c\leq (32(2n+r))^{-1/2}$ and let
$B:=\{t=(t_1,\ldots, t_{2n+r}) : |t_j|<c\epsilon^{d_j}\}$;
take $c$ so small that $B\subseteq U$ and set $N=\Psi(B)$.  $N$ is clearly open since $\Psi$ is
a diffeomorphism.  Thus, it remains to show
$N\subseteq B_F(x,\epsilon)$.
Take $y\in N$, so that there exists $t\in B$ with $y=\Psi(t)$.
Define $f:B_{\R}(1/2)\rightarrow M$ by
\begin{equation*}
f(s):=e^{4s(t_1W_1+\cdots+t_{2n+r}W_{2n+r})}x,
\end{equation*}
so that $f\in C^\infty$, $f(0)=x$, $f(1/4)=y$, and
\begin{equation*}
f'(s) = \sum_{j=1}^{2n+r} 4t_j W_j(f(s)) =\sum_{j=1}^{2n+r} 4\frac{t_j}{\epsilon^{d_j}} \epsilon^{d_j} W_j(f(s)).
\end{equation*}
Since
$$\sum_{j=1}^{2n+r} \left(4\frac{t_j}{\epsilon^{d_j}}\right)^2 \leq \sum_{j=1}^{2n+r} \frac{1}{2(2n+r)}\leq \frac{1}{2}<1,$$
it follows that $\rho_F(x,y)<\epsilon$, completing the proof.
\end{proof}

\begin{lemma}\label{Lemma::PfSubE::TopsAreTheSame}
The metric topology induced by $\rho_F$ is the same as the usual topology on $M$.
\end{lemma}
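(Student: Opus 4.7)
The plan is to prove each topology is finer than the other. One direction --- that every $\rho_F$-ball around $x$ is a neighborhood of $x$ in the usual topology --- is essentially a restatement of \cref{Lemma::PfSubE::rhoFCont}, which gives $\rho_F(x,y_n)\to 0$ whenever $y_n\to x$ in the usual topology.

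The harder direction is that each usual-topology neighborhood $N$ of $x$ contains some $\rho_F$-ball $B_F(x,\epsilon)$. I would fix a coordinate chart on an open set $U\ni x$ with $\bar U\subset N$ compact, together with a closed Euclidean ball $\bar B(x,r)\subset U$ measured in the chart. Let $A:=\max_l\|W_l\|_{L^\infty(\bar U)}$ computed in the chart, set $C:=(2m+q)A$, and choose $\epsilon:=\min\{1,r/(2C)\}$. The claim is $B_F(x,\epsilon)\subseteq \bar B(x,r)$, which gives $B_F(x,\epsilon)\subseteq N$.

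Given $y$ with $\rho_F(x,y)<\epsilon$ and associated curves $f_1,\ldots,f_K:B_\R(1/2)\to M$ and scales $\delta_1,\ldots,\delta_K$ with $\sum\delta_j\le \epsilon$, I would use the nonempty-intersection conditions to pick parameters $a_j,b_j\in B_\R(1/2)$ and form an absolutely continuous concatenation $\gamma:[0,1]\to M$ with $\gamma(0)=x$ and $\gamma(1)=y$. As long as a given $f_j$-piece of $\gamma$ lies in $\bar U$, the bounds $|s_j^l|\le 1$ (from $\sum_l|s_j^l|^2<1$) and $\delta_j^{d_l}\le \delta_j$ (from $d_l\ge 1$ and $\delta_j\le 1$) yield the Euclidean estimate $|f_j'(t)|\le C\delta_j$; so this piece contributes at most $C\delta_j$ to the Euclidean arc length of $\gamma$, and the total Euclidean arc length of $\gamma$ is at most $C\epsilon\le r/2$.

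The main obstacle is converting this conditional arc length bound into the unconditional statement that $\gamma$ stays in $\bar B(x,r)$, because the bound on $|f_j'|$ presupposes that $\gamma$ has not yet left $\bar U$. I would close this by a standard bootstrap: let $T^*:=\sup\{s\in[0,1]:\gamma([0,s])\subseteq \bar B(x,r)\}$. Since $\gamma(0)=x$ lies in the interior of $\bar B(x,r)$, one has $T^*>0$, and for $s<T^*$ the arc length estimate yields $|\gamma(s)-x|\le C\epsilon\le r/2<r$; hence $\gamma([0,T^*])$ is trapped inside the open ball $B(x,r/2)$, and continuity forces $T^*=1$. Therefore $y=\gamma(1)\in \bar B(x,r)\subseteq N$, completing the argument.
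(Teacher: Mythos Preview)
Your argument is correct and matches the paper's approach. The paper's own proof is extremely terse: it cites \cref{Lemma::PfSubE::rhoFCont} for one direction (exactly as you do) and for the other direction simply asserts that it is ``a straightforward application of the Phragm\'en--Lindel\"of Theorem'' (almost certainly a slip for Picard--Lindel\"of, given the context), leaving the details to the reader; your concatenation-plus-bootstrap argument is precisely how one fills in those details. The paper also notes an alternative route in a footnote---use $\rho_S\le\rho_F$ (proved independently in \cref{Item::SubE::EasyMetrics}) together with the fact, from \cite{StovallStreetI}, that the $\rho_S$-topology is finer than the usual topology---but your direct argument is self-contained and equally valid.

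One cosmetic point: you bound $A$ on $\bar U$, but your chart is only defined on $U$; it is cleaner to take $A:=\max_l\|W_l\|_{L^\infty(\bar B(x,r))}$, since $\bar B(x,r)\subset U$ is where you actually need the bound and where the bootstrap confines $\gamma$.
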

\begin{proof}
\Cref{Lemma::PfSubE::rhoFCont} shows that the usual topology on $M$ is finer than the metric topology induced $\rho_F$.
That the metric topology induced by $\rho_F$ is finer than the usual topology is a straightforward application of the Phragm\'en-Lindel\"of Theorem; and we leave the details to the reader.\footnote{Another way to prove \cref{Lemma::PfSubE::TopsAreTheSame} is as follows.  We see
below in the proof of  \cref{Thm::SubEGeom} \cref{Item::SubE::EasyMetrics} that $\rho_S\leq \rho_F$--and the proof of this inequality does not use \cref{Lemma::PfSubE::TopsAreTheSame}.
Thus the metric topology induced by $\rho_F$ is finer than the metric topology induced by $\rho_S$.  That the metric topology induced by $\rho_S$ is finer than the usual topology
follows from \cite[\SSFinerTopology]{StovallStreetI}.  Alternatively, one can easily adapt the proof of \cite[\SSFinerTopology]{StovallStreetI} to directly prove that the metric topology induced
by $\rho_F$ is finer than the usual topology.}
\end{proof}

\begin{proof}[Proof of  \cref{Thm::SubEGeom} \cref{Item::SubE::EasyMetrics}]
We begin by showing $\rho_F\leq \rho_S$.  Suppose $\rho_S(x,y)<\delta$.  Then, there exists $\gamma:[0,1]\rightarrow M$, $\gamma(0)=x$, $\gamma(1)=y$,
$\gamma'(t)=\sum a_j(t) \delta^{d_j} W_j(\gamma(t))$, $\Norm{\sum|a_j|^2}[L^\infty([0,1])]<1$.
For $\sigma>0$, let $\gamma_\sigma:[0,1]\rightarrow M$ be functions such that $\gamma_\sigma\big|_{(0,1)}\in C^\infty$,
$\gamma_\sigma\xrightarrow{\sigma\rightarrow 0} \gamma$ in $\CSpace{[0,1]}$, and 
$\gamma_\sigma'(t)=\sum b_j^{\sigma}(t) (\delta+\sigma)^{d_j} W_j(\gamma_\sigma(t))$ with $\BNorm{\sum|b_j^{\sigma}|^2}[L^\infty]<1$--this can be achieved by simple argument using mollifiers and the fact
that $W_1,\ldots, W_{2m+q}$ are smooth and span the tangent space at every point.
Set $x_\sigma:=\gamma_{\sigma}(\sigma)$, $y_\sigma:=\gamma_{\sigma}(1-\sigma)$, so that $\lim_{\sigma\downarrow 0} x_\sigma= x$ and $\lim_{\sigma\downarrow 0} y_\sigma= y$.
Using the function $f_\sigma:B_{\R}(1/2)\rightarrow M$ given by $f_{\sigma}(t):=\gamma_{\sigma}(t+1/2)$, it follows from the definition of $\rho_F$
that $\rho_F(x_\sigma,y_\sigma)<\delta+\sigma$.
Thus, we have
\begin{equation*}
\rho_F(x,y) \leq \rho_F(x,x_\sigma)+\rho_F(x_\sigma, y_\sigma)+\rho_F(y_\sigma,y) < \delta+\sigma + \rho_F(x,x_\sigma)+\rho_F(y_\sigma,y)\xrightarrow{\sigma\rightarrow 0} \delta,
\end{equation*}
where in the last step we have used \cref{Lemma::PfSubE::rhoFCont}.  We conclude $\rho_F(x,y)\leq \rho_S(x,y)$.

Next, we show $\rho_S\leq \rho_F$.  Suppose $\rho_F(x,y)<\delta$ and let $f_1,\ldots, f_K, \delta_1,\ldots, \delta_K$ be as in the definition of $\rho_F$.
For $w_1,w_2\in f_j(B_{\R}(1/2))$, we will show $\rho_S(w_1,w_2)<\delta_j$.  Notice, this will complete the proof since we may find $\xi_1,\ldots, \xi_{L+1}$ with
$\xi_j,\xi_{j+1}\in f_j(B_{\R}(1/2))$, $x=\xi_1$, $y=\xi_{L+1}$, and so using the triangle
inequality for $\rho_S$, we have
\begin{equation*}
\rho_S(x,y) \leq \sum_{j=1}^L \rho_S(\xi_j, \xi_{j+1})\leq \sum_{j=1}^L \rho_F(\xi_j, \xi_{j+1})<\sum_{j=1}^L \delta_j\leq \delta,
\end{equation*}
which will prove $\rho_S(x,y)\leq \rho_F(x,y)$.

Given $w_1,w_2\in f_j(B_{\R}(1/2))$, we have $w_1=f_j(t_1)$, $w_2=f_j(t_2)$ for some $t_1,t_2\in B_{\R}(1/2)$.
Set $\gamma(r):=f_j((1-r)t_1+rt_2)$.  Then,
\begin{equation*}
\gamma'(r) = f_j'((1-r)t_1+rt_2)(t_1-t_2)=\sum_{l=1}^{m+q} (t_1-t_2)s_j^l(t) \delta_j^{d_l} W_l(f_j(t)),
\end{equation*}
with $\Norm{\sum_{l} |s_j^l|^2}[L^\infty]<1$.  Since $|t_1-t_2|<1$, it follows from the definition of $\rho_S$ that $\rho_S(w_1,w_2)<\delta_j$, completing the proof
of $\rho_S\leq \rho_F$.

Finally, we show $\rho_F\leq \rho_H$.  Suppose $\rho_H(x,y)<\delta$.  Take $\delta_1,\ldots, \delta_K$ and $f_1,\ldots, f_K$ as in the definition of $\rho_H$.
We will show that if $w_1,w_2\in f_j(B_{\C\times \R}(1/2))$, then $\rho_F(w_1,w_2)<\delta_j$.  The result will then follow from the triangle inequality,
just as in the proof of $\rho_S\leq \rho_F$.

Let $w_1=f_j(\xi_1)$ and $w_2=f_j(\xi_2)$ with $\xi_1,\xi_2\in B_{\R\times \C}(1/2)$.  Fix $\epsilon>0$ small (depending on $\xi_1,\xi_2$)
and set
$\eta(r):=(\frac{1}{2}-(1+\epsilon)r) \xi_1 + (\frac{1}{2}+(1+\epsilon)r) \xi_2.$
Note (if $\epsilon>0$ is small enough), $\eta:B_{\R}(1/2)\rightarrow B_{\R\times \C}(1/2)$.  Set $g(r):=f_j(\eta(r))$.
Let $\xi_3=(1+\epsilon)(\xi_2-\xi_1)$, and we henceforth think of $\xi_3$ as an element of $B_{\R^3}(1)$, by identifying
$\R\times \C$ with $\R^3$.
We have
\begin{equation*}
g'(r) = df_j(\eta(r)) \eta'(r) =df_j(\eta(r)) \xi_3.
\end{equation*}

Let $\Sh_j(t,x_1,x_2)$ be the matrix from \cref{Rmk::SubH::Shj}.  We have
\begin{equation*}
g'(r) = \sum_{l=1}^{2m+q} (\Sh_j(t,x_1,x_2) \xi_3)_l \delta_j^{d_l} W_l(g(r)),
\end{equation*}
where $ (\Sh_j(t,z) \xi_3)_l$ denotes the $l$-th component of the vector $\Sh_j(t,z) \xi_3$.  Since $|\xi_3|<1$ and using \cref{Eqn::SubH::NormShj}, we have
\begin{equation*}
\BNorm{ \sum_l \left|(\Sh_j(\cdot) \xi_3)_l \right|^2}[L^\infty]<1.
\end{equation*}
Since $g(-1/(2(1+\epsilon)))=w_1$ and $g(1/(2(1+\epsilon))=w_2$, it follows that $\rho_F(w_1,w_2)<\delta_j$, as desired.
\end{proof}

\begin{proof}[Completion of the proof of \cref{Thm::SubEGeom}]
We will prove the theorem by applying \cref{Thm::Results::MainThm,Thm::Results::Desnity::MainResult,Cor::Results::Desnity::MainCor} to $\delta^{\beta}X,\delta^{\beta}L$,
as the base point $x_0$ ranges over $\Compact$ and as $\delta$ ranges over $(0,1]$
(where $\delta^{\beta}X$ and $\delta^{\beta}L$ are defined in \cref{Section::Res::SubE}).
Thus, our first goal is to show that the hypotheses of these results are satisfied uniformly for $x_0\in \Compact$ and $\delta\in (0,1]$; so that any type of admissible constant in those results can be chosen independently of $x_0\in \Compact$ and $\delta\in (0,1]$.
For notational simplicity, we turn to calling
the base point $x$ instead of $x_0$.

For $\delta\in (0,1]$, we multiply both sides of \cref{Eqn::SubE::AssumeNSW} by $\delta^{\beta_j+\beta_k}$ to see
\begin{equation*}
[\delta^{\beta_j} Z_j, \delta^{\beta_k} Z_k] = \sum_{\beta_l\leq \beta_j+\beta_k} (\delta^{\beta_j+\beta_k-\beta_l} c_{j,k}^{1,l}) \delta^{\beta_l} Z_l,
\end{equation*}
\begin{equation*}
[\delta^{\beta_j} Z_j, \delta^{\beta_k} \Zb[k]] = \sum_{\beta_l\leq \beta_j+\beta_k} (\delta^{\beta_j+\beta_k-\beta_l} c_{j,k}^{2,l}) \delta^{\beta_l} Z_l +\sum_{\beta_l\leq \beta_j+\beta_k} (\delta^{\beta_j+\beta_k-\beta_l} c_{j,k}^{3,l}) \delta^{\beta_l} \Zb[l].
\end{equation*}
Setting $Z_j^{\delta}:=\delta^{\beta_j} Z_j$ and
$$c_{j,k}^{a,l,\delta}:=\begin{cases}
\delta^{\beta_j+\beta_k-\beta_l} c_{j,k}^{a,l} & \text{if }\beta_l\leq \beta_j+\beta_k\\
0&\text{otherwise,}
\end{cases}
$$
we have
\begin{equation*}
[Z_j^{\delta},Z_k^{\delta}]=\sum_{l} c_{j,k}^{1,\delta} Z_l^{\delta},\quad
[Z_j^{\delta},\Zbdelta{\delta}[k]]=\sum_l c_{j,k}^{2,\delta} Z_l^{\delta} + \sum_l c_{j,k}^{3,\delta} \Zbdelta{\delta}[l].
\end{equation*}
With this notation, $\delta^\beta X,\delta^{\beta}L$ is the same as the list $Z_1^{\delta},\ldots, Z_{m+q}^{\delta}$.

For $\delta\in(0,1]$, $c_{j,k}^{a,l,\delta}\in C^\infty$ and $Z_l^{\delta}\in C^\infty$, \textit{uniformly in} $\delta$.  Thus if $\Omega\Subset M$ is a relatively compact
open set with $\Compact\subseteq \Omega$, we have, directly from the definitions,
\begin{equation*}
\CXjNorm{c_{j,k}^{a,l,\delta}}{\delta^{\beta}X,\delta^{\beta}L}{p}[\Omega]\lesssim 1, \quad \forall j,k,l,a, \quad \forall p\in \N,
\end{equation*}
where the implicit constant may depend on $p$, but does not depend on $\delta\in (0,1]$.
It follows from \cref{Lemma::FuncSpaceRev::Properties} \cref{Item::FuncSpaceRev::CmBoundsHms} and \cref{Item::FuncSpaceRev::HmsBoundsZygs}
that
\begin{equation}\label{Eqn::SubEGeom::BoundZygNormGlobal}
\ZygXNorm{c_{j,k}^{a,l,\delta}}{\delta^{\beta} X, \delta^{\beta}L}{s}[\Omega]\lesssim 1, \quad \forall j,k,l,a,\quad s>0,\delta\in (0,1],
\end{equation}
where the implicit constant may depend on $s$, but does not depend on $\delta\in (0,1]$.
We take $\xi\in (0,1]$ so small $B_{X,L}(x,\xi)\subseteq \Omega$, $\forall x\in \Compact$; as a consequence, $B_{\delta^{\beta}X,\delta^{\beta}L}(x,\xi)\subseteq B_{X,L}(x,\xi)\subseteq \Omega$, $\forall x\in \Compact$, $\delta\in (0,1]$.  By \cref{Lemma::FuncSpaceRev::Properties} \cref{Item::FuncSpaceRev::IncludeSets} and \cref{Eqn::SubEGeom::BoundZygNormGlobal} we have
\begin{equation*}
\BZygXNorm{c_{j,k}^{a,l,\delta}}{\delta^{\beta} X, \delta^{\beta}L}{s}[B_{\delta^{\beta}X,\delta^{\beta}L}(x,\xi)]\lesssim 1, \quad \forall j,k,l,a,\quad s>0,\delta\in (0,1],x\in \Compact,
\end{equation*}
where the implicit constant does not depend on $\delta\in (0,1]$ or $x\in \Compact$.
We also have $\Lie{Z_j^{\delta}} \nu = f_j^{\delta} \nu$ where $f_j^{\delta}\in C^\infty$ uniformly for $\delta\in (0,1]$ (this follows directly from the definitions and the fact that
$\nu$ is a strictly positive, $C^\infty$ density).  Similar to the above discussion, we have
\begin{equation*}
\ZygXNorm{f_j^{\delta}}{\delta^{\beta} X, \delta^{\beta}L}{s}[B_{\delta^{\beta}X,\delta^{\beta}L}(x,\xi)]\lesssim 1, \quad \forall j,\quad s>0,\delta\in (0,1],x\in \Compact.
\end{equation*}

The existence of $\eta>0$ and $\delta_0>0$ (independent of $x\in \Compact$ and $\delta\in (0,1]$) as in the hypotheses of \cref{Thm::Results::MainThm} (when applied to $\delta^\beta X, \delta^{\beta} L$ at the base point $x$) follows
from \cref{Lemma::MoreAssume::ExistEtaDelta0}; indeed
\cref{Lemma::MoreAssume::ExistEtaDelta0} directly gives the existence of these constants for $x\in \Compact$ when $\delta=1$ and it is immediate from the definitions of $\eta$ and $\delta_0$ that the same constants
may be used $\forall \delta\in (0,1]$.  The existence of $J_0=J_0(x,\delta)\in \sI(r,q)$, $K_0=K_0(x,\delta)\in \sI(n,m)$, and $\zeta\in (0,1]$  (independent
of $x\in \Compact$, $\delta\in (0,1]$) as in \cref{Thm::Results::MainThm} (when applied to $\delta^\beta X, \delta^{\beta} L$ at the base point $x$)  follows from the hypothesis \cref{Eqn::ResSubE::ExistsMaximal}.

Thus, \cref{Thm::Results::MainThm,Thm::Results::Desnity::MainResult,Cor::Results::Desnity::MainCor} apply (with, e.g., $s_0=3/2$--the choice of $s_0\in (1,\infty)$ is irrelevant for what follows), uniformly for $x\in \Compact$, $\delta\in (0,1]$.
In particular, any positive $\Zygad{s}$-admissible  constant from those results (for any $s>0$)
can be chosen independent of $x\in \Compact$, $\delta\in (0,1]$ (and is therefore $\approx 1$ in the sense of this theorem); and similarly
for any other kind of admissible constant.
We let $\xi_2\approx 1$ ($0<\xi_2\leq \xi\leq 1$) and $K\approx 1$ be the constants of the same name from   \cref{Thm::Results::MainThm}, and let $\Phi_{x,\delta}:B_{\R^r\times \C^n}(1)\rightarrow B_{\delta^{\beta}X,\delta^{\beta}L}(x,\xi)$
be the map guaranteed by \cref{Thm::Results::MainThm} when applied to $\delta^{\beta}X,\delta^{\beta}L$ at the base point $x\in \Compact$.

We turn to proving \cref{Item::SubEGeom::ExistEpsilon}.  By \cref{Thm::Results::MainThm} \cref{Item::Results::MainThm::xi1xi2} we have
\begin{equation*}
B_{\delta^{\beta}X,\delta^{\beta}L}(x,\xi_2)\subseteq \Phi_{x,\delta}(B_{\R^{r}\times \C^n}(1)) \subseteq B_{\delta^{\beta}X,\delta^{\beta}L}(x,\xi)\subseteq B_{\delta^{\beta}X,\delta^{\beta}L}(x,1)=B_S(x,\delta).
\end{equation*}
We set $\epsilon=\xi_2$, and the proof of \cref{Item::SubEGeom::ExistEpsilon} will be complete once we show
\begin{equation}\label{Eqn::SubEPf::SInBall}
B_S(x,\xi_2\delta)\subseteq B_{\delta^{\beta}X,\delta^{\beta}L}(x,\xi_2).
\end{equation}
Take $y\in B_S(x,\xi_2\delta)$.  Thus, $\exists \gamma:[0,1]\rightarrow M$, $\gamma(0)=x$, $\gamma(1)=y$, $\gamma'(t)=\sum a_j(t) \xi_2^{d_j} \delta^{d_j} W_j(\gamma(t))$,
with $\Norm{\sum |a_j|^2}[L^\infty([0,1])]<1$.  Hence,
$$\gamma'(t) = \sum_j (a_j(t) \xi_2^{d_j-1}) \xi_2\delta^{d_j} W_j(\gamma(t)),\quad \BNorm{\sum \left|a_j\xi_2^{d_j-1}\right|^2}[L^\infty]<1.$$
It follows that $y=\gamma(1)\in B_{\delta^{\beta}X,\delta^{\beta}L}(x,\xi_2)$, completing the proof of \cref{Item::SubEGeom::ExistEpsilon}.

\Cref{Item::SubEThm::ZhIsE} follows from \cref{Thm::Results::MainThm} \cref{Item::Results::MainThem::IsEMap}.
\Cref{Item::SubEThm::ZsSpan} follows from \cref{Thm::Results::MainThm} \cref{Item::ResultsMainThm::AFormula}
using the fact that if $\AMatrix$ is as in that result,
$\Norm{\AMatrix(t,z)}[\M^{(n+r)\times (n+r)}]\leq \frac{1}{4}$, $\forall t,z$ by \cref{Thm::Results::MainThm} \cref{Item::ResultsMainThm::ABound}
and therefore $I+\AMatrix(t,z)$ is invertible with $\Norm{(I+\AMatrix(t,z))^{-1}}[\M^{(n+r)\times (n+r)}]\leq \frac{4}{3}$, $\forall t,z$.

Since $\CjNorm{\cdot}{k}\leq \ZygNorm{\cdot}{k+1}$, $\forall k\in \N$, by definition,
\cref{Item::SubEThm::ZsSmooth} follows from \cref{Thm::Results::MainThm} \cref{Item::ResultsMainThm::PullbacksSmooth}.
Similarly, \cref{Item::SubEThm::Esth} follows from \cref{Thm::Results::Desnity::MainResult} \cref{Item::Results::Density::hConst} and \cref{Item::Results::Density::hSmooth}.

\Cref{Item::SubEThm::PhiDiffeo} follows from \cref{Thm::Results::MainThm} \cref{Item::ResultsMainThm::PhiOpen} and \cref{Item::ResultsMainThem::PhiDiffeo};
except that \cref{Item::ResultsMainThem::PhiDiffeo} only guarantees $\Phi_{x,\delta}$ is a $C^2$ diffeomorphism.  That $\Phi_{x,\delta}$ is $C^\infty$
follows by combining \cref{Item::SubEThm::ZsSmooth} and \cref{Lemma::QualEPf::RecogSmooth}.

Next, we prove \cref{Item::SubEThm::ImageInH}.
Let $y\in \Phi_{x,\delta}(B_{\R^r\times \C^n}(1))$.  We will show $y\in B_H(x,R\delta)$ for some $R\approx 1$ to be chosen later.
By \cref{Item::SubEThm::ZsSpan} and \cref{Item::SubEThm::ZsSmooth} we may write
\begin{equation*}
\diff{t_k} = \sum_{l=1}^{m+q} a_{k,x,\delta}^{l}\Zh_{l}^{x,\delta}, \quad \diff{\zb[j]}=\sum_{l=1}^{m+q} b_{j,x,\delta}^l \Zh_l^{x,\delta},
\end{equation*}
where,
\begin{equation*}
\CjNorm{a_{k,x,\delta}^l}{p}[B_{\R^r\times \C^n}(1)],\CjNorm{b_{j,x,\delta}^l}{p}[B_{\R^r\times \C^n}(1)]\lesssim 1, \quad \forall p\in \N.
\end{equation*}
We have
\begin{equation*}
d\Phi_{x,\delta}(t,z) \diff{t_k} =\sum_{l=1}^{m+q} a_{k,x,\delta}^l(t,z) Z_l^{\delta}(\Phi_{x,\delta}(t,z)),
\quad
d\Phi_{x,\delta}(t,z) \diff{\zb[j]} =\sum_{l=1}^{m+q} b_{j,x,\delta}^l(t,z) Z_l^{\delta}(\Phi_{x,\delta}(t,z)).
\end{equation*}
Let $y=\Phi_{x,\delta}(t_0,z_0)$ for some $(t_0,z_0)\in B_{\R^r\times \C^n}(1)$.
Define
$$f(s,w):=\Phi_{x,\delta}\left(2s\frac{t_0}{|t_0|}, 2w\frac{z_0}{|z_0|}\right)$$
so that $f:B_{\R\times \C}(1/2)\rightarrow M$, $f(0,0)=x$ and $y\in f(B_{\R\times \C}(1/2))$.  We have
\begin{equation*}
df(s,w)\diff{s} = \sum_{l=1}^{m+q} \at_l(s,w) Z_l^{\delta}(f(s,w)),\quad  df(s,w)\diff{\wb} = \sum_{l=1}^{m+q} \bt_l(s,w) Z_l^{\delta}(f(s,w)),
\end{equation*}
where
\begin{equation*}
\at_l(s,w) := \sum_{k=1}^r a_{k,x,\delta}^l\left( 2s\frac{t_0}{|t_0|}, 2w\frac{z_0}{|z_0|}\right) \mleft(2\frac{t_0}{|t_0|}\mright)_k,
\end{equation*}
where $\mleft(2\frac{t_0}{|t_0|}\mright)_k$ denotes the $k$th component of $2\frac{t_0}{|t_0|}$; and $\bt_l$ is defined similarly.
In particular
$$\Norm{\at_l}[L^\infty(B_{\R\times \C}(1/2)], \Norm{\bt_l}[L^\infty(B_{\R\times \C}(1/2))]\lesssim 1.$$
For $R\geq 1$ set $\at_l^R:=\at_l/(R^{\beta_l})$, so that we have
\begin{equation*}
df(s,w)\diff{s} = \sum_{l=1}^{m+q} \at_l^R(s,w) Z_l^{R\delta}(f(s,w)),\quad  df(s,w)\diff{\wb} = \sum_{l=1}^{m+q} \bt_l^R(s,w) Z_l^{R\delta}(f(s,w)).
\end{equation*}
By taking $R$ to be a sufficiently large admissible constant, we see that $f$ satisfies the hypotheses of the definition of $\rho_H$ with $K=1$
(i.e., we are using $f_1=f$ and $\delta_1=R\delta$).  This proves $y\in f(B_{\R\times \C}(1/2))\subseteq B_H(x,R\delta)$,
completing the proof of \cref{Item::SubEThm::ImageInH}.

We turn to \cref{Item::SubEThm::HIsSmallerThanS}.  Because $\Compact$ is compact with respect to the usual topology on $M$, $\rho_F$ induces the usual topology on $M$ (\cref{Lemma::PfSubE::TopsAreTheSame}), and  $\rho_F=\rho_S$,
it follows from \cref{Lemma::PfSubE::TopsAreTheSame} that $\Compact$ is compact with respect to the metric topology induced by $\rho_S$.
A simple compactness argument shows that to prove \cref{Item::SubEThm::HIsSmallerThanS}, it suffices to show that there exists $\epsilon'>0$
such that if $\rho_S(x,y)<\epsilon'$, $x,y\in \Compact$, then $\rho_H(x,y)\lesssim \rho_S(x,y)$.  We take $\epsilon'=\epsilon$, where $\epsilon>0$
is from \cref{Item::SubEGeom::ExistEpsilon}.  If $\rho_S(x,y)<\epsilon\delta$ (for some $\delta\in (0,1]$), we have (by \cref{Item::SubEGeom::ExistEpsilon} and \cref{Item::SubEThm::ImageInH})
$y\in B_S(x,\epsilon\delta)\subseteq \Phi_{x,\delta}(B_{\R^r\times \C^n}(1))\subseteq B_H(x,R\delta)$.  Hence $\rho_H(x,y)\leq R\delta$.  We conclude
that if $\rho_S(x,y)<\epsilon$ with $x,y\in \Compact$, then $\rho_H(x,y)\leq \frac{R}{\epsilon} \rho_S(x,y)$.  This completes the proof of \cref{Item::SubEThm::HIsSmallerThanS}.

Next we prove \cref{Item::SubEThm::EstimateVols}.  \Cref{Cor::Results::Desnity::MainCor} shows
\begin{equation}\label{Eqn::SubEPf::GivenVolEst}
\nu(B_{\delta^{\beta} X, \delta^{\beta}L}(x,\xi_2))\approx \Lambda(x,\delta)\approx \Lambda(x,\epsilon\delta),
\end{equation}
where in the second $\approx$, we have used the formula for $\Lambda$ and the fact that $\epsilon\approx 1$.
Using this, \cref{Eqn::SubEPf::SInBall}, and the fact that we chose $\epsilon=\xi_2$, we have
\begin{equation}\label{Eqn::SubEPf::UpperVol}
\nu(B_S(x,\epsilon\delta))\leq \nu(B_{\delta^{\beta}X,\delta^{\beta}L}(x,\xi_2))\lesssim \Lambda(x,\epsilon\delta).
\end{equation}
Conversely, again using \cref{Eqn::SubEPf::GivenVolEst}, we have
\begin{equation}\label{Eqn::SubEPf::LowerVol}
\Lambda(x,\delta)\lesssim \nu(B_{\delta^{\beta}X,\delta^{\beta}L}(x,\xi_2))\leq \nu(B_{\delta^{\beta}X,\delta^{\beta}L}(x,1))=\nu(B_S(x,\delta)).
\end{equation}
Since \cref{Eqn::SubEPf::UpperVol} and \cref{Eqn::SubEPf::LowerVol} hold $\forall \delta\in (0,1]$, it follows that
$\nu(B_S(x,\delta))\approx \Lambda(x,\delta)$, $\forall \delta\in (0,\epsilon]$.
By \cref{Item::SubE::EasyMetrics} we have (for $\delta\in (0,\epsilon]$),
\begin{equation}\label{Eqn::SubEPf::UpperVol::H}
\nu(B_H(x,\delta)) \leq \nu(B_S(x,\delta))\approx \Lambda(x,\delta).
\end{equation}
By \cref{Item::SubEThm::ImageInH} and \cref{Item::SubEGeom::ExistEpsilon}, we have (for $\delta\in (0,1]$)
\begin{equation}\label{Eqn::SubEPf::LowerVol::H}
\Lambda(x,R\delta)\approx \Lambda(x,\epsilon\delta)\approx \nu(B_S(x,\epsilon\delta))\leq \nu(B_H(x,R\delta)),
\end{equation}
where in the first $\approx$, we have used $R,\epsilon\approx 1$ and the formula for $\Lambda$.
Combining \cref{Eqn::SubEPf::UpperVol::H} and \cref{Eqn::SubEPf::LowerVol::H}, we have for $\delta\in (0,\min\{\epsilon,1/R\}]$,
\begin{equation*}
\nu(B_H(x,\delta))\approx \Lambda(x,\delta).
\end{equation*}
This completes the proof of \cref{Item::SubEThm::EstimateVols}.
\Cref{Item::SubEThm::Doubling} is a consequence of \cref{Item::SubEThm::EstimateVols} and the formula for $\Lambda$.
\end{proof}

\section{Nirenberg's Theorem for Elliptic Structures}\label{Section::Nirenbeg}
In this section, we present the main technical result from \cite{StreetNirenberg}.
This can be seen as a sharp (in terms of regularity) version
of Nirenberg's theorem that formally integrable elliptic structures are integrable \cite{NirenbergAComplexFrobeniusTheorem}.
Here, unlike the setting of \cref{Thm::Results::MainThm}, we assume the vector fields already have the desired regularity, and that we have good estimates on the coefficients in a
given coordinate system.  The goal is to pick a new coordinate system in which the vector fields are spanned by $\diff{t_1},\ldots, \diff{t_r}, \diff{\zb[1]},\ldots, \diff{\zb[n]}$,
while maintaining the regularity of the vector fields.

Fix $s_0\in (0,\infty)\cup \{\omega\}$ and let
$X_1,\ldots, X_r, L_1,\ldots, L_n$ be complex vector fields
on $B_{\R^r\times \C^n}(1)$ with:
\begin{itemize}
\item If $s_0\in (0,\infty)$, $X_k, L_j\in \ZygSpace{s_0+1}[B_{\R^r\times \C^n}(1)][\C^{r+2n}]$.
\item If $s_0=\omega$, $X_k, L_j\in \ASpace{r+2n}{1}[\C^{r+2n}]$.
\end{itemize}
We suppose:
\begin{itemize}
\item $X_k(0)=\diff{t_k}$, $L_j(0)=\diff{\zb[j]}$.
\item $\forall \zeta\in B_{\R^r\times \C^n}(1)$,
$[X_{k_1},X_{k_2}](\zeta), [X_k, L_j](\zeta), [L_{j_1}, L_{j_2}](\zeta)
\in \Span_{\C}\mleft\{X_1(\zeta),\ldots, X_r(\zeta), L_1(\zeta),\ldots, L_n(\zeta)\mright\}$.
\end{itemize}

Under these hypotheses, Nirenberg's theorem\footnote{Originally, Nirenberg considered only the case of $C^\infty$ vector fields and worked in the case when $X_1,\ldots, X_r$ were real.} implies
that there exists a map $\Phi_1:B_{\R^r\times \C^n}(1)\rightarrow B_{\R^r\times \C^n}(1)$, with $\Phi_1(0)=0$, $\Phi_1$ is a diffeomorphism
onto its image (which is an open neighborhood of $0\in B_{\R^r\times \C^n}(1)$), and such that
$\Phi_1^{*}X_k (u,w), \Phi_1^{*}L_j(u,w)\in \Span_{\C}\mleft\{\diff{u_1},\ldots, \diff{u_r}, \diff{\wb[1]},\ldots, \diff{\wb[r]}\mright\}$, $\forall (u,w)$ (here
we are giving the domain space $\R^r\times \C^n$ coordinates $(u,w)$).
In \cite{StreetNirenberg} this is improved to
 a quantitative version which gives $\Phi_1$ the optimal regularity (namely,
when $s_0\in (0,\infty)$, $\Phi_1$ is in $\ZygSpace{s_0+2}$,
and when $s_0=\omega$, $\Phi_1$ is real analytic).
Unlike the results in the rest of this paper, the results in this section are not quantitatively diffeomorphically invariant:  the estimates depend on the particular coordinate system we are using
(the standard coordinate system on $\R^r\times \C^n$).


\begin{defn}\label{Defn::Nirenberg::Admissible}
If $s_0\in (0,\infty)$, for $s\geq s_0$ if we say $C$ is an
$\Zygad{s}$-admissible constant, it means that we assume
$X_k, L_j\in \ZygSpace{s+1}[B_{\R^r\times \C^n}(1)][\C^{r+2n}]$, $\forall j,k$.  $C$ can then be chosen to depend only on
$n$, $r$, $s$, $s_0$, and upper bounds for
$\ZygNorm{X_k}{s+1}[B_{\R^r\times \C^n}(1)]$ and
$\ZygNorm{L_j}{s+1}[B_{\R^r\times \C^n}(1)]$, $1\leq k\leq r$, $1\leq j\leq n$.
For $s\leq s_0$, we define $\Zygad{s}$-admissible constants
to be $\Zygad{s_0}$-admissible constants.
\end{defn}

\begin{rmk}\label{Rmk::Nirenberg::AdmissibleOkay}
In Definition \ref{Defn::Nirenberg::Admissible} we have defined admissible constants differently than they were defined in Definitions \ref{Defn::Results::0Admiss}, \ref{Defn::Results::ZygAdmiss}, and \ref{Defn::Results::omegaAdmiss}.  This reuse of notation is justified when we turn to the proof of the main theorem (\cref{Thm::Results::MainThm}).
Indeed, when we apply \cref{Thm::Nirenberg::MainThm} in the proof \cref{Thm::Results::MainThm}, we apply it to a choice of vector fields in such a way that constants
which are admissible in the sense of \cref{Thm::Nirenberg::MainThm} are admissible in the sense of \cref{Thm::Results::MainThm}.  Thus, \textit{in the particular application}
of \cref{Thm::Nirenberg::MainThm} used to prove \cref{Thm::Results::MainThm}, the definitions of admissible constants do coincide.
\end{rmk}

\begin{defn}
If $s_0=\omega$, we say $C$ is an $\Zygad{\omega}$-admissible constant
if $C$ can be chosen to depend only on $n$, $r$, and upper bounds
for $\ANorm{X_k}{2n+r}{1}$, $\ANorm{L_j}{2n+r}{1}$, $1\leq k\leq r$, $1\leq j\leq n$.
\end{defn}

\begin{thm}\label{Thm::Nirenberg::MainThm}
There exists an $\Zygad{s_0}$-admissible constant $K_1\geq 1$
and a map $\Phi_1:B_{\R^r\times \C^n}(1)\rightarrow B_{\R^r\times \C^n}(1)$ such that
\begin{enumerate}[(i)]
\item\label{Item::Nirenberg::PhiRegularity}
\begin{itemize}
\item If $s_0\in (0,\infty)$, $\Phi_1\in \ZygSpace{s_0+2}[B_{\R^r\times \C^n}(1)][\R^r\times \C^n]$ and
    $\ZygNorm{\Phi_1}{s+2}[B_{\R^r\times \C^n}(1)]\lesssim_{\Zygad{s}} 1$, $\forall s>0$.
\item If $s_0=\omega$, $\Phi_1\in \ASpace{2n+r}{2}[\R^r\times \C^n]$
and $\ANorm{\Phi_1}{2n+r}{2}\leq 1$.
\end{itemize}
\item\label{Item::Nirenberg::Phiof0} $\Phi_1(0)=0$ and $d_{(t,x)} \Phi_1(0) = K_1^{-1} I_{(r+2n)\times (r+2n)}$.  See \cref{Section::Notation} for the notation $d_{(t,x)}$.

\item\label{Item::Nirenberg::JacobianConst} $\forall \zeta\in B_{\R^r\times\C^n}(1)$, $\det d_{(t,x)} \Phi_1 (\zeta)\approx_{\Zygad{s_0}} 1$.

\item\label{Item::Nirenberg::Phi4Open} $\Phi_1(B_{\R^r\times \C^n}(1))\subseteq B_{\R^r\times \C^n}(1)$ is an open set and $\Phi_1:B_{\R^r\times \C^n}(1)\rightarrow \Phi_1(B_{\R^r\times \C^n}(1))$ is a diffeomorphism\footnote{By diffeomorphism we mean that $\Phi_1:B_{\R^r\times \C^n}(1)\rightarrow \Phi_1(B_{\R^r\times \C^n}(1))$ is a bijection and $d\Phi_1$ is everywhere nonsingular.}.
\item\label{Item::Nirenberg::AMatrix}
\begin{equation*}
\begin{bmatrix}
\diff{u} \\ \diff{\wb}
\end{bmatrix}
= K_1^{-1}(I+\AMatrix)
\begin{bmatrix}
\Phi_1^{*} X \\ \Phi_1^{*} L
\end{bmatrix},
\end{equation*}
where $\AMatrix:B_{\R^r\times\C^n}(1)\rightarrow \M^{(n+r)\times (n+r)}(\C)$, $\AMatrix(0)=0$ and
\begin{itemize}
    \item If $s_0\in (0,\infty)$, $\ZygNorm{\AMatrix}{s+1}[B_{\R^r\times\C^n}(1)][\M^{(n+r)\times (n+r)}]\lesssim_{\Zygad{s}} 1$, $\forall s>0$ and $$\ZygNorm{\AMatrix}{s_0+1}[B_{\R^r\times\C^n}(1)][\M^{(n+r)\times (n+r)}]\leq \frac{1}{4}.$$
    \item If $s_0=\omega$, $\ANorm{\AMatrix}{2n+r}{1}[\M^{(n+r)\times (n+r)}]\leq \frac{1}{4}$.
\end{itemize}
In either case, note that this implies $(I+\AMatrix)$ is an invertible matrix on $B_{\R^r\times\C^n}(1)$.
\item\label{Item::Nirenberg::MainThm::ZEst} Suppose $Z$ is another complex vector field on $B_{\R^r\times\C^n}(1)$.  Then,
    \begin{itemize}
        \item If $s_0\in (0,\infty)$,
        $\ZygNorm{\Phi_1^{*} Z}{s+1}[B_{\R^r\times\C^n}(1)]\lesssim_{\Zygad{s}} \ZygNorm{Z}{s+1}[B_{\R^r\times\C^n}(1)]$, $\forall s>0$.
        \item If $s_0=\omega$,
        $\ANorm{\Phi_1^{*} Z}{2n+r}{1}\lesssim_{\Zygad{\omega}} \ANorm{Z}{2n+r}{1}$.
    \end{itemize}
\end{enumerate}
\end{thm}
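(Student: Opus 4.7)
The plan is to realize $\Phi_1$ as the inverse of a "normalizing" chart $\Psi = (\psi_1,\dots,\psi_r,\phi_1,\dots,\phi_n)$ built from first integrals of the elliptic structure: real-valued functions $\psi_l$ with $L_j \psi_l \equiv 0$ and $X_k\psi_l \equiv \delta_{k,l}$, together with complex functions $\phi_j$ with $X_k \phi_j \equiv 0$ and $L_j \phi_k \equiv 0$, all defined on a neighborhood of $0$ with $\Psi(0)=0$ and $d\Psi(0)$ a scalar multiple of the identity. Dualizing, such a $\Psi$ forces $\Phi_1^{*}X_k$ and $\Phi_1^{*}L_j$ to lie in the span of $\{\partial/\partial u_1,\dots,\partial/\partial u_r,\partial/\partial\bar w_1,\dots,\partial/\partial \bar w_n\}$, giving \cref{Item::Nirenberg::AMatrix} once we write the coefficient matrix as $K_1^{-1}(I+\AMatrix)$. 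So everything reduces to constructing $\Psi$ with the correct regularity and norm bounds.

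First I would rescale: replace $X_k,L_j$ by $\zeta\mapsto X_k(\epsilon \zeta),\ L_j(\epsilon \zeta)$ for an admissibly small $\epsilon$. In the new variables, $X_k = \partial/\partial t_k + E_k$, $L_j=\partial/\partial \bar z_j + F_j$ with $E_k,F_j$ vanishing at $0$ and having a small $\ZygSpace{s_0+1}$-norm (respectively $\ASpace{2n+r}{1}$-norm in the analytic case). We then look for $\psi_l = t_l + \alpha_l$, $\phi_j = \bar z_j + \beta_j$, reducing the system to an equation $(\alpha,\beta)=T(\alpha,\beta)$ whose right-hand side involves the initial data and a mildly nonlinear expression in $(\alpha,\beta)$. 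To extract ellipticity from the system $\{X_k,L_j\}$, I adjoin a complement so that, together with the formally integrable structure assumption and the ellipticity hypothesis $\LVS_\zeta+\LVSb[\zeta]=\C T_\zeta M$ (which here is automatic because $X_k(0),L_j(0),\bar L_j(0)$ span), the resulting square operator $P$ is a constant-coefficient elliptic operator whose inverse gains one derivative on the Zygmund scale, $P^{-1}\colon \ZygSpace{s}\to \ZygSpace{s+1}$, with explicit bounds.

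With this setup, the solution $(\alpha,\beta)$ is obtained by a Banach contraction in the ball of radius $\lesssim 1$ in $\ZygSpace{s_0+2}$, using \cref{Prop::FuncSpaceRev::Algebra} to handle the algebra estimates on the nonlinearity and \cref{Lemma::FuncSpaceRev::Composition} to evaluate compositions. Higher regularity then follows by a standard elliptic bootstrap: the identity $P(\alpha,\beta)=(\text{data})+N(\alpha,\beta)$ promotes $\ZygSpace{s_0+2}$ control to $\ZygSpace{s+2}$ control for every $s>0$, yielding the estimates in \cref{Item::Nirenberg::PhiRegularity} and, after a direct computation of $\AMatrix$ in terms of $d\Psi$, those in \cref{Item::Nirenberg::AMatrix}. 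The $\frac14$ smallness of $\ZygNorm{\AMatrix}{s_0+1}$ is arranged by taking $\epsilon$ small enough in the rescaling. The conditions in \cref{Item::Nirenberg::Phiof0}, \cref{Item::Nirenberg::JacobianConst}, and \cref{Item::Nirenberg::Phi4Open} then follow from $d\Psi(0)=K_1 I$ combined with the small perturbation bound on $d\Psi-K_1 I$ and the quantitative inverse function theorem. Finally, \cref{Item::Nirenberg::MainThm::ZEst} is a consequence of the chain rule together with \cref{Lemma::FuncSpaceRev::Composition} applied to the now-known estimates on $\Phi_1$.

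The real-analytic case runs on the same template, but in the Banach algebra $\ASpace{2n+r}{1}$: the Banach-algebra property (\cref{Prop::FuncSpaceRev::Algebra}) handles the nonlinearity, \cref{Lemma::FuncSpaceRev::DerivOfAnal} plays the role of the elliptic gain (inverting a derivative costs a shrinkage of the radius of analyticity), and \cref{Lemma::FuncSpaceRev::ComposeAnal} gives composition bounds. The main obstacle — and the reason Zygmund spaces cannot be replaced by $C^m$ or Lipschitz spaces at integer exponents — is precisely the step $P^{-1}:\ZygSpace{s}\to \ZygSpace{s+1}$: a sharp elliptic estimate that fails in $C^m$ or $\HSpace{m}{1}$ (cf.\ the discussion after \cref{Thm::QualComplex::GlobalThm}). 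A second, more book-keeping, obstacle is propagating all constants through the rescaling and contraction mapping so that every constant depends only on the admissible data of \cref{Defn::Nirenberg::Admissible}; this is tedious but mechanical, and is the reason the statement carefully tracks $s_0$ and distinguishes the $\Zygad{s_0}$ bound from the $\Zygad{s}$ bounds for $s\ge s_0$.
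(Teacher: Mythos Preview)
The paper does not prove this theorem at all: its entire ``proof'' is the one-line citation ``This is \cite[Theorem 7.3]{StreetNirenberg}.'' So there is nothing in the present paper to compare your argument against; you are effectively sketching what that external reference presumably contains.

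Your outline is the standard Malgrange--Nirenberg strategy (build first integrals, rescale to a small perturbation of the flat structure, invert an elliptic operator in Zygmund spaces, contract, bootstrap), and this is indeed the approach taken in \cite{StreetNirenberg}. Two small slips are worth flagging. First, in the setup of \cref{Section::Nirenbeg} the $X_k$ are \emph{complex} vector fields (see the opening paragraph there), so you should not assume the coordinate functions $\psi_l$ are real-valued; the correct normalization is simply $X_k\psi_l=\delta_{k,l}$, $L_j\psi_l=0$ with $\psi_l$ complex. Second, your ansatz $\phi_j=\bar z_j+\beta_j$ has the wrong leading term: since you require $L_l\phi_j=0$ and $L_l(0)=\partial/\partial\bar z_l$, the functions $\phi_j$ must be holomorphic in $z$ to leading order, so the correct ansatz is $\phi_j=z_j+\beta_j$. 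Neither of these affects the architecture of the argument, but both would need to be fixed in an actual write-up.
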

\begin{proof}
This is \cite[Theorem 7.3]{StreetNirenberg}.
\end{proof}

\section{The Real Case}\label{Section::RealCase}
The case when $m=0$ of \cref{Thm::Results::MainThm} (i.e.,
when there are no complex vector fields), was the
subject of the series
\cite{StovallStreetI,StovallStreetII,StovallStreetIII}.
In this section, we present a simplified version of this for use
in proving \cref{Thm::Results::MainThm}.

Let $W_1,\ldots, W_Q$ be $C^1$ real vector fields on a
$C^2$ manifold $\fM$.  Fix $x_0\in \fM$ and let
$N:=\dim \Span_{\R}\{W_1(x_0),\ldots, W_Q(x_0)\}$.
Fix $\xi,\zeta\in (0,1]$.  We assume that
on $B_W(x_0,\xi)$, the $W_j$ satisfy
\begin{equation*}
[W_j, W_k]=\sum_{l=1}^Q c_{j,k}^l W_l, \quad c_{j,k}^l\in \CSpace{B_W(x_0,\xi)},
\end{equation*}
where $B_W(x_0,\xi)$ is given the metric topology induced by
the corresponding sub-Riemannian metric \cref{Eqn::FuncMan::Defnrho}.
Under the above hypotheses, $B_{W}(x_0,\xi)$ is a $C^2$, injectively immersed submanifold of $\fM$ of dimension $N$ and $ T_xB_{W}(x_0,\xi)=\Span_{\R}\{W_1(x),\ldots, W_Q(x)\}$, $\forall x\in B_{W}(x_0,\xi)$  (see \cref{Prop::AppendImmerse}).
Henceforth we view $W_1,\ldots, W_Q$ as $C^1$ vector fields on $B_{W}(x_0,\xi)$.

Let $P_0\in \sI(N,Q)$ be such that $\bigwedge W_{P_0}(x_0)\ne 0$
and moreover
\begin{equation*}
\max_{P\in \sI(N,Q)} \mleft|\frac{\bigwedge W_{P}(x_0)}{\bigwedge W_{P_0}(x_0)}\mright|\leq \zeta^{-1}.
\end{equation*}
Without loss of generality, reorder the vector fields
so that $P_0=(1,\ldots, N)$.

We take $\eta>0$ and $\delta_0>0$ as in  \cref{Thm::Results::MainThm}; i.e.,
\begin{itemize}
\item Fix $\eta>0$ so that $W_{P_0}$ satisfies $\sC(x_0,\eta,\fM)$.
\item Fix $\delta_0>0$ such that $\forall \delta\in (0,\delta_0]$, the following holds.  If $z\in B_{W_{P_0}}(x_0,\xi)$
is such that $W_{P_0}$ satisfies $\sC(z, \delta, B_{W_{P_0}}(x_0,\xi))$ and if
$t\in B_{\R^{2n+r}}(\delta)$ is such that $e^{t_1 W_1+\cdots+t_{2n+r} W_{N}}z=z$
and if $W_1(z),\ldots, W_{N}(z)$ are linearly independent, then $t=0$.
\end{itemize}

\begin{defn}
We say $C$ is a $0$-admissible constant if $C$ can be chosen
to depend only on upper bounds for
$Q$, $\zeta^{-1}$, $\xi^{-1}$, and $\CNorm{c_{j,k}^l}{B_{W_{P_0}}(x_0,\xi)}$, $1\leq j,k,l\leq Q$.
\end{defn}

Fix $s_0\in (1,\infty)\cup\{\omega\}$.

\begin{defn}
Suppose $s_0\in (1,\infty)$.  For $s\in [s_0,\infty)$ if we
say $C$ is an $\Zygad{s}$-admissible constant it means that
we assume $c_{j,k}^l\in \ZygXSpace{W_{P_0}}{s}[B_{W_{P_0}}(x_0,\xi)]$.
$C$ is allowed to depend only on $s$, $s_0$, and upper bounds
for $\zeta^{-1}$, $\xi^{-1}$, $\eta^{-1}$, $\delta_0^{-1}$, $Q$,
and $\ZygXNorm{c_{j,k}^l}{W_{P_0}}{s}[B_{W_{P_0}}(x_0,\xi)]$, $1\leq j,k,l\leq Q$.  For $s\in (0,s_0)$, we define $\Zygad{s}$-admissible
constants to be $\Zygad{s_0}$-admissible constants.
\end{defn}

\begin{defn}
Suppose $s_0=\omega$.  If we say $C$ is an $\Zygad{s_0}$-admissible
constant it means that we assume $c_{j,k}^l\in \AXSpace{W_{P_0}}{x_0}{\eta}$.  $C$ is allowed to depend only
on anything a $0$-admissible constant may depend on, as well
as upper bounds for $\eta^{-1}$, $\delta_0^{-1}$,
and $\AXNorm{c_{j,k}^l}{W_{P_0}}{x_0}{\eta}$, $1\leq j,k,l\leq Q$.
\end{defn}

\begin{prop}\label{Prop::MainRealProp}
There exists a $0$-admissible constant $\chi\in (0,\xi]$ such that
\begin{enumerate}[label=(\roman*),series=oldtheoremenumeration]
\item\label{Item::MainRealProp1} $\forall y\in B_{W_{P_0}}(x_0,\chi)$, $\bigwedge W_{P_0}(y)\ne 0$.
\item\label{Item::MainRealProp2} $\forall y\in B_{W_{P_0}}(x_0,\chi)$,
\begin{equation*}
    \max_{P\in \sI(N,Q)} \mleft|\frac{\bigwedge W_P(y)}{\bigwedge W_{P_0}(y)}\mright|\approx_0 1.
\end{equation*}
\item\label{Item::MainRealProp3} $\forall \chi'\in (0,\chi]$, $B_{W_{P_0}}(x_0,\chi')$ is an open
subset of $B_W(x_0,\chi)$, and is therefore a submanifold.
\end{enumerate}
For the remainder of the proposition, we assume:
\begin{itemize}
\item If $s_0\in (1,\infty)$, we assume $c_{j,k}^l\in\ZygXSpace{W_{P_0}}{s_0}[B_{W_{P_0}}(x_0,\xi)]$.
\item If $s_0=\omega$, we assume $c_{j,k}^l\in \AXSpace{W_{P_0}}{x_0}{\eta}$.
\end{itemize}
There exists a $C^2$ map $\Phi_0:B_{\R^N}(1)\rightarrow B_{W_{P_0}}(x_0,\chi)$ such that:
\begin{enumerate}[resume*=oldtheoremenumeration]
\item\label{Item::MainRealProp::Open} $\Phi_0(B_{\R^N}(1))$ is an open subset of $B_{W_{P_0}}(x_0,\chi)$ and is therefore a submanifold.
\item\label{Item::MainRealProp::Phiof0} $\Phi_0(0)=x_0$.
\item\label{Item::MainRealProp::Diffeo} $\Phi_0:B_{\R^N}(1)\rightarrow \Phi_0(B_{\R^N}(1))$ is a $C^2$
diffeomorphism.
\item\label{Item::MainRealProp::SmoothnessWj} \begin{itemize}
    \item If $s_0\in (1,\infty)$, $\ZygNorm{\Phi_0^{*} W_j}{s+1}[B_{\R^N}(1)][\R^N]\lesssim_{\Zygad{s}} 1$, $\forall s>0$, $1\leq j\leq Q$.
    \item If $s_0=\omega$, $\ANorm{\Phi_0^{*} W_j}{N}{1}[\R^N]\lesssim_{\Zygad{\omega}} 1$, $1\leq j\leq Q$.
\end{itemize}
\item\label{Item::MainRealProp::AMatrix} There exists an $\Zygad{s_0}$-admissible constant $K_0\geq 1$ such that
    \begin{equation*}
    \Phi_0^{*} W_{P_0} = K_0 (I+\AMatrix_0) \diff{t},
    \end{equation*}
    where $\AMatrix_0:B_{\R^N}(1)\rightarrow \M^{N\times N}(\R)$,
    $\AMatrix_0(0)=0$, $\sup_{t\in B_{\R^N}(1)}\Norm{\AMatrix_0(t)}[\M^{N\times N}]\leq \frac{1}{2}$,
    and:
    \begin{itemize}
        \item If $s_0\in (1,\infty)$, $\ZygNorm{\AMatrix_0}{s}[B_{\R^N}(1)][\M^{N\times N}]\lesssim_{\Zygad{s}} 1$, $\forall s>0$.
        \item If $s_0=\omega$, $\ANorm{\AMatrix_0}{N}{1}[\M^{N\times N}]\leq \frac{1}{2}.$
    \end{itemize}
\end{enumerate}
\end{prop}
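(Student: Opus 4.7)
The plan is to derive \cref{Prop::MainRealProp} essentially as a specialization to the $m=0$ case of the main results of the series \cite{StovallStreetI, StovallStreetII, StovallStreetIII}, which treat precisely this setting. So the argument is largely a verification that the hypotheses and admissibility parameters align correctly, followed by an appeal to those papers for the sharp regularity statements.

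Parts \cref{Item::MainRealProp1}--\cref{Item::MainRealProp3} express quantitative non-degeneracy of the frame $W_{P_0}$ near $x_0$. For a sub-Riemannian path $\gamma$ with $\gamma'(s) = \sum_j a_j(s) W_j(\gamma(s))$ and $\|a\|_{L^\infty}<1$, differentiating $\bigwedge W_P(\gamma(s))$ along $\gamma$ and using $[W_j, W_k] = \sum_l c_{j,k}^l W_l$ produces a linear first-order ODE for $\log|\bigwedge W_{P_0}(\gamma(s))|$ and for each ratio $\bigwedge W_P(\gamma(s))/\bigwedge W_{P_0}(\gamma(s))$, whose coefficients are bounded by the $\CSpace{B_{W_{P_0}}(x_0,\xi)}$ norms of the $c_{j,k}^l$. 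A Gronwall estimate then produces a $0$-admissible $\chi \in (0, \xi]$ such that \cref{Item::MainRealProp1} and \cref{Item::MainRealProp2} hold inside $B_{W_{P_0}}(x_0, \chi)$. Part \cref{Item::MainRealProp3} follows because \cref{Item::MainRealProp2} lets us write each $W_j$ ($N < j \leq Q$) as a bounded linear combination of $W_{P_0}$ inside $B_{W_{P_0}}(x_0, \chi)$, so any $W$-path is, up to a bounded reparameterization, a $W_{P_0}$-path, making $B_{W_{P_0}}(x_0, \chi')$ open in $B_W(x_0, \xi)$.

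For the construction of $\Phi_0$, I would set
\begin{equation*}
\Phi_0(t) := e^{K_0^{-1}(t_1 W_1 + \cdots + t_N W_N)} x_0,
\end{equation*}
for a sufficiently large $\Zygad{s_0}$-admissible constant $K_0 \geq 1$. Existence of this exponential on $B_{\R^N}(1)$ follows from $\sC(x_0, \eta, \fM)$ provided $K_0^{-1}\sqrt{N} \leq \eta$, and the image lies in $B_{W_{P_0}}(x_0, \chi)$ once $K_0^{-1}$ is small. At $t = 0$ one has $d\Phi_0(0)\diff{t_j} = K_0^{-1} W_j(x_0)$, so $\Phi_0^* W_{P_0}(0) = K_0 \diff{t}$. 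Differentiating in $t_j$ at general $t$ and using the commutator relations to simplify the variational equation gives $\Phi_0^* W_{P_0}(t) = K_0(I + \AMatrix_0(t))\diff{t}$ with $\AMatrix_0(0) = 0$, and taking $K_0^{-1}$ small relative to the $C^0$ bounds on the $c_{j,k}^l$ forces $\sup_t \Norm{\AMatrix_0(t)}[\M^{N\times N}] \leq 1/2$. Injectivity of $\Phi_0$ and nonsingularity of $d\Phi_0$ on $B_{\R^N}(1)$ then follow from the $\delta_0$ hypothesis by a Baker--Campbell--Hausdorff-type argument: two coincident exponentials produce a shorter exponential that fixes $\Phi_0(t)$, whose length can be bounded by $K_0^{-1}$ times a constant, forcing it to be zero.

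The main obstacle is the sharp quantitative regularity in \cref{Item::MainRealProp::SmoothnessWj} and the Zygmund or real-analytic estimates on $\AMatrix_0$ in \cref{Item::MainRealProp::AMatrix}, neither of which is visible from ODE methods alone, since those only control derivatives along integral curves and not uniformly in all $N$ directions. For $s_0 \in (1, \infty)$, the argument in \cite{StovallStreetII} recasts the identity for $\AMatrix_0$ as a quasilinear elliptic system for $\Phi_0$ and bootstraps via Schauder theory; this is precisely where the requirement $s_0 > 1$ appears, as the structure coefficients $c_{j,k}^l$ need Zygmund regularity of strictly positive order to close the bootstrap. For $s_0 = \omega$, \cite{StovallStreetIII} replaces Schauder by a Cauchy--Kovalevskaya-style majorant method, with admissibility constants tracked through the recursion. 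Given these estimates on $\Phi_0$ and $\AMatrix_0$, the pullback $\Phi_0^* W_j$ for $N < j \leq Q$ is handled by writing $W_j = \sum_{l \leq N} b_j^l W_l$, with $b_j^l$ controlled by \cref{Item::MainRealProp2}, so \cref{Item::MainRealProp::SmoothnessWj} then follows from the algebra properties of the Zygmund and analytic spaces in \cref{Prop::FuncSpaceRev::Algebra}.
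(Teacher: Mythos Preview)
Your proposal is correct and takes essentially the same approach as the paper: both treat \cref{Prop::MainRealProp} as a direct consequence of the main results of \cite{StovallStreetI,StovallStreetII,StovallStreetIII}, with the paper's proof being even terser (it simply cites \cite{StovallStreetII} for $s_0\in(1,\infty)$ and, for $s_0=\omega$, takes the map $\Phih$ from \cite{StovallStreetIII} on $B_{\R^N}(\etah)$ and precomposes with the dilation $\Psi(t)=\etah t$, setting $K_0=\etah^{-1}$). One small point: for $s_0\in(1,\infty)$ the actual $\Phi_0$ in \cite{StovallStreetII} is a composition of three maps rather than the single exponential you wrote down (the paper notes this in its proof outline), though your identification of the Schauder bootstrap as the key analytic input is on target.
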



    \subsection{Densities}
We take the same setting as \cref{Prop::MainRealProp}, and let
$\chi\in (0,\xi]$ be as in that proposition.  Let $\nu$ be a real $C^1$
density on $B_{W_{P_0}}(x_0,\chi)$ and suppose
for $1\leq j\leq N$ (recall, we are assuming $P_0=(1,\ldots,N)$),
\begin{equation*}
\Lie{W_j} \nu = f_j \nu, \quad f_j\in \CSpace{B_{W_{P_0}}(x_0,\chi)}.
\end{equation*}

\begin{defn}
If we say $C$ is a $\Zygsonu$-admissible constant, it means that $C$ is a $\Zygad{s_0}$-admissible constant, which is also allowed
to depend on upper bounds for $\CNorm{f_j}{B_{W_{P_0}}(x_0,\chi)}$,
$1\leq j\leq N$.  This definition holds in both cases:  $s_0\in (1,\infty)$ and $s_0=\omega$.
\end{defn}

\begin{defn}
If $s_0\in (1,\infty)$, for 
$s>0$
if we say $C$
is an $\Zygad{s;\nu}$-admissible constant it means that
$f_j\in \ZygXSpace{W_{P_0}}{s}[B_{W_{P_0}}(x_0,\chi)]$.
$C$ is then allowed to depend on anything an $\Zygad{s}$-admissible
constant may depend on, and is allowed to depend on
upper bounds for $\ZygXNorm{f_j}{W_{P_0}}{s}[B_{W_{P_0}}(x_0,\chi)]$,
$1\leq j\leq N$.
For $s\leq 0$, we define $\Zygad{s;\nu}$-admissible constants to be $\Zygsonu$-admissible constants.
\end{defn}

If $s_0=\omega$, we fix some number $r_0>0$.

\begin{defn}
If $s_0=\omega$ and if we say $C$ is a $\Zygad{\omega;\nu}$-admissible
constant, it means that we assume $f_j\in \AXSpace{W_{P_0}}{x_0}{r_0}$, 
$1\leq j\leq N$.
$C$ is then allowed to depend on anything a $\Zygad{\omega}$-admissible
constant may depend on, and is allowed to depend on upper bounds for
$r_0^{-1}$ and
$\AXNorm{f_j}{W_{P_0}}{x_0}{r_0}$, $1\leq j\leq N$.
\end{defn}

\begin{prop}\label{Prop::RealDensities}
Define $h_0\in \CjSpace{1}[B_{\R^N}(1)]$ by
$\Phi_0^{*} \nu = h_0\LebDensity$.  Then,
\begin{enumerate}[(a)]
\item\label{Item::RealDensities::h0Const} $h_0(t)\approx_{\Zygsonu} \nu(W_1,\ldots, W_N)(x_0)$,
$\forall t\in B_{\R^N}(1)$.\footnote{Recall, we are assuming without loss
of generality that $P_0=(1,\ldots, N)$.}
In particular, $h_0(t)$ always has the same sign, and is either
never zero or always zero.
\item\label{Item::RealDensities::hSmooth} 
\begin{itemize}
    \item If $s_0\in (1,\infty)$, for $s>0$,
        \begin{equation*}
        \ZygNorm{h_0}{s}[B_{\R^N}(1)]\lesssim_{\Zygad{s-1;\nu}} |\nu(W_1,\ldots, W_N)(x_0)|.
        \end{equation*}
    \item If $s_0=\omega$, 
        \begin{equation*}
        \ANorm{h_0}{N}{\min\{1,r_0\}} \lesssim_{\Zygad{\omega;\nu}} |\nu(W_1,\ldots, W_N)(x_0)|.
        \end{equation*}
\end{itemize}
\end{enumerate}
\end{prop}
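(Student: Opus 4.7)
The plan is to derive a first-order linear ODE for $h_0$ by combining the matrix identity in \cref{Prop::MainRealProp} \cref{Item::MainRealProp::AMatrix} with the pullback of $\Lie{W_j}\nu = f_j\nu$, and then to read off both assertions from Gronwall's inequality and a Zygmund bootstrap. This essentially parallels the density argument in \cite{StovallStreetI} (Theorem 6.5 there, together with \cite{StovallStreetIII} for the analytic case) and adapts it to the notation of \cref{Prop::MainRealProp}.

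Write $B(t):=K_0(I+\AMatrix_0(t))$, so that $\Phi_0^*W_j=\sum_k B_{jk}\partial_{t_k}$. Multilinearity of $\nu$ on the frame $W_1(\Phi_0(t)),\dots,W_N(\Phi_0(t))$ then yields the pointwise identity $\nu(W_1,\dots,W_N)(\Phi_0(t)) = \det B(t)\cdot h_0(t)$, and since $\|\AMatrix_0\|_{L^\infty}\le 1/2$ the determinant is bounded above and below by $\Zygad{s_0}$-admissible positive constants. Applying $\Phi_0^*$ to $\Lie{W_j}\nu=f_j\nu$, using $\Phi_0^*\Lie{W_j} = \Lie{\Phi_0^*W_j}\Phi_0^*$, and the Euclidean identity $\Lie{X}(h\LebDensity)=[Xh+h\,\mathrm{div}(X)]\LebDensity$ then gives
\begin{equation*}
\sum_k B_{jk}(t)\,\partial_{t_k}h_0(t) \;=\; \bigl[f_j(\Phi_0(t)) - D_j(t)\bigr]\,h_0(t),\qquad D_j:=\sum_k\partial_{t_k}B_{jk}.
\end{equation*}
Since $\|B - K_0 I\|_{L^\infty}\le K_0/2$, $B$ is invertible with $\Zygad{s_0}$-admissibly bounded inverse, and multiplying through by $B^{-1}$ produces
\begin{equation*}
\partial_{t_k}h_0 \;=\; G_k\,h_0,\qquad G_k := \sum_j (B^{-1})_{kj}\bigl(f_j\circ\Phi_0 - D_j\bigr).
\end{equation*}

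Assertion \cref{Item::RealDensities::h0Const} now follows by integrating this linear ODE along straight segments from $0$ and applying Gronwall, since $\|G_k\|_{L^\infty}$ is $\Zygsonu$-admissible; the boundary value is $h_0(0)=\nu(W_1,\dots,W_N)(x_0)/\det B(0)\approx_{\Zygad{s_0}} \nu(W_1,\dots,W_N)(x_0)/K_0^N$, giving the constant-sign statement and the two-sided bound. For \cref{Item::RealDensities::hSmooth}, I would first verify that $G_k\in \ZygSpace{s-1}[B_{\R^N}(1)]$: by \cref{Item::MainRealProp::AMatrix}, $B,B^{-1}\in\ZygSpace{s_0+1}$ via \cref{Rmk::FuncSpaceRev::InverseMatrix} and $D_j\in\ZygSpace{s_0}$, while $f_j\in \ZygXSpace{W_{P_0}}{s-1}[B_{W_{P_0}}(x_0,\chi)]$ pulls back to $\ZygXSpace{\Phi_0^*W_{P_0}}{s-1}[B_{\R^N}(1)]$ by \cref{Prop::FuncMan::DiffeoInv}, which equals $\ZygSpace{s-1}[B_{\R^N}(1)]$ by \cref{Prop::FuncSpaceRev::CompEuclid} applied to the frame $\Phi_0^*W_{P_0}$ (whose coefficient matrix $B$ is in $\ZygSpace{s_0+1}$ with bounded inverse). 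The algebra property (\cref{Prop::FuncSpaceRev::Algebra}) then places $G_k$ in $\ZygSpace{s-1}$. Starting from $h_0\in C^1$ and iterating $\partial_{t_k}h_0 = G_k h_0$, the algebra upgrades $h_0\in\ZygSpace{\sigma}$ to $h_0\in\ZygSpace{\min(\sigma,s-1)+1}$ at each step, converging in finitely many steps to $h_0\in\ZygSpace{s}$ with a $\Zygad{s-1;\nu}$-admissible bound. The analytic case $s_0=\omega$ proceeds identically with $\ASpace{N}{\min(1,r_0)}$ replacing the Zygmund spaces, using the analytic Banach algebra property (\cref{Prop::FuncSpaceRev::Algebra}) and \cref{Lemma::FuncSpaceRev::DerivOfAnal} to close the bootstrap.

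The principal difficulty is bookkeeping: one must check at every stage of the bootstrap that each quantity controlling $G_k$ is estimated by $\Zygad{s-1;\nu}$-admissible constants rather than accidentally requiring higher regularity of $f_j$ than the admissibility class provides, and similarly that the $\Zygad{s_0}$-admissible bounds on $B$ and $B^{-1}$ propagate through \cref{Prop::FuncSpaceRev::CompEuclid}. Once the ODE $\partial_{t_k} h_0 = G_k h_0$ is in hand and the admissibility framework of \cref{Prop::MainRealProp} is applied, the remainder of the argument introduces no essential new ideas beyond those in \cite{StovallStreetI,StovallStreetIII}.
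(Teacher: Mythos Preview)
Your argument is correct and is essentially the proof that underlies the results being cited. The paper itself does not prove \cref{Prop::RealDensities} directly: for $s_0\in(1,\infty)$ it simply invokes the main results of \cite{StovallStreetII}, and for $s_0=\omega$ it quotes the density theorem from \cite{StovallStreetIII} for the unrescaled map $\Phih$ on $B_{\R^N}(\etah)$ and then transfers the conclusion to $\Phi_0=\Phih\circ\Psi$ via the dilation $\Psi(t)=\etah t$. Your ODE-plus-bootstrap derivation is precisely what those companion papers carry out, so in substance you and the paper agree.

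One small point worth flagging in the analytic case: when $r_0\geq 1$ you need $G_k\in\ASpace{N}{1}$, but $D_j=\sum_k\partial_{t_k}B_{jk}$ obtained from $\AMatrix_0\in\ASpace{N}{1}$ via \cref{Lemma::FuncSpaceRev::DerivOfAnal} only lands in $\ASpace{N}{\eta_2}$ for $\eta_2<1$. The paper's rescaling trick sidesteps this, since the cited bound is $\ANorm{\hh}{N}{\min\{\etah,r_0\}}$ with $\etah<1$, and composition with $\Psi$ then yields $\ANorm{h_0}{N}{\min\{1,r_0\}}$ exactly. Your direct route would need either this same rescaling or a separate observation (e.g.\ that admissible constants may depend on $r_0^{-1}$, so one may assume $r_0<1$ without loss).
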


    \subsection{Proofs}
In this section, we discuss the proofs of
\cref{Prop::MainRealProp} and \cref{Prop::RealDensities}.
When $s_0\in (1,\infty)$, \cref{Prop::MainRealProp} and \cref{Prop::RealDensities} follow directly from the main
results in \cite{StovallStreetII}, and so we focus on the case
$s_0=\omega$.

The main results of \cite{StovallStreetIII} are very similar
to \cref{Prop::MainRealProp} and \cref{Prop::RealDensities}
when $s_0=\omega$.
\Cref{Item::MainRealProp1}, \cref{Item::MainRealProp2},
and \cref{Item::MainRealProp3} of \cref{Prop::MainRealProp} are
directly contained in \cite{StovallStreetIII}.
The main result of \cite{StovallStreetIII} shows that there
exists an $\Zygad{\omega}$-admissible constant
$\etah\in (0,1]$ and a map
\begin{equation*}
    \Phih:B_{\R^N}(\etah)\rightarrow B_{W_{P_0}}(x_0,\xi)
\end{equation*}
such that
\begin{itemize}
\item $\Phih(B_{\R^N}(\etah))$ is an open subset of $B_{W_{P_0}}(x_0,\chi)$ and is therefore a submanifold of $B_W(x_0,\xi)$.
\item $\Phih:B_{\R^N}(\etah)\rightarrow \Phih(B_{\R^N}(\etah))$
is a $C^2$ diffeomorphism, and $\Phih(0)=x_0$.
\item $\ANorm{\Phih^{*} W_j}{N}{\etah}[\R^N]\lesssim_{\Zygad{\omega}} 1$, $1\leq j\leq Q$.
    \item \begin{equation*}
        \Phih^{*} W_{P_0} = (I+\AMatrixh)\diff{t},
    \end{equation*}
    where $\AMatrixh:B_{\R^n}(\etah)\rightarrow \M^{N\times N}(\R)$,
    $\AMatrixh(0)=0$, and $\ANorm{\AMatrixh}{N}{\etah}\leq \frac{1}{2}$.
\end{itemize}

Define $\Psi:B_{\R^N}(1)\rightarrow B_{\R^N}(\etah)$ by
$\Psi(t) = \etah t$, and set $\Phi_0:=\Phih\circ \Psi$.
The remainder of  \cref{Prop::MainRealProp} follows
from the above properties of $\Phih$,
with $K_0:=\etah^{-1}$ and $\AMatrix_0:=\AMatrixh\circ \Psi$.

Now let $\nu$ be a real $C^1$ density on $B_{W_{P_0}}(x_0,\chi)$
as in \cref{Prop::RealDensities}.  The main result on
densities in \cite{StovallStreetIII} shows that if
$\hh\in \CjSpace{1}[B_{\R^n}(\etah)]$ is defined by
$\Phih^{*} \nu = \hh \LebDensity$, then
\begin{itemize}
\item $\hh(t) \approx_{\Zygomeganu} \nu(W_1,\ldots, W_N)(x_0)$,
$\forall t\in B_{\R^n}(\etah)$.
\item $\hh\in \ASpace{N}{\min\{\etah,r_0\}}$ and
$\ANorm{\hh}{N}{\min\{\etah,r_0\}}\lesssim_{\Zygad{\omega;\nu}} 1$.
\end{itemize}
Note that $h_0=(\hh\circ \Psi) \det d\Psi = \etah^{N} \hh\circ \Psi$.
Since $\etah\approx_{\Zygad{\omega}} 1$, \cref{Prop::RealDensities}
follows from the above estimates on $\hh$. 

\section{Proofs of the Main Results}
In this section, we prove \cref{Thm::Results::MainThm},
\cref{Thm::Results::Desnity::MainResult}, and \cref{Cor::Results::Desnity::MainCor}.

Note that, by the definitions,
$B_{W_{P_0}}(x_0, \xi)=B_{X_{K_0},L_{J_0}}(x_0,\xi)$,
$\ZygXSpace{W_{P_0}}{s}[U]=\ZygXSpace{X_{K_0},L_{J_0}}{s}[U]$,
and $\AXSpace{W_{P_0}}{r}{\eta}=\AXSpace{X_{K_0},L_{J_0}}{r}{\eta}$
(with equality of norms).
It follows from the hypotheses that we may write
(for $1\leq j,k\leq 2m+q$)
\begin{equation*}
    [W_j, W_k]=\sum_{l=1}^{2m+q} \ct_{j,k}^l W_l, \quad \ct_{j,k}^l\in \CSpace{B_{W_{P_0}}(x_0,\xi)},
\end{equation*}
    $\CNorm{\ct_{j,k}^l}{B_{W_{P_0}}(x_0,\xi)}\lesssim_{0} 1$,
and
\begin{itemize}
    \item If $s_0\in (1,\infty)$, $\ZygXNorm{\ct_{j,k}^l}{W_{P_0}}{s}[B_{W_{P_0}}(x_0,\chi)]\lesssim_{\Zygad{s}} 1$, $\forall s>0$.
    \item If $s_0=\omega$, $\AXNorm{\ct_{j,k}^l}{W_{P_0}}{x_0}{\eta}\lesssim_{\Zygad{\omega}} 1$.
\end{itemize}

Recall,
\begin{equation}\label{Eqn::ProofsMain::RecallWP0}
W_{P_0}=W_1,\ldots, W_{2n+r}=X_1,\ldots, X_r, 2\Real(L_1),\ldots, 2\Real(L_n),2\Imag(L_1),\ldots, 2\Imag(L_n).
\end{equation}
Combining \cref{Eqn::MainRes::ChooseZeta} with \cref{Prop::AppendWedge::EquivQuot} \cref{Item::AppendWedge::EquivQuot1}, we see
\begin{equation}\label{Eqn::MainRes::RealZeta}
\max_{P\in \sI(2n+r,2m+q)} \mleft|\frac{\bigwedge W_P(x_0)}{\bigwedge W_{P_0}(x_0)}\mright|\leq \mleft( 2\zeta^{-1} \sqrt{2n+r} \mright)^{2n+r}\lesssim_0 1.
\end{equation}
In light of these remarks, and the definition of $\eta$ and $\delta_0$, \cref{Prop::MainRealProp} applies
to the vector fields $W_1,\ldots, W_{2m+q}$ (with $N=2n+r$) and any constant which is $*$-admissible in the sense of \cref{Prop::MainRealProp}
is $*$-admissible in the sense of this section (where $*$ is any symbol).

We take the $0$-admissible constant $\chi\in (0,\xi]$ from \cref{Prop::MainRealProp}.  By \cref{Prop::MainRealProp} \cref{Item::MainRealProp1} and \cref{Item::MainRealProp2},
$\forall y\in B_{W_{P_0}}(x_0,\chi)$, $\bigwedge W_{P_0}(y)\ne 0$ and
\begin{equation}\label{Eqn::ProofsMain::ConcludeWQuotient}
    \max_{P\in \sI(N,Q)} \mleft|\frac{\bigwedge W_P(y)}{\bigwedge W_{P_0}(y)}\mright|\approx_0 1.
\end{equation}
By hypothesis, $\dim \LVS_y=\dim\LVS_{x_0} = 2n+r$ and $\dim \XVS_y=\dim\XVS_{x_0}=r$, $\forall y\in B_{W_{P_0}}(x_0,\chi)\subseteq B_{X,L}(x_0,\xi)$.
Combining this with \cref{Eqn::ProofsMain::ConcludeWQuotient}, \cref{Prop::AppendWedge::EquivQuot} \cref{Item::AppendWedge::EquivQuot2} implies $\forall y\in B_{W_{P_0}}(x_0,\chi)= B_{X_{K_0}, L_{J_0}}(x_0,\chi)$,
\begin{equation*}
	\left(\bigwedge X_{K_0}(y)\right)\bigwedge \left(\bigwedge L_{J_0}(y)\right)\ne 0, 
\end{equation*}
and moreover
\begin{equation}\label{Eqn::PfsMain::TmpQuotitentXsLs}
	\max_{\substack{J\in \sI(n_1,m), K\in \sI(r_1,q) \\ n_1+r_1=n+r}} \left| \frac{ \left(\bigwedge X_{K}(y)\right)\bigwedge \left(\bigwedge L_{J}(y)\right)}{\left(\bigwedge X_{K_0}(y)\right)\bigwedge \left(\bigwedge L_{J_0}(y)\right)} \right|\lesssim_0 1.
\end{equation}
Since the left hand side of \cref{Eqn::PfsMain::TmpQuotitentXsLs} is $\geq 1$, it follows that the left hand side of \cref{Eqn::PfsMain::TmpQuotitentXsLs} is $\approx_0 1$.
\Cref{Thm::Results::MainThm} \cref{Item::Results::MainThm::1} and \cref{Item::Results::MainThm::2} follow.
\Cref{Thm::Results::MainThm} \cref{Item::Results::MainThm::3} follows from
\cref{Prop::MainRealProp} \cref{Item::MainRealProp3}.
Since $\dim \LVS_{x}=\dim\LVS_{x_0}=2n+r$, $\forall x\in B_{X,L}(x_0,\xi)$, \cref{Thm::Results::MainThm} \cref{Item::Results::MainThm::1}
implies that $X_1(x),\ldots, X_r(x),L_1(x),\ldots, L_n(x)$ form a basis for $\LVS_{x}$, $\forall x\in B_{X_{K_0},L_{J_0}}(x_0,\chi)$.
In particular, for $ x\in B_{X_{K_0},L_{J_0}}(x_0,\chi)$, $1\leq k, k_1, k_2\leq q$, $1\leq j, j_1,j_2\leq m$,
\begin{equation}\label{Eqn::ProofsMain::Invol1}
X_k(x), L_j(x), [X_{k_1},X_{k_2}](x), [L_{j_1},L_{j_2}](x),[X_k,L_j](x)\in \LVS_{x}=\Span_{\C}\{X_1(x),\ldots, X_r(x), L_1(x),\ldots, L_n(x) \}.
\end{equation}

Let $\Phi_0:B_{\R^{r+2n}}(1)\rightarrow B_{X_{K_0},L_{J_0}}(x_0,\chi)$ be the map from  \cref{Prop::MainRealProp}.
\begin{itemize}
\item If $s_0\in (1,\infty)$,  \cref{Prop::MainRealProp} \cref{Item::MainRealProp::SmoothnessWj} gives $\ZygNorm{\Phi_0^{*} W_j}{s+1}[B_{\R^{2n+r}}(1)]\lesssim_{\Zygad{s}} 1$, $1\leq  j\leq 2m+q$,
and therefore
\begin{equation}\label{Eqn::ProofsMain::Phi0Reg::Finite}
\ZygNorm{\Phi_0^{*} X_k}{s+1}[B_{\R^N}(1)], \ZygNorm{\Phi_0^{*} L_j}{s+1}[B_{\R^{2n+r}}(1)]\lesssim_{\Zygad{s}} 1,\quad 1\leq j\leq m, 1\leq k\leq q.
\end{equation}
\item If $s_0=\omega$, \cref{Prop::MainRealProp} \cref{Item::MainRealProp::SmoothnessWj} gives $\ANorm{\Phi_0^{*} W_j}{2n+r}{1}\lesssim_{\Zygad{\omega}} 1$, $1\leq  j\leq 2m+q$,
and therefore
\begin{equation}\label{Eqn::ProofsMain::Phi0Reg::Omega}
\ANorm{\Phi_0^{*} X_k}{2n+r}{1}, \ANorm{\Phi_0^{*} L_j}{2n+r}{1}\lesssim_{\Zygad{\omega}} 1, \quad 1\leq j\leq m, 1\leq k\leq q.
\end{equation}
\end{itemize}

We identify $\R^{r+2n}\cong \R^r\times \C^n$, via the map $(t_1,\ldots, t_r,x_1,\ldots, x_{2n})\mapsto (t_1,\ldots, t_r, x_1+ix_{n+1},\ldots, x_n+ix_{2n})$.
Let $K_2\geq 1$ be the $\Zygad{s_0}$-admissible constant called $K_0$ in \cref{Prop::MainRealProp}.  By \cref{Prop::MainRealProp} \cref{Item::MainRealProp::AMatrix}
(and since $P_0=(1,\ldots, 2n+r)$), we have
\begin{equation*}
\Phi_0^{*} K_2^{-1}W_j(0) =
\begin{cases}
\diff{t_j} & 1\leq j\leq r,\\
\diff{x_{j-r}} & r+1\leq j\leq 2n.
\end{cases}
\end{equation*}
Using this and \cref{Eqn::ProofsMain::RecallWP0} shows,
for $1\leq k\leq r$, $1\leq j\leq n$,
\begin{equation*}
\Phi_0^{*} K_2^{-1} X_k(0) = \diff{t_k}, \quad \Phi_0^{*}K_2^{-1}L_j(0) = \diff{\zb[j]}.
\end{equation*}
Pulling \cref{Eqn::ProofsMain::Invol1} back via $\Phi_0$ (and multiplying by $K_2^{-2}$), we have for $1\leq k,k_1,k_2\leq r$, $1\leq j,j_1,j_2\leq n$, $\zeta\in B_{\R^r\times \C^n}(1)$,
\begin{equation*}
\begin{split}
&\mleft[\Phi_0^{*} K_2^{-1} X_{k_1}, \Phi_0^{*} K_2^{-1} X_{k_2}\mright](\zeta), \mleft[ \Phi_0^{*} K_2^{-1} L_{j_1}, \Phi_0^{*} K_2^{-1} L_{j_2}\mright](\zeta), \mleft[ \Phi_0^{*} K_2^{-1} X_k, \Phi_0^{*} K_2^{-1} L_j\mright](\zeta)
\\&\in \Span_{\C}\mleft\{ \Phi_0^{*} K_2^{-1} X_1(\zeta),\ldots \Phi_0^{*} K_2^{-1} X_r(\zeta), \Phi_0^{*} K_2^{-1} L_1(\zeta), \ldots, \Phi_0^{*} K_2^{-1} L_n(\zeta)  \mright\}.
\end{split}
\end{equation*}

The above remarks show that \cref{Thm::Nirenberg::MainThm} applies to the vector fields
$$\Phi_0^{*} K_2^{-1} X_1,\ldots, \Phi_0^{*} K_2^{-1} X_r, \Phi_0^{*} K_2^{-1} L_1,\ldots, \Phi_0^{*} K_2^{-1} L_n,$$
and any constant which is $\Zygad{s}$-admissible in the sense of \cref{Thm::Nirenberg::MainThm} is $\Zygad{s}$-admissible in the sense of this section.
We let $K_1\geq 1$ be the $\Zygad{s_0}$-admissible constant from \cref{Thm::Nirenberg::MainThm},
and $\Phi_1:B_{\R^r\times \C^n}(1)\rightarrow B_{\R^r\times \C^n}(1)$ and $\AMatrix:B_{\R^r\times \C^n}(1)\rightarrow \M^{(r+n)\times (r+n)}(\C)$ be
as in \cref{Thm::Nirenberg::MainThm}.  Set $K=K_2 K_1$ and $\Phi = \Phi_0\circ \Phi_1$.
Note that $\Phi^{*} = \Phi_1^{*} \Phi_0^{*}$.
\Cref{Thm::Results::MainThm} \cref{Item::ResultsMainThm::PhiOpen} follows from \cref{Thm::Nirenberg::MainThm} \cref{Item::Nirenberg::Phi4Open}
and \cref{Prop::MainRealProp} \cref{Item::MainRealProp::Open,Item::MainRealProp::Diffeo}.
\Cref{Thm::Results::MainThm} \cref{Item::ResultsMainThem::PhiDiffeo} follows from \cref{Thm::Nirenberg::MainThm} \cref{Item::Nirenberg::PhiRegularity,Item::Nirenberg::Phi4Open}
and \cref{Prop::MainRealProp} \cref{Item::MainRealProp::Diffeo}.
\Cref{Thm::Results::MainThm} \cref{Item::Results::MainThm::Phiof0} follows from \cref{Thm::Nirenberg::MainThm} \cref{Item::Nirenberg::Phiof0} and   \cref{Prop::MainRealProp} \cref{Item::MainRealProp::Phiof0}.
\Cref{Thm::Results::MainThm} \cref{Item::Results::MainThm::AMatrix0,Item::ResultsMainThm::ABound} follow from \cref{Thm::Nirenberg::MainThm} \cref{Item::Nirenberg::AMatrix}.

Using \cref{Thm::Nirenberg::MainThm} \cref{Item::Nirenberg::AMatrix} we have
\begin{equation*}
\begin{bmatrix}
\diff{u} \\ \diff{\wb}
\end{bmatrix}
= K_1^{-1}(I+\AMatrix)
\begin{bmatrix}
\Phi_1^{*}  \Phi_0^{*} K_2^{-1} X_{K_0} \\ \Phi_1^{*} \Phi_0^{*} K_2^{-1} L_{J_0}
\end{bmatrix}
= K^{-1}(I+\AMatrix)
\begin{bmatrix}
\Phi^{*}  X_{K_0} \\ \Phi^{*}  L_{J_0}
\end{bmatrix}.
\end{equation*}
\Cref{Thm::Results::MainThm} \cref{Item::ResultsMainThm::AFormula} follows.

Because $X_1(x),\ldots,X_r(x),L_1(x),\ldots, L_n(x)$ forms a basis for $\LVS_x$, $\forall x\in B_{X_{K_0},L_{J_0}}(x_0,\chi)$,
$$\Phi^{*} X_1(\zeta),\ldots, \Phi^{*} X_r(\zeta), \Phi^{*}  L_1(\zeta),\ldots, \Phi^{*} L_n(\zeta)$$
forms a basis for $(\Phi^{*} \LVS)_{\zeta}$, $\forall \zeta\in B_{\R^r\times \C^n}(1)$.
\Cref{Thm::Results::MainThm} \cref{Item::ResultsMainThm::ABound} (which we have already shown) implies that
$$\sup_{\zeta\in B_{\R^r\times \C^n}(1)} \Norm{\AMatrix(\zeta)}[\M^{(r+n)\times (r+n)}]\leq \frac{1}{4}.$$
In particular, the matrix $I+\AMatrix(\zeta)$ is invertible, $\forall \zeta\in B_{\R^r\times \C^n}(1)$.
Hence, \cref{Thm::Results::MainThm} \cref{Item::ResultsMainThm::AFormula} (which we have already proved) implies, $\forall  \zeta\in B_{\R^r\times \C^n}(1)$,
\begin{equation*}
(\Phi^{*} \LVS)_{\zeta}=\Span_{\C}\mleft\{ \Phi^{*} X_1(\zeta), \ldots, \Phi^{*} X_r(\zeta), \Phi^{*} L_1(\zeta), \ldots, \Phi^{*} L_n(\zeta)\mright\} = \Span_{\C}\mleft\{\diff{t_1},\ldots, \diff{t_r}, \diff{\zb[1]},\ldots, \diff{\zb[n]}\mright\}.
\end{equation*}
Since $X_k(x), L_j(x)\in \LVS_x$, $\forall x$, $1\leq k\leq q$, $1\leq j\leq m$, it follows that for $1\leq k\leq q$, $1\leq j\leq m$ and $\forall \zeta\in B_{\R^r\times \C^n}(1)$ we have
\begin{equation*}
\Phi^{*} X_k(\zeta), \Phi^{*} L_j(\zeta) \in \Span_{\C}\mleft\{\diff{t_1},\ldots, \diff{t_r}, \diff{\zb[1]},\ldots, \diff{\zb[n]}\mright\}.
\end{equation*}
Because $\Phi^{*} X_k$ is a real vector field, we conclude for $1\leq k\leq q$,
\begin{equation*}
\Phi^{*} X_k(\zeta)\in \Span_{\R}\mleft\{\diff{t_1},\ldots, \diff{t_r}\mright\}, \quad \forall \zeta\in B_{\R^r\times \C^n}(1).
\end{equation*}
\Cref{Thm::Results::MainThm} \cref{Item::Results::MainThem::IsEMap} follows.
Since $\Phi^{*} = \Phi_1^{*} \Phi_0^{*}$,
\Cref{Thm::Results::MainThm} \cref{Item::ResultsMainThm::PullbacksSmooth} follows by combining
\cref{Thm::Nirenberg::MainThm} \cref{Item::Nirenberg::MainThm::ZEst}, \cref{Eqn::ProofsMain::Phi0Reg::Finite}, and \cref{Eqn::ProofsMain::Phi0Reg::Omega}.

All that remains of \cref{Thm::Results::MainThm} is \cref{Item::Results::MainThm::xi1xi2}.  We already have, by the range of $\Phi_0$, that
$\Phi(B_{\R^r\times \C^n}(1))\subseteq B_{X_{K_0},L_{J_0}}(x_0,\chi)\subseteq B_{X,L}(x_0,\xi)$ and the final two containments in \cref{Item::Results::MainThm::xi1xi2} follow.
Let $\xi_1\in (0,\xi]$ be a constant to be chosen later, and suppose $y\in B_{X_{K_0},L_{J_0}}(x_0,\xi_1)=B_{W_{P_0}}(x_0,\xi_1)$.
Thus, there exists $\gamma:[0,1]\rightarrow B_{W_{P_0}}(x_0,\xi_1)$ with $\gamma(0)=x_0$, $\gamma(1)=y$,
$\gamma'(t) = \sum_{j=1}^{2n+r} b_j(t) \xi_1 W_j(\gamma(t))$, $\Norm{\sum |b_j(t)|^2}[L^\infty]<1$.
Define
\begin{equation*}
t_0:=\sup\mleft\{  t\in [0,1] : \gamma(t')\in \Phi(B_{\R^r\times \C^n}(1/2)), \forall 0\leq t'\leq t \mright\}.
\end{equation*}
We want to show that by taking $\xi_1>0$ to be a sufficiently small $\Zygad{s_0}$-admissible constant, we have $t_0=1$
and $\gamma(1)\in \Phi(B_{\R^r\times \C^n}(1/2))$.  Note that  $t_0\geq 0$, since $\gamma(0)=x_0=\Phi(0)$.

Suppose $t_0<1$.  Then $|\Phi^{-1}(\gamma(t_0))| = 1/2$.  Using that $\CNorm{\Phi^{*} W_j}{B_{\R^r\times \C^n}(1)}[\R^{r+2n}]\lesssim_{\Zygad{s_0}} 1$ (by \cref{Thm::Results::MainThm} \cref{Item::ResultsMainThm::PullbacksSmooth} and the definition of the $W_j$),
and $\Phi(0)=x_0$ (by \cref{Thm::Results::MainThm} \cref{Item::Results::MainThm::Phiof0}) and therefore
$\Phi^{-1}(\gamma(0))=\Phi^{-1}(x_0)=0$, we have
\begin{equation*}
1/2 = |\Phi^{-1}(\gamma(t_0))| = \mleft| \int_0^{t_0} \frac{d}{dt} \Phi^{-1}\circ \gamma(t) \: dt \mright| = \mleft| \int_0^{t_0} \sum_{j=1}^{2n+r} b_j(t) \xi_1 (\Phi^{*} W_j)(\Phi^{-1} \circ\gamma(t))\: dt \mright|\lesssim_{\Zygad{s_0}} \xi_1.
\end{equation*}
This a contradiction if $\xi_1$ is a sufficiently small $\Zygad{s_0}$-admissible constant, which proves the second containment in \cref{Thm::Results::MainThm} \cref{Item::Results::MainThm::xi1xi2}.
The existence of $\xi_2>0$ as in \cref{Thm::Results::MainThm} \cref{Item::Results::MainThm::xi1xi2} follows from \cite[\SSExistXiTwo]{StovallStreetI}.
This completes the proof of \cref{Thm::Results::MainThm}.

Now let $\nu$ be a density as in \cref{Section::MainResult::Densities}.
\Cref{Prop::RealDensities} applies to $\nu$, and any constant which is $\Zygsonu$ or $\Zygad{s;\nu}$-admissible in the sense of that proposition is $\Zygsonu$ or $\Zygad{s;\nu}$-admissible, respectively, in the sense of this section.
Let $h_0$ be as in \cref{Prop::RealDensities} so that $\Phi_0^{*} \nu = h_0 \LebDensity$.  Thus,
$$h\LebDensity = \Phi^{*} \nu = \Phi_1^{*} h_0 \LebDensity = (h_0\circ \Phi_1) \det d\Phi_1 \LebDensity.$$
We conclude $h=(h_0\circ \Phi_1) \det d\Phi_1$.
\Cref{Prop::RealDensities} \cref{Item::RealDensities::h0Const}
combined with
\cref{Thm::Nirenberg::MainThm} \cref{Item::Nirenberg::JacobianConst}
yields \cref{Thm::Results::Desnity::MainResult} \cref{Item::Results::Density::hConst}.

Combining \cref{Prop::RealDensities} \cref{Item::RealDensities::hSmooth} with \cref{Thm::Nirenberg::MainThm} \cref{Item::Nirenberg::PhiRegularity}
(and using \cref{Lemma::FuncSpaceRev::Composition,Lemma::FuncSpaceRev::ComposeAnal}) shows:
\begin{itemize}
\item If $s_0\in (1,\infty)$, for $s>0$,
\begin{equation*}
	\ZygNorm{h_0\circ \Phi_1}{s}[B_{\R^r\times \C^n}(1)]\lesssim_{\Zygad{s-1;\nu}}  \left| \nu(X_1,\ldots, X_r, 2\Real(L_1),\ldots, 2\Real(L_n), 2\Imag(L_1),\ldots, 2\Imag(L_n))(x_0)\right|.
\end{equation*}
\item If $s_0=\omega$,
\begin{equation*}
	\ANorm{h_0\circ \Phi_1}{2n+r}{\min\{1,r_0\}}\lesssim_{\Zygad{\omega;\nu}}   \left| \nu(X_1,\ldots, X_r, 2\Real(L_1),\ldots, 2\Real(L_n), 2\Imag(L_1),\ldots, 2\Imag(L_n))(x_0)\right|.
\end{equation*}
\end{itemize}
Also by \cref{Thm::Nirenberg::MainThm} \cref{Item::Nirenberg::PhiRegularity} (and using \cref{Prop::FuncSpaceRev::Algebra,Lemma::FuncSpaceRev::DerivOfAnal}) we have:
\begin{itemize}
\item If $s_0\in (1,\infty)$, for $s>0$,
$
	\ZygNorm{\det d \Phi_1}{s}[B_{\R^r\times \C^n}(1)]\lesssim_{\Zygad{s-1}} 1.
$
\item If $s_0=\omega$,
$
	\ANorm{\det d \Phi_1}{2n+r}{1}\lesssim_{\Zygad{\omega}}   1.
$
\end{itemize}
Combining the above estimates and using \cref{Prop::FuncSpaceRev::Algebra} yields
\cref{Thm::Results::Desnity::MainResult} \cref{Item::Results::Density::hSmooth}.

Finally, we turn to \cref{Cor::Results::Desnity::MainCor}.
To prove this, we introduce a corollary of  \cref{Thm::Results::MainThm}.

\begin{cor}\label{Cor::Containments}
Let $\Phi$, $\xi_1$, and $\xi_2$ be as in \cref{Thm::Results::MainThm}.  Then, there exist $\Zygad{s_0}$-admissible constants $0<\xi_4\leq \xi_3\leq \xi_2$
and a map $\Phih:B_{\R^r\times \C^n}(1)\rightarrow B_{X_{K_0},L_{J_0}}(x_0,\xi_2)$, which satisfies all the same estimates as $\Phi$,
so that
\begin{equation*}
\begin{split}
&B_{X,L}(x_0,\xi_4) \subseteq B_{X_{K_0},L_{J_0}}(x_0,\xi_3)\subseteq \Phih(B_{\R^r\times \C^n}(1))\subseteq B_{X_{K_0},L_{J_0}}(x_0,\xi_2)\subseteq B_{X,L}(x_0,\xi_2)
\\& \subseteq B_{X_{K_0},L_{J_0}}(x_0,\xi_1)\subseteq \Phi(B_{\R^r\times \C^n}(1)) \subseteq B_{X_{K_0},L_{J_0}}(x_0,\chi)\subseteq B_{X_{K_0},L_{J_0}}(x_0,\xi).
\end{split}
\end{equation*}
\end{cor}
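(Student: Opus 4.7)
The plan is to apply \cref{Thm::Results::MainThm} a second time at a finer scale, with $\xi$ replaced by the radius $\xi_2$ produced by the first application, and then to splice the new chain of containments with the one already provided by $\Phi$. The map $\Phih$ and the constants $\xi_3,\xi_4$ will be those furnished by this second application (playing the roles of $\Phi$, $\xi_1$, $\xi_2$, respectively, relative to the smaller ball $B_{X_{K_0},L_{J_0}}(x_0,\xi_2)$).

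First I would verify that all the hypotheses and admissible-constant bookkeeping of \cref{Thm::Results::MainThm} carry over when $B_{X,L}(x_0,\xi)$ is shrunk to $B_{X,L}(x_0,\xi_2)$. The algebraic assumptions (the commutator identities together with the continuity/Zygmund/analytic bounds for the coefficients $c_{j,k}^{a,l}$) are inherited trivially on passing to a subset, since the relevant norms only decrease. The choice of $K_0, J_0, \zeta$ continues to satisfy \cref{Eqn::MainRes::ChooseZeta} because that estimate only involves values at $x_0$. The constant $\eta$ is unchanged, as it is measured against all of $\fM$, while any $\delta_0$ that worked on $B_{X_{K_0},L_{J_0}}(x_0,\xi)$ works automatically on $B_{X_{K_0},L_{J_0}}(x_0,\xi_2)$: the hypothesis $\sC(z,\delta,B_{X_{K_0},L_{J_0}}(x_0,\xi_2))$ is strictly stronger than $\sC(z,\delta,B_{X_{K_0},L_{J_0}}(x_0,\xi))$. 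The essential point is that $\xi_2$ is itself a $\Zygad{s_0}$-admissible constant, so any quantity depending on $\xi_2^{-1}$ in the second application is again $\Zygad{s_0}$-admissible in the sense of the first application; every flavor of admissible constant therefore matches.

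With the hypotheses verified, the second application of \cref{Thm::Results::MainThm} produces a map $\Phih:B_{\R^r\times\C^n}(1)\rightarrow B_{X_{K_0},L_{J_0}}(x_0,\xi_2)$ satisfying exactly the same list of conclusions as $\Phi$ does (but relative to the smaller ball), together with $\Zygad{s_0}$-admissible constants $\xi_3, \xi_4>0$, with $\xi_4\leq \xi_3\leq \xi_2$, for which
\begin{equation*}
B_{X,L}(x_0,\xi_4) \subseteq B_{X_{K_0},L_{J_0}}(x_0,\xi_3) \subseteq \Phih(B_{\R^r\times\C^n}(1)) \subseteq B_{X_{K_0},L_{J_0}}(x_0,\xi_2).
\end{equation*}
The original application gives the right half of the desired chain:
\begin{equation*}
B_{X,L}(x_0,\xi_2) \subseteq B_{X_{K_0},L_{J_0}}(x_0,\xi_1) \subseteq \Phi(B_{\R^r\times\C^n}(1)) \subseteq B_{X_{K_0},L_{J_0}}(x_0,\chi) \subseteq B_{X_{K_0},L_{J_0}}(x_0,\xi).
\end{equation*}

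All that remains is the one trivial link $B_{X_{K_0},L_{J_0}}(x_0,\xi_2)\subseteq B_{X,L}(x_0,\xi_2)$ that joins the two chains. This follows immediately from \cref{Eqn::FuncMan::DefnSRBall} and \cref{Eqn::FuncComplex::Ball}: a curve witnessing membership in the left-hand side only uses vector fields drawn from the sublist $X_{K_0},L_{J_0}$ of the full list $X,L$, so it equally well witnesses membership in the right-hand side. There is no genuine obstacle in this argument; it is entirely a bookkeeping exercise, and the only point requiring any attention is the invariance of the admissibility class under shrinking $\xi$ to $\xi_2$, which is exactly what the observation that $\xi_2^{-1}$ is itself admissible buys.
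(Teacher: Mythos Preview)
Your proposal is correct and follows exactly the same approach as the paper: apply \cref{Thm::Results::MainThm} a second time with $\xi$ replaced by $\xi_2$, and concatenate the resulting chain of containments with the original one. The paper's own proof is a single sentence to this effect; you have simply filled in the routine verification that the hypotheses and admissibility bookkeeping are preserved under this replacement.
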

\begin{proof}
After applying \cref{Thm::Results::MainThm} to obtain $\Phi$, $\xi_1$, and $\xi_2$, we apply \cref{Thm::Results::MainThm} again with $\xi$ replaced by $\xi_2$
to obtain $\xi_3$, $\xi_4$, and $\Phih$ as above.
\end{proof}

\begin{proof}[Proof of  \cref{Cor::Results::Desnity::MainCor}]
Using \cref{Thm::Results::Desnity::MainResult} \cref{Item::Results::Density::hConst}, we have
\begin{equation*}
\begin{split}
&\nu(\Phi(B_{\R^r\times \C^n}(1))) = \int_{\Phi(B_{\R^r\times \C^n}(1))} \nu = \int_{B_{\R^r\times \C^n}(1)} \Phi^{*} \nu = \int_{B_{\R^r\times \C^n}(1)} h(t,x) \: dt\: dx
\\&\approx_{\Zygsonu}
 \nu(X_1,\ldots, X_r, 2\Real(L_1),\ldots, 2\Real(L_n), 2\Imag(L_1),\ldots, 2\Imag(L_n))(x_0),
\end{split}
\end{equation*}
with the same result with $\Phi$ replaced by $\Phih$, where $\Phih$ is as in \cref{Cor::Containments}.  Since
\begin{equation*}
\Phih(B_{\R^r\times \C^n}(1))\subseteq B_{X_{K_0},L_{J_0}}(x_0,\xi_2)\subseteq B_{X,L}(x_0,\xi_2) \subseteq \Phi(B_{\R^r\times \C^n}(1)),
\end{equation*}
and since $h(t,x)$ always has the same sign (\cref{Thm::Results::Desnity::MainResult} \cref{Item::Results::Density::hConst}), \cref{Eqn::Results::Density::MainEqual} follows.

We turn to \cref{Eqn::Results::Density::ToShow1}.  It follows from the definitions that
\begin{equation*}
\begin{split}
	&\left|\nu(X_1,\ldots, X_r, 2\Real(L_1),\ldots, 2\Real(L_n), 2\Imag(L_1),\ldots, 2\Imag(L_n))(x_0)\right|
	\\&\leq
	\max_{\substack{K\in \sI(r,q), J\in \sI(n,m)}} \left|\nu( X_K, 2\Real(L)_J, 2\Imag(L)_J)(x_0)\right|
	\\&\leq
	\max_{P\in \sI(2n+r,2m+q)} \left|\nu(W_P)(x_0)\right|.
\end{split}
\end{equation*}
Thus, with \cref{Eqn::Results::Density::MainEqual} in hand, to prove \cref{Eqn::Results::Density::ToShow1} it suffices to show
\begin{equation*}
\max_{P\in \sI(2n+r,2m+q)} \left|\nu(W_P)(x_0)\right| \lesssim_{0} \left|\nu(X_1,\ldots, X_r, 2\Real(L_1),\ldots, 2\Real(L_n), 2\Imag(L_1),\ldots, 2\Imag(L_n))(x_0)\right|;
\end{equation*}
i.e., we wish to show
\begin{equation}\label{Eqn::Results::Density::ToShow2}
\max_{P\in \sI(2n+r,2m+q)} \left|\nu(W_P)(x_0)\right| \lesssim_{0} \left|\nu(W_{P_0})(x_0)\right|.
\end{equation}
Since $W_{P_0}(x_0)$ forms a basis for the tangent space $T_{x_0} B_{X,L}(x_0,\xi)$, if the right hand side is $0$, the left hand side must be zero as well.
If the right hand side is nonzero, it follows from \cref{Lemma::AppendWedge::FormulasForQuot}
that
\begin{equation*}
\max_{P\in \sI(2n+r,2m+q)} \frac{\left|\nu(W_P)(x_0)\right|}{\left|\nu(W_{P_0})(x_0)\right|} = \max_{P\in \sI(2n+r, 2m+q)} \mleft|\frac{\bigwedge W_P(x_0)}{\bigwedge W_{P_0}(x_0)}\mright|\lesssim_0 1,
\end{equation*}
where the final inequality follows from \cref{Eqn::MainRes::RealZeta}.  \Cref{Eqn::Results::Density::ToShow2} follows, which completes the proof.
\end{proof}

\begin{rmk}
The most important special case of \cref{Thm::Results::MainThm} is the case when $r=0$.  In that case, we can always pick $J_0$ so that \cref{Eqn::MainRes::ChooseZeta} holds
with $\zeta=1$.  However, even in this case, because of \cref{Eqn::ProofsMain::RecallWP0}, we require  \cref{Prop::MainRealProp} in the general case $\zeta\in (0,1]$.
Thus, even for the reader only interested in \cref{Thm::Results::MainThm} in the case $\zeta=1$, it is important that we at least have \cref{Prop::MainRealProp} for general $\zeta\in (0,1]$.
In any case, having \cref{Thm::Results::MainThm} for general $\zeta\in (0,1]$ gives additional, convenient flexibility in applications, even when $r=0$.
\end{rmk}

\section{H\"older Spaces}\label{Section::Holder}
Let $\Omega \subset \R^n$ be a bounded, Lipschitz domain.  It follows immediately from the definitions that for $\hm\in \N$, $s\in [0,1]$ with $\hm+s>0$,
we have the containment $\HSpace{\hm}{s}[\Omega]\subseteq \ZygSpace{\hm+s}[\Omega]$.  For $\hm\in \N$, $s\in (0,1)$ we also have the reverse
containment $\ZygSpace{\hm+s}[\Omega]\subseteq \HSpace{\hm}{s}[\Omega]$; this follows easily from \cite[Theorem 1.118 (i)]{TriebelTheoryOfFunctionSpacesIII}.

When we move to the corresponding spaces with respect to $C^1$ real vector fields $W_1,\ldots, W_N$ on a $C^2$ manifold $M$,
we have similar results.  For any $\hm\in \N$, $s\in [0,1]$, $\hm+s>0$, we have $\HXSpace{W}{\hm}{s}[M]\subseteq \ZygXSpace{W}{\hm+s}[M]$;
this follows from \cref{Lemma::FuncSpaceRev::Properties}.  The reverse containment for $\hm\in \N$, $s\in (0,1)$ requires more hypotheses on the vector fields.
This is described in \cite{StovallStreetII}.

In a similar vein, we can create H\"older versions of \cref{Thm::QualE::LocalThm} and \cref{Thm::QualE::GlobalThm}.  We present these here.

Let $X_1,\ldots, X_q$ be real $C^1$ vector fields on a connected $C^2$ manifold $M$ and let $L_1,\ldots, L_m$ be complex $C^1$ vector fields on $M$.
For $x\in M$ set
\begin{equation}\label{Eqn::Holder::QualE::DefineLVS}
\LVS_x:=\Span_{\C}\{X_1(x),\ldots, X_q(x), L_1(x),\ldots, L_m(x)\},\quad \XVS_x:=\Span_{\C}\{X_1(x),\ldots, X_q(x)\}.
\end{equation}
We assume:
\begin{itemize}
\item $\LVS_x+\LVSb[x]=\C T_xM$, $\forall x\in M$.
\item $\XVS_x=\LVS_x\cap \LVSb[x]$, $\forall x\in M$.
\end{itemize}

\begin{cor}[The Local Result]\label{Cor::Holder::LocalThm}
Fix $x_0\in M$, $\hm\in \N$, $\hm\geq 1$, $s\in (0,1)$, and set $r:=\dim \XVS_{x_0}$ and $n+r:=\dim \LVS_{x_0}$.  The following three conditions are equivalent:
\begin{enumerate}[(i)]
\item\label{Item::Holder::Local::Diffeo} There exists an open neighborhood $V\subseteq M$ of $x_0$ and a $C^2$ diffeomorphism $\Phi:U\rightarrow V$, where $U\subseteq \R^r\times \C^n$ is open,
such that $\forall(t,z)\in U$, $1\leq k\leq q$,
$1\leq j\leq m$,
$$\Phi^{*}X_k(t,z)\in \Span_{\R}\left\{\diff{t_1},\ldots, \diff{t_r}\right\},\quad \Phi^{*}L_j(t,z)\in \Span_{\C}\left\{\diff{t_1},\ldots, \diff{t_r},\diff{\zb[1]},\ldots, \diff{\zb[n]}\right\},$$
and $\Phi^{*}X_k\in \HSpace{\hm+1}{s}[U][\R^r]$, $\Phi^{*}L_j\in \HSpace{\hm+1}{s}[U][\C^{r+n}]$.

\item\label{Item::Holder::Local::Basis} Reorder $X_1,\ldots, X_q$ so that $X_1(x_0),\ldots, X_r(x_0)$ are linearly independent, and reorder $L_1,\ldots, L_m$ so that $L_1(x_0),\ldots, L_n(x_0),X_1(x_0),\ldots, X_r(x_0)$
are linearly independent.  Let $\Zh_1,\ldots, \Zh_{n+r}$ denote the list $X_1,\ldots, X_r,L_1,\ldots, L_n$, and let $Y_1,\ldots, Y_{m+q-(r+n)}$ denote the list $X_{r+1},\ldots, X_q,L_{n+1},\ldots, L_m$.
There exists an open neighborhood $V\subseteq M$ of $x_0$ such that:
\begin{itemize}
\item $[\Zh_j,\Zh_k]=\sum_{l=1}^{n+r} \ch_{j,k}^{1,l} \Zh_l$, and $[\Zh_j,\Zhb[k]]=\sum_{l=1}^{n+r} \ch_{j,k}^{2,l} \Zh_l + \sum_{l=1}^{n+r} \ch_{j,k}^{3,l} \Zhb[l]$, where
$\ch_{j,k}^{a,l}\in \HXSpace{X,L}{\hm}{s}[V]$, $1\leq j,k,l\leq n+r$, $1\leq a\leq 3$.
\item $Y_j=\sum_{l=1}^{n+r} b_j^l \Zh_l$, where $b_j^l\in \HXSpace{X,L}{\hm+1}{s}[V]$, $1\leq j\leq m+q-(r+n)$, $1\leq l\leq n+r$.
\end{itemize}
Furthermore, the map $x\mapsto \dim \LVS_x$, $V\rightarrow \N$ is constant in $x$.

\item\label{Item::Holder::Local::Commute} Let $Z_1,\ldots, Z_{m+q}$ denote the list $X_1,\ldots, X_q, L_1,\ldots, L_m$.  There exists a neighborhood $V\subseteq M$ of $x_0$ such that
$[Z_j,Z_k]=\sum_{l=1}^{m+q} c_{j,k}^{1,l} Z_l$ and $[Z_j, \Zb[k]]=\sum_{l=1}^{m+q} c_{j,k}^{2,l} Z_l  + \sum_{l=1}^{m+q} c_{j,k}^{3,l} \Zb[l]$, where
$c_{j,k}^{a,l}\in \HXSpace{X,L}{\hm}{s}[V]$, $1\leq a\leq 3$, $1\leq j,k,l\leq m+q$.
Furthermore, the map $x\mapsto \dim \LVS_x$, $V\rightarrow \N$ is constant in $x$.

\end{enumerate}
\end{cor}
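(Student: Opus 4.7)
The plan is to derive \cref{Cor::Holder::LocalThm} from the Zygmund version \cref{Thm::QualE::LocalThm} by exploiting the classical equivalence of H\"older and Zygmund spaces at non-integer exponents. Set $s' := \hm + s$, so $s' \in (1,\infty)\setminus \N$ since $\hm \geq 1$ and $s \in (0,1)$. The H\"older assumptions in each of the three conditions embed into their Zygmund counterparts by \cref{Lemma::FuncSpaceRev::Properties} \cref{Item::FuncSpaceRev::HmsBoundsZygs}, so \cref{Thm::QualE::LocalThm} applies with this $s'$ whenever \cref{Item::Holder::Local::Basis} or \cref{Item::Holder::Local::Commute} is assumed.

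First I would prove \cref{Item::Holder::Local::Commute} $\Rightarrow$ \cref{Item::Holder::Local::Diffeo}. The embedding $\HXSpace{X,L}{\hm}{s}[V] \subseteq \ZygXSpace{X,L}{s'}[V]$ verifies the hypothesis of \cref{Thm::QualE::LocalThm} \cref{Item::QualE::Local::Commute}, producing a $C^2$ diffeomorphism $\Phi:U\to V'$ with $\Phi^{*}X_k, \Phi^{*}L_j \in \ZygSpace{s'+1}[U] = \ZygSpace{\hm+1+s}[U]$ (restricted to the appropriate target spaces). After shrinking $U$ to an open ball, \cite[Theorem 1.118 (i)]{TriebelTheoryOfFunctionSpacesIII} identifies $\ZygSpace{\hm+1+s}[U]=\HSpace{\hm+1}{s}[U]$ at the non-integer exponent $\hm+1+s$, giving the H\"older regularity required by \cref{Item::Holder::Local::Diffeo}. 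The implication \cref{Item::Holder::Local::Basis} $\Rightarrow$ \cref{Item::Holder::Local::Commute} is purely algebraic and mirrors the analogous step in the proof of \cref{Thm::QualE::LocalThm}: one expands iterated brackets of the $Z_j$ in terms of brackets of the $\Zh_l$ and products of the $b_j^l$, using the H\"older algebra property (\cref{Prop::FuncSpaceRev::Algebra}) together with the observation that a $C^1$ vector field $W$ trivially maps $\HXSpace{W}{\hm+1}{s}\to \HXSpace{W}{\hm}{s}$ by definition of these norms.

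Finally, \cref{Item::Holder::Local::Diffeo} $\Rightarrow$ \cref{Item::Holder::Local::Basis} is the most involved step and will be the main obstacle. On the Euclidean side I would run the matrix-inversion argument from the proof of \cref{Thm::QualE::LocalThm}: writing $\Phi^{*}X_{K_0},\Phi^{*}L_{J_0}$ in the standard coordinate frame via a matrix $B\in \HSpace{\hm+1}{s}[U]$, which is invertible on a smaller ball $U_0$; applying the H\"older analog of \cref{Rmk::FuncSpaceRev::InverseMatrix} (cofactor formula plus \cref{Prop::FuncSpaceRev::Algebra}) to see $B^{-1}\in \HSpace{\hm+1}{s}[U_0]$; and then using the Hölder algebra property to obtain coefficients $\ct_{j,k}^{a,l}\in \HSpace{\hm}{s}[U_0]$ and $\bt_j^l\in \HSpace{\hm+1}{s}[U_0]$ in the bracket and spanning decompositions. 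This step is routine once one has the correct function space machinery.

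The remaining task---transferring this Euclidean H\"older regularity on $U_0$ to the vector-field-adapted H\"older regularity on $V_0:=\Phi(U_0)$---is the crux of the argument. By the isometric diffeomorphism invariance of these spaces (\cref{Prop::FuncMan::DiffeoInv}), it suffices to show $\HSpace{\hm}{s}[U_0]\subseteq \HXSpace{\Phi^{*}X,\Phi^{*}L}{\hm}{s}[U_0]$. This is a H\"older analog of \cref{Prop::FuncSpaceRev::CompEuclid}: since $\Phi^{*}X_k,\Phi^{*}L_j\in \HSpace{\hm+1}{s}[U_0]$ span the complexified tangent space at every point, iterated application of the product rule, the H\"older algebra property, and the trivial identification $\HSpace{\hm}{s}[U_0]=\HXSpace{\grad}{\hm}{s}[U_0]$ should yield the desired inclusion. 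The delicate point, and the main work to verify, is that the reverse embedding $\ZygXSpace{W}{\hm+s}\subseteq \HXSpace{W}{\hm}{s}$ fails in general (as noted in \cref{Rmk::FuncSpaceRev::ZygAndHolderEquiv}), so one cannot simply appeal to \cref{Prop::FuncSpaceRev::CompEuclid}; instead one must prove the H\"older inclusion directly, paralleling the argument of \cite[\SSCompareEuclidNorms]{StovallStreetI} but working exclusively with integer-order H\"older derivatives. Once this H\"older analog is in hand, pushing forward via \cref{Prop::FuncMan::DiffeoInv} immediately yields \cref{Item::Holder::Local::Basis} on $V_0$.
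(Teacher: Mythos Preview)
Your proof is correct and matches the paper's approach: \cref{Item::Holder::Local::Commute}$\Rightarrow$\cref{Item::Holder::Local::Diffeo} via the embedding $\HXSpace{X,L}{\hm}{s}\subseteq\ZygXSpace{X,L}{\hm+s}$, an application of \cref{Thm::QualE::LocalThm}, and the Triebel identification $\ZygSpace{\hm+1+s}=\HSpace{\hm+1}{s}$ on a ball at the non-integer exponent. The paper dismisses \cref{Item::Holder::Local::Diffeo}$\Rightarrow$\cref{Item::Holder::Local::Basis}$\Rightarrow$\cref{Item::Holder::Local::Commute} as ``nearly identical'' to the Zygmund case without further comment, so the H\"older comparison you flag as the main obstacle is one the paper treats as routine---and indeed the inclusion $\HSpace{\hm}{s}[U_0]\subseteq\HXSpace{Y}{\hm}{s}[U_0]$ is elementary once the $Y_j$ span with bounded inverse, since then Euclidean distance is controlled by the sub-Riemannian distance.
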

\begin{proof}
\Cref{Item::Holder::Local::Diffeo}$\Rightarrow$\cref{Item::Holder::Local::Basis}$\Rightarrow$\cref{Item::Holder::Local::Commute} has a nearly identical proof to the corresponding
parts of \cref{Thm::QualE::LocalThm}, and we leave the details to the reader.  Assume \cref{Item::Holder::Local::Commute} holds.
Then, since $\HXSpace{X,L}{\hm}{s}[V]\subseteq \ZygXSpace{X,L}{\hm+s}[V]$ (by \cref{Lemma::FuncSpaceRev::Properties} \cref{Item::FuncSpaceRev::HmsBoundsZygs})
we have that \cref{Thm::QualE::LocalThm} \cref{Item::QualE::Local::Commute} holds (with $s$ replaced by $\hm+s$).  Therefore \cref{Thm::QualE::LocalThm} \cref{Item::QualE::Local::Diffeo} holds
(again, with $s$ replaced by $\hm+s$); we may shrink $U$ in \cref{Thm::QualE::LocalThm} \cref{Item::QualE::Local::Diffeo} so that it is a Euclidean ball.
This establishes all of \cref{Item::Holder::Local::Diffeo}, except that it shows $\Phi^{*}X_k\in \ZygSpace{\hm+s+1}[U][\R^r]$, $\Phi^{*}L_j\in \ZygSpace{\hm+s+1}[U][\C^{r+n}]$ instead of
$\Phi^{*}X_k\in \HSpace{\hm+1}{s}[U][\R^r]$, $\Phi^{*}L_j\in \HSpace{\hm+1}{s}[U][\C^{r+n}]$.  However, since $U$ is a ball and $s\in (0,1)$ (this is the only place we use $s\ne 0,1$),
it follows from \cite[Theorem 1.118 (i)]{TriebelTheoryOfFunctionSpacesIII}
that $\ZygSpace{\hm+s+1}[U][\R^r]= \HSpace{\hm+1}{s}[U][\R^r]$.  This establishes \cref{Item::Holder::Local::Commute}$\Rightarrow$\cref{Item::Holder::Local::Diffeo} and completes the proof.
\end{proof}

\begin{rmk}
The only place where $\hm\geq 1$, $s\ne 0,1$ was used in the proof of \cref{Cor::Holder::LocalThm} was the implication \cref{Item::Holder::Local::Commute}$\Rightarrow$\cref{Item::Holder::Local::Diffeo}.  The implications \cref{Item::Holder::Local::Diffeo}$\Rightarrow$\cref{Item::Holder::Local::Basis}$\Rightarrow$\cref{Item::Holder::Local::Commute}
hold for $\hm\in \N$ and $s\in [0,1]$ with the same proof.
\end{rmk}

\begin{cor}[The Global Result]
For $\hm\in \N$,  $\hm\geq 1$, $s\in (0,1)$ the following three conditions are equivalent:
\begin{enumerate}[label=(\roman*)]
\item\label{Item::Holder::QualE::EMfld}
There exists a $\HSpace{\hm+2}{s}$ E-manifold structure on $M$, compatible with its $C^2$ structure, such that $X_1,\ldots, X_q,L_1,\ldots, L_m$ are  $\HSpace{\hm+1}{s}$
vector fields on $M$ and $\LVS$ (as defined in \cref{Eqn::Holder::QualE::DefineLVS})  is the
associated elliptic structure (see \cref{Defn::EMfld::sL}).

\item
For each $x_0\in M$, any of the three equivalent conditions from \cref{Cor::Holder::LocalThm} hold for this choice of $x_0$.

\item
Let $Z_1,\ldots, Z_{m+q}$ denote the list $X_1,\ldots, X_q,L_1,\ldots, L_m$.  Then,
$[Z_j,Z_k]=\sum_{l=1}^{m+q} c_{j,k}^{1,l} Z_l$ and $[Z_j,\Zb[k]]=\sum_{l=1}^{m+q} c_{j,k}^{2,l} Z_l + \sum_{l=1}^{m+q} c_{j,k}^{3,l} \Zb[l]$,
where $\forall x\in M$, there exists an open neighborhood $V\subseteq M$ of $x$ such that
$c_{j,k}^{a,l}\big|_V\in \HXSpace{X,L}{\hm}{s}[V]$, $1\leq a \leq 3$, $1\leq j,k,l\leq m+q$.
Furthermore, the map $x\mapsto \dim \LVS_x$, $M\rightarrow \N$ is constant.
\end{enumerate}
Furthermore, under these conditions, the $\HSpace{\hm+2}{s}$ E-manifold structure in \cref{Item::Holder::QualE::EMfld} is unique, in the sense that if $M$ has another
$\HSpace{g+2}{s}$ E-manifold structure satisfying the conclusions of \cref{Item::Holder::QualE::EMfld}, then the identity map $M\rightarrow M$ is
a $\HSpace{\hm+2}{s}$ E-diffeomorphism
between these two E-manifold structures.
\end{cor}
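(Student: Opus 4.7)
The plan is to follow the same three-step pattern used to prove \cref{Thm::QualE::GlobalThm}, replacing \cref{Thm::QualE::LocalThm} by \cref{Cor::Holder::LocalThm} at each stage and invoking the local equivalence $\HSpace{\hm}{s}[B]=\ZygSpace{\hm+s}[B]$ on Euclidean balls $B$ (valid since $s\in(0,1)$, by \cite[Theorem 1.118 (i)]{TriebelTheoryOfFunctionSpacesIII}, exactly as it was used in the proof of \cref{Cor::Holder::LocalThm}) to convert back and forth between Hölder and Zygmund norms.

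\textbf{(i)$\Rightarrow$(ii):} Given an $\HSpace{\hm+2}{s}$ E-atlas $\{(\phi_\alpha,V_\alpha)\}$ for which $X_1,\ldots,X_q,L_1,\ldots,L_m$ are $\HSpace{\hm+1}{s}$ vector fields, the inverse charts $\Phi_\alpha:=\phi_\alpha^{-1}$ are $\HSpace{\hm+2}{s}$ E-diffeomorphisms (hence $C^2$, as $\hm\geq 1$), and $\Phi_\alpha^{*}X_k,\Phi_\alpha^{*}L_j\in\HSpace{\hm+1}{s}$ with the correct spanning property because $\LVS$ is the associated elliptic structure (using \cref{Lemma::QualE::RealVFsInSpan}). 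Thus \cref{Cor::Holder::LocalThm}\cref{Item::Holder::Local::Diffeo} holds at each $x_0\in M$.

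\textbf{(ii)$\Rightarrow$(i):} Arguing as in the proof of \cref{Thm::QualE::GlobalThm}, the constancy of $x\mapsto\dim\LVS_x$ (guaranteed by the local theorem) plus connectedness of $M$ gives global constants $r,n$; and \cref{Cor::Holder::LocalThm}\cref{Item::Holder::Local::Diffeo} produces, at each $x\in M$, a $C^2$ diffeomorphism $\Phi_x:U_x\to V_x$ onto a neighborhood of $x$, with $\Phi_x^{*}X_k,\Phi_x^{*}L_j\in\HSpace{\hm+1}{s}$ spanned by $\diff{t_j},\diff{\zb[l]}$. The transition maps $\Psi_{x,y}:=\Phi_y^{-1}\circ\Phi_x$ are E-maps by the same calculation \cref{Eqn::QualEPf::dPsi,Eqn::QualEPf::SpanPullback} as in the proof of \cref{Thm::QualE::GlobalThm}. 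To see they are $\HSpace{\hm+2}{s}$, I apply \cref{Lemma::QualEPf::RecogSmooth} to the vector fields $\Phi_x^{*}X_k,\Phi_x^{*}L_j$ (which together with their conjugates span the complexified tangent space and are $\ZygSpace{\hm+s+1}=\HSpace{\hm+1}{s}$ on $U_x$): this gives that $\Psi_{x,y}$ is $\ZygSpace{\hm+s+2}$, and on balls this equals $\HSpace{\hm+2}{s}$. Therefore $\{(\Phi_x^{-1},V_x)\}$ is an $\HSpace{\hm+2}{s}$ E-atlas compatible with the $C^2$ structure, and by construction the associated elliptic structure is $\LVS$.

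\textbf{(ii)$\Leftrightarrow$(iii):} The direction (iii)$\Rightarrow$(ii) is immediate by restricting the commutator relations to a sufficiently small neighborhood. For (ii)$\Rightarrow$(iii): a partition-of-unity argument defines $c_{j,k}^{a,l}$ globally on $M$. To verify the local $\HSpace{\hm}{s}$ bounds near any $x_0\in M$, I pick a chart $\Phi:B_{\R^r\times\C^n}(1)\to W$ with $\Phi(0)=x_0$ provided by (i), so that $Y_l:=\Phi^{*}X_{k(l)}$ or $\Phi^{*}\Real(L_j),\Phi^{*}\Imag(L_j)$ together form $\HSpace{\hm+1}{s}=\ZygSpace{\hm+s+1}$ real vector fields on $B_{\R^r\times\C^n}(1)$ spanning the tangent space. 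Applying \cref{Prop::FuncSpaceRev::CompEuclid} with the parameter $s$ there equal to $\hm+s$, I obtain $\ZygSpace{\hm+s}[B]=\ZygXSpace{Y}{\hm+s}[B]$ on any ball $B\Subset B_{\R^r\times\C^n}(1)$, so $c_{j,k}^{a,l}\circ\Phi\in\ZygSpace{\hm+s}[B]=\HSpace{\hm}{s}[B]$; pushing forward via $\Phi$ and invoking \cref{Prop::FuncMan::DiffeoInv} gives $c_{j,k}^{a,l}\in\HXSpace{X,L}{\hm}{s}[\Phi(B)]$.

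\textbf{Uniqueness:} If two $\HSpace{\hm+2}{s}$ E-manifold structures on $M$ both realize (i), then the identity $\mathrm{id}:M\to M$ is $C^2$ and pushes the vector fields $X_1,\ldots,X_q,L_1,\ldots,L_m$ (together with their conjugates spanning $\C TM$) which are $\HSpace{\hm+1}{s}=\ZygSpace{\hm+s+1}$ for both structures, so \cref{Lemma::QualEPf::RecogSmooth} gives that $\mathrm{id}$ is $\ZygSpace{\hm+s+2}$, hence $\HSpace{\hm+2}{s}$ locally. Because $\mathrm{id}$ sends $\LVS$ to $\LVS$, \cref{Lemma::EMfld::RecongnizeEMap} applied in either direction shows $\mathrm{id}$ is an $\HSpace{\hm+2}{s}$ E-map both ways, hence an E-diffeomorphism.

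\textbf{Expected obstacles.} The only real subtleties are bookkeeping: one must check that the equivalence $\HSpace{\hm}{s}=\ZygSpace{\hm+s}$ (used freely on any ball for $s\in(0,1)$) propagates cleanly through \cref{Lemma::QualEPf::RecogSmooth}, \cref{Prop::FuncSpaceRev::CompEuclid}, and the algebra/composition lemmas, and that the partition-of-unity step in (ii)$\Rightarrow$(iii) does not lose regularity. Restricting $s\in(0,1)$ (non-integer) and working with balls (Lipschitz domains) is what makes all of this routine.
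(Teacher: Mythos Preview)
Your proposal is correct and follows exactly the approach the paper indicates: the paper's own proof simply says ``With \cref{Cor::Holder::LocalThm} in hand, the proof is nearly identical to the proof of \cref{Thm::QualE::GlobalThm}, and we leave the details to the interested reader,'' and you have supplied precisely those details in the natural way, using the local H\"older result in place of the local Zygmund result and the equivalence $\HSpace{\hm}{s}[B]=\ZygSpace{\hm+s}[B]$ on balls for $s\in(0,1)$ to translate between the two scales. Your ``expected obstacles'' paragraph correctly flags the one genuine bookkeeping point (propagating the H\"older/Zygmund equivalence through \cref{Lemma::QualEPf::RecogSmooth} and \cref{Prop::FuncSpaceRev::CompEuclid}), which the paper itself addresses only by reference to \cite{StovallStreetII} at the start of \cref{Section::Holder}.
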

\begin{proof}
With \cref{Cor::Holder::LocalThm} in hand, the proof is nearly identical to the proof of \cref{Thm::QualE::GlobalThm}, and we leave the details to the interested reader.
\end{proof}

When $s\in \{0,1\}$, the use of Zygmund spaces (as in  \cref{Thm::QualE::LocalThm} and \cref{Thm::QualE::GlobalThm}) is essential.  Indeed, the above results do not hold with $s\in \{0,1\}$, at least in the special case when $n\geq 1$, $r=0$.
This follows from the next lemma.

\begin{lemma}\label{Lemma::Holder::ZygmundRequired}
Fix $n\geq 1$, $g\in \N$.  There exists an open neighborhood $V'\subseteq \C^n$ of $0$ and complex vector fields $L_1,\ldots, L_n\in \CjSpace{g+1}[V'][\C^{2n}]$ such that
\begin{enumerate}[(i)]
	\item For every $\zeta\in V$, $L_1(\zeta),\ldots, L_n(\zeta), \Lb[1](\zeta),\ldots, \Lb[n](\zeta)$ form a basis for $\C T_{\zeta} V'$.
	\item $[L_j, L_k] = \sum_{l=1}^m c_{j,k}^{1,l} L_l$ and  $[L_j, \Lb[k]] =\sum_{l=1}^m c_{j,k}^{2,l} L_l + \sum_{l=1}^m c_{j,k}^{3,l} \Lb[l]$,
where $c_{j,k}^{a,l}\in \CXjSpace{L}{g}[V']$, $1\leq a\leq 3$, $1\leq j,k,l\leq n$.
	\item\label{Item::Holder::ExistencePhi} There does not exist a $C^2$ diffeomorphism $\Phi:U\rightarrow V$, where $V\subseteq V'$ is an open neighborhood of $0$ and $U\subseteq \C^n$ is open
	such that $\Phi^{*}L_1,\ldots, \Phi^{*}L_n\in \HSpace{g}{1}[U]$ and $\forall \zeta\in U$,
	$$\Phi^{*} L_1(\zeta),\ldots, \Phi^{*}L_n(\zeta)\in \Span_{\C}\mleft\{ \diff{\zb[1]},\ldots, \diff{\zb[n]}\mright\}.$$
\end{enumerate}
\end{lemma}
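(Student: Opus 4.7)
The plan is to reduce to dimension one and invoke the known sharpness at integer exponents of Malgrange's Zygmund regularity theorem for the Newlander--Nirenberg theorem.

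On a small ball $V'\subseteq \C^n$ around $0$, I would take
\[
L_1 := \diff{\zb[1]} + a(z_1,\zb[1])\,\diff{z_1}, \qquad L_j := \diff{\zb[j]} \quad (2\leq j\leq n),
\]
for some $a\in \CjSpace{g+1}[D]$, where $D\subseteq\C$ is a disc containing the $z_1$-projection of $V'$, with $\|a\|_{L^\infty}<1$, to be chosen below. Then $L_1,\ldots,L_n,\Lb[1],\ldots,\Lb[n]$ span $\C T_\zeta V'$ (the relevant coefficient determinant is $1-|a|^2$). All brackets $[L_j,L_k]$ and $[L_j,\Lb[k]]$ involving an index $\geq 2$ vanish identically; $[L_1,L_1]=0$; and a short computation yields $[L_1,\Lb[1]] = c^1 L_1 + c^2 \Lb[1]$, where $c^1,c^2$ are rational in $a,\bar a,L_1 a,\overline{L_1 a}$ with denominator $1-|a|^2$. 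Combined with \cref{Prop::FuncSpaceRev::Algebra,Prop::FuncSpaceRev::CompEuclid} (together with the fact that $a\in\CjSpace{g+1}$ gives $L_1 a\in \CXjSpace{L}{g}[V']$), this yields $c^{a,l}_{j,k}\in\CXjSpace{L}{g}[V']$, proving (i) and (ii).

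For (iii), suppose a $\Phi:U\to V\subseteq V'$ as forbidden in \cref{Item::Holder::ExistencePhi} existed. Writing each $\Phi^{*}L_j = \sum_k f_{jk}\,\diff{\wb[k]}$ (as forced by the spanning condition) and unwinding $\Phi_{*}\bigl(\sum_k f_{jk}\,\diff{\wb[k]}\bigr) = L_j$ in coordinates expresses the $f_{jk}$ as rational functions of entries of $\partial\Phi/\partial w$, and couples $\partial\Phi_l/\partial\bar w_k$ to $\partial\Phi_l/\partial w_k$ and $a\circ\Phi$ by a first-order elliptic (Beltrami-type) system. Since $d\Phi$ is invertible and $|a|<1$, bootstrapping through this system shows that $\Phi^{*}L\in \HSpace{g}{1}$ forces $\Phi\in \HSpace{g+1}{1}$. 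On the other hand, the classical Newlander--Nirenberg theorem \cite{HillTaylorIntegrabilityOfRoughAlmostComplexStructures,NewlanderNirenbergComplexAnalyticCoordiantesInAlmostComplexManifolds} provides some $C^2$ chart $\Phi_0$ turning $\LVS:=\Span_\C\{L_1,\ldots,L_n\}$ into the standard $T^{0,1}$-bundle of $U$, and any two such charts differ by composition with a $C^2$ (hence, by Cauchy--Riemann, real-analytic) biholomorphism of open subsets of $\C^n$. So existence of $\Phi$ is equivalent to $\Phi_0\in \HSpace{g+1}{1}$, and by Malgrange \cite{MalgrangeSurLIntegbrabilite} one always has $\Phi_0\in \ZygSpace{g+2}$, which is strictly larger than $\HSpace{g+1}{1}$. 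The lemma therefore reduces to exhibiting $a$ for which $\Phi_0\notin \HSpace{g+1}{1}$.

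This final step is the main obstacle; it is the classical optimality of Malgrange's Zygmund regularity theorem at integer exponents. A concrete realization is the lacunary trigonometric sum
\[
a(z_1,\zb[1]) := \chi(z_1)\sum_{k\geq K_0} 2^{-k(g+1)}\, e^{i\, 2^k\, \Real z_1},
\]
for $\chi\in C_c^\infty(D)$ with $\chi\equiv 1$ near $0$ and $K_0$ large enough that $\|a\|_{L^\infty}<1$. A Picard-iteration argument, modeled on the proof of Malgrange's theorem, produces $\Phi_0$ as a sum with essentially the same lacunary structure; its Littlewood--Paley decomposition sits precisely on the Zygmund scale $\ZygSpace{g+2}$, while a direct differentiation of the leading term along the real axis shows the $(g+1)$-st derivative fails to be Lipschitz at $0$, giving $\Phi_0\notin \HSpace{g+1}{1}$. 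This contradicts the assumed existence of $\Phi$ and completes the proof.
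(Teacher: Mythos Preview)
Your overall strategy is sound and runs parallel to the paper's, but the paper takes a much shorter path: it simply invokes Yao's counterexample \cite{YaoACounterExample}, which already furnishes $L_1,\ldots,L_n\in \CjSpace{g+1}$ admitting no $\HSpace{g+1}{1}$ chart straightening them to $T^{0,1}$. The paper then observes that (ii) is automatic because $L_j\in \CjSpace{g+1}$ forces $c_{j,k}^{a,l}\in \CjSpace{g}\subseteq \CXjSpace{L}{g}$, and for (iii) uses exactly your bootstrap step---any $\Phi$ as in \cref{Item::Holder::ExistencePhi} would be $\HSpace{g+1}{1}$ by the H\"older analog of \cref{Lemma::QualEPf::RecogSmooth}---to contradict Yao's conclusion.

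What you do differently is attempt to reconstruct Yao's example from scratch via a one-variable Beltrami coefficient built from a lacunary series. The reduction to $n=1$ and the form of the Ansatz are both reasonable and in the spirit of \cite{YaoACounterExample}, and your bootstrap ($\Phi^{*}L\in \HSpace{g}{1}\Rightarrow\Phi\in \HSpace{g+1}{1}$) matches the paper's. However, your last paragraph is where the real content lies, and it is only sketched: the claim that the Newlander--Nirenberg chart $\Phi_0$ inherits ``essentially the same lacunary structure'' from $a$ requires controlling the full nonlinear iteration, including the composition $a\circ\Phi_0$ and the cutoff $\chi$, well enough that the borderline singularity survives in the $(g+1)$st derivative and is not smeared out by the lower-order corrections. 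This is precisely the work carried out in \cite{YaoACounterExample}; without those details your argument is a plausible outline rather than a proof. The paper sidesteps this by citing Yao directly.
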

\begin{proof}
The idea of the proof is that our results (e.g., \cref{Thm::QualComplex::LocalThm}) imply the sharp regularity of the classical Newlander-Nirenberg theorem,
and it is a result of Liding Yao \cite{YaoACounterExample} that sharp results for the classical Newlander-Nirenberg theorem require Zygmund spaces.
Indeed, he exhibits a set complex vector fields $L_1,\ldots, L_n\in \CjSpace{g+1}[V'][\C^{2n}]$, defined on an open neighborhood $V'$ of the origin in $\C^n$
such that
\begin{itemize}
	\item For every $\zeta\in V$, $L_1(\zeta),\ldots, L_n(\zeta), \Lb[1](\zeta),\ldots, \Lb[n](\zeta)$ form a basis for $\C T_{\zeta} V'$.
	\item There does not exist a $\HSpace{g+1}{1}$ diffeomorphism $\Phi:U\rightarrow V$, where $V\subseteq V'$ is an open neighborhood of $0$ and $U\subseteq \C^n$ is open
	such that $\forall \zeta\in U$,
	$$\Phi^{*} L_1(\zeta),\ldots, \Phi^{*}L_n(\zeta)\in \Span_{\C}\mleft\{ \diff{\zb[1]},\ldots, \diff{\zb[n]}\mright\}.$$
\end{itemize}

To see why this choice of $L_1,\ldots, L_n$ satisfies the conclusion of them lemma, note that
\begin{equation*}
	[L_j, L_k] = \sum_{l=1}^m c_{j,k}^{1,l} L_l, \quad [L_j, \Lb[k]] =\sum_{l=1}^m c_{j,k}^{2,l} L_l + \sum_{l=1}^m c_{j,k}^{3,l} \Lb[l],
\quad  c_{j,k}^{a,l}\in \CjSpace{g}[V'], 1\leq a\leq 3, 1\leq j,k,l\leq n.
\end{equation*}
Since $L_1,\ldots, L_n\in \CjSpace{g+1}[V'][\C^{2n}]$, it follows immediately from the definitions
that $\CjSpace{g}[V']\subseteq \CXjSpace{L}{g}[V']$.  Now suppose, for contradiction, $\Phi:U\rightarrow V$ is a $C^2$ diffeomorphism as in \cref{Item::Holder::ExistencePhi}.
Then, the obvious of analog of \cref{Lemma::QualEPf::RecogSmooth} for H\"older spaces shows $\Phi\in \HSpace{g+1}{1}[U]$, contradicting the choice
of $L_1,\ldots, L_n$.
\end{proof}

\appendix
\section{Immersed Submanifolds}
Let $W_1,\ldots, W_N$ be real $C^1$ vector fields on a $C^2$ manifold $\fM$.  For $x,y\in \fM$, define $\rho(x,y)$ as in \cref{Eqn::FuncMan::Defnrho}.
Fix $x_0\in \fM$ and let $Z:=\{y\in \fM : \rho(x_0,y)<\infty\}$.  $\rho$ is a metric on $Z$, and we give $Z$ the topology induced by $\rho$ (this is
finer\footnote{See \cite[\SSFinerTopology]{StovallStreetI} for a proof that this topology is finer than the subspace topology.}
than the topology as a subspace of $\fM$, and may be strictly finer).  Let $M\subseteq Z$ be a connected open subset of $Z$
containing $x_0$.  We give $M$ the topology of a subspace of $Z$.

\begin{prop}\label{Prop::AppendImmerse}
Suppose $[W_j,W_k]=\sum_{l=1}^N c_{j,k}^l W_l$, where $c_{j,k}^l:M\rightarrow \R$ are locally bounded.  Then, there is a $C^2$ manifold structure on $M$ (compatible with its topology)
such that:
\begin{itemize}
\item The inclusion $M\hookrightarrow \fM$ is a $C^2$ injective immersion.
\item $W_1,\ldots, W_N$ are $C^1$ vector fields tangent to $M$.
\item $W_1,\ldots, W_N$ span the tangent space at every point of $M$.
\end{itemize}
Furthermore, this $C^2$ structure is unique in the sense that if $M$ is given another $C^2$ structure (compatible with its topology) such that the inclusion
map $M\hookrightarrow \fM$ is a $C^2$ injective immersion, then the identity map $M\rightarrow M$ is a $C^2$ diffeomorphism between these two $C^2$ structures
on $M$.
\end{prop}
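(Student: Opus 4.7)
My plan is to build a $C^2$ atlas on $M$ by using flow coordinates of the vector fields $W_1,\ldots,W_N$, with involutivity playing the role of Frobenius' hypothesis. Fix $y\in M$, let $n(y):=\dim\Span_{\R}\{W_1(y),\ldots,W_N(y)\}$, and after reordering choose $W_1(y),\ldots,W_{n(y)}(y)$ linearly independent. I would define the candidate chart
\begin{equation*}
\Psi_y(t_1,\ldots,t_{n(y)}) := e^{t_1 W_1 + \cdots + t_{n(y)} W_{n(y)}}y,
\end{equation*}
for $t$ in a small ball; since each $W_j$ is $C^1$, the ODE theory shows $\Psi_y$ is $C^2$ in $t$, and the inverse function theorem makes $\Psi_y$ a $C^2$ diffeomorphism of a neighborhood of $0$ onto an embedded $C^2$ submanifold $N_y\subseteq\fM$ of dimension $n(y)$, with $T_y N_y=\Span\{W_1(y),\ldots,W_{n(y)}(y)\}$.

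The first key step is to verify that \emph{every} $W_k$ is tangent to $N_y$ along $N_y$ and that $N_y\subseteq M$. For tangency, the involutivity hypothesis $[W_j,W_k]=\sum_l c_{j,k}^l W_l$ (with $c_{j,k}^l$ only locally bounded) implies via Gronwall that for each $j$, the pushforward $(e^{sW_j})_{*}$ maps $\Span\{W_1,\ldots,W_N\}$ into itself pointwise; iterating along flows in $\Psi_y$'s construction shows $W_k(\Psi_y(t))\in T_{\Psi_y(t)}N_y$. This is a quantitative Nagano/Sussmann-type argument that I would carry out with an ODE comparison for the coefficient matrix expressing $(e^{sW_j})_{*}W_k$ in terms of $W_1,\ldots,W_N$; the weak regularity of $c_{j,k}^l$ is enough because only bounds are needed. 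Containment $N_y\subseteq M$ follows because points of $N_y$ are connected to $y$ by piecewise flows of $W_1,\ldots,W_{n(y)}$, hence by $\rho$, so they lie in the same orbit. In particular $\dim\Span\{W_1(y'),\ldots,W_N(y')\}\geq n(y)$ for $y'\in N_y$; the reverse inequality follows from tangency, so $n(\cdot)$ is locally constant on $M$, and since $M$ is connected, $n(y)\equiv n$ is a global constant.

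Next I would show that the collection $\{(\Psi_y,V_y)\}_{y\in M}$ is a $C^2$ atlas compatible with the topology of $M$. Compatibility with the metric topology on $M$: $\Psi_y$ sends small balls to sets reachable from $y$ by short sub-Riemannian paths, so it is a homeomorphism onto an open subset of $M$. For transition maps $\Psi_{y'}^{-1}\circ\Psi_y$, note that both charts are $C^2$ immersions from open subsets of $\R^n$ into $\fM$ with the same image (a single $N_y$), so by the implicit function theorem applied in $\fM$ and the already-proved tangency, the transitions are $C^2$. The inclusion $M\hookrightarrow\fM$ is then automatically a $C^2$ injective immersion whose differential has image $\Span\{W_1,\ldots,W_N\}$, making $W_1,\ldots,W_N$ tangent $C^1$ vector fields on $M$ spanning $TM$. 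The hard step will be this tangency-plus-invariance argument in the low regularity setting: the bracket identity is an equality of $C^0$ vector fields (since $W_j\in C^1$ and $c_{j,k}^l$ only locally bounded), so classical Frobenius does not apply, and I will need the ODE proof of invariance of the distribution under the flows that only uses boundedness of $c_{j,k}^l$.

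Finally, for uniqueness, suppose $M$ carries another $C^2$ structure $\mathcal{S}'$ with $\iota:M\hookrightarrow\fM$ a $C^2$ injective immersion. Then $\iota_*T_yM\subseteq\Span\{W_1(y),\ldots,W_N(y)\}$, and dimension count forces equality, so both structures have the same tangent distribution at each point and the same underlying topology. Writing the identity $M\to M$ in local $\Psi_y$-coordinates against any chart of $\mathcal{S}'$ and applying the inverse function theorem (valid since both immersions are $C^2$ with injective differentials onto the same subspace of $T\fM$) gives that the identity is a $C^2$ diffeomorphism between the two structures. I do not foresee difficulty here beyond cleanly invoking the standard fact that two $C^2$ injective immersions of the same topological manifold into $\fM$ with the same image and tangent spaces define the same smooth structure.
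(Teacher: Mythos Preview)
The paper does not give a self-contained proof here; it simply cites the companion paper \cite{StovallStreetI}. Your outline follows the standard orbit/Frobenius-type construction and is essentially the right strategy: use exponential charts $\Psi_y$, prove invariance of the distribution under the flows via a Gronwall argument using only boundedness of $c_{j,k}^l$, deduce constancy of the rank, and handle uniqueness by comparing two $C^2$ immersions with the same image and topology.

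There is, however, one concrete regularity gap. Your claim that ``the ODE theory shows $\Psi_y$ is $C^2$ in $t$'' is false for $C^1$ vector fields when $n\geq 2$: the map $(t_1,\ldots,t_n)\mapsto e^{t_1W_1+\cdots+t_nW_n}y$ depends on $t$ as a \emph{parameter}, and parameter dependence of flows of $C^1$ vector fields is only $C^1$ in general (the variational equation for $\partial_{t_k}\Psi_y$ has merely continuous coefficients, since $DW_j$ is only $C^0$). So the atlas $\{\Psi_y^{-1}\}$ gives only a $C^1$ structure on $M$, and with those charts the inclusion into $\fM$ is only $C^1$. The fix is a bootstrapping step: working in $C^2$ ambient coordinates $(u,v)\in\R^n\times\R^{d-n}$ with $W_1(y),\ldots,W_n(y)$ spanning the $u$-directions, your $C^1$ chart $\Psi_y$ shows the leaf is a $C^1$ graph $v=h(u)$; then tangency of the $W_j$ forces $Dh(u)=[W^u(u,h(u))]^{-1}W^v(u,h(u))$, which is $C^1$ since the $W_j$ are $C^1$ and $h$ is $C^1$. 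Hence $h\in C^2$, and the projection $u$ (not $\Psi_y^{-1}$) furnishes the $C^2$ charts in which the inclusion $u\mapsto(u,h(u))$ is visibly $C^2$ and the pulled-back vector fields $W_j^u(u,h(u))$ are $C^1$. With this correction, the rest of your argument goes through as written.
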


\Cref{Prop::AppendImmerse} is standard; see \cite[\SSProofInjectiveImmersion]{StovallStreetI} for a proof. 

\section{Linear Algebra}

	\subsection{Real and Complex Vector Spaces}
Let $\VVS$ be a real vector space and let $\VVS^{\C}=\VVS\otimes_{\R} \C$ be its complexification.  We consider $\VVS\hookrightarrow \VVS^{\C}$ as a real subspace
by identifying $v$ with $v\otimes 1$.  There are natural maps:
\begin{equation*}
\Real:\VVS^{\C}\rightarrow \VVS, \quad \Imag:\VVS^{\C}\rightarrow \VVS,\quad \text{complex conjugation}:\VVS^{\C}\rightarrow \VVS^{\C},
\end{equation*}
defined as follows.  Every $v\in \VVS^{\C}$ can be written uniquely as $v=v_1\otimes 1+ v_2\otimes i$, with $v_1,v_2\in V$.  Then,
$\Real(v):=v_1$, $\Imag(v):=v_2$, and $\overline{v}:=v_1\otimes 1 - v_2\otimes i$.

\begin{lemma}\label{Lemma::AppendCR::dimFormula}
Let $\LVS\subseteq \VVS^{\C}$ be a finite dimensional complex subspace.  Then,
$\dim (\LVS+\overline{\LVS}) + \dim (\LVS\bigcap \overline{\LVS}) = 2\dim(\LVS).$
\end{lemma}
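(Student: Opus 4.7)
The plan is to reduce the identity to the familiar dimension formula for subspaces, $\dim(A+B) + \dim(A\cap B) = \dim A + \dim B$, applied to $A=\LVS$ and $B=\overline{\LVS}$ inside the complex vector space $\VVS^{\C}$. Once that is in hand, the only additional ingredient needed is the equality $\dim_{\C}\overline{\LVS} = \dim_{\C}\LVS$, from which the claim $\dim(\LVS+\overline{\LVS})+\dim(\LVS\cap\overline{\LVS}) = 2\dim(\LVS)$ follows immediately.

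First, I would verify that $\overline{\LVS}$ is indeed a complex subspace of $\VVS^{\C}$ (note that conjugation on $\VVS^{\C}$ is only $\mathbb{R}$-linear, not $\mathbb{C}$-linear, so this is a minor point worth checking): if $w_1, w_2 \in \overline{\LVS}$ and $c\in\C$, write $w_i = \overline{v_i}$ with $v_i\in\LVS$, and then $cw_1 + w_2 = \overline{\overline{c}v_1 + v_2}$, which lies in $\overline{\LVS}$ because $\overline{c}v_1+v_2 \in \LVS$. So $\overline{\LVS}$ is a complex subspace, and the standard dimension formula applies.

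Next I would show $\dim_{\C}\overline{\LVS} = \dim_{\C}\LVS$ by exhibiting a basis. Let $e_1,\ldots,e_k$ be a $\C$-basis of $\LVS$; I claim $\overline{e_1},\ldots,\overline{e_k}$ is a $\C$-basis of $\overline{\LVS}$. For spanning, any $w\in\overline{\LVS}$ equals $\overline{v}$ for some $v=\sum c_j e_j\in\LVS$, and then $w = \sum \overline{c_j}\,\overline{e_j}$. For independence, if $\sum d_j \overline{e_j}=0$ with $d_j\in\C$, applying conjugation yields $\sum \overline{d_j}\,e_j = 0$, so $\overline{d_j}=0$, hence $d_j=0$. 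Thus $\dim_{\C}\overline{\LVS}=k=\dim_{\C}\LVS$.

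Combining these gives
\begin{equation*}
\dim(\LVS+\overline{\LVS})+\dim(\LVS\cap\overline{\LVS}) = \dim(\LVS)+\dim(\overline{\LVS}) = 2\dim(\LVS),
\end{equation*}
as required. There is no real obstacle here — this is essentially a bookkeeping result — but the one subtlety to flag is that complex conjugation on $\VVS^{\C}$ is conjugate-linear, so the equality $\dim_{\C}\overline{\LVS}=\dim_{\C}\LVS$ should be justified explicitly rather than waved through as ``conjugation is a bijection.''
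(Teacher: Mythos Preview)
Your proof is correct and follows essentially the same approach as the paper: apply the standard dimension formula $\dim(A+B)+\dim(A\cap B)=\dim A+\dim B$ and then observe that $\dim_{\C}\overline{\LVS}=\dim_{\C}\LVS$ because conjugation is an anti-linear isomorphism $\LVS\to\overline{\LVS}$. You have simply unpacked the anti-linear isomorphism step more explicitly than the paper does.
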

\begin{proof}
It is a standard fact that $\dim (\LVS+\overline{\LVS}) + \dim (\LVS\bigcap \overline{\LVS}) = \dim(\LVS)+\dim(\overline{\LVS})$.
Using that $w\mapsto \overline{w}$, $\LVS\rightarrow \overline{\LVS}$ is an anti-linear isomorphism, the result follows.
\end{proof}

\begin{lemma}\label{Lemma::AppendVS::CRBasis}
Let $\LVS\subseteq \VVS^{\C}$ be a finite dimensional subspace.  Let $x_1,\ldots, x_r\in \LVS\bigcap \overline{\LVS}\bigcap V$ be a basis
for $\LVS\bigcap \overline{\LVS}$ and let $l_1,\ldots, l_n\in \LVS$.  The following are equivalent:
\begin{enumerate}[(i)]
\item\label{Item::AppendCR::RealBasis} $x_1,\ldots, x_r, \Real(l_1),\ldots, \Real(l_n), \Imag(l_1),\ldots, \Imag(l_n)$ is a basis for $\LVS+\overline{\LVS}$.
\item\label{Item::AppendCR::ComplexBasis} $x_1,\ldots, x_r, l_1,\ldots, l_n$ is a basis for $\LVS$.
\end{enumerate}
\end{lemma}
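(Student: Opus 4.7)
My plan is to prove the equivalence by invoking the dimension formula from \cref{Lemma::AppendCR::dimFormula}, which gives $\dim_{\C} \LVS = n+r$ and $\dim_{\C}(\LVS + \LVSb) = 2n+r$ (where I interpret the basis in \cref{Item::AppendCR::RealBasis} as a complex basis of the complex subspace $\LVS + \LVSb \subseteq \VVS^{\C}$, since $r + 2n$ is exactly the required $\C$-dimension). So in each direction I will have matching cardinalities, and the argument reduces to verifying either linear independence or spanning, but not both.

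For \cref{Item::AppendCR::ComplexBasis}$\Rightarrow$\cref{Item::AppendCR::RealBasis}, I would start from the observation that if $x_1,\ldots,x_r,l_1,\ldots,l_n$ is a $\C$-basis of $\LVS$, then applying complex conjugation gives that $x_1,\ldots,x_r,\Lb[1],\ldots,\Lb[n]$ is a $\C$-basis of $\LVSb$. Hence $\{x_j,l_k,\Lb[k]\}$ spans $\LVS + \LVSb$. Using the elementary identities $\Real(l_k) = (l_k + \Lb[k])/2$ and $\Imag(l_k) = (l_k - \Lb[k])/(2i)$, together with their inverses $l_k = \Real(l_k) + i\,\Imag(l_k)$ and $\Lb[k] = \Real(l_k) - i\,\Imag(l_k)$, the set $\{x_j,\Real(l_k),\Imag(l_k)\}$ $\C$-spans the same subspace. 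It has $r + 2n$ elements, matching $\dim_{\C}(\LVS + \LVSb)$, so it is a basis.

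For \cref{Item::AppendCR::RealBasis}$\Rightarrow$\cref{Item::AppendCR::ComplexBasis}, I would check $\C$-linear independence of $x_1,\ldots,x_r,l_1,\ldots,l_n$ inside $\VVS^{\C}$. Given a relation $\sum_j a_j x_j + \sum_k b_k l_k = 0$ with $a_j,b_k \in \C$, I substitute $l_k = \Real(l_k) + i\,\Imag(l_k)$ to rewrite this as $\sum_j a_j x_j + \sum_k b_k \Real(l_k) + \sum_k (ib_k)\Imag(l_k) = 0$, which is a $\C$-linear combination of the basis vectors from \cref{Item::AppendCR::RealBasis}. Independence there forces all coefficients to vanish, hence $a_j = b_k = 0$. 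Since $x_j \in \LVS \cap \LVSb \subseteq \LVS$ and $l_k \in \LVS$, these $n+r$ independent vectors lie in the $(n+r)$-dimensional space $\LVS$ and therefore form a $\C$-basis.

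I do not anticipate a serious obstacle: the entire argument is the standard change-of-basis between the real/imaginary decomposition and the holomorphic/antiholomorphic decomposition, combined with the dimension count from \cref{Lemma::AppendCR::dimFormula}. The only small point requiring care is unambiguously fixing whether ``basis'' in \cref{Item::AppendCR::RealBasis} means a $\C$-basis of the complex subspace $\LVS + \LVSb$, which the dimension count $r + 2n = \dim_{\C}(\LVS + \LVSb)$ forces; once this is clarified, the two implications are each a few lines.
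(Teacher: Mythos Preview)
Your proposal is correct and follows essentially the same approach as the paper: both directions use the dimension formula from \cref{Lemma::AppendCR::dimFormula} to reduce to checking either spanning or independence, and both verify \cref{Item::AppendCR::ComplexBasis}$\Rightarrow$\cref{Item::AppendCR::RealBasis} via spanning and \cref{Item::AppendCR::RealBasis}$\Rightarrow$\cref{Item::AppendCR::ComplexBasis} via independence. Your independence argument (substituting $l_k = \Real(l_k) + i\,\Imag(l_k)$ and invoking $\C$-linear independence directly) is a bit more streamlined than the paper's version, which instead applies the $\Real$ operator to the relation and argues in two steps, but the underlying idea is the same.
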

\begin{proof}
Clearly $r=\dim(\LVS\cap \overline{\LVS})$.

\cref{Item::AppendCR::RealBasis}$\Rightarrow$\cref{Item::AppendCR::ComplexBasis}:  Suppose \cref{Item::AppendCR::RealBasis} holds.  Then $\dim(\LVS+\overline{\LVS})=2n+r$.
\Cref{Lemma::AppendCR::dimFormula} implies $\dim(\LVS)=n+r$.  Thus, once we show $x_1,\ldots, x_r,l_1,\ldots, l_n$ are linearly independent, they will form a basis.
Suppose
\begin{equation}\label{Eqn::AppendCR::ToShowLinIndep}
\sum_{k=1}^r a_k x_k + \sum_{j=1}^n (b_j+ic_j)l_j = 0,
\end{equation}
with $a_k\in \C$, $b_j,c_j\in \R$.  We wish to show $a_k=b_j=c_j=0$, $\forall j,k$.
Applying $\Real$ to \cref{Eqn::AppendCR::ToShowLinIndep}, we see
\begin{equation*}
\sum_{k=1}^n \Real(a_k) x_k+ \sum_{j=1}^n b_j \Real(l_j)-\sum_{j=1}^n c_j \Imag(l_j)=0.
\end{equation*}
Since $x_1,\ldots, x_r, \Real(l_1),\ldots, \Real(l_n),\Imag(l_1),\ldots, \Imag(l_n)$ are linearly independent by hypothesis, we see
$\Real(a_k)=b_j=c_j=0$, $\forall j,k$.  Plugging this into \cref{Eqn::AppendCR::ToShowLinIndep} we have
\begin{equation*}
\sum_{k=1}^r i \Imag(a_k) x_k =0.
\end{equation*}
Since $x_1,\ldots, x_r$ are linearly independent, we see $\Imag(a_k)=0$, $\forall k$.  Thus, $a_k=b_j=c_j=0$, $\forall j,k$ and \cref{Item::AppendCR::ComplexBasis} follows.

\cref{Item::AppendCR::ComplexBasis}$\Rightarrow$\cref{Item::AppendCR::RealBasis}:  Suppose $x_1,\ldots, x_r, l_1,\ldots, l_n$ form a basis for $\LVS$.  Then,
$\dim(\LVS)=n+r$ and \cref{Lemma::AppendCR::dimFormula} shows $\dim(\LVS+\overline{\LVS})=2n+r$.
Thus, once we show $x_1,\ldots, x_r, \Real(l_1),\ldots, \Real(l_n), \Imag(l_1),\ldots, \Imag(l_n)$ span $\LVS+\overline{\LVS}$ it will follow that they are a basis.
But it is immediate to verify that $\Real(\LVS)$ spans $\LVS+\overline{\LVS}$, thus since $\Real(x_j)=x_j$, $\Real(ix_j)=0$, and $\Real(-il_j)=\Imag(l_j)$, it follows that
$x_1,\ldots, x_r, \Real(l_1),\ldots, \Real(l_n), \Imag(l_1),\ldots, \Imag(l_n)$ span $\LVS+\overline{\LVS}$, which completes the proof.
\end{proof}

\begin{lemma}\label{Lemma::AppendCR::FromRealFormulaToC}
Let $\LVS\subseteq \VVS^{\C}$ be a finite dimensional complex subspace.  Suppose $x_1,\ldots, x_r\in \LVS\bigcap \overline{\LVS}\bigcap \VVS$ is a basis
for $\LVS\bigcap \overline{\LVS}$ and extend this to a basis $x_1,\ldots, x_r,l_1,\ldots, l_n\in \LVS$.  Suppose $z\in \LVS$ and
\begin{equation*}
\Real(z)=\sum_{k=1}^r a_k x_k + \sum_{j=1}^n b_j\Real(l_j)+\sum_{j=1}^n c_j\Imag(l_j), \quad
\Imag(z)=\sum_{k=1}^r d_k x_k + \sum_{j=1}^n e_j \Real(l_j)+\sum_{j=1}^n f_j\Imag(l_j),
\end{equation*}
with $a_k,b_j,c_j,d_k,e_j,f_j\in \R$.  Then,
\begin{equation*}
z=\sum_{k=1}^r (a_k+id_k) x_k +\sum_{j=1}^n (b_j-ic_j) l_j.
\end{equation*}
\end{lemma}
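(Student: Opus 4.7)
The plan is to compute $z = \Real(z) + i\Imag(z)$ by substituting the two given real expansions, re-expressing $\Real(l_j)$ and $\Imag(l_j)$ in terms of $l_j$ and $\overline{l_j}$, and then exploiting the hypothesis $z \in \LVS$ to force certain coefficients to vanish. The key enabling fact is that by \cref{Lemma::AppendVS::CRBasis} the list $x_1,\ldots,x_r, l_1,\ldots,l_n$ is a basis for $\LVS$, and complex conjugation carries this to a basis $x_1,\ldots,x_r, \overline{l_1},\ldots,\overline{l_n}$ of $\overline{\LVS}$ (using $\overline{x_k}=x_k$). Combined with \cref{Lemma::AppendCR::dimFormula} and the fact that $\LVS\cap\overline{\LVS}=\Span_\C\{x_1,\ldots,x_r\}$, a dimension count shows that
\begin{equation*}
x_1,\ldots, x_r,\ l_1,\ldots, l_n,\ \overline{l_1},\ldots, \overline{l_n}
\end{equation*}
is a basis for $\LVS+\overline{\LVS}$; in this basis, an element of $\LVS$ is characterized by having zero coefficients on each $\overline{l_j}$.

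First I would substitute $\Real(l_j) = \tfrac{1}{2}(l_j+\overline{l_j})$ and $\Imag(l_j) = \tfrac{-i}{2}(l_j-\overline{l_j})$ into $z = \Real(z)+i\Imag(z)$, gather coefficients, and obtain
\begin{equation*}
z = \sum_{k=1}^r (a_k+id_k)x_k + \sum_{j=1}^n \tfrac{(b_j+f_j)+i(e_j-c_j)}{2}\, l_j + \sum_{j=1}^n \tfrac{(b_j-f_j)+i(e_j+c_j)}{2}\, \overline{l_j}.
\end{equation*}
Since $z \in \LVS$ and the above basis of $\LVS + \overline{\LVS}$ is linearly independent, the coefficients of each $\overline{l_j}$ must vanish, giving $b_j = f_j$ and $e_j = -c_j$. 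Plugging these relations back into the coefficient of $l_j$ yields $b_j - ic_j$, and the formula follows.

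The argument is essentially a bookkeeping check and I do not anticipate a substantive obstacle; the only non-trivial point is to justify that $x_1,\ldots,x_r,l_1,\ldots,l_n,\overline{l_1},\ldots,\overline{l_n}$ is actually a basis for $\LVS+\overline{\LVS}$ (rather than merely a spanning set), which is exactly where \cref{Lemma::AppendCR::dimFormula} and \cref{Lemma::AppendVS::CRBasis} are needed to match counts. Everything else is an arithmetic manipulation in the complexified basis.
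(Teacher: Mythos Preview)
Your argument is correct. The computation is right, and the key step---that $x_1,\ldots,x_r,l_1,\ldots,l_n,\overline{l_1},\ldots,\overline{l_n}$ is a $\C$-basis for $\LVS+\overline{\LVS}$---follows immediately from \cref{Lemma::AppendVS::CRBasis} (which already encodes the dimension count of \cref{Lemma::AppendCR::dimFormula}) by the obvious invertible change of variables from $\Real(l_j),\Imag(l_j)$ to $l_j,\overline{l_j}$.

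Your route differs from the paper's. The paper defines the candidate $z_0=\sum_k(a_k+id_k)x_k+\sum_j(b_j-ic_j)l_j$ and verifies $z=z_0$ by first checking $\Real(z-z_0)=0$ directly, then observing that $\Imag(z-z_0)=\tfrac{1}{i}(z-z_0)$ lies in $\LVS\cap\overline{\LVS}=\Span_\C\{x_k\}$ while the given expansion places it in $\Span_\C\{\Real(l_j),\Imag(l_j)\}$; linear independence from \cref{Lemma::AppendVS::CRBasis} then forces $\Imag(z-z_0)=0$. Your approach instead expands $z$ directly in the basis $\{x_k,l_j,\overline{l_j}\}$ and reads off the answer from $z\in\LVS$. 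Your method is more computational but has the mild bonus of explicitly yielding the hidden relations $f_j=b_j$ and $e_j=-c_j$; the paper's method is a cleaner guess-and-verify that avoids the coefficient bookkeeping. Both hinge on the same linear-independence input from \cref{Lemma::AppendVS::CRBasis}.
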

\begin{proof}
Set $z_0=\sum_{k=1}^r (a_k+id_k) x_k +\sum_{j=1}^n (b_j-ic_j) l_j$; we wish to show $z=z_0$.
Clearly $\Real(z-z_0)=0$.
We have
$$\Imag(z-z_0)=\sum_{j=1}^n (e_j+c_j) \Real(l_j)+ \sum_{j=1}^n (f_j-b_j)\Imag(l_j)\in \Span_{\C}\{\Real(l_1),\ldots, \Real(l_n),\Imag(l_1),\ldots, \Imag(l_n)\}.$$
However, since $\Real(z-z_0)=0$,
\begin{equation*}
\Imag(z-z_0)=\frac{1}{i}(z-z_0) = -\frac{1}{i}(\overline{z}-\overline{z_0})\in \LVS\bigcap\overline{\LVS}=\Span_{\C}\{x_1,\ldots, x_r\}.
\end{equation*}
Thus,
\begin{equation*}
\Imag(z-z_0)\in \Span_{\C}\{\Real(l_1),\ldots, \Real(l_n),\Imag(l_1),\ldots, \Imag(l_n)\}\bigcap \Span_{\C}\{x_1,\ldots, x_r\}.
\end{equation*}
Since $x_1,\ldots, x_r,\Real(l_1),\ldots, \Real(l_n),\Imag(l_1),\ldots, \Imag(l_n)$ are linearly independent (by \cref{Lemma::AppendVS::CRBasis}), it follows
that $\Imag(z-z_0)=0$, which completes the proof.
\end{proof} 
	
	\subsection{Wedge Products}\label{Append::LA::Wedge}
Let $\ZVS$ be a one dimensional vector space over a field $\Field$ (we will always be using $\Field=\C\text{ or } \R$).
For $z_1,z_2\in \ZVS$, $z_1\ne 0$ we set
\begin{equation*}
\frac{z_2}{z_1}:=\frac{\lambda(z_2)}{\lambda(z_1)}\in \Field,
\end{equation*}
where $\lambda:\ZVS\rightarrow \Field$ is any non-zero linear functional.  It is easy to see that $\frac{z_2}{z_1}$ is independent of the choice of $\lambda$.

Let $\WVS$ be an $N$-dimensional vector space over $\Field$, so that $\bigwedge^N \WVS$ is a one-dimensional vector space over $\Field$.
Let $w_1,\ldots, w_N\in \WVS$ be a basis for $\WVS$ and let $w_1',\ldots, w_N'\in \WVS$.  Using the above definition, it makes sense to consider
\begin{equation*}
\frac{w_1'\wedge w_2'\wedge \cdots \wedge w_N'}{w_1\wedge w_2\wedge \cdots \wedge w_N}.
\end{equation*}

\begin{lemma}\label{Lemma::AppendWedge::FormulasForQuot}
In the above setting, the following three quantities are equal:
\begin{enumerate}[(i)]
\item\label{Item::AppendWedge::ComputeQ1} $\frac{w_1'\wedge w_2'\wedge \cdots \wedge w_N'}{w_1\wedge w_2\wedge \cdots \wedge w_N}$.
\item\label{Item::AppendWedge::ComputeQ2} $\det(B)$, where $B$ is the linear transformation defined by $Bw_j=w_j'$.
\item\label{Item::AppendWedge::ComputeQ3} $\det(C)$, where $C$ is the $N\times N$ matrix with components $c_{j}^k$, where $w_j' = \sum c_j^k w_k$.
\end{enumerate}
\end{lemma}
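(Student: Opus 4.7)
The plan is to reduce everything to the standard characterization of the determinant via the induced map on the top exterior power. First, I would verify the easy equivalence of \cref{Item::AppendWedge::ComputeQ2} and \cref{Item::AppendWedge::ComputeQ3}: by definition of $B$ one has $Bw_j = w_j' = \sum_k c_j^k w_k$, so the matrix $C$ (possibly up to transpose, which does not affect the determinant) is exactly the matrix representation of $B$ in the basis $w_1,\ldots,w_N$, and therefore $\det(B)=\det(C)$.

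Next, to link \cref{Item::AppendWedge::ComputeQ1} with \cref{Item::AppendWedge::ComputeQ2}, I would invoke the standard fact that for any linear endomorphism $B$ of an $N$-dimensional vector space $\WVS$, the induced map $\bigwedge^N B : \bigwedge^N \WVS \to \bigwedge^N \WVS$ on the one-dimensional space $\bigwedge^N \WVS$ is multiplication by $\det(B)$. Applying this to the basis vectors gives
\begin{equation*}
w_1'\wedge \cdots \wedge w_N' = (Bw_1)\wedge \cdots \wedge (Bw_N) = \det(B)\, w_1\wedge \cdots \wedge w_N.
\end{equation*}
Then, picking any nonzero $\lambda : \bigwedge^N \WVS \to \Field$ and applying the definition of the quotient $z_2/z_1 = \lambda(z_2)/\lambda(z_1)$ for $z_1,z_2 \in \bigwedge^N \WVS$, one reads off
\begin{equation*}
\frac{w_1'\wedge \cdots \wedge w_N'}{w_1\wedge \cdots \wedge w_N} = \det(B),
\end{equation*}
which completes the chain of equalities.

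There is no real obstacle here, as the content is entirely standard multilinear algebra; the only minor care needed is bookkeeping the ``row versus column'' convention in the passage between \cref{Item::AppendWedge::ComputeQ2} and \cref{Item::AppendWedge::ComputeQ3}, but this is absorbed by the identity $\det(C^\transpose)=\det(C)$. If a fully self-contained argument is desired, one can alternatively prove \cref{Item::AppendWedge::ComputeQ1} $=$ \cref{Item::AppendWedge::ComputeQ3} directly by expanding $w_j'=\sum_k c_j^k w_k$ inside the wedge and using multilinearity and antisymmetry, which recovers the Leibniz formula $\sum_{\sigma\in S_N} \mathrm{sgn}(\sigma) \prod_j c_j^{\sigma(j)}$ for $\det(C)$.
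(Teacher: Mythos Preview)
Your proposal is correct and follows essentially the same approach as the paper: first observing that \cref{Item::AppendWedge::ComputeQ2} and \cref{Item::AppendWedge::ComputeQ3} agree because $C$ is the matrix of $B$ in the basis $w_1,\ldots,w_N$, and then using the identity $(Bw_1)\wedge\cdots\wedge(Bw_N)=\det(B)\,w_1\wedge\cdots\wedge w_N$ to get \cref{Item::AppendWedge::ComputeQ1} $=$ \cref{Item::AppendWedge::ComputeQ2}. The paper's proof is just a terser version of what you wrote.
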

\begin{proof}
Clearly \cref{Item::AppendWedge::ComputeQ2} and \cref{Item::AppendWedge::ComputeQ3} are equal.
To see that \cref{Item::AppendWedge::ComputeQ1} and \cref{Item::AppendWedge::ComputeQ2} are equal, let $B$ be as in \cref{Item::AppendWedge::ComputeQ2}.  Then, we have
\begin{equation*}
\frac{w_1'\wedge w_2'\wedge \cdots \wedge w_N'}{w_1\wedge w_2\wedge \cdots \wedge w_N}
=\frac{(Bw_1)\wedge (Bw_2)\wedge \cdots \wedge (B w_N)}{w_1\wedge w_2\wedge \cdots \wedge w_N}
=\frac{\det(B)(w_1\wedge w_2\wedge \cdots \wedge w_N)}{w_1\wedge w_2\wedge \cdots \wedge w_N}
=\det(B),
\end{equation*}
completing the proof.
\end{proof}

Let $\VVS$ be a real vector space and let $\VVS^{\C}$ be its complexification.  Let $\LVS\subseteq \VVS^{\C}$ be a finite dimensional subspace
and let $\XVS:=\LVS\bigcap \overline{\LVS}$; note that $\XVS=\overline{\XVS}$.
Set $r=\dim(\XVS)$ and $n+r=\dim(\LVS)$.
Set $\WVS:=(\LVS+\overline{\LVS})\bigcap \VVS=\Span_{\R}\{\Real(l) : l\in \LVS\}\subseteq \VVS$ (so that $\WVS$ is a real vector space).
By \cref{Lemma::AppendCR::dimFormula}, $\dim(\WVS)=2n+r$.

Fix $x_1,\ldots, x_q\in \XVS\bigcap \VVS$ and $l_1,\ldots, l_m\in \LVS$ such that $\XVS=\Span_{\C}\{x_1,\ldots, x_q\}$ and $\LVS=\Span_{\C}\{x_1,\ldots, x_q, l_1,\ldots, l_m\}$.
For $K=(k_1,\ldots, k_{r_1})\in \sI(r_1,q)$ (where $\sI(r_1,q)=\{1,\ldots, q\}^{r_1}$; see \cref{Eqn::MainRes::DefnsI}), set
$\bigwedge X_K:=x_{k_1}\wedge x_{k_2}\wedge \cdots\wedge x_{k_{r_1}}$.
For $J=(j_1,\ldots, j_{n_1})\in \sI(n_1,m)$ set
\begin{equation}\label{Eqn::AppedWedge::DefineWedges}
\begin{split}
\bigwedge L_J := l_{j_1}\wedge l_{j_2}\wedge \cdots\wedge l_{j_{n_1}}, \quad &\bigwedge 2\Real(L)_J := 2\Real(l_{j_1})\wedge 2\Real(l_{j_2})\wedge \cdots \wedge 2\Real(l_{j_{n_1}}),
\\\bigwedge 2\Imag(L)_J := &2\Imag(l_{j_1})\wedge 2\Imag(l_{j_2})\wedge \cdots \wedge 2\Imag(l_{j_{n_1}}).
\end{split}
\end{equation}
Let $w_1,\ldots, w_{2m+q}$ denote the list $x_1,\ldots, x_q, 2\Real(l_1),\ldots, 2\Real(l_m),2\Imag(l_1),\ldots, 2\Imag(l_m)$, so that
$\WVS=\Span_{\R}\{w_1,\ldots, w_{2m+q}\}$.
For $P=(p_1,\ldots, p_{2n+r})\in \sI(2n+r,2m+q)$, we set $\bigwedge W_P:=w_{p_1}\wedge w_{p_2}\wedge \cdots \wedge w_{p_{2n+r}}$.

\begin{prop}\label{Prop::AppendWedge::EquivQuot}
Fix $\zeta\in (0,1]$, $J_0\in \sI(n,m)$, $K_0\in \sI(r,q)$.
\begin{enumerate}[(i)]
\item\label{Item::AppendWedge::EquivQuot1} Suppose $\left(\bigwedge X_{K_0}\right)\bigwedge \left(\bigwedge L_{J_0}\right)\ne 0$ and moreover,
\begin{equation*}
\max_{\substack{J\in \sI(n_1,m), K\in \sI(r_1,q)\\n_1+r_1=n+r}}\left| \frac{  \left(\bigwedge X_K\right)\bigwedge\left(\bigwedge L_J\right)   }{ \left(\bigwedge X_{K_0}\right)\bigwedge\left(\bigwedge L_{J_0}\right)  }  \right|
\leq \zeta^{-1}.
\end{equation*}
Then, $\left(\bigwedge X_{K_0}\right)\bigwedge \left(\bigwedge 2\Real(L)_{J_0}\right)\bigwedge \left(\bigwedge 2\Imag(L)_{J_0}\right)\ne 0$ and moreover,
\begin{equation}\label{Eqn::AppendWedge::EquivQuot1}
\max_{P\in \sI(2n+r,2m+q)} \left|  \frac{ \bigwedge W_P     }{   \left(\bigwedge X_{K_0}\right)\bigwedge \left(\bigwedge 2\Real(L)_{J_0}\right)\bigwedge \left(\bigwedge 2\Imag(L)_{J_0}\right)  }  \right|\leq \left(2\zeta^{-1}\sqrt{2n+r}\right)^{2n+r}.
\end{equation}
\item\label{Item::AppendWedge::EquivQuot2} Conversely, suppose $\left(\bigwedge X_{K_0}\right)\bigwedge \left(\bigwedge 2\Real(L)_{J_0}\right)\bigwedge \left(\bigwedge 2\Imag(L)_{J_0}\right)\ne 0$ and moreover,
\begin{equation*}
\max_{P\in \sI(2n+r,2m+q)} \left|  \frac{ \bigwedge W_P     }{   \left(\bigwedge X_{K_0}\right)\bigwedge \left(\bigwedge 2\Real(L)_{J_0}\right)\bigwedge \left(\bigwedge 2\Imag(L)_{J_0}\right)  }  \right|\leq \zeta^{-1}.
\end{equation*}
Then, $\left(\bigwedge X_{K_0}\right)\bigwedge \left(\bigwedge L_{J_0}\right)\ne 0$ and moreover,
\begin{equation}\label{Eqn::AppendWedge::EquivQuot2}
\max_{\substack{J\in \sI(n_1,m), K\in \sI(r_1,q)\\n_1+r_1=n+r}}\left| \frac{  \left(\bigwedge X_K\right)\bigwedge\left(\bigwedge L_J\right)   }{ \left(\bigwedge X_{K_0}\right)\bigwedge\left(\bigwedge L_{J_0}\right)  }  \right|
\leq \left(4\zeta^{-1}\sqrt{n+r}\right)^{n+r}.
\end{equation}
\end{enumerate}
\end{prop}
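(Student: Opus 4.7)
The plan is to convert each wedge quotient into a matrix determinant via Lemma~\ref{Lemma::AppendWedge::FormulasForQuot}, express the real matrix in terms of the complex one using the computation in Lemma~\ref{Lemma::AppendCR::FromRealFormulaToC}, and then conclude with Hadamard's inequality. Up to reordering $x_1,\ldots,x_q$ and $l_1,\ldots,l_m$, I assume $K_0=(1,\ldots,r)$ and $J_0=(1,\ldots,n)$.

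First, I dispatch the non-vanishing statements. In (i), the hypothesis $\left(\bigwedge X_{K_0}\right)\wedge\left(\bigwedge L_{J_0}\right)\ne 0$ forces $x_1,\ldots,x_r,l_1,\ldots,l_n$ to be a basis of $\LVS$; consequently $x_1,\ldots,x_r$ is a basis of $\XVS$ (it is linearly independent inside the $r$-dimensional space $\XVS$). Lemma~\ref{Lemma::AppendVS::CRBasis} then gives that $x_1,\ldots,x_r,\Real(l_1),\ldots,\Real(l_n),\Imag(l_1),\ldots,\Imag(l_n)$ is a basis of $\WVS$, so the denominator in \eqref{Eqn::AppendWedge::EquivQuot1} is nonzero. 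For (ii), the same lemma runs in reverse to recover the denominator of \eqref{Eqn::AppendWedge::EquivQuot2}.

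Next, I introduce the complex matrix $M\in\M^{(n+r)\times(m+q)}(\C)$ whose columns give the coordinates of $x_1,\ldots,x_q,l_1,\ldots,l_m$ relative to the basis $x_1,\ldots,x_r,l_1,\ldots,l_n$ of $\LVS$, and the real matrix $M'\in\M^{(2n+r)\times(2m+q)}(\R)$ whose columns give the coordinates of $x_1,\ldots,x_q,2\Real(l_1),\ldots,2\Real(l_m),2\Imag(l_1),\ldots,2\Imag(l_m)$ relative to $x_1,\ldots,x_r,2\Real(l_1),\ldots,2\Real(l_n),2\Imag(l_1),\ldots,2\Imag(l_n)$. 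By Lemma~\ref{Lemma::AppendWedge::FormulasForQuot}, the quotients in (i) (respectively (ii)) are determinants of $(n+r)\times(n+r)$ submatrices of $M$ (respectively $(2n+r)\times(2n+r)$ submatrices of $M'$). If the column of $M$ for $l_j$ has entries $\mu_j^1,\ldots,\mu_j^r,\nu_j^1,\ldots,\nu_j^n$, then Lemma~\ref{Lemma::AppendCR::FromRealFormulaToC} (or a direct computation via $\nu l=\Real(\nu)\Real(l)-\Imag(\nu)\Imag(l)+i(\Real(\nu)\Imag(l)+\Imag(\nu)\Real(l))$) gives that the column of $M'$ for $2\Real(l_j)$ has entries $2\Real(\mu_j^k)$ in the $x$-block, $\Real(\nu_j^i)$ in the $2\Real(l)$-block, and $-\Imag(\nu_j^i)$ in the $2\Imag(l)$-block; the column for $2\Imag(l_j)$ has $2\Imag(\mu_j^k)$, $\Imag(\nu_j^i)$, and $\Real(\nu_j^i)$ respectively; and the columns for the $x_k$'s are simply those of $M$ padded with zeros in the $l$-blocks (noting that $x_k\in\VVS$ has real coordinates).

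For direction (i), a one-column cofactor expansion---replacing a single column of the identity submatrix indexed by $K_0\cup J_0$ by any other column---extracts each individual entry of $M$ as an $(n+r)\times(n+r)$ subdeterminant, so the hypothesis yields $|M_{ij}|\le\zeta^{-1}$ for every $i,j$. From the column formulas above, $|M'_{ij}|\le 2\zeta^{-1}$, and Hadamard's inequality bounds any $(2n+r)\times(2n+r)$ subdeterminant of $M'$ by $(2n+r)^{(2n+r)/2}(2\zeta^{-1})^{2n+r}=(2\zeta^{-1}\sqrt{2n+r})^{2n+r}$, which is \eqref{Eqn::AppendWedge::EquivQuot1}. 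Direction (ii) runs analogously: the hypothesis gives $|M'_{ij}|\le\zeta^{-1}$, whence $|\mu_j^k|\le\zeta^{-1}/\sqrt 2$ and $|\nu_j^i|\le\sqrt 2\,\zeta^{-1}$, so $|M_{ij}|\le 2\zeta^{-1}$; Hadamard then gives $(n+r)^{(n+r)/2}(2\zeta^{-1})^{n+r}=(2\zeta^{-1}\sqrt{n+r})^{n+r}\le(4\zeta^{-1}\sqrt{n+r})^{n+r}$, yielding \eqref{Eqn::AppendWedge::EquivQuot2}. The only genuine obstacle is bookkeeping: correctly writing $M'$ in terms of $M$ (this is where Lemma~\ref{Lemma::AppendCR::FromRealFormulaToC} is essential) and verifying that single entries of either matrix can be recovered from the wedge hypothesis via the cofactor expansion above.
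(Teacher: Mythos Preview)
Your proof is correct and follows essentially the same approach as the paper's. Both arguments convert the wedge quotients to determinants via Lemma~\ref{Lemma::AppendWedge::FormulasForQuot}, extract individual coordinate entries via Cramer's rule (your ``one-column cofactor expansion''), pass between the complex and real coordinate descriptions, and finish with Hadamard's inequality; the only difference is organizational---you set up the matrices $M$ and $M'$ globally and record their entrywise relationship once, whereas the paper treats each vector $w_{p_t}$ (respectively $z_t$) individually and invokes Lemma~\ref{Lemma::AppendCR::FromRealFormulaToC} explicitly for direction~(ii). Your bookkeeping in fact yields the slightly better constant $(2\zeta^{-1}\sqrt{n+r})^{n+r}$ in \eqref{Eqn::AppendWedge::EquivQuot2}, which the paper also notes is achievable.
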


\begin{rmk}\label{Rmk::AppendWedge::AboutJ0K0}
A choice of $K_0$, $J_0$, and $\zeta$ as in \cref{Item::AppendWedge::EquivQuot1} or \cref{Item::AppendWedge::EquivQuot2} always exist:  take $K_0=(k_1,\ldots, k_r)$ and $J_0=(j_1,\ldots, j_n)$ so that
$x_{k_1},\ldots, x_{k_r},l_{j_1},\ldots, l_{j_{n}}$ form a basis for $\LVS$.  With this choice, the conditions for  \cref{Item::AppendWedge::EquivQuot1} and \cref{Item::AppendWedge::EquivQuot2} then hold for some $\zeta\in (0,1]$.
If $\XVS\bigcap \Span_{\C}\{l_1,\ldots,l_m\}=\{0\}$, one may pick $J_0$ and $K_0$ so that the conditions of \cref{Item::AppendWedge::EquivQuot1} hold with $\zeta=1$.
This occurs in the two most important special cases:  $r=0$ or $m=0$.
\end{rmk}

\begin{rmk}
The estimates \cref{Eqn::AppendWedge::EquivQuot1,Eqn::AppendWedge::EquivQuot2} are not optimal; however, we do not know the optimal estimates, and so content ourselves with proving the simplest
estimates which are sufficient for our purposes.
\end{rmk}

\begin{proof}
Suppose $K_0$, $J_0$, and $\zeta$ are as in \cref{Item::AppendWedge::EquivQuot1}; let $K_0=(k_1,\ldots, k_r)$, $J_0=(j_1,\ldots, j_n)$.
Since $\dim \LVS=n+r$ and since $x_{k_1},\ldots, x_{k_r},l_{j_1},\ldots, l_{j_n}$ are linearly independent, it follows that
$x_{k_1},\ldots, x_{k_r},l_{j_1},\ldots, l_{j_n}$ are a basis for $\LVS$.  By \cref{Lemma::AppendVS::CRBasis},
$x_{k_1},\ldots, x_{k_r}, 2\Real(l_{j_1}),\ldots, 2\Real(l_{j_n}),2\Imag(l_{j_1}),\ldots, 2\Imag(l_{j_n})$ are a basis for $\WVS$,
and therefore $\left(\bigwedge X_{K_0}\right)\bigwedge \left(\bigwedge 2\Real(L)_{J_0}\right)\bigwedge \left(\bigwedge 2\Imag(L)_{J_0}\right)\ne 0$.

Let $P=(p_1,\ldots, p_{2n+r})\in \sI(2n+r,2m+q)$.  We claim, for $t=1,\ldots, 2n+r$,
\begin{equation}\label{Eqn::AppendWedge::ToShowSumWpt}
w_{p_t} = \sum_{\alpha=1}^r a_t^\alpha x_{k_{\alpha}} + \sum_{\beta=1}^n b_t^\beta 2\Real(l_{j_\beta}) + \sum_{\beta=1}^n c_t^\beta 2\Imag(l_{j_{\beta}}), \quad \frac{1}{2}|a_t^\alpha|, |b_t^\beta|, |c_t^\beta|\leq \zeta^{-1},\forall t,\alpha,\beta.
\end{equation}
By its definition $w_{p_t}=2\Real(z)$, where $z\in \{\frac{1}{2}x_1,\ldots, \frac{1}{2}x_q, l_1,\ldots,l_m, -il_1,\ldots, -il_m\}$.  Using Cramer's rule, we have
\begin{equation*}
\begin{split}
z &= \sum_{\alpha=1}^r \frac{ x_{k_1}\wedge \cdots \wedge x_{k_{\alpha-1}}\wedge z\wedge x_{k_{\alpha+1}}\wedge \cdots \wedge x_{k_r}\wedge l_{j_1}\wedge \cdots\wedge l_{j_n} }{x_{k_1}\wedge \cdots \wedge x_{k_r}\wedge l_{j_1}\wedge \cdots\wedge l_{j_n}} x_{k_\alpha}
\\&\quad+\sum_{\beta=1}^n \frac{x_{k_1}\wedge \cdots \wedge x_{k_r}\wedge l_{j_1}\wedge \cdots\wedge l_{j_{\beta-1}}\wedge z\wedge l_{j_{\beta+1}}\wedge \cdots\wedge l_{j_n}}{x_{k_1}\wedge \cdots \wedge x_{k_r}\wedge l_{j_1}\wedge \cdots\wedge l_{j_n}}l_{j_\beta}
\\&=:\sum_{\alpha=1}^r d_{\alpha}x_{k_{\alpha}} + \sum_{\beta=1}^n e_\beta  l_{j_{\beta}},
\end{split}
\end{equation*}
where $|d_{\alpha}|,|e_\beta|\leq \zeta^{-1}$ by hypothesis.
Thus,
\begin{equation*}
\begin{split}
w_{p_t}&=2\Real(z) = z+\overline{z} = \sum_{\alpha=1}^r (d_\alpha+\overline{d_\alpha})x_{k_{\alpha}} +\sum_{\beta=1}^n (e_\beta l_{j_{\beta}}) +(\overline{e_\beta} \overline{l_{j_{\beta}}})
\\&=\sum_{\alpha=1}^r 2\Real(d_\alpha) x_{k_{\alpha}} + \sum_{\beta=1}^n \Real(e_\beta) 2\Real(l_{j_\beta}) + \sum_{\beta=1}^n -\Imag(e_{\beta}) 2\Imag(l_{j_{\beta}}).
\end{split}
\end{equation*}
\cref{Eqn::AppendWedge::ToShowSumWpt} follows.

Using \cref{Eqn::AppendWedge::ToShowSumWpt}, \cref{Lemma::AppendWedge::FormulasForQuot} shows
\begin{equation*}
 \frac{ \bigwedge W_P     }{   \left(\bigwedge X_{K_0}\right)\bigwedge \left(\bigwedge 2\Real(L)_{J_0}\right)\bigwedge \left(\bigwedge 2\Imag(L)_{J_0}\right)  }
\end{equation*}
is equal to the determinant of a $(2n+r)\times (2n+r)$ matrix, all of whose components are bounded by $2\zeta^{-1}$.
\Cref{Eqn::AppendWedge::EquivQuot1} now follows from Hadamard's inequality.

Suppose $K_0$, $J_0$, and $\zeta$ are as in \cref{Item::AppendWedge::EquivQuot2}; let $K_0=(k_1,\ldots, k_r)$, $J_0=(j_1,\ldots, j_n)$.
Since $$x_{k_1},\ldots, x_{k_r}, 2\Real(l_{j_1}),\ldots, 2\Real(l_{j_n}),2\Imag(l_{j_1}),\ldots, 2\Imag(l_{j_n})$$ are linearly independent,
and since $\dim \WVS=2n+r$, it follows that they are a basis for $\WVS$.  By \cref{Lemma::AppendVS::CRBasis}, $x_{k_1},\ldots, x_{r},l_{j_1},\ldots, l_{j_n}$
is a basis for $\LVS$, and therefore  $\left(\bigwedge X_{K_0}\right)\bigwedge \left(\bigwedge L_{J_0}\right)\ne 0$.

Let $J\in \sI(n_1,m), K\in \sI(r_1,q)$ with $n_1+r_1=n+r$.
$\left(\bigwedge X_K\right)\bigwedge \left(\bigwedge L_J\right)=z_1\wedge z_2\wedge \cdots \wedge z_{n+r}$,
where each $z_t$ is of the form $x_k$ or $l_j$ for some $j$ or $k$.  We claim
\begin{equation}\label{Eqn::AppendWedge::ToShowzt}
z_t = \sum_{\alpha=1}^r g_t^{\alpha} x_{k_{\alpha}} + \sum_{\beta=1}^n h_t^{\beta} l_{j_\beta}, \quad |g_t^{\alpha}|,|h_t^{\beta}|\leq 4\zeta^{-1},\forall t,\alpha,\beta.
\end{equation}
Indeed, suppose $z_t=l_j$ for some $j$.  Then,
\begin{equation*}
\begin{split}
&2\Real(z_t)
=\sum_{\alpha=1}^r a_t^{\alpha}x_{k_{\alpha}} + \sum_{\beta=1}^n b_t^{\beta} 2\Real(l_{j_\beta}) + \sum_{\beta=1}^n c_t^{\beta} 2\Imag(l_{j_\beta}),
\end{split}
\end{equation*}
where, by Cramer's rule,
\begin{equation*}
 a_t^{\alpha}=\frac{ \bigwedge W_{P_{t,\alpha}}     }{   \left(\bigwedge X_{K_0}\right)\bigwedge \left(\bigwedge 2\Real(L)_{J_0}\right)\bigwedge \left(\bigwedge 2\Imag(L)_{J_0}\right)  },
\end{equation*}
and $\bigwedge W_{P_{t,\alpha}}$ is defined by replacing $x_{k_{\alpha}}$ with $2\Real(z_t)$ in  $\left(\bigwedge X_{K_0}\right)\bigwedge \left(\bigwedge 2\Real(L)_{J_0}\right)\bigwedge \left(\bigwedge 2\Imag(L)_{J_0}\right) $, and therefore $|a_t^{\alpha}|\leq \zeta^{-1}$, by hypothesis.  Similarly, $|b_t^\beta|, |c_t^{\beta}|\leq \zeta^{-1}$.
Similarly,
\begin{equation*}
2\Imag{z_t} = \sum_{\alpha=1}^r d_t^{\alpha} x_{k_{\alpha}} + \sum_{\beta=1}^n e_t^{\beta} 2\Real(l_{j_\beta}) + \sum_{\beta=1}^n f_t^{\beta} 2\Real(l_{j_\beta}), \quad |d_t^{\alpha}|,|e_t^{\beta}|,|f_t^{\beta}|\leq \zeta^{-1}, \forall t,\alpha,\beta.
\end{equation*}
\Cref{Eqn::AppendWedge::ToShowzt} now follows from \cref{Lemma::AppendCR::FromRealFormulaToC} (with, in fact, $4\zeta^{-1}$ replaced by $2\zeta^{-1}$).  A similar proof
works when $z_t=x_k$ for some $k$, yielding \cref{Eqn::AppendWedge::ToShowzt}.

Using \cref{Eqn::AppendWedge::ToShowzt},  \cref{Lemma::AppendWedge::FormulasForQuot} shows
$$\frac{  \left(\bigwedge X_K\right)\bigwedge\left(\bigwedge L_J\right)   }{ \left(\bigwedge X_{K_0}\right)\bigwedge\left(\bigwedge L_{J_0}\right)  } $$
is equal to the determinant of an $(n+r)\times (n+r)$ matrix, all of whose components are bounded by $4\zeta^{-1}$.
\Cref{Eqn::AppendWedge::EquivQuot2} now follows from Hadamard's inequality.
\end{proof}

%
%
%


\bibliographystyle{amsalpha}

\bibliography{subh}

\center{\it{University of Wisconsin-Madison, Department of Mathematics, 480 Lincoln Dr., Madison, WI, 53706}}

\center{\it{street@math.wisc.edu}}

\center{MSC 2010:  32M25 (Primary), 53C17 and 32W50 (Secondary)}

\end{document}